\mathchardef\mhyphen="2D
\def\on{\operatorname}
\newtheorem{theorem}{Theorem}[section]
\newtheorem{lemma}[theorem]{Lemma}
\newtheorem{conjecture}[theorem]{Conjecture}
\newtheorem{proposition}[theorem]{Proposition}
\newtheorem{corollary}[theorem]{Corollary}
\newtheorem*{corollary*}{Corollary}
\newtheorem{introthm}{Theorem}
\newtheorem{introcor}{Corollary}
\theoremstyle{definition}
\newtheorem{notation}[theorem]{Notation}
\newtheorem{construction}[theorem]{Construction}
\newtheorem{definition}[theorem]{Definition}
\newtheorem{remark}[theorem]{Remark}
\newtheorem{example}[theorem]{Example}
\newtheorem*{example*}{Example}
\title{Spherical monadic adjunctions of stable infinity categories}
\author{Merlin Christ}
\date{\today}
\begin{document}

\maketitle
\abstract{ 
This paper concerns spherical adjunctions of stable $\infty$-categories and their relation to monadic adjunctions. We begin with a proof of the 2/4 property of spherical adjunctions in the setting of stable $\infty$-categories. The proof is based on the description of spherical adjunctions as \mbox{$4$-periodic} semiorthogonal decompositions given by Halpern-Leistner, Shipman \cite{HLS16} and Dyckerhoff, Kapranov, Schechtman, Soibelman \cite{DKSS19}. We then describe a class of examples of spherical adjunctions arising from local systems on spheres. The main result of this paper is a characterization of the sphericalness of a monadic adjunctions in terms of properties of the monad. Namely, a monadic adjunction is spherical if and only if the twist functor is an equivalence and commutes with the unit map of the monad. This characterization is inspired by work of Ed Segal \cite{Seg18}.
}

\tableofcontents

\newpage

\section*{Introduction}
Seidel and Thomas \cite{ST01} introduced the notion of a \textit{spherical object} for the construction of autoequivalences of the bounded derived category of a smooth complex projective variety. Their approach was inspired by homological mirror symmetry; the symplectic analogue of the constructed autoequivalence is a Dehn twist associated to a Lagrangian sphere. Spherical objects were soon generalized, leading to the notion of a \textit{spherical functor}, applicable to the more general construction of autoequivalences of triangulated categories. A functor $F:\mathcal{A}\rightarrow \mathcal{B}$ between (suitably enhanced) triangulated categories that admits a right adjoint $G$ is called spherical if
\begin{itemize} 
\item the endofunctor $T_\mathcal{A}=\on{cone}(id_\mathcal{A}\xrightarrow{u}GF)$ of $\mathcal{A}$ given by the pointwise cone of the unit transformation $u$ of the adjunction $F\dashv G$ is an equivalence and
\item the endofunctor $T_\mathcal{B}=\on{cone}(FG\xrightarrow{cu}id_\mathcal{B})[-1]$ of $\mathcal{B}$ given by the pointwise shifted cone of the counit transformation of the adjunction $F\dashv G$ is an equivalence.
\end{itemize} 
The functors $T_\mathcal{A}$ and $T_\mathcal{B}$ are called twist and cotwist functor, respectively. Following \cite{DKSS19}, we adopt a different convention and call the adjunction $F\dashv G$ a \textit{spherical adjunction}, emphasizing that sphericalness is a property of an adjunction. Currently, much interest in spherical adjunctions arises because spherical adjunctions appear as local data of the conjectured concept of perverse schober on surfaces, cf.~\cite{KS14}.\\

The cone in a triangulated category is infamous for not being functorial, so that one needs to choose an enhancement to define the twist and cotwist functors. Common choices are pretriangulated dg-categories and pretriangulated $\mathbb{A}_\infty$-categories. We choose stable $\infty$-categories as the enhancement. This choice provides us access to the powerful framework developed by Lurie in \cite{HTT,HA}. 

We will begin in \Cref{sec3} by extending basic aspects of the theory of spherical adjunctions to the setting of stable $\infty$-categories, most notably the so called 2/4 property, appearing in the setting of dg-categories in \cite{AL17}. Our proof of the 2/4 property is based on the correspondence between spherical adjunctions and $4$-periodic semiorthogonal decompositions due to \cite{HLS16}, which was extended to the setting of stable $\infty$-categories in \cite{DKSS19}. In \Cref{sec4}, we study the following family of spherical adjunctions.

\begin{example*} 
Let $f:X\rightarrow Y$ be a spherical fibration between Kan complexes, i.e.~the simplicial analogue of a Serre fibration between spaces whose fibers are homotopy equivalent to $n$-spheres. Let $\mathcal{D}$ be any stable $\infty$-category. We call the functor categories $\on{Fun}(Y,\mathcal{D})$ and $\on{Fun}(X,\mathcal{D})$ the stable $\infty$-categories of local systems with values in $\mathcal{D}$ on $Y$ and $X$, respectively. Consider the pullback functor 
\[f^*:\on{Fun}(Y,\mathcal{D})\longrightarrow \on{Fun}(X,\mathcal{D})\]
along $f$ with right adjoint $f_*$. The adjunction $f^*\dashv f_*$ is spherical.
\end{example*}

For $\mathcal{D}=\mathcal{D}^b(k)$ the bounded derived category of a field $k$, the above examples also appear in \cite[1.11]{KS14} in the setting of pretriangulated dg-categories. 

These examples of spherical adjunctions can be seen as arising from a family of spherical objects. By allowing any stable $\infty$-category $\mathcal{D}$ as the target for the local systems, we show that such families of spherical objects also exist in the setting of spectrally enriched $\infty$-categories which cannot be treated as dg- or $A_\infty$-categories, such as the $\infty$-category of spectra. 

In the remaining \Cref{sec5} we turn towards spherical monadic adjunctions. Let $\mathcal{D}$ be a stable $\infty$-category. A monad $M:\mathcal{D}\rightarrow \mathcal{D}$ on $\mathcal{D}$ is an algebra object in the $\infty$-category of endofunctors, i.e~equipped with a multiplication map $m:M^2\rightarrow M$, a unit map $u:id_{\mathcal{D}}\rightarrow M$ and further data exhibiting associativity and unitality. An important source of monads is given by adjunctions. Every adjunction $F\dashv G$ determines a monad $GF$ called the adjunction monad, with unit given by the adjunction unit. Every monad $M$ is equivalent to the adjunction monad of its associated monadic adjunction
\[ F:\mathcal{D}\leftrightarrow \on{LMod}_M(\mathcal{D}):G\,.\]
Here $\on{LMod}_M(\mathcal{D})$ denotes the stable $\infty$-category of left modules in $\mathcal{D}$ over the monad $M$, see \Cref{sec1.4} below, and is also sometimes called the Eilenberg-Moore $\infty$-category of the monad. The stable Kleisli $\infty$-category  $\overline{\on{LMod}^{\on{free}}_M(\mathcal{D})}\subset \on{LMod}_\mathcal{D}(D)$ is defined as the smallest stable, full subcategory containing all free $M$-modules. The monad $M$ on $\mathcal{D}$ is also equivalent to the adjunction monad of the stable Kleisli adjunction
\[ F:\mathcal{D}\leftrightarrow \overline{\on{LMod}^{\on{free}}_M(\mathcal{D})}:G\,,\]
which is defined as the restriction of the monadic adjunction. We show in \Cref{sec1.4} that this adjunction is the minimal adjunction of stable $\infty$-categories with adjunction monad $M$. If the monadic adjunction is spherical, then the stable Kleisli adjunction is as a restriction also spherical. The main result of this paper is the following characterization of the sphericalness of a monadic adjunction in terms of the properties of the adjunction monad.

\begin{introthm}[\Cref{sphmndthm}]
\label{thm1}
Let $\mathcal{D}$ be a stable $\infty$-category and let $M:\mathcal{D}\rightarrow \mathcal{D}$ be a monad with unit $u:\on{id}_{\mathcal{D}}\rightarrow M$. Consider the endofunctor $T_{\mathcal{D}}=\on{cone}(\on{id}_{\mathcal{D}}\xrightarrow{u}M)\in \on{Fun}(\mathcal{D},\mathcal{D})$. The following conditions are equivalent.
\begin{enumerate}
\item The endofunctor $T_{\mathcal{D}}$ is an equivalence and the unit $u$ satisfies $T_\mathcal{D}u\simeq uT_\mathcal{D}$.
\item The monadic adjunction $F:\mathcal{D}\leftrightarrow \on{LMod}_M(\mathcal{D}):G$ is spherical.
\end{enumerate}
\end{introthm}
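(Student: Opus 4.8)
Write $\mathcal{E}=\on{LMod}_M(\mathcal{D})$ and let $F\dashv G$ be the monadic adjunction, so that $GF\simeq M$, the adjunction unit is $u$, and $T_{\mathcal{D}}$ is exactly the twist of $F\dashv G$, the cotwist being $T_{\mathcal{E}}=\on{cone}(FG\xrightarrow{\epsilon}\on{id}_{\mathcal{E}})[-1]$. Thus (2) says precisely that $T_{\mathcal{D}}$ and $T_{\mathcal{E}}$ are both equivalences; since (2) already contains invertibility of $T_{\mathcal{D}}$, the content of the theorem is that, \emph{assuming $T_{\mathcal{D}}$ is an equivalence}, $T_{\mathcal{E}}$ is an equivalence if and only if $T_{\mathcal{D}}u\simeq uT_{\mathcal{D}}$.

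The key starting point is a pair of identities valid for every monad. The triangle identity exhibits $Fu\colon F\to FGF$ as a section of $\epsilon_F\colon FGF\to F$, whence $T_{\mathcal{E}}F=\on{fib}(\epsilon_F)\simeq\on{cofib}(Fu)\simeq F\,\on{cofib}(u)=F\,T_{\mathcal{D}}$, using exactness of $F$; dually the left unit axiom of an $M$-module exhibits $u_{G(-)}$ as a section of the action map $G\epsilon\colon MG\to G$, so $G\,T_{\mathcal{E}}=\on{fib}(G\epsilon)\simeq\on{cofib}(u_{G(-)})=T_{\mathcal{D}}\,G$. Composing, $M\,T_{\mathcal{D}}=GF\,T_{\mathcal{D}}\simeq G\,T_{\mathcal{E}}F\simeq T_{\mathcal{D}}\,GF=T_{\mathcal{D}}\,M$; so an abstract equivalence $MT_{\mathcal{D}}\simeq T_{\mathcal{D}}M$ always exists, and what (1) demands beyond invertibility of $T_{\mathcal{D}}$ is that such an equivalence be compatible with $u$.

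Granting $T_{\mathcal{D}}$ invertible, the identity $T_{\mathcal{E}}F\simeq FT_{\mathcal{D}}$ shows that $T_{\mathcal{E}}$ carries free modules to free modules and, being exact, restricts to an autoequivalence of the stable Kleisli category $\overline{\on{LMod}^{\on{free}}_M(\mathcal{D})}$ as soon as it is fully faithful there, i.e.\ that
\[
\on{Map}_{\mathcal{E}}(FX,FY)\xrightarrow{\ T_{\mathcal{E}}\ }\on{Map}_{\mathcal{E}}\big(F T_{\mathcal{D}}X,\,F T_{\mathcal{D}}Y\big)
\]
is an equivalence. Both sides identify via $F\dashv G$ with $\on{Map}_{\mathcal{D}}(X,MY)$ and $\on{Map}_{\mathcal{D}}(T_{\mathcal{D}}X,MT_{\mathcal{D}}Y)$, the latter becoming the former by $MT_{\mathcal{D}}\simeq T_{\mathcal{D}}M$ and invertibility of $T_{\mathcal{D}}$; the point is whether $T_{\mathcal{E}}$ induces this identification. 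Tracking the unit $\on{id}_{FX}$ --- which corresponds under the Kleisli identification to $u_X$ --- through $T_{\mathcal{E}}$ sends it to $\on{id}_{FT_{\mathcal{D}}X}$, corresponding to $u_{T_{\mathcal{D}}X}=uT_{\mathcal{D}}$, whereas the displayed identification sends $\on{id}_{FX}$ to $T_{\mathcal{D}}u$; so the identification is the unit-compatible (and then composition-compatible) one exactly when $T_{\mathcal{D}}u\simeq uT_{\mathcal{D}}$. This yields $(1)\Rightarrow(2)$ once one knows that $T_{\mathcal{E}}$ is then an equivalence of all of $\mathcal{E}$, not merely of the Kleisli category --- which holds because (as noted below) invertibility of $T_{\mathcal{D}}$ forces $G$ to have a right adjoint $H$, so that $T_{\mathcal{E}}$ has an explicit inverse built from $G$ and $H$. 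Conversely, if the monadic adjunction is spherical then $T_{\mathcal{E}}$, being an equivalence, restricts to an autoequivalence of $\overline{\on{LMod}^{\on{free}}_M(\mathcal{D})}$, and the same unit computation read backwards produces $T_{\mathcal{D}}u\simeq uT_{\mathcal{D}}$, giving $(2)\Rightarrow(1)$.

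The main obstacle I anticipate is making rigorous the $\infty$-categorical coherence claim underlying the unit-tracking: that the natural equivalence $T_{\mathcal{E}}F\simeq FT_{\mathcal{D}}$ identifies the action of $T_{\mathcal{E}}$ on Kleisli mapping spectra with the evident identification exactly when $T_{\mathcal{D}}$ commutes with $u$. Rather than chasing cells I would organize $\overline{\on{LMod}^{\on{free}}_M(\mathcal{D})}$ as a construction functorial in the pair (monad, unit), show that $T_{\mathcal{D}}$ together with the datum $T_{\mathcal{D}}u\simeq uT_{\mathcal{D}}$ is an equivalence of such pairs, and deduce an autoequivalence of the stable Kleisli category that the formal identities identify with $T_{\mathcal{E}}$. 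The fact used above --- that $T_{\mathcal{D}}$ invertible forces $G$ to have a right adjoint --- comes from rotating $\on{id}\xrightarrow{u}M\to T_{\mathcal{D}}$ to $M\simeq\on{fib}(T_{\mathcal{D}}\to\on{id}[1])$ and passing to termwise right adjoints, which gives $M^{R}=\on{cofib}(\on{id}_{\mathcal{D}}[-1]\to T_{\mathcal{D}}^{-1})$, a comonad with $\on{LMod}_M(\mathcal{D})\simeq\on{LComod}_{M^{R}}(\mathcal{D})$ and $G$ acquiring the cofree-comodule right adjoint $H$; as an alternative to the direct argument, once $F\dashv G\dashv H$ is available the whole statement should fall out of the 2/4 property of \Cref{sec3}, with ``$T_{\mathcal{D}}u\simeq uT_{\mathcal{D}}$'' (given $T_{\mathcal{D}}$ invertible) matched against one of the conditions figuring there.
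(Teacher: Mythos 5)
Your two commutation identities ($T_{\mathcal{E}}F\simeq FT_{\mathcal{D}}$, $G T_{\mathcal{E}}\simeq T_{\mathcal{D}}G$, the paper's \Cref{commlem}) and the role you assign to the unit are the right ingredients, but the two load-bearing steps of your $(1)\Rightarrow(2)$ are not established. The passage from an autoequivalence of the stable Kleisli category to an equivalence of all of $\on{LMod}_M(\mathcal{D})$ rests on the claim that invertibility of $T_{\mathcal{D}}$ alone (plus the existence of the right adjoint $H$ of $G$, itself only sketched via the comonad $M^R$) already yields an explicit inverse of $T_{\mathcal{E}}$ built from $G$ and $H$. If that were true, the monadic adjunction would be spherical whenever $T_{\mathcal{D}}$ is invertible, and by the converse direction the commutation $T_{\mathcal{D}}u\simeq uT_{\mathcal{D}}$ would be automatic --- i.e.\ the hypothesis the theorem is about would be redundant. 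What the adjoint triple $F\dashv G\dashv H$ gives you is only a \emph{candidate} inverse (the twist of $G\dashv H$); showing it actually inverts $T_{\mathcal{E}}$ is equivalent to one of the conditions in \Cref{2/4prop} and is exactly where the unit condition must enter, so nothing comes for free. Nor can you argue instead by colimit-generation from free modules: $\mathcal{D}$ is merely stable, so general $M$-modules are not reached from the Kleisli category by colimits that are available and preserved here (compare conditions 3 and 4 in \Cref{sphmndprop}, which exist precisely to bridge this gap for non-monadic adjunctions).

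The coherence problem you flag yourself is the actual mathematical content, and the proposed repair does not work as stated: an ``equivalence of pairs (monad, unit)'' inducing an autoequivalence of the (stable) Kleisli construction would need compatibility of $T_{\mathcal{D}}$ with the multiplication $m$ as well, and condition (1) supplies no such datum. The paper's proof of \Cref{sphmndthm} avoids both issues at once: it composes $G$ with the equivalence $T_{\mathcal{D}}$, notes $T_{\mathcal{D}}G$ is monadic by \cite[4.7.3.22]{HA}, identifies the comparison functor of the adjunction $FT_{\mathcal{D}}^{-1}\dashv T_{\mathcal{D}}G$ with the cotwist via \Cref{commlem}, and applies \cite[4.7.3.5, 4.7.3.16]{HA}; there all multiplicative coherence is carried by the adjunction monad $T_{\mathcal{D}}MT_{\mathcal{D}}^{-1}$, and the only thing checked by hand is a comparison of unit maps, which is exactly where the (even pointwise) equivalence $uT_{\mathcal{D}}\simeq T_{\mathcal{D}}u$ is used. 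Likewise $(2)\Rightarrow(1)$ is not just ``the same computation read backwards'': in the paper it is a short independent argument identifying the unit of $FT_{\mathcal{D}}^{-1}\dashv T_{\mathcal{D}}G$ both with $T_{\mathcal{D}}u T_{\mathcal{D}}^{-1}$ and, using $FT_{\mathcal{D}}\simeq T_{\mathcal{C}}F$ and $GT_{\mathcal{C}}\simeq T_{\mathcal{D}}G$ for the cotwist $T_{\mathcal{C}}$, with $u$ itself. So the proposal needs either the Barr--Beck mechanism or a genuinely worked-out replacement for the Kleisli coherence and the extension step; as written it has gaps at both points.
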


The main ingredient in the proof of \Cref{thm1} is Lurie's far reaching $\infty$-categorical Barr-Beck theorem. \Cref{thm1} extends and completes a discussion in \cite[Section 3.2]{Seg18}. Using \Cref{thm1}, we can also extend the main result of \cite{Seg18} to the setting of stable $\infty$-categories.

\begin{introcor}[\Cref{cor:sph}]\label{cor1}
Every autoequivalence of a stable $\infty$-category arises as the twist functor of a spherical adjunction. 
\end{introcor}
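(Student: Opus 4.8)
The plan is to deduce this from \Cref{thm1}. Given an autoequivalence $\Phi$ of a stable $\infty$-category $\mathcal{D}$, it is enough to exhibit a monad $M$ on $\mathcal{D}$, with unit $u\colon\on{id}_{\mathcal{D}}\to M$, such that $T_{\mathcal{D}}=\on{cone}(\on{id}_{\mathcal{D}}\xrightarrow{u}M)\simeq\Phi$ and such that condition (1) of \Cref{thm1} is satisfied. Indeed, \Cref{thm1} then guarantees that the monadic adjunction $F\colon\mathcal{D}\leftrightarrow\on{LMod}_M(\mathcal{D})\colon G$ is spherical; its twist functor is $\on{cone}$ of the adjunction unit, and for a monadic adjunction the adjunction unit is precisely the monad unit $u$ (as recalled in \Cref{sec1.4}), so the twist functor is $T_{\mathcal{D}}\simeq\Phi$, as desired.

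For $M$ I would take the trivial (split) square-zero extension $\on{id}_{\mathcal{D}}\oplus\Phi$ of the identity monad, following the idea of \cite{Seg18}. Here $\on{Fun}(\mathcal{D},\mathcal{D})$ is regarded as a monoidal stable $\infty$-category under composition, with monoidal unit $\on{id}_{\mathcal{D}}$; since bimodules over the monoidal unit are just objects, the split square-zero construction of \cite{HA} endows $\on{id}_{\mathcal{D}}\oplus\Phi$ with the structure of an $E_1$-algebra, i.e.\ a monad. Its unit $u$ is the inclusion of the first summand, and its multiplication $M\circ M\to M$ is the canonical identification on each of the summands $\on{id}_{\mathcal{D}}$, $\on{id}_{\mathcal{D}}\circ\Phi$ and $\Phi\circ\on{id}_{\mathcal{D}}$ and is the zero map on $\Phi\circ\Phi$. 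As $\Phi$ is an equivalence, $M$ is exact, so $\on{LMod}_M(\mathcal{D})$ is again a stable $\infty$-category and the monadic adjunction is one of stable $\infty$-categories.

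Next I would verify condition (1). First, $T_{\mathcal{D}}=\on{cone}(u)$ is the cofiber of the summand inclusion $\on{id}_{\mathcal{D}}\hookrightarrow\on{id}_{\mathcal{D}}\oplus\Phi$, hence canonically equivalent to $\Phi$, which is an equivalence by hypothesis. Second, under this identification both $T_{\mathcal{D}}\circ M$ and $M\circ T_{\mathcal{D}}$ are identified with $\Phi\oplus(\Phi\circ\Phi)$ (using that $\Phi$ is exact), and both whiskered transformations $T_{\mathcal{D}}u$ and $uT_{\mathcal{D}}$ are identified with the summand inclusion $\Phi\hookrightarrow\Phi\oplus(\Phi\circ\Phi)$; hence $T_{\mathcal{D}}u\simeq uT_{\mathcal{D}}$. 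With condition (1) in hand, \Cref{thm1} completes the argument.

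The main obstacle I anticipate is the second half of this verification — and, underlying it, making precise what condition (1) asks: the relation $T_{\mathcal{D}}u\simeq uT_{\mathcal{D}}$ is not an equality of objects but a commutativity datum for a square of natural transformations, including a coherent identification $T_{\mathcal{D}}\circ M\simeq M\circ T_{\mathcal{D}}$. So the real work lies in extracting these coherences from the split square-zero structure on $\on{id}_{\mathcal{D}}\oplus\Phi$, rather than checking anything pointwise. A secondary, more routine point to be spelled out is the compatibility, mentioned above, between the adjunction unit of the monadic adjunction and the monad unit $u$, which is what pins the twist functor down as $T_{\mathcal{D}}$.
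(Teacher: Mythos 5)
Your proposal is correct and follows essentially the same route as the paper: the paper also takes the square-zero extension monad $M=\on{id}_{\mathcal{D}}\oplus T$ (citing \cite[Section 7.3.4]{HA}), observes that the twist functor of its monadic adjunction is $T$ and that the unit commutes with it, and concludes by \Cref{sphmndthm}. The coherence issue you flag is handled in the paper by the remark following \Cref{sphmndthm} that the commutativity $uT_{\mathcal{D}}\simeq T_{\mathcal{D}}u$ only needs to be checked pointwise, so your summand-inclusion verification suffices.
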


For the proof of \Cref{cor1}, we consider the square-zero extension monad, whose monadic adjunction is spherical and whose twist functor is equivalent to the autoequivalence. In \cite{Seg18}, Segal recovers the autoequivalence via the Kleisli adjunction of the square-zero extension monad, which in the setting of dg-categories is Morita equivalent to the stable Kleisli adjunction of the monad. We can thus describe any autoequivalence as the twist functor of both a monadic adjunction and a stable Kleisli adjunction. 

\Cref{thm1} has further implications for all spherical adjunctions. We describe in \Cref{sphmndprop} a characterization of the sphericalness of an adjunction (not necessarily monadic) similar to \Cref{thm1}. 

We conclude this work by providing a counterexample to an expectation raised by Segal, see "Proposition" 3.10 in \cite{Seg18}. Given a spherical adjunction $F:\mathcal{D}\leftrightarrow \mathcal{C}:G$, such that $F$ is essentially surjective, the expectation states that the adjunction can be recovered from the twist functor $T_{\mathcal{D}}$ and its section $s:T_{\mathcal{D}}[-1]\rightarrow \on{id}_{\mathcal{D}}$, see \Cref{sec4.1}. Such an adjunction is by \Cref{stbKleisli} equivalent to the stable Kleisli adjunction of the adjunction monad $M=GF$ and thus determined by the adjunction monad. We provide the following counter-example to the expectation, that the monad can be recovered from the section of the twist functor. Given a field $k$, there are two algebra structures on $k\oplus k$, the square-zero algebra structure and the product algebra structure. These determine different monads with underlying endofunctor $(k\oplus k)\otimes_k \mhyphen:\mathcal{D}(k)\rightarrow \mathcal{D}(k)$. The two different arising stable Kleisli adjunctions are spherical and have equivalent twist functors and sections.

\subsection*{Acknowledgements}
I deeply thank my supervisor Tobias Dyckerhoff for his availability and guidance. I further wish to thank Ed Segal for valuable comments on a draft of this paper and in particular for suggesting a simplification of the \Cref{ex1,ex2}. Finally, I also wish to thank two anonymous referees for helping improve the readability of the paper. This paper is based on the author's Master's thesis. The author acknowledges support by the Deutsche Forschungsgemeinschaft under Germany’s Excellence Strategy – EXC 2121 “Quantum Universe” – 390833306.

\section{Preliminaries}\label{sec2}
In this paper we assume familiarity with the basic notions of $\infty$-category theory as developed by Joyal and Lurie. In \Cref{in1}, we provide an informal account of the role of stable $\infty$-categories in the remainder of the text. In particular, we discuss how stable $\infty$-categories compare to other notions of enhancements of triangulated categories. In the \Cref{sec1.2,sec2.2,sec1.4} we introduce some concepts and results from the theory of $\infty$-categories. In \Cref{sec1.5} we recall the relationship between semiorthogonal decompositions and adjunctions of stable $\infty$-categories, as appearing in \cite{DKSS19}. For an extensive treatment of the theory of $\infty$-categories and stable $\infty$-categories we refer to \cite{HTT} and \cite{HA}, respectively.
The contents of this section are not original, except for the discussion of the stable Kleisli $\infty$-category in \Cref{sec1.4}.

\subsection{Stable \texorpdfstring{$\infty$}{infinity}-categories}\label{in1}
For any $\infty$-category, there is an associated $1$-category called the homotopy category. A stable $\infty$-category is an $\infty$-category with additional properties which ensure that the homotopy category can be given the structure of a triangulated category. Practically all examples of interest of triangulated categories appear as the homotopy category of a stable $\infty$-category. We can thus use stable $\infty$-categories as an enhancement of triangulated categories. Stable $\infty$-categories posses a number of convenient features, which distinguishes them from other choices of enhancements.
\begin{itemize}
\item There is an intrinsic notion of limits and colimits in any $\infty$-category. We can thus characterize many appearing mathematical objects via universal properties. 
\item By using universal properties many definitions can be stated in a simpler form and many theorems become more general. 
\item There is an intrinsic notion of Kan extension. They allow for much flexibility in constructing functors and $\infty$-categories.
\item Any pretriangulated dg- or $\mathbb{A}_\infty$-category can be regarded as a stable $\infty$-category via the nerve construction. There are further examples of stable $\infty$-categories such as the stable $\infty$-category of spectra.
\end{itemize}
We present a small dictionary for translating between triangulated categories and stable $\infty$-categories in \Cref{table1}.

\begin{table}[h!]
\setlength{\extrarowheight}{9pt} \setlength{\tabcolsep}{12pt}
\centering
\begin{tabular}{cc}
triangulated categories & stable $\infty$-categories\\ 
\midrule
object & $0$-simplex or vertex or object \\
morphism & $1$-simplex or edge or morphism \\
shift functor & suspension functor \\
inverse shift functor & loop functor \\
distinguished triangle & fiber and cofiber sequence\\
mapping cone  & cofiber \\
mapping cone shifted by $[-1]$ & fiber \\
adjunction & biCartesian fibration
\end{tabular}
\caption{A small dictionary for translating between triangulated categories and stable $\infty$-categories.}
\label{table1}
\end{table}

The concept of stable $\infty$-categories first appeared in \cite{Lur06}, building on the idea of a stable model category, originating in \cite{Hov99}. Applying the theory of stable $\infty$-categories has become feasible after the foundational works \cite{HTT} and \cite{HA}. Our hope is that this paper exemplifies that the theory of stable $\infty$-categories may provide new and efficient tools for studying spherical adjunctions, both for theoretical considerations and practical computations.

\subsection{Adjunctions and the Grothendieck construction}\label{sec1.2}
We begin by recalling the notion of an adjunction of $\infty$-categories. An adjunction of 1-categories can be defined as a pair of functors with unit and counit transformations satisfying the triangle identities. For adjunctions of $\infty$-categories, one needs to keep track of further data. There are specific 2-simplicies exhibiting the triangle identities and there are further simplicies exhibiting further compatibility properties of the $2$-simplicies and so on. To avoid making any choice of such data, one adopts a different approach. One encodes the data of an adjunction in a functor $\mathcal{M}\rightarrow \Delta^1$ that is at the same time a Cartesian fibration as well as a coCartesian fibration, i.e.~a functor with certain lifting properties. A Cartesian and coCartesian fibration is called a biCartesian fibration.

\begin{definition}
An adjunction between $\infty$-categories $\mathcal{A}$ and $\mathcal{B}$ is a biCartesian fibration $p:\mathcal{M}\rightarrow \Delta^1$ with fibers $\mathcal{A}$ and $\mathcal{B}$ over $0$ and $1$. 
\end{definition}

Given a biCartesian fibration $p:\mathcal{M}\rightarrow \Delta^1$ with fibers $\mathcal{A}$ and $\mathcal{B}$, we can associate a functor $G:\mathcal{B}\rightarrow \mathcal{A}$ to the the Cartesian fibration $p$ and a second functor $F:\mathcal{A}\rightarrow \mathcal{B}$ to the coCartesian fibration $p$, see \cite[Section 5.2.1]{HTT}. We call any pair of functors $F,G$ arising in this way adjoint and write $F\dashv G$. The functor $F$ is called the left adjoint and the functor $G$ the right adjoint. The unit, counit and further coherence data can also be recovered using the lifting properties of $p$.

Let $\mathcal{M}\rightarrow \Delta^1$ be a biCartesian fibration as above. We are interested in distinguished edges $e:a\rightarrow b$ in $\mathcal{M}$ with $a\in \mathcal{A}$ and $b\in \mathcal{B}$, called coCartesian and Cartesian edges, defined via certain lifting properties in \cite[Section 2.4.1]{HTT}. The edge $e$ is coCartesian if, informally, it describes the application of the functor $F$ to $a$ and is thus in particular equivalent to an edge of the form $a\rightarrow F(a)$. The edge $e$ is Cartesian if, informally, it describes the application of the functor $G$ to $b$ and is thus in particular equivalent to an edge of the form $G(b)\rightarrow b$. 

\begin{notation}
Let $e$ be an edge as above. We write $e:a\xrightarrow{\ast}b$ if $e$ is a Cartesian edge and $e:a\xrightarrow{!}b$ if $e$ is a coCartesian edge.
\end{notation}

\begin{definition}
Let $p:\mathcal{M}\rightarrow \Delta^1$ be an adjunction between $\mathcal{A}$ and $\mathcal{B}$. An edge $e:a\rightarrow a'$ in $\mathcal{A}$ is called a unit map if there exists a diagram in $\mathcal{M}$ (i.e.~object of $\on{Fun}(\Delta^2,\mathcal{M})$) of the form 
\[
\begin{tikzcd}
                                 & b                     \\
a \arrow[r, "e"] \arrow[ru, "!"] & a' \arrow[u, "\ast"']
\end{tikzcd}
\]
with $b\in \mathcal{B}$. 
An edge $e':b\rightarrow b'$ in $\mathcal{B}$ is called a counit map if there exists a diagram in $\mathcal{M}$ of the form 
\[
\begin{tikzcd}
                  & a \arrow[ld, "!"'] \arrow[d, "\ast"] \\
b \arrow[r, "e'"] & b'                                  
\end{tikzcd}
\]
with $a\in \mathcal{A}$.
\end{definition}
By the properties of the involved coCartesian and Cartesian edges, unit and counit maps are determined up to contractible choice by their domain and target, respectively.\\

As we now explain, given a functor we can encode it in a Cartesian or coCartesian fibration by applying the Grothendieck construction. Consider more generally a small $1$-category $C$ and a functor $f:C\rightarrow \on{Set}_\Delta$  taking values in $\infty$-categories. The relative nerve construction, cf.~\cite[3.2.5.2]{HTT}, associates a coCartesian fibration $\Gamma(f)\longrightarrow N(C)$  over the nerve $N(C)$ of $C$ whose fibers are equivalent to the values of the functor $f$. We will call the relative nerve construction the covariant Grothendieck construction. We call the dual version of the relative nerve construction the contravariant Grothendieck construction, it is given by the Cartesian fibration $\chi(f)=\Gamma\left((-)^{op}\circ f\right)^{op}\rightarrow N(C)^{op}$, where $(-)^{op}$ denotes the autoequivalence of $\on{Set}_\Delta$ that assigns to a simplicial set its opposite simplicial set.

Consider the case where $C=[1]$ and  $N(C)=\Delta^1$. Given $f:[1]\rightarrow \on{Set}_\Delta$ taking values in $\infty$-categories, we can identify it with a functor of $\infty$-categories denoted $\hat{f}:f(0)\longrightarrow f(1).$ Then $\Gamma(\hat{f})\coloneqq\Gamma(f)$ is spanned by the two full subcategories $f(0)=\Gamma(\hat{f})\times_{\{0\}}\Delta^1$ and $f(1)=\Gamma(\hat{f})\times_{\{1\}}\Delta^1.$ By definition of $\Gamma(\hat{f})$, there are no edges from $f(1)$ to $f(0)$ in $\Gamma(\hat{f})$. An edge $x\rightarrow y$ where $x\in f(0)$ and $y\in f(1)$ corresponds to the data of two vertices $x\in f(0),y\in f(1)$ and an edge $\hat{f}(x)\rightarrow y$ in $f(1)$. We note that the functor associated to the coCartesian fibration $\Gamma(f)\rightarrow \Delta^1$ is equivalent to $\hat{f}$. The functor $\hat{f}$ admits a right adjoint if and only if the coCartesian fibration $\Gamma(\hat{f})\rightarrow \Delta^1$ is also Cartesian, see \cite[5.2.1.3]{HTT}. The Grothendieck construction thus allows us to recover an adjunction from any of its two adjoints.

We end this section with recording a technical fact, which is best referred to as needed. Consider a map of simplicial sets $f:X\rightarrow Y$ and an $\infty$-category $\mathcal{D}$ such that all functors in $\on{Fun}(X,\mathcal{D})$ admit colimits. Then there is an adjunction $f_!:\on{Fun}(X,\mathcal{D})\leftrightarrow \on{Fun}(Y,\mathcal{D}):f^*$ between the left Kan extension functor and the pullback functor by \cite[4.3.3.7]{HTT}. A coCartesian edge in the biCartesian fibration $\chi(f^*)\rightarrow \Delta^1$ is of the form $F\rightarrow f_!(F)$ with $F\in \on{Fun}(X,\mathcal{D})$. Such an edge corresponds to the data of a left extension of $F$ by $f_!(F)$, as defined in \cite[4.3.3.1]{HTT}. That extension is a left Kan extension.

\begin{lemma}
\label{Kanextlem}
Let $f:X\rightarrow Y$ be a map of simplicial sets and let $\mathcal{D}$ be an $\infty$-category such that all functors in $\on{Fun}(X,\mathcal{D})$ admit colimits. Consider the adjunction $f_!:\on{Fun}(X,\mathcal{D})\leftrightarrow \on{Fun}(Y,\mathcal{D}):f^*$ between the left Kan extension functor and the pullback functor. An edge $F\rightarrow G$ in $\chi(f^*)$ with $F\in \on{Fun}(X,\mathcal{D})$ and $G\in \on{Fun}(Y,\mathcal{D})$ is coCartesian if and only if the induced edge $F\rightarrow f^*(G) $ is a left Kan extension.
\end{lemma}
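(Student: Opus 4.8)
The plan is to reduce the statement to the universal property of the left adjoint $f_!$ together with the uniqueness of left Kan extensions. By \cite[4.3.3.7]{HTT} the pullback functor $f^*$ admits the left adjoint $f_!$, computed by left Kan extension along $f$; hence, dually to \cite[5.2.1.3]{HTT}, the Cartesian fibration $\chi(f^*)\to\Delta^1$ is also coCartesian, so that it makes sense to speak of its coCartesian edges. Recall from the description of the Grothendieck construction above that, for $F\in\on{Fun}(X,\mathcal{D})$ and $G\in\on{Fun}(Y,\mathcal{D})$, giving an edge $e\colon F\to G$ in $\chi(f^*)$ is the same as giving a morphism $\bar e\colon F\to f^*(G)$ in $\on{Fun}(X,\mathcal{D})$: one obtains $\bar e$ by factoring $e$ through a Cartesian edge $f^*(G)\xrightarrow{\ast}G$, and this $\bar e$ is precisely the left extension of $F$ relative to $f$ determined by $e$ in the sense of \cite[4.3.3.1]{HTT}. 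The forward implication is then the observation already made before the statement: if $e$ is coCartesian, its target is equivalent to $f_!(F)$ and the factorisation $F\xrightarrow{\bar e}f^*(G)\xrightarrow{\ast}G$ exhibits $\bar e$ as a unit map of $f_!\dashv f^*$; since $f_!$ is the left Kan extension functor, $\bar e$ is a left Kan extension.

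For the converse, suppose the induced morphism $\bar e\colon F\to f^*(G)$ is a left Kan extension of $F$ along $f$. By construction, $f_!(F)$ together with the unit $\eta_F\colon F\to f^*f_!(F)$ is also a left Kan extension of $F$ along $f$, and left Kan extensions along $f$ are determined up to a contractible space of choices (see \cite{HTT}); hence there is an equivalence $G\simeq f_!(F)$ in $\on{Fun}(Y,\mathcal{D})$ carrying $\bar e$ to $\eta_F$. Tracing this identification back through the correspondence of the previous paragraph, the edge $e$ becomes equivalent, in $\on{Fun}(\Delta^1,\chi(f^*))$, to a coCartesian edge $F\to f_!(F)$ of $\chi(f^*)$. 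Since the class of coCartesian edges is stable under such equivalences, $e$ is itself coCartesian.

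The step I expect to require the most care is this last one: one has to check that the equivalence of left extensions produced by the uniqueness of left Kan extensions genuinely lifts to an equivalence of the associated edges of $\chi(f^*)$. This amounts to unwinding the explicit form of the relative nerve and using that the factorisation of an edge through a fixed Cartesian edge is functorial in the edge. Alternatively, one could phrase the whole argument as a direct comparison between the lifting property defining $p$-coCartesian edges for $p\colon\chi(f^*)\to\Delta^1$ and the lifting property defining left Kan extensions; this is perhaps conceptually cleaner but more computational. Every other step is a formal consequence of the cited results in \cite{HTT}.
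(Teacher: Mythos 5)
Your argument is correct and takes essentially the same route as the paper, which records this lemma as a technical fact justified only by the preceding remark that a coCartesian edge of $\chi(f^*)$ has the form $F\to f_!(F)$ and corresponds to a left extension of $F$ that is a left Kan extension---exactly your forward direction. Your converse, via uniqueness of left Kan extensions (equivalently, factoring $e$ through a chosen coCartesian edge $F\to f_!(F)$ and observing that the induced comparison map $f_!(F)\to G$ must be an equivalence) together with stability of coCartesian edges under equivalence, is the routine completion the paper leaves implicit, and the step you flag about lifting the equivalence of left extensions to an equivalence of edges is indeed only an unwinding of the relative nerve description.
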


\subsection{Kan extensions and the construction of the twist functors}\label{sec2.2}

We begin by recalling the general procedure for constructing functors via Kan extensions. We are given an $\infty$-category $\mathcal{C}$ and two simplicial sets $A'\subset A\in \on{Set}_\Delta$. We define an $\infty$-category $\mathcal{D}$ as the full subcategory of the $\infty$-category of diagrams $\on{Fun}(A,\mathcal{C})$ spanned by functors that are either left or right Kan extensions of their restriction to $A'$. We denote $\mathcal{D}'\coloneqq\on{Fun}(A',\mathcal{C})$. We find the restriction functor $\on{res}:\mathcal{D}\longrightarrow \mathcal{D}'$ to be a trivial fibration, see \cite[4.3.2.15]{HTT}, and in particular an equivalence of $\infty$-categories. A trivial fibration has the property that there exists an essentially unique section, i.e.~the space of sections 
\[\on{Fun}_{\mathcal{D}'}(\mathcal{D}',\mathcal{D})\coloneqq \on{Fun}(\mathcal{D}',\mathcal{D}) \times_{\on{Fun}(\mathcal{D}',\mathcal{D}')} \{id_{\mathcal{D}'}\}\]
is contractible. We can make the choice of one such section $F: \mathcal{D}'\rightarrow \mathcal{D}$ and have constructed an interesting functor. 

\begin{remark}\label{rem:twistdef}
We illustrate the above procedure with the construction of the twist and cotwist functors associated to an adjunction of stable $\infty$-categories in \Cref{twistconstr}, as appearing in \cite{DKSS19}. Before doing so, let us comment on equivalent way to describe the twist and cotwist functors.

Consider an adjunction $\mathcal{M}\rightarrow \Delta^1$ of stable $\infty$-categories, associated to a pair of functor $F:\mathcal{A}\leftrightarrow \mathcal{B}:G$ and choose a  unit $u$ and counit $cu$. Then the twist functor $T_\mathcal{A}$ defined in \Cref{twistconstr} is equivalent to the functor given by the cofiber of $u$ in the stable $\infty$-category $\on{Fun}(\mathcal{A},\mathcal{A})$. Dually, the cotwist functor $T_\mathcal{B}$ is equivalent to the fiber of $cu$ in the stable $\infty$-category $\on{Fun}(\mathcal{B},\mathcal{B})$. We will use the definition provided in \Cref{twistconstr} because it is better applicable in proofs.
\end{remark}

\begin{construction}\label{twistconstr}
Let $p:\mathcal{M}\rightarrow \Delta^1$ be an adjunction between stable $\infty$-categories $\mathcal{A}$ and $\mathcal{B}$. We split the construction of the twist functor into six steps, denoted {\bf a)} to {\bf f)} below.

\noindent {\bf a)} Consider the full subcategory $\mathcal{D}_1$ of $\on{Fun}_{\Delta^1}(\Delta^{1},\mathcal{M})$ spanned by functors that are a left Kan extension relative $p$ of their restriction to $\on{Fun}_{\Delta^1}(\Delta^{\{0\}},\mathcal{M})$. The vertices of $\mathcal{D}_1$ can be depicted as 
\[ a\xlongrightarrow{!}b\] 
with $a\in \mathcal{A}$ and $b\in \mathcal{B}$. By \cite[4.3.2.15]{HTT}, the restriction functor to $a$ is a trivial fibration from $\mathcal{D}_1$ to $\mathcal{A}$. 

\noindent {\bf b)} We consider $\Lambda^2_2$ as lying over $\Delta^1$, by mapping $0,1$ to $0$ and $2$ to $1$. Consider the inclusion $\Delta^1\simeq \Delta^{\{1,2\}}\subset \Lambda^2_2$ and the resulting restriction functor $\on{Fun}_{\Delta^1}(\Lambda^2_2,\mathcal{M})\rightarrow \on{Fun}_{\Delta^1}(\Delta^1,\mathcal{M})$. We define $\mathcal{D}_2$ to be the full subcategory of $\on{Fun}_{\Delta^1}(\Lambda^2_2,\mathcal{M})$ spanned by diagrams whose restriction to $\on{Fun}_{\Delta^1}(\Delta^1,\mathcal{M})$ lies in $\mathcal{D}_1$ and that are a right Kan extension relative $p$ of their restriction to $\on{Fun}_{\Delta^1}(\Delta^1,\mathcal{M})$. The vertices of $\mathcal{D}_2$ are of the form
\[ 
 \begin{tikzcd}
  a\arrow[dr, "!"]&\\
  a'\arrow[r, "\ast"]& b
 \end{tikzcd}
\] 
where $a,a'\in \mathcal{A}$ and $b\in \mathcal{B}$. The restriction functor defines a trivial fibration from $\mathcal{D}_2$ to $\mathcal{D}_1$. 

\noindent {\bf c)} We consider $\Delta^2$ as lying over $\Delta^1$, by mapping $0,1$ to $0$ and $2$ to $1$. Let $E$ denote the set of all degenerate edges of $\Delta^2$ together with the edge $\Delta^{\{1,2\}}$. The inclusion $(\Lambda^2_2,E\cap (\Lambda^2_2)_1)\subset (\Delta^2,E)$ is by \cite[3.1.1.1]{HTT} a marked anodyne morphism of marked simplicial sets, so that the restriction functor $\on{Fun}_{\Delta^1}((\Delta^2,E),\mathcal{M}^\natural)\rightarrow\on{Fun}_{\Delta^1}((\Lambda^2_2,E\cap (\Lambda^2_2)_1),\mathcal{M}^\natural)$ is a trivial fibration by \cite[3.1.3.4]{HTT}, see also \cite[3.1.1.8]{HTT} for the notation $M^\natural$. Consider the pullback of simplicial sets $\mathcal{D}_3=\on{Fun}_{\Delta^1}((\Delta^2,E),\mathcal{M}^\natural)\times_{\on{Fun}_{\Delta^1}((\Lambda^2_2,E\cap (\Lambda^2_2)_1),\mathcal{M}^\natural)}\mathcal{D}_2$. The $\infty$-category $\mathcal{D}_3$ is equivalent to the full subcategory of $\on{Fun}_{\Delta^1}(\Delta^2,\mathcal{M})$ spanned by vertices of the following form.
\[
 \begin{tikzcd}
   a\arrow[dr, "!"]\arrow[d]&\\
   a'\arrow[r, "\ast"]& b
 \end{tikzcd}
\]
The functor from $\mathcal{D}_3$ to $\mathcal{D}_2$ contained in the defining pullback diagram of $\mathcal{D}_3$ is again a trivial fibration. 

\noindent {\bf d)} We consider the simplicial set $\Delta^{\{0,1'\}}$ as lying over $\Delta^1$ via the constant map with value $0$. Let $\mathcal{D}_4$ be the full subcategory of $\on{Fun}_{\Delta^1}(\Delta^2\coprod_{\Delta^{\{0\}}} \Delta^{\{0,1'\}},\mathcal{M})$ spanned by functors that are a $p$-relative right Kan extension of their restriction to $\Delta^2$ and whose restriction to $\on{Fun}_{\Delta^1}(\Delta^2,\mathcal{M})$ is contained in $\mathcal{D}_3$. The vertices of $\mathcal{D}_4$ are diagrams of the following form.
\[ 
\begin{tikzcd}
0 & a \arrow[rd, "!"] \arrow[d] \arrow[l] &   \\
  & a' \arrow[r, "\ast"]                  & b
\end{tikzcd}
\]
We find the restriction functor to be a trivial fibration from $\mathcal{D}_4$ to $\mathcal{D}_3$. 

\noindent {\bf e)} We consider the simplicial set $\Delta^1\times\Delta^1$ lying over $\Delta^1$ with the constant map with value $0$. Consider the full subcategory $\mathcal{D}_5$ of $\on{Fun}_{\Delta^1}(\Delta^2 \coprod_{\Delta^{\{0,1\}}} \Delta^1\times\Delta^1,\mathcal{M})$ spanned by functors that are $p$-relative left Kan extensions of their restriction to $\Delta^2\coprod_{\Delta^{\{0\}}} \Delta^{\{0,1'\}}$  and whose restriction to $\on{Fun}_{\Delta^1}(\Delta^2\coprod_{\Delta^{\{0\}}} \Delta^{\{0,1'\}},\mathcal{M})$ is contained in $\mathcal{D}_4$.  The vertices of $\mathcal{D}_5$ can be depicted as follows,
\begin{equation}\label{squeq}
\begin{tikzcd}
0 \arrow[d] & a \arrow[rd, "!"] \arrow[d] \arrow[l] \arrow[ld, "\square", phantom] &   \\
a''         & a' \arrow[r, "\ast"] \arrow[l]                              & b
\end{tikzcd}
\end{equation}
 with $a,a',a''\in \mathcal{A}$ and $b\in \mathcal{B}$. The box $\square$ in the center of the commutative square in diagram \eqref{squeq} denotes that the square is biCartesian, i.e.~both pullback and pushout. The square thus describes a fiber and cofiber sequence. We find the restriction functor to be a trivial fibration from $\mathcal{D}_5$ to $\mathcal{D}_4$.
 
\noindent {\bf f)} Composing the above constructed trivial fibrations, we obtain the trivial fibration $R:\mathcal{D}_5\rightarrow \mathcal{A}$ given by the restriction functor to the vertex $a$.  We define up to contractible choice the twist functor 
\[ T_\mathcal{A}:\mathcal{A}\longrightarrow \mathcal{A} \]
as the composition of a section of $R$ with the restriction functor to $a''$ from $\mathcal{D}_5$ to $\mathcal{A}$.\\

The construction of the cotwist functor is dual. We consider the full subcategory $\mathcal{D}'$ of $\on{Fun}_{\Delta^1}(\Delta^2\coprod_{\Delta^{\{1,2\}}}\Delta^1\times\Delta^1,\mathcal{M})$ spanned by functors of the following form,
\[
\begin{tikzcd}
b'' \arrow[r] \arrow[d] \arrow[rd, "\square", phantom] & b' \arrow[d] &                                       \\
0 \arrow[r]                                            & b            & a \arrow[l, "\ast"'] \arrow[lu, "!"']
\end{tikzcd}
\]
where $a\in \mathcal{A}$ and $b,b',b''\in \mathcal{B}$. Similar to before, the restriction functor $R':\mathcal{D}'\rightarrow \mathcal{B}$ to the vertex $b$ is a trivial fibration. We define up to contractible choice the cotwist functor 
\[ T_\mathcal{B}:\mathcal{B}\longrightarrow \mathcal{B} \] 
as the composition of a section of $R'$ with the restriction functor to $b''$ from $\mathcal{D}'$ to $\mathcal{B}$. 
\end{construction} 

\begin{remark}
We will often describe appearing diagram $\infty$-categories by specifying their vertices up to equivalence, leaving their construction using Kan extensions implicit. For example, consider the setup of \Cref{twistconstr} and denote the adjoint functors associated to the biCartesian fibration $p:\mathcal{M}\rightarrow \Delta^1$ by $F:\mathcal{A}\leftrightarrow \mathcal{B}:G$. We can describe the $\infty$-category $\mathcal{D}'$ as spanned by functors of the following form, 
\[
\begin{tikzcd}
T_\mathcal{B}(b) \arrow[r] \arrow[d] \arrow[rd, "\square", phantom] & FG(a) \arrow[d] &                                          \\
0 \arrow[r]                                                         & b               & G(b) \arrow[l, "\ast"'] \arrow[lu, "!"']
\end{tikzcd}
\]
up to equivalence. This notational convention will help to remember the meaning of smaller diagram $\infty$-categories and also simplify the notation in the construction of large diagram $\infty$-categories. 
\end{remark}

\subsection{Monadic adjunctions}\label{sec1.4}

In this section we recall the theory of modules over a monad and of monadic adjunctions and describe the Kleisli $\infty$-category and stable Kleisli $\infty$-category associated to a monad. 

Before turning to monads, we first recall some concepts and notation regarding monoidal $\infty$-categories. The definition is based on the formalism of $\infty$-operads, see \cite[Section 2.1]{HA}, meaning certain functors $O^\otimes\rightarrow N(\on{Fin}_\ast)$, where $\on{Fin}_\ast$ denotes the category of finite pointed sets. The associative $\infty$-operad is denoted by $\on{Assoc}^\otimes$, for the definition see \cite[4.1.1.3]{HA}. A monoidal $\infty$-category $\mathcal{C}$ is defined to be a coCartesian fibration of $\infty$-operads $\mathcal{C}^\otimes\rightarrow \on{Assoc}^\otimes$, see \cite[4.1.1.10]{HA}. The $\infty$-category $\mathcal{C}$ arises from the $\infty$-operad $\mathcal{C}^{\otimes}\rightarrow N(\on{Fin}_\ast)$ as the fiber over $\langle 1\rangle \in N(\on{Fin}_\ast)$. The coCartesian fibration $\mathcal{C}^{\otimes}\rightarrow \on{Assoc}^\otimes$ serves to encode a monoidal product $\otimes:\mathcal{C}\times\mathcal{C}\rightarrow \mathcal{C}$, a monoidal unit and the data exhibiting coherent associativity and unitality.

Let $\mathcal{D}$ be an $\infty$-category. We can turn the $\infty$-category $\on{Fun}(\mathcal{D},\mathcal{D})$ of endofunctors into a monoidal $\infty$-category via the composition monoidal structure, see \Cref{endlem} below. The monoidal product of the composition monoidal structure is given by composition of functors. Given a monoidal $\infty$-category, there is a notion of an associative algebra object, see \cite[2.1.3.1]{HA}. If the monoidal $\infty$-category is the nerve of a monoidal $1$-category, then this notion agrees with the 1-categorical notion of an associative algebra object. An associative algebra object in the monoidal $\infty$-category $\on{Fun}(\mathcal{D},\mathcal{D})$ is called a monad on $\mathcal{D}$. We can informally express the datum of a monad $M:\mathcal{D}\rightarrow \mathcal{D}$ as follows.
\begin{itemize}
\item A multiplication map $m:M\otimes M= M^2 \rightarrow M$.
\item The data expressing the coherent associativity of the map $m$.
\item A unit map $u:id_\mathcal{D}\rightarrow M$.
\item The data expressing the unitality of $u$ and $m$.
\end{itemize}
Every adjunction $F:\mathcal{D}\leftrightarrow\mathcal{C}: G$ of $\infty$-categories determines a monad $M=GF:\mathcal{D}\rightarrow\mathcal{D}$, see \cite[4.7.3.3]{HA}. We call the monad $M$ the adjunction monad of $F\dashv G$. The multiplication map of $M$ is induced by the counit of the adjunction and the unit $id_\mathcal{D}\rightarrow M$ of the monad $M$ is equivalent to the unit map of the adjunction. Every monad arises as the adjunction monad of the associated monadic adjunction. To define the monadic adjunction, we first introduce the $\infty$-category modules over the monad (the $1$-categorical analogue is called the category of algebras over the monad or also the Eilenberg-Moore category).

One can associate to any associative algebra object $A$ in a monoidal $\infty$-category $\mathcal{C}$ an $\infty$-category $\on{LMod}_A(\mathcal{C})$ of left modules in $\mathcal{C}$ over $A$. This is however not sufficiently general for our purpose. We are interested in module objects in $\mathcal{D}$ over a monad in $\on{Fun}(\mathcal{D},\mathcal{D})$. The objects of $\on{Fun}(\mathcal{D},\mathcal{D})$ act on the objects of $\mathcal{D}$ via the evaluation of functors. This can be used to exhibit the $\infty$-category $\mathcal{D}$ as left-tensored over the monoidal $\infty$-category $\on{Fun}(\mathcal{D},\mathcal{D})$, in the sense of \cite[4.2.1.19]{HA}. Given a monad $M\in \on{Fun}(\mathcal{D},\mathcal{D})$, there is thus an associated $\infty$-category $\on{LMod}_M(\mathcal{D})$ of left modules over $M$ in $\mathcal{D}$, see \cite[4.2.1.13]{HA}.

Given a monad $M:\mathcal{D}\rightarrow \mathcal{D}$ we denote by $F:\mathcal{D}\leftrightarrow \on{LMod}_M(\mathcal{D}):G$ the monadic adjunction. The functor $F$ is the free module functor and admits a right adjoint by\cite[4.2.4.8]{HA}. Any adjunction of this form is called monadic. An adjunction is called comonadic if the opposite adjunction is monadic. A functor equivalent to the right adjoint $G$ of a monadic adjunction is called a monadic functor. Lurie’s $\infty$-categorical Barr-Beck theorem \cite[4.7.3.5]{HA} characterizes monadic functors. The theorem states that a functor $G:\mathcal{C}\rightarrow\mathcal{D}$ between $\infty$-categories is monadic if and only if the following conditions are satisfied.
\begin{itemize} 
\item $G$ admits a left adjoint.
\item The functor $G$ is conservative, i.e.~reflects isomorphism.
\item The $\infty$-category $\mathcal{C}$ admits and $G$ preserves colimits of $G$-split simplicial objects, i.e.~functors $N(\Delta)^{op}\rightarrow \mathcal{C}$ such that their composition with $G$ can be extended to a split simplicial object, as defined in \cite[4.7.2.2]{HA}.
\end{itemize}
The $\infty$-categorical Barr-Beck theorem is an essential tool in the theory of $\infty$-categories and one of its corollaries \cite[4.7.3.16]{HA} can be used to identify equivalences of $\infty$-categories. 

A module in $\on{LMod}_M(\mathcal{D})$ is called free if it is equivalent to an element in the image of the left adjoint $F$ of the monadic adjunction. The essential image of $F$ is called the Kleisli $\infty$-category which we denote by $\on{LMod}_M^{\on{free}}(\mathcal{D})$. The restriction of the monadic adjunction $F:\mathcal{D}\leftrightarrow \on{LMod}_M^{\on{free}}(\mathcal{D}):G$ is called the Kleisli adjunction. The Kleisli adjunction is the minimal adjunction whose adjunction monad is equivalent to $M$, as captured by \Cref{Kleisli}. 

\begin{proposition}\label{Kleisli}
Let $F:\mathcal{D}\leftrightarrow \mathcal{C}:G$ be an adjunction of $\infty$-categories with adjunction monad $M$. Denote by $F'':\mathcal{D}\rightarrow \on{LMod}_M^{\on{free}}(\mathcal{D})$ the free-module functor of the monadic adjunction of $M$. There exists a fully faithful functor $F':\on{LMod}_{M}^{\on{free}}(\mathcal{D})\rightarrow \mathcal{C}$ making the following diagram commute.
\begin{equation}\label{tri1}
\begin{tikzcd}
\on{LMod}_M^{\on{free}}(\mathcal{D}) \arrow[rr, "F'"] &                                               & \mathcal{C} \\
                                                     & \mathcal{D} \arrow[ru, "F"'] \arrow[lu, "F''"] &            
\end{tikzcd}
\end{equation}
In particular, there exists an equivalence of $\infty$-categories $\on{LMod}^{\on{free}}_M(\mathcal{D})\simeq \on{Im}(F)$.
\end{proposition}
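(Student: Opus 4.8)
The plan is to produce $F'$ by inverting the Eilenberg--Moore comparison functor of the adjunction $F\dashv G$ on free modules. By Lurie's theory of monadic adjunctions (see \cite[Section 4.7.3]{HA}), the adjunction $F:\mathcal{D}\leftrightarrow\mathcal{C}:G$ with adjunction monad $M=GF$ gives rise to a comparison functor $C:\mathcal{C}\to\on{LMod}_M(\mathcal{D})$ which lies over $\mathcal{D}$, in the sense that $G''\circ C\simeq G$ for the forgetful functor $G'':\on{LMod}_M(\mathcal{D})\to\mathcal{D}$; which satisfies $C\circ F\simeq F''$, the free-module functor of $M$; and which intertwines the units of the two adjunctions. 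First I would record these three compatibilities, extracting them from the construction of $C$.

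Next I would show that $C$ restricts to an equivalence $\on{Im}(F)\xrightarrow{\ \sim\ }\on{LMod}_M^{\on{free}}(\mathcal{D})$. That $C$ carries $\on{Im}(F)$ into $\on{LMod}_M^{\on{free}}(\mathcal{D})$, and that the resulting functor is essentially surjective, are both immediate from $C\circ F\simeq F''$ together with the definitions of the two essential images (every free $M$-module is equivalent to $F''d\simeq CFd$ for some $d$). For full faithfulness, the key point is that for every $d\in\mathcal{D}$ and every $c\in\mathcal{C}$ the natural map
\[
\on{Map}_{\mathcal{C}}(Fd,c)\longrightarrow\on{Map}_{\on{LMod}_M(\mathcal{D})}(F''d,Cc)
\]
is an equivalence: the left-hand side is computed via $F\dashv G$ as $\on{Map}_{\mathcal{D}}(d,Gc)$, the right-hand side via $F''\dashv G''$ as $\on{Map}_{\mathcal{D}}(d,G''Cc)\simeq\on{Map}_{\mathcal{D}}(d,Gc)$, and compatibility of $C$ with the units identifies this map with the identity. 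Specializing to $c=Fd'$ and using $Cc\simeq F''d'$ yields full faithfulness of $C$ on $\on{Im}(F)$.

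Finally, I would let $F'$ be a homotopy inverse of the equivalence $\on{Im}(F)\xrightarrow{\ \sim\ }\on{LMod}_M^{\on{free}}(\mathcal{D})$ just produced, composed with the inclusion $\on{Im}(F)\hookrightarrow\mathcal{C}$. Then $F'$ is fully faithful, being a composite of an equivalence with a fully faithful inclusion, and $F'\circ F''\simeq F'\circ C\circ F\simeq F$ gives the commutativity of \eqref{tri1}; corestricting $F'$ recovers the asserted equivalence $\on{LMod}_M^{\on{free}}(\mathcal{D})\simeq\on{Im}(F)$.

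The step I expect to be the main obstacle is the "compatibility with units" invoked in the second paragraph: one must know that $C$ does not merely commute with the right adjoints and with the left adjoints separately, but intertwines the full adjunction data, so that the two identifications of mapping spaces with $\on{Map}_{\mathcal{D}}(d,Gc)$ genuinely agree. This is built into Lurie's construction of the comparison functor, but one has to cite it at the level of $\infty$-categories rather than homotopy categories, where it would not be enough; the remainder of the argument is formal manipulation of equivalences in functor $\infty$-categories.
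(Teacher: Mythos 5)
Your overall strategy is close to the paper's: both arguments run through the canonical lift $G'$ (your $C$) of $G$ to $\on{LMod}_M(\mathcal{D})$ and both exploit that $F\dashv G$ and the monadic adjunction $F''\dashv G''$ have the same adjunction monad, hence the same unit maps. The implementations differ at one substantive point, though. You take as inputs the coherent compatibilities $C\circ F\simeq F''$ and the intertwining of units, and then invert the restricted comparison functor $\on{Im}(F)\rightarrow \on{LMod}_M^{\on{free}}(\mathcal{D})$. The paper never uses $C\circ F\simeq F''$: it constructs $F'$ directly as a partial left adjoint of $G'$, by sending a free module $F''(d)$ to the presheaf $\on{Map}_{\on{LMod}_M(\mathcal{D})}(F''(d),G'(\mhyphen))$, identifying this with $\on{Map}_{\mathcal{C}}(F(d),\mhyphen)$ using only $G''\circ G'\simeq G$ and the two adjunctions, and then inverting the Yoneda embedding on corepresentables. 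Commutativity of \eqref{tri1} is the functoriality in $d$ of that identification, and full faithfulness follows from the pointwise observation that a unit map of $F\dashv G$ is also a unit map of $F''\dashv G''$. Your mapping-space computation in the second paragraph is essentially the paper's equivalence \eqref{ffeq} in different packaging, so that part is sound.

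The caveat is exactly the step you flag. The compatibilities $C\circ F\simeq F''$ and the unit intertwining are true, but they are not available as citations: the construction in \cite{HA} (the discussion following Proposition 4.7.3.3, which is what the paper invokes) only yields $G''\circ C\simeq G$, and I know of no stated result there giving $C\circ F\simeq F''$ together with the unit coherence you need for the mapping-space comparison. To "record" these facts you would have to prove them, e.g.\ by identifying the $M$-module $G\circ F$ (whose action is obtained by whiskering the canonical action $M\circ G=GFG\xrightarrow{G\epsilon}G$ with $F$, i.e.\ the multiplication $GFGF\xrightarrow{G\epsilon F}GF$) with $M$ as a free module over itself, coherently and compatibly with the units. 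That verification is of roughly the same order of difficulty as the corepresentability computation the paper actually performs, and the paper's route is designed precisely to avoid assuming it. So your proposal is correct in substance, but the step you hope to dispatch by citation carries the real content; either carry out that identification explicitly or switch to the Yoneda-style construction of $F'$, which requires only $G''\circ G'\simeq G$.
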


\begin{proof}
We adapt the proof of \cite[4.7.3.13]{HA}. Denote the monadic functor of $M$ by $G''$. Consider the canonical functor $G':\mathcal{C}\rightarrow \on{LMod}_M(\mathcal{D})$, defined in the discussion following Proposition 4.7.3.3 in \cite{HA}. There exist an equivalence of functors $G''\circ G'\simeq G$. We define a functor 
\begin{equation}\label{fhat}\hat{F}: \on{LMod}_M(\mathcal{D})^{op}\xrightarrow{j} \on{Fun}(\on{LMod}_M(\mathcal{D}),\mathcal{S})\xrightarrow{\circ G'} \on{Fun}(\mathcal{C},\mathcal{S})\,,\end{equation}
where $j$ denotes the Yoneda embedding of $\on{LMod}_M(\mathcal{C})^{op}$. For $d\in \mathcal{D}$, there exists an equivalence in $\on{Fun}(\mathcal{C},\mathcal{S})$
\begin{equation}\label{ffeq} \hat{F}F''(d)=\on{Map}_{\on{LMod}_M(\mathcal{D})}(F''(d),G'(\mhyphen))\simeq \on{Map}_{\mathcal{D}}(d,G''G'(\mhyphen))\simeq \on{Map}_{\mathcal{C}}(F(d),\mhyphen)\,. \end{equation} 
It follows that the image of ${\on{LMod}_M^{\on{free}}(\mathcal{D})^{op}}$ under $\hat{F}$ is given by the full subcategory\linebreak \mbox{$\on{Im}(F)^{op}\subset \mathcal{C}^{op}\subset \on{Fun}(\mathcal{C},\mathcal{S})$} of functors corepresentable by an object in the essential image \mbox{of $F$}. We hence obtain a functor 
\[  F':\on{LMod}_M^{\on{free}}(\mathcal{D})\subset \on{LMod}_M(\mathcal{D})\xrightarrow{\hat{F}^{op}}j'(\on{Im}(F)^{op})^{op}\xrightarrow{(j'^{-1})^{op}} \on{Im}(F)\subset \mathcal{C}\,,\] 
where $j':\mathcal{C}^{op}\rightarrow \on{Fun}(\mathcal{C},\mathcal{S})$ denotes the Yoneda embedding of $\mathcal{C}^{op}$. The equivalence \eqref{ffeq} is functorial in $d\in \mathcal{D}$, so that we see that the diagram \eqref{tri1} commutes.

For any $d\in \mathcal{D}$, the equivalence \eqref{ffeq} defines an edge $e:F''(d)\rightarrow G'F'F''(d)$, which is adjoint to the edge $e':d'\rightarrow G''G'F'F''(d)\simeq GF(d)$ which is a unit map of the adjunction $F\dashv G$. The adjunctions $F\dashv G$ and $F''\dashv G''$ have the same associated adjunction monad $M$, so that $e'$ is also a unit map of the adjunction $F''\dashv G''$, showing that $e$ is an equivalence. Hence $G'$ restricts to a functor $G':\on{Im}(F)\rightarrow \on{LMod}_M^{\on{free}}(\mathcal{D})$, which is, by construction of $F'$, right adjoint to $F':\on{LMod}_M^{\on{free}}(\mathcal{D})\rightarrow \on{Im}(F)$ with the unit map at $d$ given by the equivalence $e$. This shows that $F'$ is fully faithful.
\end{proof}

\begin{lemma}
 Let $\mathcal{D}$ be a stable $\infty$-category and $M:\mathcal{D}\rightarrow \mathcal{D}$ a monad whose underlying endofunctor is exact. The $\infty$-category $\on{LMod}_M(\mathcal{D})$ is stable.
\end{lemma}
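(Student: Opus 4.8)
The plan is to deduce stability of $\on{LMod}_M(\mathcal{D})$ from the general theory of stable $\infty$-categories as developed in \cite{HA}, by showing that $\on{LMod}_M(\mathcal{D})$ admits finite limits and colimits and that a square in it is a pushout if and only if it is a pullback. The key observation is that the forgetful functor $G:\on{LMod}_M(\mathcal{D})\rightarrow \mathcal{D}$ is conservative and, because it is the right adjoint of a monadic adjunction with $M$ exact, both preserves and reflects finite limits and colimits.

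First I would recall that by \cite[4.2.3.5]{HA} the forgetful functor $G$ creates (small, hence in particular finite) limits, so $\on{LMod}_M(\mathcal{D})$ has finite limits since $\mathcal{D}$ does, and $G$ preserves and detects them. For colimits, the relevant input is that $M$ is exact: by \cite[4.2.3.5]{HA} again (the dual statement concerning colimits that are preserved by the relevant tensoring), $\on{LMod}_M(\mathcal{D})$ admits those colimits that $\mathcal{D}$ admits and that are preserved by the functor $M$ — in particular all finite colimits, since $M$ exact means $M$ preserves finite colimits — and $G$ preserves and detects these as well. So $\on{LMod}_M(\mathcal{D})$ is finitely complete and cocomplete, and it has a zero object (the image under $F$, or equivalently under $G$'s section, of the zero object of $\mathcal{D}$; concretely $G$ reflects the zero object since it is conservative and preserves the terminal and initial objects).

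Next I would verify the pushout-pullback equivalence. Given a commutative square in $\on{LMod}_M(\mathcal{D})$, applying the conservative functor $G$ yields a commutative square in $\mathcal{D}$. Since $G$ both preserves and reflects pushouts and pullbacks (being a right adjoint that creates finite limits, and preserving the finite colimits in question because $M$ is exact, and being conservative), the square in $\on{LMod}_M(\mathcal{D})$ is a pushout (resp.\ pullback) if and only if its image in $\mathcal{D}$ is. As $\mathcal{D}$ is stable, the image square is a pushout iff it is a pullback, and hence the same holds upstairs. Together with the existence of a zero object and finite (co)limits, this is exactly the definition of a stable $\infty$-category, cf.\ \cite[1.1.3.4]{HA}, or one can instead cite \cite[1.1.3.1]{HA} together with the characterization of exact squares.

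The main obstacle is getting the colimit statement cleanly: one must be careful that $\on{LMod}_M(\mathcal{D})$ need not inherit all colimits from $\mathcal{D}$ in general, and it is precisely the exactness hypothesis on $M$ that rescues the finite colimits we need; I would make sure to invoke the correct form of \cite[4.2.3.5]{HA} (or the discussion around monadic functors and colimits in \cite[Section 4.2.3, 4.7.3]{HA}) that guarantees $G$ creates exactly those colimits preserved by $M$. Everything else — the zero object, finite limits, and reflection of (co)limits by the conservative right adjoint $G$ — is routine once this point is pinned down.
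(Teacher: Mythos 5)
Your argument is correct, but the paper itself disposes of this lemma with a single citation: it is exactly \cite[7.1.1.4]{HA}, which states that for a stable $\infty$-category left-tensored over a monoidal $\infty$-category (here $\mathcal{D}$ over $\on{Fun}(\mathcal{D},\mathcal{D})$ with the composition monoidal structure) and an algebra object $A$ whose action functor is exact, $\on{LMod}_A$ is again stable. What you have written is, in effect, Lurie's proof of that proposition unpacked: the forgetful functor creates limits (the precise reference is \cite[4.2.3.1--4.2.3.3]{HA}, not 4.2.3.5), creates those colimits that exist in $\mathcal{D}$ and are preserved by the action of the algebra (\cite[4.2.3.5]{HA}), and then stability follows from the zero object plus the pushout-equals-pullback characterization \cite[1.1.3.4]{HA}, using conservativity of the forgetful functor. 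The one point worth making explicit in your write-up is the identification that makes the exactness hypothesis do its job: under the left-tensoring of $\mathcal{D}$ over $\on{Fun}(\mathcal{D},\mathcal{D})$, the action of the algebra object $M$ on $\mathcal{D}$ is evaluation of the endofunctor $M$, so ``the tensoring preserves finite colimits'' is literally the assumption that $M$ is exact; with that spelled out, and the limit citation corrected, your proof is complete. In practice the two routes buy the same thing --- your version is self-contained modulo the creation-of-(co)limits statements, while the paper's citation is shorter and delegates precisely this bookkeeping to \cite{HA}.
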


\begin{proof}
 This is \cite[7.1.1.4]{HA}.
\end{proof}

The Kleisli $\infty$-category $\on{LMod}_M^{\on{free}}(\mathcal{D})$ is in general not stable, even if $M$ is exact. However, if $M$ is a monad whose underlying endofunctor is an exact endofunctor of a stable $\infty$-category there is a stable version of the Kleisli $\infty$-category which is the minimal adjunction of stable $\infty$-categories whose adjunction monad is equivalent to $M$. This is captured by \Cref{stbKleisli}.

\begin{definition}~
\begin{itemize}
\item A full subcategory $\mathcal{D}$ of a stable $\infty$-category $\mathcal{C}$ is called a stable subcategory if it closed under finite limits and colimits in $\mathcal{C}$ (in particular $\mathcal{D}$ contains a zero object) and is closed under equivalences in $\mathcal{C}$.
\item Let $\mathcal{C}$ be a stable $\infty$-category and $\mathcal{C}'$ a full subcategory. The stable closure $\overline{\mathcal{C}'}$ of $\mathcal{C}'$ is the smallest stable subcategory of $\mathcal{C}$ which contains $\mathcal{C}'$.
\end{itemize}
\end{definition}

We call the stable closure $\overline{\on{LMod}_M^{\on{free}}(\mathcal{D})}$ of the Kleisli $\infty$-category in $\on{LMod}_M(\mathcal{D})$ the stable Kleisli $\infty$-category.

\begin{proposition}\label{stbKleisli}
Let $F:\mathcal{D}\leftrightarrow \mathcal{C}:G$ be an adjunction of stable $\infty$-categories with adjunction monad $M$. Denote by $F'':\mathcal{D}\rightarrow \on{LMod}_M^{\on{free}}(\mathcal{D})\subset \on{LMod}_M(\mathcal{D}))$ the free-module functor of the monadic adjunction of $M$. There exists a fully faithful functor $F':\overline{\on{LMod}_{M}^{\on{free}}(\mathcal{D})}\rightarrow \mathcal{C}$ making the following diagram commute.
\begin{equation*}
\begin{tikzcd}
\overline{\on{LMod}_M^{\on{free}}(\mathcal{D})} \arrow[rr, "F'"] &                                               & \mathcal{C} \\
                                                     & \mathcal{D} \arrow[ru, "F"'] \arrow[lu, "F''"] &            
\end{tikzcd}
\end{equation*}
The functor $F'$ induces an equivalence
\[ \overline{\on{LMod}_M^{\on{free}}(\mathcal{D})}\simeq \on{Im}(F')=\overline{\on{Im}(F)}\,.\] 
\end{proposition}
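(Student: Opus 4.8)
The plan is to bootstrap from Proposition \ref{Kleisli}, which already supplies a fully faithful functor $F_0: \on{LMod}_M^{\on{free}}(\mathcal{D}) \to \mathcal{C}$ inducing an equivalence onto the non-stable full subcategory $\on{Im}(F) \subset \mathcal{C}$. The strategy is to extend $F_0$ to the stable closure by invoking a universal property of the stable closure as an $\infty$-category generated under finite limits and colimits. First I would record the relevant universal property: if $\mathcal{C}'$ is a full subcategory of a stable $\infty$-category $\mathcal{C}$, and $H: \mathcal{C}' \to \mathcal{E}$ is a functor into a stable $\infty$-category that is exact in the sense that it carries those finite (co)limit diagrams in $\mathcal{C}'$ which happen to exist to (co)limit diagrams, then $H$ extends essentially uniquely to an exact functor $\overline{\mathcal{C}'} \to \mathcal{E}$ — and if moreover $H$ is fully faithful and $\mathcal{E}$ is generated as a stable subcategory of some ambient stable $\infty$-category by the image of $\mathcal{C}'$, the extension lands in and identifies $\overline{\mathcal{C}'}$ with $\overline{H(\mathcal{C}')}$. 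Concretely, one takes $\mathcal{C}' = \on{LMod}_M^{\on{free}}(\mathcal{D})$, the ambient stable $\infty$-category $\mathcal{C}$, $H = F_0$, and observes that $\overline{\on{Im}(F)}$ is by definition the stable closure of $H(\mathcal{C}')$ inside $\mathcal{C}$.

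The key steps, in order, are as follows. (1) Show $F_0: \on{LMod}_M^{\on{free}}(\mathcal{D}) \hookrightarrow \on{LMod}_M(\mathcal{D})$, restricted to those finite (co)limit diagrams existing in $\on{LMod}_M^{\on{free}}(\mathcal{D})$, is computed in $\on{LMod}_M(\mathcal{D})$: any finite colimit of free modules that happens to land back in the Kleisli subcategory is the colimit computed in $\on{LMod}_M(\mathcal{D})$, since $\on{LMod}_M^{\on{free}}(\mathcal{D})$ is a full subcategory. Hence $F_0$ is "exact on existing (co)limits". (2) Argue that the stable closure $\overline{\on{LMod}_M^{\on{free}}(\mathcal{D})}$ in $\on{LMod}_M(\mathcal{D})$ enjoys a universal property: an exact functor out of it into a stable $\infty$-category is equivalent data to a functor out of $\on{LMod}_M^{\on{free}}(\mathcal{D})$ sending existing finite (co)limits to (co)limits. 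This is a standard consequence of the fact that every object of $\overline{\mathcal{C}'}$ is obtained from objects of $\mathcal{C}'$ by finitely many (co)fiber constructions; one builds the extension by induction on this generation and checks uniqueness via the same induction. (3) Apply this to extend $F_0$ to an exact functor $F': \overline{\on{LMod}_M^{\on{free}}(\mathcal{D})} \to \mathcal{C}$, commuting with $F''$ and $F$ by construction since it restricts to $F_0$. (4) Prove $F'$ is fully faithful: mapping spaces in a stable $\infty$-category are recovered from those of generators via the long exact sequences attached to the cofiber sequences used to generate objects; since $F'$ is exact and fully faithful on $\on{LMod}_M^{\on{free}}(\mathcal{D})$ by Proposition \ref{Kleisli}, a double induction on the generation of source and target shows $F'$ induces equivalences on all mapping spaces. (5) Identify the essential image: $F'$ exact and fully faithful means $\on{Im}(F')$ is a stable subcategory of $\mathcal{C}$ containing $\on{Im}(F)$, hence contains $\overline{\on{Im}(F)}$; conversely every object of the source maps into $\overline{\on{Im}(F)}$ by exactness and generation, giving $\on{Im}(F') = \overline{\on{Im}(F)}$ and thus $\overline{\on{LMod}_M^{\on{free}}(\mathcal{D})} \simeq \overline{\on{Im}(F)}$.

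The main obstacle I anticipate is step (2): making precise and proving the universal property of the stable closure, i.e.~that exact functors out of $\overline{\mathcal{C}'}$ correspond to functors out of $\mathcal{C}'$ preserving whatever finite (co)limits exist there. The subtlety is that $\mathcal{C}'$ need not have any (co)limits of its own, so one cannot phrase this as a left Kan extension along a nice inclusion in the naive way; instead one must work with the explicit transfinite (here, countable) description of $\overline{\mathcal{C}'}$ as the union of an increasing chain $\mathcal{C}' = \mathcal{C}_0 \subset \mathcal{C}_1 \subset \cdots$ where $\mathcal{C}_{n+1}$ adds all cofibers, fibers, and zero objects of diagrams in $\mathcal{C}_n$, and verify both existence and uniqueness of the extension stage by stage, each stage using that cofiber and fiber sequences in a stable $\infty$-category are determined up to contractible choice. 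An alternative, possibly cleaner, route would be to avoid step (2) entirely: one could instead verify directly, using the explicit description of $\overline{\on{Im}(F)}$ and $\overline{\on{LMod}_M^{\on{free}}(\mathcal{D})}$ as such chains, that the functor $G': \mathcal{C} \to \on{LMod}_M(\mathcal{D})$ from the proof of Proposition \ref{Kleisli} restricts to an equivalence $\overline{\on{Im}(F)} \xrightarrow{\sim} \overline{\on{LMod}_M^{\on{free}}(\mathcal{D})}$ — since $G'$ is exact (it is a right adjoint between stable $\infty$-categories, hence preserves finite limits, and being exact also preserves finite colimits) it carries $\overline{\on{Im}(F)}$ into $\overline{G'(\on{Im}(F))} = \overline{\on{LMod}_M^{\on{free}}(\mathcal{D})}$, and fully faithfulness on the generators from Proposition \ref{Kleisli} propagates to the closures by the mapping-space induction of step (4). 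Taking $F'$ to be the inverse of this restricted equivalence, precomposed with the inclusion, then gives everything at once.
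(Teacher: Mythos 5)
Your primary route breaks at step (2): the stable closure $\overline{\mathcal{C}'}$ of a full subcategory inside an ambient stable $\infty$-category does \emph{not} enjoy the universal property you ascribe to it. Exact functors out of $\overline{\mathcal{C}'}$ are not equivalent to functors out of $\mathcal{C}'$ that preserve whatever finite (co)limits happen to exist there: the closure's mapping spaces are inherited from the ambient category and carry morphisms and relations among iterated (co)fibers that are not determined by $\mathcal{C}'$ together with exactness (the universal object with your property is a ``free'' stable envelope of $\mathcal{C}'$, and its comparison to $\overline{\mathcal{C}'}$ is essentially surjective but in general not fully faithful). Moreover, the proposed construction ``by induction on the generation'' at best assigns values to objects; it does not produce the coherent simplicial data needed to define a functor of $\infty$-categories, and since objects of $\overline{\mathcal{C}'}$ admit many presentations as iterated cofibers, well-definedness is precisely what is at stake. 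The paper sidesteps extension problems altogether: it defines $\hat{F}$ on all of $\on{LMod}_M(\mathcal{D})$ (Yoneda followed by restriction along $G'$), shows the locus $\mathcal{X}$ of objects with corepresentable image is closed under colimits and contains the free modules, hence contains $\overline{\on{LMod}_M^{\on{free}}(\mathcal{D})}$ by \Cref{stbcllem}, and only then produces $F'$ by inverting the Yoneda embedding; full faithfulness is obtained by showing the locus $\mathcal{X}_0$ where the unit $x\rightarrow G'F'(x)$ is an equivalence is closed under finite colimits (since $G'F'$ is exact) and contains the free modules.

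Your alternative route, by contrast, is viable and is close in spirit to the paper's argument, since both rest on the comparison functor $G'$ and the unit equivalence on free modules from \Cref{Kleisli}; your mapping-space double induction is the hands-on version of the paper's closure argument for $\mathcal{X}_0$ (carry it out with mapping spectra, using that the free modules are closed under shifts, to avoid basepoint issues). Two repairs are needed. First, your justification for exactness of $G'$ is not available: $G'$ is not known to admit a left adjoint here, because $\mathcal{C}$ is only assumed stable and need not have the colimits required for the comparison functor to be a right adjoint; the correct argument (the one the paper uses) is that $G''\circ G'\simeq G$ is exact and $G''$ is exact and conservative, so $G'$ is exact. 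Second, you assert that $G'$ restricts to an \emph{equivalence} $\overline{\on{Im}(F)}\rightarrow \overline{\on{LMod}_M^{\on{free}}(\mathcal{D})}$ but only argue full faithfulness and containment of the image; essential surjectivity still has to be supplied, e.g.\ by your step (5) pattern: the essential image of the restricted $G'$ is a stable subcategory (full faithfulness plus exactness) containing $\on{LMod}_M^{\on{free}}(\mathcal{D})$, hence contains its stable closure. With these repairs, defining $F'$ as the inclusion $\overline{\on{Im}(F)}\subset\mathcal{C}$ composed with an inverse of the restricted $G'$ gives the proposition, the triangle commuting because $G'\circ F\simeq F''$.
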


Before we can prove \Cref{stbKleisli}, we need to give a more explicit description of the stable closure of a subcategory.

\begin{notation}
Let $\mathcal{C}$ be a stable $\infty$-category and let $\mathcal{C}'\subset \mathcal{C}$ be a full subcategory such that $0\in \mathcal{C}'$ and  $\mathcal{C}'$ is closed under deloopings in $\mathcal{C}$. Denote by $\on{cl}(\mathcal{C}')$ the full subcategory of $\mathcal{C}$ spanned by objects which are equivalent to the cofiber in $\mathcal{C}$ of an edge in $\mathcal{C}'\subset \mathcal{C}$.   
\end{notation}

\begin{lemma}\label{stbcllem}
Let $\mathcal{C}$ be a stable $\infty$-category and $\mathcal{C}'\subset \mathcal{C}$ a full subcategory such that $0\in \mathcal{C}'$ and $\mathcal{C}'$ is closed under deloopings in $\mathcal{C}$. Then the stable closure of $\mathcal{C}'$ in $\mathcal{C}$ is equivalent to  the direct limit in $\on{Cat}_\infty$ of the diagram
\begin{equation}\label{dirlim} \mathcal{C}'\subset \on{cl}(\mathcal{C}')\subset \on{cl}^2(\mathcal{C}')\subset \dots \,.\end{equation}
\end{lemma}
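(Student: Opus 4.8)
The plan is to show two inclusions between the stable closure $\overline{\mathcal{C}'}$ and the colimit $\mathcal{C}'_\infty \coloneqq \operatorname*{colim}_n \on{cl}^n(\mathcal{C}')$, where the colimit is taken along the sequence \eqref{dirlim} of full subcategory inclusions. Since all maps in the diagram are inclusions of full subcategories of $\mathcal{C}$, the colimit in $\on{Cat}_\infty$ is computed simply as the full subcategory of $\mathcal{C}$ spanned by the union $\bigcup_n \on{cl}^n(\mathcal{C}')$ of the object sets; I would first record this reduction (filtered colimits of monomorphisms in $\on{Cat}_\infty$ are unions on objects and morphisms), so that the problem becomes one about full subcategories of the fixed ambient category $\mathcal{C}$ and the two inclusions become inclusions of object-sets.

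For the inclusion $\mathcal{C}'_\infty \subseteq \overline{\mathcal{C}'}$, I would argue by induction that $\on{cl}^n(\mathcal{C}') \subseteq \overline{\mathcal{C}'}$ for all $n$: the base case is the defining containment $\mathcal{C}' \subseteq \overline{\mathcal{C}'}$, and for the inductive step, an object of $\on{cl}^{n+1}(\mathcal{C}')$ is a cofiber in $\mathcal{C}$ of an edge between objects of $\on{cl}^n(\mathcal{C}') \subseteq \overline{\mathcal{C}'}$, hence lies in $\overline{\mathcal{C}'}$ because a stable subcategory is closed under cofibers. Taking the union over $n$ gives the inclusion.

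For the reverse inclusion $\overline{\mathcal{C}'} \subseteq \mathcal{C}'_\infty$, since $\overline{\mathcal{C}'}$ is the \emph{smallest} stable subcategory of $\mathcal{C}$ containing $\mathcal{C}'$, it suffices to check that $\mathcal{C}'_\infty$ is itself a stable subcategory of $\mathcal{C}$ (it visibly contains $\mathcal{C}'$, and it is closed under equivalences in $\mathcal{C}$ since each $\on{cl}^n(\mathcal{C}')$ is a full subcategory and the defining condition ``is a cofiber of an edge in $\on{cl}^{n-1}(\mathcal{C}')$'' is invariant under equivalence). By the standard characterization of stable subcategories, it is enough to verify that $\mathcal{C}'_\infty$ contains $0$, is closed under shifts in both directions, and is closed under cofibers. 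Containment of $0$ and closure under cofibers are almost immediate from the construction: $0 \in \mathcal{C}' \subseteq \mathcal{C}'_\infty$, and given an edge $f\colon x \to y$ with $x,y \in \mathcal{C}'_\infty$, both $x,y$ lie in some common $\on{cl}^n(\mathcal{C}')$ (here I use that the chain is increasing and that $\mathcal{C}'_\infty$ is the union), so $\on{cofib}(f) \in \on{cl}^{n+1}(\mathcal{C}') \subseteq \mathcal{C}'_\infty$. Closure under the shift $[1]$ follows because $x[1] \simeq \on{cofib}(x \to 0)$, and since $0$ lies in $\mathcal{C}'$, which is contained in every $\on{cl}^n(\mathcal{C}')$, this cofiber lies in $\mathcal{C}'_\infty$ by the same argument.

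The one point requiring genuine care — and the step I expect to be the main obstacle — is closure under the loop functor $[-1]$, i.e.\ the fact that $\mathcal{C}'_\infty$ is closed under deloopings in $\mathcal{C}$. The hypothesis supplies this only for $\mathcal{C}'$ itself, and one must propagate it up the tower: if $z \in \on{cl}^{n+1}(\mathcal{C}')$, say $z \simeq \on{cofib}(f\colon x \to y)$ with $x,y \in \on{cl}^n(\mathcal{C}')$, then $z[-1] \simeq \on{fib}(f)$, and one wants $\on{fib}(f) \in \mathcal{C}'_\infty$. The clean way is to prove by induction on $n$ the statement ``$\on{cl}^n(\mathcal{C}')$ is closed under deloopings in $\mathcal{C}$'': given the inductive hypothesis for $\on{cl}^n(\mathcal{C}')$, write $\on{fib}(f) \simeq \on{cofib}(f)[-1]$ and instead use the rotated fiber sequence $\on{fib}(f) \to x \to y$, exhibiting $\on{fib}(f)$ as... — more carefully, $y[-1] \to \on{fib}(f) \to x$ is a cofiber sequence, so $\on{fib}(f) \simeq \on{cofib}(y[-1] \to x)$; since $y[-1] \in \on{cl}^n(\mathcal{C}')$ by the inductive hypothesis and $x \in \on{cl}^n(\mathcal{C}') \subseteq \on{cl}^n(\mathcal{C}')$, we get $\on{fib}(f) \in \on{cl}^{n+1}(\mathcal{C}')$, and in particular $z[-1] \in \on{cl}^{n+1}(\mathcal{C}') \subseteq \mathcal{C}'_\infty$. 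Feeding this back, $\mathcal{C}'_\infty$ is closed under $[-1]$, completing the verification that it is a stable subcategory, and hence the reverse inclusion. Combining the two inclusions yields the claimed equivalence $\overline{\mathcal{C}'} \simeq \mathcal{C}'_\infty$.
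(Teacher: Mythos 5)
Your overall strategy is the same as the paper's: identify the direct limit with the full subcategory of $\mathcal{C}$ spanned by $\bigcup_{n}\on{cl}^n(\mathcal{C}')$, verify that this union is a stable subcategory of $\mathcal{C}$ (the only nonobvious point being closure under deloopings, propagated up the tower by induction), and then conclude by combining the minimality of $\overline{\mathcal{C}'}$ with the containment $\on{cl}^n(\mathcal{C}')\subset\overline{\mathcal{C}'}$, which you get inductively from closure of stable subcategories under cofibers. The paper asserts the delooping-closure of each $\on{cl}^n(\mathcal{C}')$ without detail; you try to supply the detail, which is the right instinct, but your execution of exactly this step is wrong as written.

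Concretely, from the rotated sequence $y[-1]\to\on{fib}(f)\to x$ you conclude $\on{fib}(f)\simeq\on{cofib}(y[-1]\to x)$. This does not follow: in a cofiber sequence $a\to b\to c$ the middle term $b$ is not the cofiber of a map $a\to c$, and the only map available here, the composite $y[-1]\to\on{fib}(f)\to x$, is nullhomotopic (consecutive maps in a fiber sequence compose to zero), so its cofiber is $x\oplus y$ rather than $\on{fib}(f)$. A test case: for $x=0$ your formula would give $\on{fib}(0\to y)\simeq\on{cofib}(y[-1]\to 0)\simeq y$ instead of $y[-1]$. The repair is one line and keeps your induction intact: if $z\simeq\on{cofib}(f)$ with $f:x\to y$ an edge between objects of $\on{cl}^n(\mathcal{C}')$, then $z[-1]\simeq\on{cofib}(f)[-1]\simeq\on{cofib}\bigl(f[-1]\bigr)$, and $f[-1]:x[-1]\to y[-1]$ is an edge of $\on{cl}^n(\mathcal{C}')$ by the inductive hypothesis (delooping closure of $\on{cl}^n(\mathcal{C}')$ together with fullness), so $z[-1]\in\on{cl}^{n+1}(\mathcal{C}')$. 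With this substitution your argument matches the paper's proof.
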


\begin{proof}
We denote the direct limit of \eqref{dirlim} by $\mathcal{C}''$. We can consider $\mathcal{C}''$ as the full subcategory of $\mathcal{C}$ spanned by the objects of $\on{cl}^i(\mathcal{C}')$ for all $i>0$. Given a morphism $\alpha:x\rightarrow y$ in $\mathcal{C}''$, we find $i>0$ such that $\alpha$ lies in $\on{cl}^i(\mathcal{C}')$. The cofiber sequence exhibiting the cofiber of $\alpha$ in $\mathcal{C}''$ lies in $\on{cl}^{i+1}(\mathcal{C}')\subset \mathcal{C}''$ and is by construction also a cofiber sequence in $\mathcal{C}$. Using that $\mathcal{C}'$ is closed under deloopings in $\mathcal{C}$, it follows that $\on{cl}^i(\mathcal{C}')$ is also closed under deloopings for all $i>0$. It follows that $\mathcal{C}''\subset \mathcal{C}$ is a stable subcategory. Any stable subcategory $\mathcal{D}\subset \mathcal{C}$ satisfies $\on{cl}(\mathcal{D})=\mathcal{D}$. Thus for all $i>0$ we find $\on{cl}^i(\mathcal{C}')\subset \on{cl}^i(\overline{\mathcal{C}'})=\overline{\mathcal{C}'}$. Thus $\mathcal{C}''\subset \overline{\mathcal{C}'}$, which implies by definition $\mathcal{C}''=\overline{\mathcal{C}'}$.
\end{proof}

\begin{remark}\label{clrem}
 \Cref{stbcllem} can be informally summarized as showing that any object $c\in \overline{\mathcal{C}'}$ is obtained as a finitely many times repeated cofiber, starting with a finite collections of objects of $\mathcal{C}'$.
\end{remark}

\begin{proof}[Proof of \Cref{stbKleisli}]
Denote the monadic functor of $M$ by $G''$. Consider the functor
\begin{equation}\label{fhat2}\hat{F}: \on{LMod}_M(\mathcal{D})^{op}\xrightarrow{j} \on{Fun}(\on{LMod}_M(\mathcal{D}),\mathcal{S})\xrightarrow{\circ G'} \on{Fun}(\mathcal{C},\mathcal{S})\,,\end{equation} 
from the proof of \Cref{Kleisli}. Denote by $\mathcal{X}\subset \on{LMod}_M(\mathcal{D})$ the full subcategory spanned by objects whose image under $\hat{F}$ is a corepresentable functor. The functor $\hat{F}$ preserves all limits in $\on{LMod}_M(\mathcal{D})^{op}$ by \cite[5.1.3.2]{HTT} and using that limits are computed pointwise in functor categories. The full subcategory of $\on{Fun}(\mathcal{C},\mathcal{S})$ of corepresentable functors is closed under limits. This implies that $\mathcal{X}$ is closed under colimits in $\on{LMod}_M(\mathcal{D})$. The stable Kleisli $\infty$-category $\overline{\on{LMod}_M^{\on{free}}(\mathcal{D})}$ is contained in $\mathcal{X}$: this follows from $\on{LMod}^{\on{free}}_M(\mathcal{D})\subset \mathcal{X}$, that $\on{LMod}^{\on{free}}_M(\mathcal{D})$ is closed under delooping in $\on{LMod}_M(\mathcal{D})$ and \Cref{stbcllem}. As in the proof of \Cref{Kleisli}, using the inverse of the Yoneda embedding $j'$ of $\mathcal{C}^{op}$, we can thus find a functor $F':\overline{\on{LMod}_M^{\on{free}}(\mathcal{D})}\rightarrow \mathcal{C}$.

 For any $x\in \mathcal{X}$, we denote by $u_x:x\rightarrow G'F'(x)$ the edge in $\on{LMod}_M(\mathcal{D})$ being mapped to $id_{F'(x)}$ via the equivalence $\on{Map}_{\on{LMod}_M(\mathcal{D})}(x,G'F'(x))\simeq \on{Map}_{\mathcal{C}}(F'(x),F'(x))$. Consider the Cartesian fibration $p:\chi(G')\rightarrow \Delta^1$. For $x\in \mathcal{X}\subset \chi(G')\times_{\Delta^1}\Delta^{\{0\}}$, the edge $x\rightarrow F'(x)$ in $\chi(G')$ lying over $0\rightarrow 1$ and corresponding to $u_x:x\rightarrow G'F'(x)$ is $p$-coCartesian by \cite[2.4.4.3]{HTT}. By performing a construction as in the proof of \cite[5.2.2.8]{HTT}, we can produce a natural transformation $u:\mathcal{X}\rightarrow \on{Fun}(\Delta^1,\on{LMod}_M(\mathcal{D}))$ from the identity functor to $G'\circ F'$ such that $u(x)\simeq u_x$. Denote by $\mathcal{X}_0\subset \mathcal{X}$ the full subcategory consisting of $x\in \mathcal{X}$ such that $u_x$ is an equivalence. We have shown in the proof of \Cref{Kleisli} that $\mathcal{X}_0$ contains $\on{LMod}_M^{\on{free}}(\mathcal{D})$. The functor $G'$ is exact, because $G''\circ G'\simeq G$ is exact and $G''$ is conservative. The functor $G'F'$ is thus exact. It follows that $\mathcal{X}_0$ is closed under the formation of finite colimits. Thus $\overline{\on{LMod}_M^{\on{free}}(\mathcal{D})}\subset \mathcal{X}_0$. This shows that $F'$ is a fully faithful functor and thus an equivalence onto its essential image. It also follows that $\on{Im}(F')$ is a stable subcategory of $\mathcal{C}$.

Let $x\in \overline{\on{LMod}_M^{\on{free}}(\mathcal{D})}$. Then $x$ is given by a finitely many times repeated cofiber starting with a set of objects in $\on{LMod}^{\on{free}}_M(\mathcal{D})$. Thus $F'(x)\in \on{Im}(F')$ is given by a finitely many times repeated cofiber starting with a set of objects in $\on{Im}(F'|_{\on{LMod}_M^{\on{free}}(\mathcal{D})})=\on{Im}(F)$. We deduce $\on{Im}(F')\subset \overline{\on{Im}(F)}$. Using that $\on{Im}(F')$ is a stable subcategory of $\mathcal{C}$ and that $\on{Im}(F)\subset \on{Im}(F')$, it follows by definition $\overline{\on{Im}(F)}\subset \on{Im}(F')$. We have shown the desired equality $\on{Im}(F')=\overline{\on{Im}(F)}$.
\end{proof}

We end this section with a (well known) construction of monads from algebra objects in monoidal $\infty$-categories and a characterization of the monads associated to free-forget adjunctions of algebra objects.

\begin{lemma}\label{endlem}
Let $\mathcal{D}$ be an $\infty$-category. The $\infty$-category $\on{Fun}(\mathcal{D},\mathcal{D})$ is an endomorphism object for $\mathcal{D}\in \on{Cat}_\infty$ and in particular admits a monoidal structure, called the composition monoidal structure.

If $\mathcal{D}$ is left tensored over a monoidal $\infty$-category $\mathcal{C}$, then there exists an $\on{Assoc}^\otimes$-monoidal functor $i:\mathcal{C}^\otimes\rightarrow \on{Fun}(\mathcal{D},\mathcal{D})^\otimes$, in the sense of \cite[2.1.3.7]{HA}. In particular, given an associative algebra object $A\in \mathcal{C}$, $i(A)\in \on{Fun}(\mathcal{D},\mathcal{D})$ inherits the structure of a monad. We denote $i(A)$ by $A \otimes \mhyphen$\,.
\end{lemma}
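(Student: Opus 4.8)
The plan is to deduce both assertions from the general theory of left-tensored $\infty$-categories and endomorphism objects developed in \cite[Section 4.7]{HA}. For the first statement, the point is that $\on{Cat}_\infty$ is itself a (Cartesian) monoidal $\infty$-category in which every object $\mathcal{D}$ admits an endomorphism object in the sense of \cite[4.7.1.1]{HA}; this is precisely the content of \cite[4.7.1.40]{HA} (or \cite[4.7.1.41]{HA}), which identifies the endomorphism object of $\mathcal{D}\in \on{Cat}_\infty$ with $\on{Fun}(\mathcal{D},\mathcal{D})$ equipped with its composition monoidal structure. By \cite[4.7.1.34]{HA}, an endomorphism object canonically carries the structure of an associative algebra in $\on{Cat}_\infty$, which unwinds to a monoidal structure on $\on{Fun}(\mathcal{D},\mathcal{D})$; this is the composition monoidal structure, and its monoidal product is composition of functors by construction. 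So the first paragraph is essentially a citation, once the correct references are pinned down.

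For the second statement, I would argue as follows. To say that $\mathcal{D}$ is left-tensored over a monoidal $\infty$-category $\mathcal{C}$ is, by definition \cite[4.2.1.19]{HA}, to give a fibration of $\infty$-operads over $\on{LM}^\otimes$ whose restriction over $\on{Assoc}^\otimes$ recovers $\mathcal{C}^\otimes$ and whose fiber over the module object $\mathfrak{m}$ is $\mathcal{D}$. By the universal property of the endomorphism object — concretely, \cite[4.7.1.34]{HA} together with the discussion preceding it, or the fact that $\on{Fun}(\mathcal{D},\mathcal{D})$ is \emph{terminal} among monoidal $\infty$-categories acting on $\mathcal{D}$ — any such left-tensoring is classified by an $\on{Assoc}^\otimes$-monoidal functor $i\colon \mathcal{C}^\otimes \rightarrow \on{Fun}(\mathcal{D},\mathcal{D})^\otimes$ compatible with the actions on $\mathcal{D}$, in the sense of \cite[2.1.3.7]{HA}. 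A monoidal functor sends associative algebra objects to associative algebra objects (by functoriality of the $\on{Alg}$-construction, \cite[2.1.3.1]{HA} and the remarks following it), so for $A\in \on{Alg}(\mathcal{C})$ the endofunctor $i(A)\in \on{Fun}(\mathcal{D},\mathcal{D})$ acquires the structure of an associative algebra object, i.e.\ a monad on $\mathcal{D}$. The notation $A\otimes\mhyphen$ is justified because, on underlying objects, $i(A)$ is the endofunctor $d\mapsto A\otimes d$ given by the action of $A\in\mathcal{C}$ on $\mathcal{D}$; this again follows from the compatibility of $i$ with the two actions on $\mathcal{D}$.

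The only real subtlety — and the step I expect to be the main obstacle — is citing the construction of the comparison functor $i$ at the correct level of generality: \cite{HA} phrases the universal property of endomorphism objects and the classification of (left) module structures in terms of $\infty$-operads and the operad $\on{LM}^\otimes$, and one must check that the $\on{Assoc}^\otimes$-monoidal functor extracted this way is the one appearing in \cite[2.1.3.7]{HA} and that it is compatible with the module structures on the nose, not merely after passing to homotopy categories. Once the right statements in \cite[Section 4.7.1]{HA} are invoked, no further argument is needed; everything else is bookkeeping with $\infty$-operads. I would therefore keep the written proof short, essentially a chain of references with a sentence explaining why monoidal functors preserve algebra objects and why $i(A)$ deserves the name $A\otimes\mhyphen$.
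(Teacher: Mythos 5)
Your overall strategy is the same as the paper's: both reduce everything to the endomorphism-object formalism of \cite[Section 4.7.1]{HA}, using \cite[4.7.1.40]{HA} to get the algebra/monoidal structure on the endomorphism object and (via its part (2), i.e.\ the terminality of $\on{Fun}(\mathcal{D},\mathcal{D})$ among monoidal $\infty$-categories acting on $\mathcal{D}$) to produce the $\on{Assoc}^\otimes$-monoidal functor $i$; your closing remarks that monoidal functors preserve algebra objects and that $i(A)$ is the action functor $A\otimes\mhyphen$ agree with how the paper uses these references.

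The one genuine gap is the very first step. You treat the identification of the endomorphism object of $\mathcal{D}\in\on{Cat}_\infty$ with $\on{Fun}(\mathcal{D},\mathcal{D})$ as a pure citation (``precisely the content of \cite[4.7.1.40]{HA} or \cite[4.7.1.41]{HA}''), but those results concern an arbitrary left-tensored $\infty$-category and only tell you what follows \emph{once} you know that $\on{Cat}_\infty[\mathcal{D}]$ has a final object and that this final object is $\on{Fun}(\mathcal{D},\mathcal{D})$; they do not supply that identification themselves. This is exactly the piece of content the paper proves rather than cites: since $\on{Cat}_\infty$ is Cartesian closed, the functor $\mathcal{D}\times\mhyphen$ admits the right adjoint $\on{Fun}(\mathcal{D},\mhyphen)$, and unwinding \cite[4.7.1.1]{HA} together with the Grothendieck-construction identification $\Gamma(\mathcal{D}\times\mhyphen)\simeq \chi(\on{Fun}(\mathcal{D},\mhyphen))$ yields an equivalence $\on{Cat}_\infty[\mathcal{D}]\simeq \on{Cat}_{\infty/\on{Fun}(\mathcal{D},\mathcal{D})}$, so that $\on{Fun}(\mathcal{D},\mathcal{D})$ is terminal in $\on{Cat}_\infty[\mathcal{D}]$; the monoidal structure then follows from \cite[4.7.1.40, 4.1.3.19]{HA}. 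You do gesture at the right mechanism (Cartesian closedness and the internal hom), so the fix is only to carry out this short verification, or to locate a precise statement in \cite{HA} covering the $\on{Cat}_\infty$ case; as written, the hedged references do not deliver the terminality claim on which the remainder of your argument, in particular the universal-property step producing $i$, depends.
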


\begin{proof}
We show that $\mathcal{E}\coloneqq\on{Fun}(\mathcal{D},\mathcal{D})$ is an endomorphism object for $\mathcal{D}$ in $\on{Cat}_\infty$, meaning a terminal object of the endomorphism $\infty$-category $\on{Cat}_\infty[\mathcal{D}]$ introduced in \cite[4.7.1.1]{HA}. $\on{Cat}_\infty$ is Cartesian closed, the functor  $\mathcal{D}\times\mhyphen:\on{Cat}_\infty\rightarrow \on{Cat}_\infty$ admits a right adjoint given by the functor $\on{Fun}(\mathcal{D},\mhyphen)$. Spelling out the definition, it follows that there exists an equivalence
\[\on{Cat}_\infty[\mathcal{D}]\simeq \on{Fun}_{\Delta^1}(\Delta^1,\Gamma(\mathcal{D}\times \mhyphen))\times_{\on{Fun}_{\Delta^1}(\{1\},\Gamma((\mathcal{D}\times \mhyphen))} \{\mathcal{D}\}\,.\]
Using the equivalence $\Gamma(\mathcal{D}\times \mhyphen)\simeq \chi(\on{Fun}(\mathcal{D},\mhyphen))$, it follows
\[ \on{Cat}_\infty[\mathcal{D}]\simeq \on{Cat}_{\infty/\mathcal{E}}\,.\] 
Thus $\mathcal{E}$ is a terminal object of $\on{Cat}_\infty[\mathcal{D}]$ and admits a monoidal structure by \cite[4.7.1.40, 4.1.3.19]{HA}. 

The second part of the Lemma follows directly from \cite[4.7.1.40.(2)]{HA} and \cite[4.1.3.19]{HA}. 
\end{proof}

\begin{example}
Let $\mathcal{D}$ be a monoidal $\infty$-category. Then $\mathcal{D}$ is canonically left tensored over itself. \Cref{endlem} implies that there exists an $\on{Assoc}^\otimes$-monoidal functor $\mathcal{D}^\otimes\rightarrow \on{Fun}(\mathcal{D},\mathcal{D})^\otimes$.
\end{example}

\begin{lemma}\label{algmndlem}
Let $\mathcal{D}$ be an $\infty$-category left tensored over a monoidal $\infty$-category $\mathcal{C}$. Let $A \in \mathcal{C}$ be an associative algebra object. Consider the monad $M=A \otimes \mhyphen$. Then there exists an equivalence $\on{LMod}_M(\mathcal{D})\simeq \on{LMod}_A(\mathcal{D})$ identifying the forgetful functor $\on{LMod}_A(\mathcal{D})\rightarrow \mathcal{D}$ with the monadic functor of $M$.
\end{lemma}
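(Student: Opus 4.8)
The plan is to exhibit the equivalence $\on{LMod}_M(\mathcal{D})\simeq \on{LMod}_A(\mathcal{D})$ by appealing directly to the $\on{Assoc}^\otimes$-monoidal functor $i:\mathcal{C}^\otimes\to \on{Fun}(\mathcal{D},\mathcal{D})^\otimes$ produced in \Cref{endlem}, together with the functoriality of the construction $A\mapsto \on{LMod}_A(-)$ of left modules with respect to monoidal functors. First I would recall that the $\infty$-category $\mathcal{D}$ is left-tensored over $\mathcal{C}$ (the given data) and that, via $i$, it is also left-tensored over $\on{Fun}(\mathcal{D},\mathcal{D})$ in a compatible way; more precisely, the left-tensoring of $\mathcal{D}$ over $\on{Fun}(\mathcal{D},\mathcal{D})$ by evaluation restricts along $i$ to the original left-tensoring over $\mathcal{C}$. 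This compatibility is exactly the content of the enhanced statement of \cite[4.7.1.40]{HA} used in the proof of \Cref{endlem}: the left module $\infty$-category $\mathcal{D}$ over $\on{Fun}(\mathcal{D},\mathcal{D})$ pulls back, as a left-tensored $\infty$-category, to $\mathcal{D}$ over $\mathcal{C}$.

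Next, I would invoke the base-change functoriality of module categories along a monoidal functor, namely \cite[4.2.3.1, 4.2.3.2]{HA} (or the relevant form of \cite[4.8.5.11]{HA}): given an $\on{Assoc}^\otimes$-monoidal functor $i:\mathcal{C}^\otimes\to \mathcal{E}^\otimes$, a left-$\mathcal{E}$-tensored $\infty$-category $\mathcal{M}$, and an algebra $A\in \on{Alg}(\mathcal{C})$, there is a canonical functor $\on{LMod}_A(\mathcal{M}_{\mathcal{C}})\to \on{LMod}_{i(A)}(\mathcal{M})$, where $\mathcal{M}_{\mathcal{C}}$ denotes $\mathcal{M}$ regarded as left-tensored over $\mathcal{C}$ via $i$. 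In our situation $\mathcal{E}=\on{Fun}(\mathcal{D},\mathcal{D})$, $\mathcal{M}=\mathcal{D}$ with its evaluation tensoring, $\mathcal{M}_{\mathcal{C}}=\mathcal{D}$ with its original tensoring, and $i(A)=M$, so this gives a comparison functor $\Phi:\on{LMod}_A(\mathcal{D})\to \on{LMod}_M(\mathcal{D})$ that is evidently compatible with the two forgetful functors to $\mathcal{D}$, i.e.\ the triangle with the forgetful functors commutes.

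It then remains to see that $\Phi$ is an equivalence, and the cleanest route is to apply the $\infty$-categorical Barr--Beck theorem, or rather its corollary \cite[4.7.3.16]{HA}, recalled in \Cref{sec1.4}. Both forgetful functors $U_A:\on{LMod}_A(\mathcal{D})\to\mathcal{D}$ and $U_M:\on{LMod}_M(\mathcal{D})\to\mathcal{D}$ admit left adjoints (the free-module functors, by \cite[4.2.4.8]{HA}), are conservative \cite[4.2.3.2, 4.7.3.5]{HA}, and preserve geometric realizations of the relevant split simplicial objects; moreover $\Phi$ intertwines the free functors because $i$ is monoidal, so $U_A\simeq U_M\circ\Phi$ and $\Phi$ carries free $A$-modules to free $M$-modules compatibly with the bar resolutions. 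Since $U_A$ and $U_M$ are monadic with the same monad $M=A\otimes\mhyphen$ acting on $\mathcal{D}$ (the monad of the free-forget adjunction for $A$ is $A\otimes\mhyphen$ by inspection of its underlying endofunctor and multiplication), \cite[4.7.3.16]{HA} forces $\Phi$ to be an equivalence identifying $U_A$ with $U_M$.

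I expect the main obstacle to be the first step: carefully justifying that the left-tensoring of $\mathcal{D}$ over $\on{Fun}(\mathcal{D},\mathcal{D})$ restricts \emph{as a left-tensored $\infty$-category}, not merely fiberwise, to the given left-tensoring over $\mathcal{C}$, so that the module-category base-change machinery applies and the comparison functor really is over $\mathcal{D}$. Once this coherence is in place, identifying the adjunction monad of $U_A$ with $A\otimes\mhyphen$ and invoking Barr--Beck is routine. An alternative to the Barr--Beck step, avoiding the need to check the colimit hypotheses, is to observe directly from \cite[4.2.3.2]{HA} that $\Phi$ is fully faithful (the mapping spaces in both module categories are computed by the same totalization over the two-sided bar construction, which only involves the tensoring and the algebra $A$, transported identically by the monoidal $i$) and essentially surjective (every $M$-module structure on an object of $\mathcal{D}$ is, by definition of $i$ and the evaluation tensoring, the same datum as an $A$-module structure); I would include whichever of these two arguments reads more cleanly.
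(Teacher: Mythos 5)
Your proposal takes essentially the same route as the paper: the comparison functor over $\mathcal{D}$ is produced from the $\mathcal{LM}^\otimes$-monoidal lift of $i$ given by \cite[4.7.1.40.(2)]{HA} (exactly the coherence of left-tensorings you flag as the main obstacle), and the equivalence is then deduced from \cite[4.7.3.16]{HA} after checking that both forgetful functors are monadic with the same adjunction monad and unit. This matches the paper's argument, so there is no gap to report.
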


\begin{proof}
Let $p_1,p_2:\mathcal{O}_1^\otimes,\mathcal{O}_2^\otimes\rightarrow \mathcal{LM}^\otimes$ be coCartesian fibrations exhibiting $\mathcal{D}$ as left tensored over $\mathcal{C}$ and $\on{Fun}(\mathcal{D},\mathcal{D})$, respectively. Using that $\on{Fun}(\mathcal{D},\mathcal{D})$ is terminal in $\on{Alg}_{\mathbb{A}_\infty}(\on{Cat}_\infty)_{/\on{Fun}(\mathcal{D},\mathcal{D})}$, it follows from \cite[4.7.1.40.(2)]{HA}, that there exists an $\mathcal{LM}^\otimes$-monoidal functor $i':\mathcal{O}^\otimes_1\rightarrow \mathcal{O}^\otimes_2$. By definition $\on{LMod}_A(\mathcal{D})\subset \on{Fun}_{\mathcal{LM}^{\otimes}}(\mathcal{LM}^\otimes,\mathcal{O}_1^\otimes)$ and $\on{LMod}_M(\mathcal{D})\subset \on{Fun}_{\mathcal{LM}^{\otimes}}(\mathcal{LM}^\otimes,\mathcal{O}_2^\otimes)$. Composition with $i'$ determines a functor $f:\on{LMod}_A(\mathcal{D})\rightarrow \on{LMod}_M(\mathcal{D})$ making the following diagram commute,
\[
\begin{tikzcd}
\on{LMod}_A(\mathcal{D}) \arrow[rd, "G"'] \arrow[rr, "f"] &             & \on{LMod}_M(\mathcal{D}) \arrow[ld, "G'"] \\
                                                      & \mathcal{D} &                                      
\end{tikzcd}
\]
where $G$ and $G'$ are the forgetful functors. Denote the left adjoints of $G$ and $G'$ by $F$ and $F'$, respectively. The functor $G'$ is the monadic functor of the monad $M$. The functor $G$ is also monadic by the $\infty$-categorical Barr-Beck theorem and \cite[3.2.2.6,~3.4.4.6]{HA}.  We observe that the unit maps $id_\mathcal{D}\rightarrow GF\simeq A \otimes \mhyphen$ and $id_\mathcal{D}\rightarrow G'F'\simeq A \otimes \mhyphen$ are both equivalent to the unit map of the monad $M$. We apply \cite[4.7.3.16]{HA} and deduce that $f$ is an equivalence.
\end{proof}

\subsection{Semiorthogonal decompositions and adjunctions}\label{sec1.5} 

In this section we closely follow \cite{DKSS19} in describing semiorthogonal decompositions of stable $\infty$-categories and their relation to adjunctions of stable $\infty$-categories.

\begin{definition}\label{soddef}
Let $\mathcal{V}$ be a stable $\infty$-category and let $\mathcal{A},\mathcal{B}$ be an ordered pair of stable subcategories of $\mathcal{V}$. Denote by $\{\mathcal{A},\mathcal{B}\}$ the full subcategory of $\on{Fun}(\Delta^1,\mathcal{V})$ spanned by vertices of the form $a\rightarrow b$ with $a\in \mathcal{A}$ and $b\in \mathcal{B}$. The fiber functor $\on{fib}$, assigning to a morphism its fiber, restricts to a functor 
\begin{equation}\label{fibreseq}
\on{fib}: \{\mathcal{A},\mathcal{B}\}\longrightarrow \mathcal{V}\,.
\end{equation} 
We call the ordered pair $(\mathcal{A},\mathcal{B})$ a semiorthogonal decomposition of $\mathcal{V}$  if the functor \eqref{fibreseq} is an equivalence.
\end{definition}

We next associative to any semiorthogonal decomposition $(\mathcal{A},\mathcal{B})$ an inner fibration $p:\chi(\mathcal{A},\mathcal{B})\rightarrow \Delta^1$. The construction is similar to the Grothendieck construction. We will relate the existence of further semiorthogonal decompositions to properties of the fibration $p$.

\begin{construction}\label{chiconstr}
Let $\mathcal{V}$ be a stable $\infty$-category with a semiorthogonal decomposition $(\mathcal{A},\mathcal{B})$. We define the simplicial set $\chi(\mathcal{A},\mathcal{B})$ by defining an $n$-simplex of $\chi(\mathcal{A},\mathcal{B})$ to correspond to the following data.
\begin{itemize}
\item An $n$-simplex $j:\Delta^n\rightarrow \Delta^1$ of $\Delta^1$.
\item An $n$-simplex $\sigma:\Delta^n\rightarrow \mathcal{V}$ such that $\sigma(\Delta^{j^{-1}(0)})\subset \mathcal{A}$ and  $\sigma({\Delta^{j^{-1}(1)}})\subset \mathcal{B}$.
\end{itemize}
We define the face and degeneracy maps to act on an $n$-simplex $(j,\sigma)\in \chi(\mathcal{A},\mathcal{B})_n$ componentwise.
The forgetful map $p:\chi(\mathcal{A},\mathcal{B})\rightarrow \Delta^1$ is an inner fibration. 
In particular, we see that $\chi(\mathcal{A},\mathcal{B})$ is an $\infty$-category.
\end{construction}

The next lemma shows that in the setting of \Cref{chiconstr}, the stable $\infty$-category $\mathcal{V}$ can be recovered up to equivalence from the inner fibration $p:\chi(\mathcal{A},\mathcal{B})\rightarrow \Delta^1$.

\begin{lemma}\label{equlem}
Let $\mathcal{V}$ be a stable $\infty$-category with a semiorthogonal decomposition $(\mathcal{A},\mathcal{B})$. There exists an equivalence 
\begin{equation} \label{decompiso}
 \on{Fun}_{\Delta^1}(\Delta^1,\chi(\mathcal{A},\mathcal{B}))\simeq \mathcal{V}\,,\end{equation}
between the $\infty$-category of sections of $p:\chi(\mathcal{A},\mathcal{B})\rightarrow \Delta^1$ and $\mathcal{V}$.
\end{lemma}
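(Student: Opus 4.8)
The plan is to construct the equivalence \eqref{decompiso} by exhibiting both sides as classifying the same data, using the definition of $\chi(\mathcal{A},\mathcal{B})$ from \Cref{chiconstr} together with the semiorthogonal decomposition hypothesis. First I would unwind the definition of a section $s:\Delta^1\rightarrow \chi(\mathcal{A},\mathcal{B})$ of $p$. An $n$-simplex of $\on{Fun}_{\Delta^1}(\Delta^1,\chi(\mathcal{A},\mathcal{B}))$ is a map $\Delta^n\times\Delta^1\rightarrow \chi(\mathcal{A},\mathcal{B})$ commuting with the projection to $\Delta^1$; by \Cref{chiconstr} this is the same as a map $\sigma:\Delta^n\times\Delta^1\rightarrow \mathcal{V}$ such that $\sigma|_{\Delta^n\times\{0\}}$ factors through $\mathcal{A}$ and $\sigma|_{\Delta^n\times\{1\}}$ factors through $\mathcal{B}$ (the condition on $j$ being automatic for the projection $\Delta^n\times\Delta^1\rightarrow\Delta^1$, whose fibers over $0,1$ are $\Delta^n\times\{0\},\Delta^n\times\{1\}$). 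In other words, $\on{Fun}_{\Delta^1}(\Delta^1,\chi(\mathcal{A},\mathcal{B}))$ is naturally isomorphic, as a simplicial set, to the full subcategory $\{\mathcal{A},\mathcal{B}\}$ of $\on{Fun}(\Delta^1,\mathcal{V})$ appearing in \Cref{soddef}.

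Having identified the left-hand side with $\{\mathcal{A},\mathcal{B}\}$, the equivalence \eqref{decompiso} becomes exactly the statement that $\on{fib}:\{\mathcal{A},\mathcal{B}\}\rightarrow\mathcal{V}$ is an equivalence, which is the definition of $(\mathcal{A},\mathcal{B})$ being a semiorthogonal decomposition. So the proof reduces to: (i) the simplicial-set identification $\on{Fun}_{\Delta^1}(\Delta^1,\chi(\mathcal{A},\mathcal{B}))\cong\{\mathcal{A},\mathcal{B}\}$, and (ii) citing \Cref{soddef}. Step (i) is a direct check at the level of $n$-simplices, compatible with face and degeneracy maps since the face/degeneracy maps of $\chi(\mathcal{A},\mathcal{B})$ act componentwise (as noted in \Cref{chiconstr}) and the internal-hom face/degeneracy maps act by precomposition on the $\Delta^n$ factor.

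The main obstacle, such as it is, is purely bookkeeping: one must be careful that ``section of $p$'' paired with the description of $n$-simplices of an internal mapping object over $\Delta^1$ really does recover precisely the pairs $(a\to b)$ with $a\in\mathcal{A}$, $b\in\mathcal{B}$ and nothing more — i.e. that the constraint ``$\sigma(\Delta^{j^{-1}(0)})\subset\mathcal{A}$, $\sigma(\Delta^{j^{-1}(1)})\subset\mathcal{B}$'' in \Cref{chiconstr}, when $j$ is the second projection $\Delta^n\times\Delta^1\to\Delta^1$, cuts out exactly the condition that the two ``ends'' of the $\Delta^1$-family land in $\mathcal{A}$ and $\mathcal{B}$ respectively. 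Once this identification of simplicial sets is in hand, the conclusion is immediate from \Cref{soddef}, and there is nothing further to prove.
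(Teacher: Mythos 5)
Your proposal is correct and follows essentially the same route as the paper: the paper likewise identifies $\on{Fun}_{\Delta^1}(\Delta^1,\chi(\mathcal{A},\mathcal{B}))$ with the full subcategory $\{\mathcal{A},\mathcal{B}\}\subset\on{Fun}(\Delta^1,\mathcal{V})$ (by noting both sit inside $\on{Fun}(\Delta^1,\mathcal{V})$ with the same image) and then invokes the defining equivalence $\on{fib}:\{\mathcal{A},\mathcal{B}\}\simeq\mathcal{V}$ from \Cref{soddef}. Your more explicit simplex-level check of the identification is just a finer-grained version of the same argument.
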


\begin{proof}
The inclusions $\on{Fun}_{\Delta^1}(\Delta^1,\chi(\mathcal{A},\mathcal{B})),\{\mathcal{A},\mathcal{B}\}\subset \on{Fun}(\Delta^1,\mathcal{V})$ are fully faithful functors whose images are identical, given by the edges $a\rightarrow b$ where $a\in \mathcal{A}$ and $b\in \mathcal{B}$. Thus there exists an equivalence $\on{Fun}_{\Delta^1}(\Delta^1,\chi(\mathcal{A},\mathcal{B}))\simeq \{\mathcal{A},\mathcal{B}\}$. Using the equivalence $\{\mathcal{A},\mathcal{B}\}\simeq \mathcal{V}$ the statement follows.
\end{proof}

\begin{definition}
Let $\mathcal{V}$ be a stable $\infty$-category and $\mathcal{A}\subset \mathcal{V}$ a stable subcategory. We define 
\begin{itemize}
\item the right orthogonal $\mathcal{A}^\perp$ to be the full subcategory of $\mathcal{V}$ spanned by those vertices $x\in \mathcal{V}$ such that for all $a\in \mathcal{A}$ the mapping space $\on{Map}_\mathcal{V}(a,x)$ is contractible. 
\item the left orthogonal $\prescript{\perp}{}{\mathcal{A}}$ to be the full subcategory of $\mathcal{V}$ spanned by those vertices $x\in \mathcal{V}$ such that for all $a\in \mathcal{A}$ the mapping space $\on{Map}_\mathcal{V}(x,a)$ is contractible.
\end{itemize}
\end{definition}

\begin{lemma}\label{perplem}
 Let $\mathcal{V}$ be a stable $\infty$-category with a semiorthogonal decomposition $(\mathcal{A},\mathcal{B})$. Then $\mathcal{B}= \prescript{\perp}{}{\mathcal{A}}$ and $\mathcal{A}= \mathcal{B}^\perp$. 
\end{lemma}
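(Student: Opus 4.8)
The plan is to prove both inclusions in each equality by exploiting the defining property of a semiorthogonal decomposition, namely that every object $v \in \mathcal{V}$ sits in an essentially unique fiber sequence $b \to a \to v$ (equivalently $v \simeq \on{cofib}(b \to a)$, or a fiber sequence with $a \in \mathcal{A}$, $b \in \mathcal{B}$), obtained by inverting the equivalence $\on{fib}\colon \{\mathcal{A},\mathcal{B}\} \xrightarrow{\sim} \mathcal{V}$ from \Cref{soddef}. I will prove $\mathcal{B} = {}^\perp\mathcal{A}$; the proof of $\mathcal{A} = \mathcal{B}^\perp$ is dual (or follows by passing to opposite categories, noting that $(\mathcal{A},\mathcal{B})$ a semiorthogonal decomposition of $\mathcal{V}$ corresponds to $(\mathcal{B}^{op}, \mathcal{A}^{op})$ a semiorthogonal decomposition of $\mathcal{V}^{op}$, with right and left orthogonals swapped and $\on{fib}$ replaced by $\on{cofib}$).

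For the inclusion $\mathcal{B} \subseteq {}^\perp\mathcal{A}$: given $b_0 \in \mathcal{B}$ and $a_0 \in \mathcal{A}$, I must show $\on{Map}_\mathcal{V}(b_0, a_0)$ is contractible, i.e. the zero object in the $\infty$-groupoid sense — more precisely that the mapping spectrum vanishes, which since $\mathcal{V}$ is stable is the right reading of ``contractible'' here. The object $a_0 \in \mathcal{A} \subset \mathcal{V}$, viewed through the equivalence $\on{fib}\colon\{\mathcal{A},\mathcal{B}\}\xrightarrow{\sim}\mathcal{V}$, corresponds to the morphism $a_0 \to 0$ (with $a_0 \in \mathcal{A}$, $0 \in \mathcal{B}$, fiber $a_0$); likewise $b_0$ corresponds to $0 \to b_0$. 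So I need to understand maps in $\{\mathcal{A},\mathcal{B}\} \subset \on{Fun}(\Delta^1,\mathcal{V})$ from $(0 \to b_0)$ to $(a_0 \to 0)$; such a map is a commutative square, and its data amounts to a map $b_0 \to 0$ (unique) together with a nullhomotopy making the square commute — unwinding, $\on{Map}_{\{\mathcal{A},\mathcal{B}\}}((0\to b_0),(a_0 \to 0))$ is a point. But I want $\on{Map}_\mathcal{V}(b_0,a_0)$. The cleaner route: the key structural fact I should extract first is that for $b \in \mathcal{B}$ and $a \in \mathcal{A}$, an edge $a \to b$ in $\mathcal{V}$ is both the fiber-sequence datum and, because $(\mathcal{A},\mathcal{B})$ is a semiorthogonal decomposition, the space of such edges $\on{Map}_\mathcal{V}(a,b)$ is precisely what $\{\mathcal{A},\mathcal{B}\}$ records — and via $\on{fib}$ this is an equivalence onto $\mathcal{V}$, so the ``mixed'' mapping spaces are controlled. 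Concretely: for $a \in \mathcal{A}$, $b \in \mathcal{B}$, apply $\on{Map}_\mathcal{V}(-, a)$ to the fiber sequence $b' \to a' \to v$ associated to $v := b$, which is $b \to 0 \to b$, giving $\on{Map}_\mathcal{V}(b,a) \simeq \Omega \on{Map}_\mathcal{V}(0,a) \times \dots$ — I should instead directly say: applying $\on{Map}_\mathcal{V}(b,-)$ is not obviously helpful, so the honest move is to use that the functor $\on{fib}$ being an equivalence means in particular it is fully faithful, hence for any two objects of $\{\mathcal{A},\mathcal{B}\}$ the map on mapping spaces is an equivalence, and then compute both sides for the pair $(a_0 \to 0)$ and $(0 \to b_0)$, reading $\on{fib}(a_0 \to 0) = a_0$ and $\on{fib}(0 \to b_0) = \Omega b_0$ or $b_0[-1]$; taking the loop into account and the suspension equivalence in the stable category, contractibility of $\on{Map}_{\{\mathcal{A},\mathcal{B}\}}$ transports to contractibility of $\on{Map}_\mathcal{V}(b_0, a_0)$.

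For the reverse inclusion ${}^\perp\mathcal{A} \subseteq \mathcal{B}$: take $x \in {}^\perp\mathcal{A}$, so $\on{Map}_\mathcal{V}(x, a) \simeq *$ for all $a \in \mathcal{A}$. Write the semiorthogonal-decomposition fiber sequence for $x$: there are $a \in \mathcal{A}$, $b \in \mathcal{B}$ with $b \to a \to x$ a fiber sequence (equivalently $a \simeq \on{fib}(x \to b)$ after rotating, or more carefully $x \simeq \on{cofib}(b \to a)$ — I will fix the precise form from the $\on{fib}$ convention: $\on{fib}(a \to b) = x$ means $x \to a \to b$ is a fiber sequence). Apply $\on{Map}_\mathcal{V}(x, -)$ to $x \to a \to b$: since $x \in {}^\perp\mathcal{A}$ and $a \in \mathcal{A}$, the middle term $\on{Map}_\mathcal{V}(x,a)$ is contractible; the fiber sequence of mapping spectra then forces $\on{Map}_\mathcal{V}(x,x) \simeq \Omega\on{Map}_\mathcal{V}(x,b)$, and in particular $\on{id}_x \in \pi_0\on{Map}_\mathcal{V}(x,x)$ is a boundary, so the map $x \to a$ is null; hence the fiber sequence $x \to a \to b$ splits and exhibits $a \simeq x \oplus b[\cdot]$ — wait, I should be careful about which maps I have. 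Let me instead argue: the map $x \to a$ is nullhomotopic (as just shown, since $\on{Map}_\mathcal{V}(x,a) \simeq *$), so in the fiber sequence $x \to a \to b$ the object $a$ is a direct sum $x[1] \oplus$ something? The clean conclusion is that when $x \to a$ is null, $b \simeq a \oplus x[1]$ as objects of $\mathcal{V}$; but $a \in \mathcal{A}$ and $x[1] \in {}^\perp\mathcal{A}$ (shift-stable), and $\mathcal{A} \cap {}^\perp\mathcal{A} = 0$ since $\on{Map}_\mathcal{V}(a,a)$ contractible forces $a \simeq 0$; combined with $\mathcal{B} \supseteq {}^\perp\mathcal{A}$ already... no — I want to conclude $x \in \mathcal{B}$, and I have $x[1]$ as a retract of $b \in \mathcal{B}$, so using that $\mathcal{B}$ is a stable subcategory (closed under finite limits/colimits and equivalences, hence under retracts and shifts), $x[1] \in \mathcal{B}$, therefore $x \in \mathcal{B}$. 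This is the step I expect to be the main obstacle: getting the splitting direction and the retract argument exactly right, in particular making sure that ``$\on{Map}_\mathcal{V}(x,a)$ contractible for all $a$'' combined with the stable fiber sequence really yields $x$ (or $x[1]$) as a retract of an object of $\mathcal{B}$, and that closure of a stable subcategory under retracts is available. If a direct retract argument is fiddly, the alternative is: from $x \to a$ null, the fiber sequence gives $a \simeq 0$-related contradiction unless... — actually the robust version is that $\on{fib}(a \to b) \simeq x$ with $x \to a$ null implies $a \simeq \on{fib}(a\to b) \oplus (\text{stuff})$; I will instead phrase it as: $x \to a$ null $\Rightarrow$ the section $b[-1] \to x$ of $\on{fib}(x\to b)\to x$... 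I will pin down the cleanest of these in the write-up, but the mathematical content is exactly the three ingredients above (semiorthogonal fiber sequence, mapping-space long exact sequence killing the cross term, stability of $\mathcal{B}$ under the resulting retract/shift), and dually for $\mathcal{A} = \mathcal{B}^\perp$.
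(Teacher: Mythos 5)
Your first inclusion $\mathcal{B}\subseteq \prescript{\perp}{}{\mathcal{A}}$ is essentially correct and is the same mechanism as the paper's argument: full faithfulness of $\on{fib}\colon\{\mathcal{A},\mathcal{B}\}\to\mathcal{V}$ plus the computation of the mapping space in the arrow category (a homotopy pullback of contractible spaces) gives $\on{Map}_{\mathcal{V}}(b_0[-1],a_0)\simeq\ast$, and closure of $\mathcal{B}$ under shifts removes the shift you flagged. The genuine gap is the last step of the reverse inclusion $\prescript{\perp}{}{\mathcal{A}}\subseteq\mathcal{B}$. From $x\simeq\on{fib}(a\to b)$ and the nullhomotopy of $x\to a$ you correctly obtain $b\simeq a\oplus x[1]$, but you then conclude $x[1]\in\mathcal{B}$ by asserting that a stable subcategory is ``closed under retracts''. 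With the paper's definition (a full subcategory closed under finite limits and colimits in $\mathcal{V}$ and under equivalences) this is false: a retract is not a finite (co)limit, and stable subcategories in this sense need not be idempotent complete. For example, in $\mathcal{V}=\mathcal{D}^b(k\times k)$ the stable closure of the full subcategory spanned by the free module $k\times k$ does not contain its summand $(k,0)$: every object of that closure has $K_0$-class in the subgroup generated by $[k\times k]=(1,1)\in\mathbb{Z}^2$, while $[(k,0)]=(1,0)$. So ``$x[1]$ is a retract of $b\in\mathcal{B}$, hence $x[1]\in\mathcal{B}$'' does not go through as stated.

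The repair is exactly the route you started and then abandoned: rather than forcing $x[1]$ into $\mathcal{B}$, kill the $\mathcal{A}$-component. From $b\simeq a\oplus x[1]$ the identity of $a$ factors as $a\to b\to a$; since you have already proved $\mathcal{B}\subseteq\prescript{\perp}{}{\mathcal{A}}$, the space $\on{Map}_{\mathcal{V}}(b,a)$ is contractible, so the projection is null, $\on{id}_a\simeq 0$, and $a\simeq 0$. (Equivalently: $\prescript{\perp}{}{\mathcal{A}}$ is defined by a mapping-space condition and hence \emph{is} closed under retracts, so $a\in\mathcal{A}\cap\prescript{\perp}{}{\mathcal{A}}=0$.) Then the fiber sequence $x\to a\to b$ gives $x\simeq b[-1]\in\mathcal{B}$, using only that $\mathcal{B}$ is closed under loops and equivalences. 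Note that the paper's own proof never needs the splitting: it computes directly that for $v\simeq\on{fib}(a\to b)$ one has $\on{Map}_{\{\mathcal{A},\mathcal{B}\}}(0\to b',a\to b)\simeq\on{Map}_{\mathcal{V}}(b',b)$ (and, for the other equality, $\on{Map}_{\{\mathcal{A},\mathcal{B}\}}(a\to b,a'\to 0)\simeq\on{Map}_{\mathcal{V}}(a,a')$), so $v$ lies in the relevant orthogonal if and only if $b\simeq 0$ (respectively $a\simeq 0$), which identifies the orthogonal with $\mathcal{A}$ (respectively $\mathcal{B}$) without any retract considerations.
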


\begin{proof}
We show $\mathcal{A}=\mathcal{B}^\perp$. The relation $\mathcal{B} = \prescript{\perp}{}{\mathcal{A}}$ can be shown analogously. By assumption the fiber functor  $\on{fib}: \{\mathcal{A},\mathcal{B}\}\rightarrow \mathcal{V}$ is an equivalence. Under any inverse of $\on{fib}$, the stable subcategory $\mathcal{A}$ of $\mathcal{V}$ is identified with the stable subcategory of $\{\mathcal{A},\mathcal{B}\}$ spanned by sections of the form $a\rightarrow 0$. We show that $a\rightarrow b\in \{\mathcal{A},\mathcal{B}\}\simeq \mathcal{V}$ lies in $\mathcal{B}^\perp$ if and only if $b\simeq 0$. Using that $\{\mathcal{A},\mathcal{B}\}$ is a full subcategory of $\on{Fun}(\Delta^1,\mathcal{V})$, we obtain for $0\rightarrow b',a\rightarrow b \in \{\mathcal{A},\mathcal{B}\}$ an equivalence
 \[ \on{Map}_{\{\mathcal{A},\mathcal{B}\}}(0\rightarrow b',a\rightarrow b) \simeq \on{Map}_{\on{Fun}(\Delta^1,\mathcal{V})}(0\rightarrow b', a\rightarrow b)\,. \] 
Using that $0\in \mathcal{A}$ is initial, we obtain that the restriction functor 
\[ \on{Map}_{\on{Fun}(\Delta^1,\mathcal{V})}(0\rightarrow b', a\rightarrow b)\longrightarrow \on{Map}_{\mathcal{V}}(b',b)\]
is also an equivalence. We deduce that under any inverse of $\on{fib}$ the $\infty$-category $\prescript{\perp}{}{B}$ is identified with the stable subcategory of $\{\mathcal{A},\mathcal{B}\}$ spanned by edges $a\rightarrow b$ satisfying $\on{Map}_{\mathcal{V}}(b',b)\simeq \ast$ for all $b'\in \mathcal{B}$. This shows that $a\rightarrow b\in \{\mathcal{A},\mathcal{B}\}\simeq \mathcal{V}$ lies in $\prescript{\perp}{}{\mathcal{B}}$ if and only if $b\simeq 0$. 
\end{proof}

\begin{definition}\label{cartdef}
Let $\mathcal{V}$ be a stable $\infty$-category with a semiorthogonal decomposition $(\mathcal{A},\mathcal{B})$ and consider the map $p:\chi(\mathcal{A},\mathcal{B})\rightarrow \Delta^1$. We call
\begin{itemize}
\item $(\mathcal{A},\mathcal{B})$ Cartesian if $p$ is a Cartesian fibration and coCartesian if $p$ is a coCartesian fibration.
\item an edge $a\rightarrow b$ in $\mathcal{V}$ with $a\in \mathcal{A}$ and $b\in \mathcal{B}$ an $(\mathcal{A},\mathcal{B})$-Cartesian edge if it is $p$-Cartesian and an $(\mathcal{A},\mathcal{B})$-coCartesian edge if it is $p$-coCartesian, considered as an edge in $\chi(\mathcal{A},\mathcal{B})$.
\item if $(\mathcal{A},\mathcal{B})$ is Cartesian, the functor associated to the Cartesian fibration $p$ the right gluing functor associated to $(\mathcal{A},\mathcal{B})$. 
\item if $(\mathcal{A},\mathcal{B})$ is coCartesian, the functor associated to the Cartesian fibration $p$ the left gluing functor associated to $(\mathcal{A},\mathcal{B})$.
\end{itemize}
\end{definition}
 
\begin{definition}
 Let $\mathcal{V}$ be a stable $\infty$-category and let $i:\mathcal{A}\rightarrow \mathcal{V}$ be the inclusion of a stable subcategory. Then $\mathcal{A}\subset \mathcal{V}$ is called 
\begin{itemize}
\item left admissible if $i$ admits a left adjoint,
\item right admissible if $i$ admits a right adjoint,
\item admissible if $i$ admits both left and right adjoints.
\end{itemize}
\end{definition}

The relation between Cartesian and coCartesian semiorthogonal decompositions and admissible subcategories is as follows.

\begin{proposition}\label{admprop}
 Let $\mathcal{V}$ be a stable $\infty$-category and let $\mathcal{A}\subset \mathcal{V}$ be a stable subcategory. Then the following are equivalent.
\begin{enumerate}
\item $\mathcal{A}$ is an admissible subcategory of $\mathcal{V}$.
\item $(\mathcal{A}^\perp,\mathcal{A})$ and $(\mathcal{A},\prescript{\perp}{}{\mathcal{A}})$ form semiorthogonal decompositions of $\mathcal{V}$.
\item $(\mathcal{A}^\perp,\mathcal{A})$ is a coCartesian semiorthogonal decomposition of $\mathcal{V}$.
\item $(\mathcal{A},\prescript{\perp}{}{\mathcal{A}})$ is a Cartesian semiorthogonal decomposition of $\mathcal{V}$.
\end{enumerate}
\end{proposition}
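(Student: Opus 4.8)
The plan is to show that each of (1)--(4) is equivalent to the single assertion (1), that the inclusion $i\colon\mathcal{A}\hookrightarrow\mathcal{V}$ admits both a left and a right adjoint, and to dispose of (4) by a duality reduction. Passing to opposite $\infty$-categories preserves stable subcategories and admissibility, interchanges $\mathcal{A}^{\perp}$ and $\prescript{\perp}{}{\mathcal{A}}$, reverses the order of the two entries of a semiorthogonal decomposition (because $\operatorname{fib}$ gets replaced by $\operatorname{cofib}$, and these differ on $\{\mathcal{A},\mathcal{B}\}$ only by the autoequivalence $[1]$, so one is an equivalence exactly when the other is), and turns a coCartesian fibration over $\Delta^{1}$ into a Cartesian fibration over $(\Delta^{1})^{\mathrm{op}}\cong\Delta^{1}$; unwinding \Cref{chiconstr} gives an isomorphism $\chi(\mathcal{A}^{\perp},\mathcal{A})^{\mathrm{op}}\cong\chi_{\mathcal{V}^{\mathrm{op}}}(\mathcal{A}^{\mathrm{op}},\prescript{\perp}{}{(\mathcal{A}^{\mathrm{op}})})$ over $\Delta^{1}$. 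Hence condition (4) for $(\mathcal{V},\mathcal{A})$ is precisely condition (3) for $(\mathcal{V}^{\mathrm{op}},\mathcal{A}^{\mathrm{op}})$, and it suffices to prove $(1)\Leftrightarrow(2)$ and $(1)\Leftrightarrow(3)$ for all $\mathcal{V}$ and $\mathcal{A}$.

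Both equivalences rest on the observation that in any semiorthogonal decomposition $(\mathcal{X},\mathcal{Y})$ the inclusion $\mathcal{X}\hookrightarrow\mathcal{V}$ admits a left adjoint and $\mathcal{Y}\hookrightarrow\mathcal{V}$ a right adjoint. Indeed, by \Cref{soddef} every $v\in\mathcal{V}$ is the fibre of an essentially unique arrow $x_{v}\to y_{v}$ with $x_{v}\in\mathcal{X}$ and $y_{v}\in\mathcal{Y}$; mapping the associated cofibre sequence $v\to x_{v}\to y_{v}$ into an object of $\mathcal{X}$ and invoking the semiorthogonality $\operatorname{Map}_{\mathcal{V}}(\mathcal{Y},\mathcal{X})\simeq\ast$ of \Cref{perplem} (together with stability, to cover all shifts) shows that $v\mapsto x_{v}$, with unit the canonical map $v\to x_{v}$, is left adjoint to $\mathcal{X}\hookrightarrow\mathcal{V}$, and dually $v\mapsto\Omega y_{v}$ is right adjoint to $\mathcal{Y}\hookrightarrow\mathcal{V}$ --- the full adjunction being obtained by transporting the source projection $\{\mathcal{X},\mathcal{Y}\}\to\mathcal{X}$ along the equivalence $\operatorname{fib}\colon\{\mathcal{X},\mathcal{Y}\}\xrightarrow{\sim}\mathcal{V}$. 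Applied to the pairs $(\mathcal{A}^{\perp},\mathcal{A})$ and $(\mathcal{A},\prescript{\perp}{}{\mathcal{A}})$ this gives $(2)\Rightarrow(1)$. For $(1)\Rightarrow(2)$, fix a right adjoint $R$ of $i$ with counit $c$ and set $z_{v}=\operatorname{cofib}(c_{v}\colon iRv\to v)$; mapping $a\in\mathcal{A}$ into the cofibre sequence $iRv\to v\to z_{v}$ and using that $c$ is an equivalence on $\operatorname{Map}_{\mathcal{V}}(a,-)$ forces $z_{v}\in\mathcal{A}^{\perp}$, so $v\mapsto(z_{v}\to i\Sigma Rv)$ defines a functor $\mathcal{V}\to\{\mathcal{A}^{\perp},\mathcal{A}\}$; using $Ri\simeq\operatorname{id}$ and $R|_{\mathcal{A}^{\perp}}\simeq 0$ one checks it inverts $\operatorname{fib}$, so $(\mathcal{A}^{\perp},\mathcal{A})$ is a semiorthogonal decomposition, and symmetrically the left adjoint of $i$ produces $(\mathcal{A},\prescript{\perp}{}{\mathcal{A}})$.

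For $(1)\Leftrightarrow(3)$ the key is to identify the $p$-coCartesian edges of $p\colon\chi(\mathcal{A}^{\perp},\mathcal{A})\to\Delta^{1}$. Since $\chi(\mathcal{A}^{\perp},\mathcal{A})$ has no edges from the fibre over $1$ to the fibre over $0$, the lifting condition against objects over $0$ is automatic, and an edge $x\to a$ with $x\in\mathcal{A}^{\perp}$ and $a\in\mathcal{A}$ is $p$-coCartesian if and only if precomposition $\operatorname{Map}_{\mathcal{V}}(a,a')\to\operatorname{Map}_{\mathcal{V}}(x,a')$ is an equivalence for every $a'\in\mathcal{A}$, i.e.\ iff it exhibits $a$ as the value at $x$ of a left adjoint of $i$. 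Assuming (1), $(\mathcal{A}^{\perp},\mathcal{A})$ is a semiorthogonal decomposition by (2), and the units $x\to iLx$ are $p$-coCartesian edges out of every object over $0$, so $p$ is a coCartesian fibration; this is (3). Conversely, if $(\mathcal{A}^{\perp},\mathcal{A})$ is a coCartesian semiorthogonal decomposition, then being a decomposition makes $\mathcal{A}$ right admissible (by the observation above), while coCartesianness produces for each $x\in\mathcal{A}^{\perp}$ an object $L_{0}x\in\mathcal{A}$ with $\operatorname{Map}_{\mathcal{V}}(x,a')\simeq\operatorname{Map}_{\mathcal{A}}(L_{0}x,a')$ naturally in $a'\in\mathcal{A}$; applying $\operatorname{Map}_{\mathcal{V}}(-,ia)$ to the canonical cofibre sequence $\Omega a_{v}\to v\to x_{v}$ of the decomposition (with $x_{v}\in\mathcal{A}^{\perp}$, $a_{v}\in\mathcal{A}$) then presents $\operatorname{Map}_{\mathcal{V}}(v,i(-))\colon\mathcal{A}\to\mathcal{S}$ as a fibre of a map of corepresentable functors, hence itself corepresentable since $\mathcal{A}$ is stable, so $i$ has a left adjoint on all of $\mathcal{V}$ and $\mathcal{A}$ is admissible.

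The step I expect to require the most care is $(3)\Rightarrow(1)$: the coCartesian edges only directly yield left-adjoint values on the subcategory $\mathcal{A}^{\perp}\subset\mathcal{V}$, and extending these to a left adjoint defined on all of $\mathcal{V}$ is exactly where stability is used, through the closure of corepresentable $\mathcal{S}$-valued functors on $\mathcal{A}$ under finite limits. The rest is bookkeeping: one must run a relative Kan extension argument, in the spirit of \Cref{twistconstr}, to promote the pointwise decompositions above to genuine functorial equivalences $\{\mathcal{A}^{\perp},\mathcal{A}\}\simeq\mathcal{V}$ and $\{\mathcal{A},\prescript{\perp}{}{\mathcal{A}}\}\simeq\mathcal{V}$, and carefully match the $p$-(co)Cartesian edges of the $\chi$-construction with the units and counits of the adjunctions identified above.
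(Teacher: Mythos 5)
Your argument is essentially correct, but it takes a genuinely different route from the paper: the paper offers no independent proof of \Cref{admprop} at all, deducing it in one line by combining \Cref{dkssprop1} and \Cref{dkssprop2}, both of which are cited from \cite{DKSS19}, whereas you in effect reprove those two cited statements from first principles --- extracting the left adjoint of $\mathcal{A}\hookrightarrow\mathcal{V}$ and the right adjoint of the second factor directly from the fiber-sequence description of a semiorthogonal decomposition, identifying the $p$-coCartesian edges of $\chi(\mathcal{A}^\perp,\mathcal{A})\rightarrow\Delta^1$ with units of a left adjoint, and handling condition 4 by passing to opposite $\infty$-categories. What your approach buys is a self-contained proof that makes the mechanism visible (in particular the only genuinely non-formal step, your $(3)\Rightarrow(1)$, where stability of $\mathcal{A}$ enters through closure of corepresentable functors under fibers); what the paper's approach buys is brevity and the fact that the harder fibrational statements are delegated to the reference. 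Two small repairs are needed in your write-up, neither of them a real gap: in $(3)\Rightarrow(1)$ the triangle you apply $\on{Map}_{\mathcal{V}}(-,ia)$ to is rotated one step off --- the cofiber sequence $\Omega a_v\rightarrow v\rightarrow x_v$ exhibits $\on{Map}_{\mathcal{V}}(x_v,i(-))$, not $\on{Map}_{\mathcal{V}}(v,i(-))$, as a fiber, so you should instead use $\Omega x_v\rightarrow \Omega a_v\rightarrow v$, whose associated fiber sequence presents $\on{Map}_{\mathcal{V}}(v,i(-))$ as the fiber of a map of functors corepresented by $L_0(\Omega x_v)$ and $\Omega a_v$, and then conclude corepresentability via the cofiber of the Yoneda-corresponding morphism; and the ``bookkeeping'' you defer (promoting pointwise units, counits and corepresenting objects to actual adjoints and to the inverse of $\on{fib}$) is standard, since the existence of an adjoint can be checked objectwise by the corepresentability criterion, so no relative Kan extension machinery beyond what \Cref{twistconstr} already illustrates is required.
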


\Cref{admprop} follows from \Cref{dkssprop1} and \Cref{dkssprop2}, which we cite from \cite{DKSS19}.

\begin{proposition}\label{dkssprop1}
 Let $\mathcal{V}$ be a stable $\infty$-category and let $i:\mathcal{A}\rightarrow \mathcal{V}$ be the inclusion of a stable subcategory. Then:
\begin{enumerate}
\item The pair $(\mathcal{A},\prescript{\perp}{}{\mathcal{A}})$ forms a semiorthogonal decomposition of $\mathcal{V}$ if and only if $\mathcal{A}\subset \mathcal{V}$ is left admissible. If these conditions hold then we further have $\prescript{\perp}{}{\mathcal{A}}=\ker(p)$  for $p$ any left adjoint of the inclusion $i:\mathcal{A}\rightarrow \mathcal{V}.$ 
\item The pair $(\mathcal{A}^\perp,\mathcal{A})$ forms a semiorthogonal decomposition of $\mathcal{V}$ if and only if $\mathcal{A}\subset \mathcal{V}$ is right admissible.
  If these conditions hold then we further have ${\mathcal{A}}^\perp=\ker(q)$  for $q$ any right adjoint of the inclusion $i:\mathcal{A}\rightarrow \mathcal{V}.$ 
\end{enumerate}
\end{proposition}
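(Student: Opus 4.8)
The plan is to prove statement (1); statement (2) then follows by passing to opposite categories, using that $(\mathcal{A},\prescript{\perp}{}{\mathcal{A}})$ is a semiorthogonal decomposition of $\mathcal{V}$ if and only if $(\mathcal{A}^{op}, (\prescript{\perp}{}{\mathcal{A}})^{op})$ — which one checks equals $((\mathcal{A}^{op})^\perp, \mathcal{A}^{op})$ — is a semiorthogonal decomposition of $\mathcal{V}^{op}$, and that left adjoints of $i$ correspond to right adjoints of $i^{op}$. So I will concentrate on the equivalence ``$(\mathcal{A},\prescript{\perp}{}{\mathcal{A}})$ is a semiorthogonal decomposition $\iff$ $\mathcal{A}\subset\mathcal{V}$ is left admissible,'' together with the identification $\prescript{\perp}{}{\mathcal{A}}=\ker(p)$.

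For the forward direction, suppose $(\mathcal{A},\prescript{\perp}{}{\mathcal{A}})$ is a semiorthogonal decomposition, so the fiber functor $\on{fib}\colon \{\mathcal{A},\prescript{\perp}{}{\mathcal{A}}\}\to\mathcal{V}$ is an equivalence. I would construct the left adjoint to $i$ as the composite of an inverse of $\on{fib}$ with the functor $\{\mathcal{A},\prescript{\perp}{}{\mathcal{A}}\}\to\mathcal{A}$ sending $a\to b$ to $a$; one must check this is genuinely left adjoint to $i$, which amounts to producing, for $v\in\mathcal{V}$ corresponding to $a\xrightarrow{f} b$ with $a\in\mathcal{A}$, $b\in\prescript{\perp}{}{\mathcal{A}}$ and $v=\on{fib}(f)$, a unit map $v\to a$ inducing equivalences $\on{Map}_\mathcal{A}(a,a')\xrightarrow{\sim}\on{Map}_\mathcal{V}(v,a')$ for all $a'\in\mathcal{A}$. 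Here I would use the fiber sequence $v\to a\to b$ and the long exact sequence of mapping spaces together with $\on{Map}_\mathcal{V}(b,a')\simeq\ast$ (by definition of $\prescript{\perp}{}{\mathcal{A}}$, after shifting, using stability so that $\on{Map}_\mathcal{V}(b[n],a')\simeq\ast$ for all $n$). The identity $\prescript{\perp}{}{\mathcal{A}}=\ker(p)$ then comes out of this description: $p(v)\simeq 0$ forces $a\simeq 0$, hence $v\simeq b\in\prescript{\perp}{}{\mathcal{A}}$, and conversely objects of $\prescript{\perp}{}{\mathcal{A}}$ correspond to sections $0\to b$, which $p$ kills.

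For the converse, suppose $i\colon\mathcal{A}\to\mathcal{V}$ admits a left adjoint $p$ with unit $u\colon\on{id}_\mathcal{V}\to i p$. For $v\in\mathcal{V}$, form the fiber sequence $\on{fib}(u_v)\to v\to ip(v)$; I would show $\on{fib}(u_v)\in\prescript{\perp}{}{\mathcal{A}}$ by mapping into an arbitrary $a'\in\mathcal{A}$ and using that $u_v$ is an equivalence after applying $\on{Map}_\mathcal{V}(-,a')$ (adjunction), plus stability to handle all shifts. This shows the fiber functor $\{\mathcal{A},\prescript{\perp}{}{\mathcal{A}}\}\to\mathcal{V}$ is essentially surjective; fully faithfulness I would get by a mapping-space computation analogous to the one in the proof of \Cref{perplem}, decomposing $\on{Map}$ in $\{\mathcal{A},\prescript{\perp}{}{\mathcal{A}}\}\subset\on{Fun}(\Delta^1,\mathcal{V})$ and using the orthogonality to split the relevant fiber sequences. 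I expect the main obstacle to be the careful bookkeeping of the $\infty$-categorical adjunction data — producing the unit transformation coherently (not just objectwise) and verifying the universal property at the level of mapping spaces rather than just homotopy categories; the cleanest route is probably to invoke \Cref{equlem} to replace $\mathcal{V}$ by sections of $\chi(\mathcal{A},\prescript{\perp}{}{\mathcal{A}})\to\Delta^1$ and characterize the relevant (co)Cartesian edges, which is in any case what is needed for the downstream \Cref{admprop}.
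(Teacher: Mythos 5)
The paper itself contains no proof of \Cref{dkssprop1}: it is cited from \cite{DKSS19} (the text only deduces \Cref{admprop} from it), so there is no internal argument to compare yours against; your direct proof is essentially the standard one and its outline is sound. Three small points would need tightening in a write-up. First, the duality reduction of (2) to (1) is slightly garbled: passing to $\mathcal{V}^{op}$ swaps the order of the pair and turns $\on{fib}$ into $\on{cof}$, so you need the (easy) remark that $\on{cof}\simeq \on{fib}[1]$ on $\{\mathcal{A},\mathcal{B}\}$, whence $\on{fib}$ is an equivalence iff $\on{cof}$ is; the pair to consider in $\mathcal{V}^{op}$ is $\bigl((\mathcal{A}^{op})^{\perp},\mathcal{A}^{op}\bigr)=\bigl((\prescript{\perp}{}{\mathcal{A}})^{op},\mathcal{A}^{op}\bigr)$, with the factors interchanged, which is exactly statement (2) read in $\mathcal{V}^{op}$. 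Second, full faithfulness of $\on{fib}$ is the one step where ``splitting the relevant fiber sequences'' is too loose: the clean argument is that for $f\colon a\to b$ and $f'\colon a'\to b'$ in $\{\mathcal{A},\prescript{\perp}{}{\mathcal{A}}\}$ the square comparing $\on{Map}_{\on{Fun}(\Delta^1,\mathcal{V})}(f,f')\to\on{Map}_{\mathcal{V}}(\on{fib}f,\on{fib}f')$ with the restriction map $\on{Map}_{\mathcal{V}}(a,a')\to\on{Map}_{\mathcal{V}}(\on{fib}f,a')$ is a pullback for purely formal reasons, and orthogonality enters only to show that this restriction map is an equivalence (using $\on{Map}(b[n],a')\simeq\ast$ for all $n$); in particular $\on{fib}$ is fully faithful on $\{\mathcal{A},\prescript{\perp}{}{\mathcal{A}}\}$ unconditionally, and left admissibility is needed only for essential surjectivity, exactly via the cofiber of the unit as you describe. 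Third, you can sidestep the coherence worry about the unit entirely: by the pointwise criterion for existence of adjoints (\cite[5.2.4.2]{HTT}, or the unit-transformation criterion \cite[5.2.2.8]{HTT}), your mapping-space computation showing that $\on{Map}_{\mathcal{V}}(v,i(-))$ is corepresented by $a$ already yields the left adjoint; the identification $\prescript{\perp}{}{\mathcal{A}}=\ker(p)$ then follows either as you say from the uniqueness of the decomposition, or even more directly from the adjunction formula $\on{Map}_{\mathcal{A}}(p(v),a')\simeq\on{Map}_{\mathcal{V}}(v,ia')$ without reference to the semiorthogonal decomposition.
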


\begin{proposition}\label{dkssprop2}
 Let $\mathcal{V}$ be a stable $\infty$-category with a semiorthogonal decomposition $(\mathcal{A},\mathcal{B})$.
\begin{enumerate}
\item The semiorthogonal decomposition $(\mathcal{A},\mathcal{B})$ is Cartesian if and only if the subcategory $\mathcal{A}\subset \mathcal{V}$ is right admissible. If these equivalent conditions hold, then the restriction of any right adjoint of $i:\mathcal{A}\subset \mathcal{V}$ to $\mathcal{B}$ is a right gluing functor.
\item The semiorthogonal decomposition $(\mathcal{A},\mathcal{B})$ is coCartesian if and only if the subcategory $\mathcal{B}\subset \mathcal{V}$ is left admissible. If these equivalent conditions hold, then the restriction of any left adjoint of $i:\mathcal{B}\subset \mathcal{V}$ to $\mathcal{A}$ is a left gluing functor. 
\end{enumerate}
\end{proposition}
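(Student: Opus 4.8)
The plan is to translate the Cartesian/coCartesian conditions into statements about $p$-Cartesian edges, the admissibility conditions into statements about the existence of certain coreflections, and then to bridge the two using \Cref{perplem}, the semiorthogonal decomposition and the stability of $\mathcal{A}$. By passing to opposite $\infty$-categories it suffices to prove part~(1): one checks that $(\mathcal{B}^{op},\mathcal{A}^{op})$ is a semiorthogonal decomposition of the stable $\infty$-category $\mathcal{V}^{op}$ --- using that $\on{cofib}\simeq\Sigma\circ\on{fib}$ and that $\Sigma$ is an equivalence --- and that $\chi(\mathcal{B}^{op},\mathcal{A}^{op})$ is isomorphic to $\chi(\mathcal{A},\mathcal{B})^{op}$ over the order-reversing isomorphism $\Delta^1\cong(\Delta^1)^{op}$, compatibly with the projections. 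Under these identifications a Cartesian fibration corresponds to a coCartesian fibration, right admissibility of a subcategory to left admissibility of its opposite, and a right gluing functor to a left gluing functor, so that part~(2) for $(\mathcal{A},\mathcal{B})$ becomes part~(1) for $(\mathcal{B}^{op},\mathcal{A}^{op})$.

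For part~(1), the first step is to unwind the fibration $p\colon\chi(\mathcal{A},\mathcal{B})\to\Delta^1$. It is straightforward from \Cref{chiconstr} that $\chi(\mathcal{A},\mathcal{B})$ is the full subcategory of $\mathcal{V}\times\Delta^1$ spanned by the objects $(a,0)$ with $a\in\mathcal{A}$ and the objects $(b,1)$ with $b\in\mathcal{B}$, so that all its mapping spaces are computed in $\mathcal{V}\times\Delta^1$. Since $p$ is an inner fibration and $0\to 1$ is the only non-degenerate edge of $\Delta^1$, the fibration $p$ is Cartesian if and only if every object $b$ of the fiber over $1$ admits a $p$-Cartesian lift of $0\to 1$. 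Feeding the mapping-space description into \cite[2.4.4.3]{HTT}, an edge $f\colon a\to b$ of $\mathcal{V}$ with $a\in\mathcal{A}$ and $b\in\mathcal{B}$, viewed as an edge of $\chi(\mathcal{A},\mathcal{B})$ over $0\to 1$, is $p$-Cartesian precisely when postcomposition with $f$ induces an equivalence $\on{Map}_{\mathcal{V}}(x,a)\xrightarrow{\sim}\on{Map}_{\mathcal{V}}(x,b)$ for every $x\in\mathcal{A}$; I will say that such an $f$ \emph{exhibits $a$ as an $\mathcal{A}$-coreflection of $b$}. Hence $p$ is a Cartesian fibration if and only if every $b\in\mathcal{B}$ admits an $\mathcal{A}$-coreflection, and in that case the functor associated to the Cartesian fibration $p$ --- the right gluing functor --- sends $b$ to the source of its $\mathcal{A}$-coreflection.

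The second step identifies this last condition with right admissibility of $\mathcal{A}\subset\mathcal{V}$. By \cite[5.2.1.3]{HTT} the inclusion $i\colon\mathcal{A}\hookrightarrow\mathcal{V}$ admits a right adjoint if and only if the coCartesian fibration $\Gamma(i)\to\Delta^1$ classifying $i$ is also Cartesian, which, unwound exactly as in the first step, holds if and only if every object $v\in\mathcal{V}$ admits an $\mathcal{A}$-coreflection, i.e.\ a map $i(a)\to v$ with $a\in\mathcal{A}$ inducing equivalences $\on{Map}_{\mathcal{V}}(x,a)\simeq\on{Map}_{\mathcal{V}}(x,v)$ for all $x\in\mathcal{A}$; the right adjoint then sends $v$ to its $\mathcal{A}$-coreflection. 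As $\mathcal{B}\subset\mathcal{V}$, right admissibility of $\mathcal{A}$ trivially implies that every $b\in\mathcal{B}$ has an $\mathcal{A}$-coreflection. For the converse I would use the semiorthogonal decomposition: by \Cref{soddef} the fiber functor $\on{fib}\colon\{\mathcal{A},\mathcal{B}\}\xrightarrow{\sim}\mathcal{V}$ is an equivalence, so any $v\in\mathcal{V}$ sits in a fiber sequence $v\to a_v\to b_v$ with $a_v\in\mathcal{A}$ and $b_v\in\mathcal{B}$; if $\rho\colon r(b_v)\to b_v$ is an $\mathcal{A}$-coreflection, then, since $a_v\in\mathcal{A}$, the map $a_v\to b_v$ factors essentially uniquely as $a_v\xrightarrow{g}r(b_v)\xrightarrow{\rho}b_v$, the fiber $\on{fib}(g)$ lies in $\mathcal{A}$ because $\mathcal{A}$ is a stable subcategory, and comparing the fiber sequences computing $\on{Map}_{\mathcal{V}}(x,v)$ and $\on{Map}_{\mathcal{V}}(x,\on{fib}(g))$ --- using that $\rho$ induces an equivalence $\on{Map}_{\mathcal{V}}(x,r(b_v))\simeq\on{Map}_{\mathcal{V}}(x,b_v)$ --- shows that the induced map $\on{fib}(g)\to v$ exhibits $\on{fib}(g)$ as an $\mathcal{A}$-coreflection of $v$. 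So existence of $\mathcal{A}$-coreflections for all $b\in\mathcal{B}$ forces it for all $v\in\mathcal{V}$, i.e.\ $\mathcal{A}$ is right admissible. Combining the two steps proves the equivalence of the first two assertions in~(1); and since any right adjoint of $i$ is equivalent to the one constructed, whose restriction to $\mathcal{B}$ sends $b$ to its $\mathcal{A}$-coreflection --- which by the first step and essential uniqueness of Cartesian lifts is the right gluing functor --- the final assertion of~(1) follows.

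The step requiring the most care is the converse in the second step: \cite[5.2.1.3]{HTT} delivers an honest right-adjoint functor once the objectwise $\mathcal{A}$-coreflections are known to exist, but verifying that Cartesianness of $\Gamma(i)\to\Delta^1$ really does unwind to objectwise existence of $\mathcal{A}$-coreflections, and keeping the mapping-space comparisons of \cite[2.4.4.3]{HTT} straight, is the bulk of the work --- routine but not purely formal. A second, minor point is the duality used for part~(2): since $\chi(\mathcal{A},\mathcal{B})$ is defined by the explicit simplicial formula of \Cref{chiconstr} rather than by a universal property, one must check by hand that $\chi$ intertwines passage to opposite $\infty$-categories with Cartesian/coCartesian duality, which is a direct simplicial-set computation.
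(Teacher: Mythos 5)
The paper itself contains no proof of this statement: it is explicitly quoted from \cite{DKSS19} (``\Cref{dkssprop1} and \Cref{dkssprop2}, which we cite from \cite{DKSS19}''), so there is no internal argument to measure yours against; what you have written supplies a proof the paper delegates to the reference. Judged on its own terms, your outline is correct. The identification of $\chi(\mathcal{A},\mathcal{B})$ with the full subcategory of $\mathcal{V}\times\Delta^1$ spanned by $\mathcal{A}\times\{0\}$ and $\mathcal{B}\times\{1\}$ is immediate from \Cref{chiconstr} since $\mathcal{A},\mathcal{B}\subset\mathcal{V}$ are full; the criterion of \cite[2.4.4.3]{HTT} then reduces Cartesianness of $p$ to the existence, for each $b\in\mathcal{B}$, of an edge $a\to b$ with $a\in\mathcal{A}$ inducing equivalences $\on{Map}_{\mathcal{V}}(x,a)\simeq\on{Map}_{\mathcal{V}}(x,b)$ for all $x\in\mathcal{A}$ (the conditions at test objects over $1$ are vacuous, as there are no maps from $\mathcal{B}\times\{1\}$ to $\mathcal{A}\times\{0\}$); the same unwinding for $\Gamma(i)$ translates right admissibility into such coreflections for all $v\in\mathcal{V}$; and your propagation from $\mathcal{B}$ to all of $\mathcal{V}$, via the fiber sequence $v\to a_v\to b_v$ provided by \Cref{soddef} and closure of $\mathcal{A}$ under fibers, is sound. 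One point to tighten: for the final assertion of (1) you need a natural, not merely objectwise, identification of $R|_{\mathcal{B}}$ with a functor associated to $p$; the cleanest fix is to note that the counit edges $iR(b)\to b$ are $p$-Cartesian by your own criterion, so that $R|_{\mathcal{B}}$ together with these edges is literally a choice of functor associated to the Cartesian fibration (equivalently, compare $\on{Map}_{\mathcal{A}}(a,R(b))\simeq\on{Map}_{\mathcal{V}}(a,b)$ with the analogous characterization of the gluing functor). The duality checks you flag for part (2) --- that $(\mathcal{B}^{op},\mathcal{A}^{op})$ is again a semiorthogonal decomposition via $\on{cofib}\simeq\Sigma\circ\on{fib}$, and that $\chi(\mathcal{B}^{op},\mathcal{A}^{op})\cong\chi(\mathcal{A},\mathcal{B})^{op}$ over $\Delta^1\cong(\Delta^1)^{op}$ --- are indeed routine from the explicit simplicial formula, and one could alternatively just rerun the argument verbatim in the coCartesian case.
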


\begin{lemma}\label{cartsect}
Let $(\mathcal{A},\mathcal{B})$ be a semiorthogonal decomposition of a stable $\infty$-category $\mathcal{V}$. Then:
\begin{enumerate}
\item Suppose $(\mathcal{A},\mathcal{B})$ is Cartesian. Under any inverse of the functor $\on{fib}:\{\mathcal{A},\mathcal{B}\}\rightarrow \mathcal{V}$, the subcategory $\mathcal{A}^\perp\subset \mathcal{V}$ is identified with the full subcategory of $\{\mathcal{A},\mathcal{B}\}$ spanned by the $(\mathcal{A},\mathcal{B})$-Cartesian edges.
\item  Suppose $(\mathcal{A},\mathcal{B})$ is coCartesian. Under any inverse of the functor $\on{fib}:\{\mathcal{A},\mathcal{B}\}\rightarrow \mathcal{V}$, the subcategory $\prescript{\perp}{}{\mathcal{B}}\subset \mathcal{V}$ is identified with the full subcategory of $\{\mathcal{A},\mathcal{B}\}$ spanned by the $(\mathcal{A},\mathcal{B})$-coCartesian edges.
\end{enumerate}
\end{lemma}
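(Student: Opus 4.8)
The plan is to prove both parts by a direct translation between the two descriptions of $\mathcal{V}$: the equivalence $\on{fib}\colon\{\mathcal{A},\mathcal{B}\}\xrightarrow{\ \sim\ }\mathcal{V}$ of \Cref{soddef}, and the description of $\mathcal{V}$ as the sections of $p\colon\chi(\mathcal{A},\mathcal{B})\to\Delta^1$ from \Cref{equlem}. Recall from the proof of \Cref{equlem} that both $\{\mathcal{A},\mathcal{B}\}$ and $\on{Fun}_{\Delta^1}(\Delta^1,\chi(\mathcal{A},\mathcal{B}))$ sit inside $\on{Fun}(\Delta^1,\mathcal{V})$ as the same full subcategory — edges $a\to b$ with $a\in\mathcal{A}$, $b\in\mathcal{B}$ — so an edge $a\to b$ is $(\mathcal{A},\mathcal{B})$-Cartesian (resp.\ coCartesian) exactly when, viewed as a section of $p$, it is a $p$-Cartesian (resp.\ $p$-coCartesian) edge sitting over $0\to 1$. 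Since $\on{fib}$ sends $a\to b$ to the fiber of that map, which fits in a cofiber sequence $\on{fib}(a\to b)\to a\to b$, I will show that the $p$-(co)Cartesian condition on $a\to b$ is equivalent to the $\on{fib}$-image landing in $\mathcal{A}^\perp$ (resp.\ $\prescript{\perp}{}{\mathcal{B}}$).

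For part (1), assume $(\mathcal{A},\mathcal{B})$ is Cartesian, so by \Cref{dkssprop2} the subcategory $\mathcal{A}\subset\mathcal{V}$ is right admissible with right adjoint $q$, and by \Cref{dkssprop1}(2) we have $\mathcal{A}^\perp=\ker(q)$, giving a semiorthogonal decomposition $(\mathcal{A}^\perp,\mathcal{A})$. The characterization of $p$-Cartesian edges in a Cartesian fibration over $\Delta^1$ (cf.\ \cite[2.4.4.3]{HTT}) says an edge $a\to b$ is $p$-Cartesian precisely when the induced map $a\to G(b)$ is an equivalence, where $G\colon\mathcal{B}\to\mathcal{A}$ is the right gluing functor; by \Cref{dkssprop2}(1) this $G$ is the restriction of $q$ to $\mathcal{B}$. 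First I would unwind what the right gluing functor does on an object: given $b\in\mathcal{B}\subset\mathcal{V}$, applying $q$ to the object $b$ and then the counit $iq(b)\to b$ (equivalently the unit-type comparison) yields a triangle $q(b)\to b\to c$ with $c\in\mathcal{A}^\perp$, and by the uniqueness of the semiorthogonal decomposition $(\mathcal{A}^\perp,\mathcal{A})$ this is the canonical decomposition of $b$. Now an arbitrary edge $a\to b$ with $a\in\mathcal{A}$ gives by stability a cofiber sequence $\on{fib}(a\to b)\to a\to b$; I claim $\on{fib}(a\to b)\in\mathcal{A}^\perp$ iff $a\to b$ is $(\mathcal{A},\mathcal{B})$-Cartesian. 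If $a\xrightarrow{\sim}q(b)$, then rotating the triangle $q(b)\to b\to c$ identifies $c\simeq\on{fib}(a\to b)[1]$... more cleanly: $\on{fib}(a\to b)\simeq\on{fib}(q(b)\to b)\simeq c[-1]\in\mathcal{A}^\perp$, using that $\mathcal{A}^\perp$ is a stable subcategory hence closed under shifts. Conversely, if $\on{fib}(a\to b)=:c'\in\mathcal{A}^\perp$, then $c'\to a\to b$ is a semiorthogonal-type triangle with $c'\in\mathcal{A}^\perp$, $a\in\mathcal{A}$, so by uniqueness of the decomposition of $b$ in $(\mathcal{A}^\perp,\mathcal{A})$ the map $a\to b$ must be equivalent to the canonical one, i.e.\ $a\simeq q(b)$ compatibly, hence $a\to b$ is $p$-Cartesian. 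This establishes the claimed identification of $\mathcal{A}^\perp$ with the full subcategory of $(\mathcal{A},\mathcal{B})$-Cartesian edges under any inverse of $\on{fib}$.

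Part (2) is dual: assuming $(\mathcal{A},\mathcal{B})$ is coCartesian, \Cref{dkssprop2}(2) gives $\mathcal{B}\subset\mathcal{V}$ left admissible with left adjoint $r$, \Cref{dkssprop1}(1) gives $\prescript{\perp}{}{\mathcal{B}}=\ker(r)$ and a semiorthogonal decomposition $(\mathcal{B},\prescript{\perp}{}{\mathcal{B}})$ — wait, the relevant one for the fiber picture is $(\prescript{\perp}{}{\mathcal{B}},\mathcal{B})$; I would instead use that for $a\in\mathcal{A}\subset\mathcal{V}$, the object $a$ has a canonical decomposition with respect to $(\prescript{\perp}{}{\mathcal{B}},\mathcal{B})$, and run the same argument with the $p$-coCartesian characterization (an edge $a\to b$ is $p$-coCartesian iff the induced comparison $F(a)\to b$ is an equivalence, $F$ the left gluing functor, which by \Cref{dkssprop2}(2) is the restriction of $r$ to $\mathcal{A}$). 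Then $\on{fib}(a\to b)$ for a $p$-coCartesian edge is exactly the $\prescript{\perp}{}{\mathcal{B}}$-component of $a$, and conversely $\on{fib}(a\to b)\in\prescript{\perp}{}{\mathcal{B}}$ forces $a\to b$ to be the canonical comparison by uniqueness of the decomposition. I expect the main obstacle to be purely bookkeeping: making precise the sentence "the induced map $a\to G(b)$ is an equivalence" — i.e., correctly extracting, from a $p$-Cartesian edge of $\chi(\mathcal{A},\mathcal{B})$ over $0\to 1$, the comparison map to the value of the gluing functor, and checking it agrees with the counit/unit triangle of the admissibility adjunction as recorded in \Cref{dkssprop2}. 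Once that translation is pinned down, both directions are immediate from the uniqueness of semiorthogonal decompositions (\Cref{perplem}) and the stability of the orthogonal subcategories under shifts.
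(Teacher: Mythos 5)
Your argument is correct, but it takes a genuinely different route from the paper. The paper's proof is essentially a citation: from Cartesianness and \Cref{admprop} it deduces that $\mathcal{A}\subset\mathcal{V}$ is admissible, invokes \cite[A.8.20]{HA} to see that $\mathcal{A}$ and $\mathcal{A}^\perp$ form a recollement of $\mathcal{V}$, and then reads off the description of $\mathcal{A}^\perp$ as the Cartesian edges from \cite[A.8.7]{HA}. You instead verify the statement by hand: identifying the right gluing functor with $q|_{\mathcal{B}}$ via \Cref{dkssprop2}, characterizing a Cartesian edge $a\to b$ by the comparison $a\to q(b)$ being an equivalence, and then comparing the triangle $\on{fib}(a\to b)\to a\to b$ with the counit triangle $q(b)\to b\to c$ using uniqueness of decompositions for $(\mathcal{A}^\perp,\mathcal{A})$. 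This is a valid, self-contained alternative that avoids Lurie's recollement machinery, at the cost of exactly the bookkeeping you flag (that the counit $q(b)\to b$ is a Cartesian edge and that the comparison map is the adjunct of $a\to b$). You could in fact shortcut both the gluing-functor translation and the uniqueness argument: over $\Delta^1$ an edge $a\to b$ is $p$-Cartesian iff $\on{Map}_{\mathcal{V}}(a',a)\to\on{Map}_{\mathcal{V}}(a',b)$ is an equivalence for all $a'\in\mathcal{A}$, and since $\mathcal{A}$ is closed under shifts, the fiber sequence of mapping spectra associated to $\on{fib}(a\to b)\to a\to b$ shows this holds iff $\on{fib}(a\to b)\in\mathcal{A}^\perp$; part (2) is dual. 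One small correction in your part (2): the semiorthogonal decomposition supplied by left admissibility of $\mathcal{B}$ is $(\mathcal{B},\prescript{\perp}{}{\mathcal{B}})$ (\Cref{dkssprop1}), and this --- not $(\prescript{\perp}{}{\mathcal{B}},\mathcal{B})$, which need not be a semiorthogonal decomposition at all unless one is in the $4$-periodic situation --- is the decomposition in which to place the source $a$: the unit $a\to r(a)$ has fiber $w\in\prescript{\perp}{}{\mathcal{B}}$, exhibiting $a\simeq\on{fib}\bigl(r(a)\to w[1]\bigr)$ with $r(a)\in\mathcal{B}$ and $w[1]\in\prescript{\perp}{}{\mathcal{B}}$. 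Your argument in substance does exactly this, so nothing breaks; only the labeling of the decomposition should be fixed.
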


\begin{proof}
 We only show statement 1, statement 2 can be shown analogously. Using \Cref{admprop} we deduce that $\mathcal{A}\subset \mathcal{V}$ is an admissible subcategory. By \cite[A.8.20]{HA}, we obtain that $\mathcal{A}$ and $\mathcal{A}^\perp$ form a recollement of $\mathcal{V}$ as defined in \cite[A.8.1]{HA}. The description of $\mathcal{A}^\perp$ as spanned by $(\mathcal{A},\mathcal{B})$-Cartesian edges is thus provided in \cite[A.8.7]{HA}.
\end{proof}

We cite the following lemma from \cite{DKSS19}.

\begin{lemma}\label{mutlem}
Let $\mathcal{V}$ be a stable $\infty$-category and let $\mathcal{A}\subset \mathcal{V}$ be an admissible subcategory. Consider a biCartesian square 
\[
\begin{tikzcd}
  d\arrow[r]\arrow[dr, phantom, "\square"]\arrow[d]& a\arrow[d]\\
  0\arrow[r]& b
\end{tikzcd}
\]
in $\mathcal{V}$. Then the following are equivalent:
\begin{enumerate}
\item $d$ is an object of $\mathcal{A}^\perp$, $a$ is an object of $\mathcal{A}$ and $b$ is an object of $\prescript{\perp}{}{\mathcal{A}}$.
\item The edge $d\rightarrow a$ is $(\mathcal{A}^\perp,\mathcal{A})$-coCartesian.
\item The edge $a\rightarrow b$ is $(\mathcal{A},\prescript{\perp}{}{\mathcal{A}})$-Cartesian.
\end{enumerate}
\end{lemma}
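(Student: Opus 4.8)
The plan is to show the equivalence of the three conditions by proving the cycle $(1)\Rightarrow(2)\Rightarrow(3)\Rightarrow(1)$, using the characterization of $(\mathcal{A},\prescript{\perp}{}{\mathcal{A}})$-Cartesian and $(\mathcal{A}^\perp,\mathcal{A})$-coCartesian edges provided by \Cref{cartsect} together with \Cref{admprop} and \Cref{dkssprop1}. First I would fix notation: write $i:\mathcal{A}\hookrightarrow\mathcal{V}$ for the inclusion, let $q$ be a right adjoint and $r$ a left adjoint of $i$ (both exist by admissibility and \Cref{admprop}), so that $\mathcal{A}^\perp=\ker(q)$ and $\prescript{\perp}{}{\mathcal{A}}=\ker(r)$ by \Cref{dkssprop1}. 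The strategy throughout is to translate each statement about the biCartesian square into a membership statement about the vertices $d$, $a$, $b$ and then invoke \Cref{cartsect}, where $(\mathcal{A},\prescript{\perp}{}{\mathcal{A}})$-Cartesian edges are identified (under an inverse of $\on{fib}$) with those $a\to b$ having $\on{fib}\simeq d\in\mathcal{A}^\perp$, and dually $(\mathcal{A}^\perp,\mathcal{A})$-coCartesian edges $d\to a$ with $\on{cofib}\simeq b\in\prescript{\perp}{}{\mathcal{A}}$.

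For $(1)\Rightarrow(2)$ and $(1)\Rightarrow(3)$: if $d\in\mathcal{A}^\perp$, $a\in\mathcal{A}$ and $b\in\prescript{\perp}{}{\mathcal{A}}$, then the top-right edge $d\to a$ is an edge from $\mathcal{A}^\perp$ to $\mathcal{A}$ with cofiber $b\in\prescript{\perp}{}{\mathcal{A}}$, so by \Cref{cartsect}(2) (applied to the coCartesian semiorthogonal decomposition $(\mathcal{A}^\perp,\mathcal{A})$, which exists by \Cref{admprop}) it is $(\mathcal{A}^\perp,\mathcal{A})$-coCartesian; symmetrically the edge $a\to b$ has fiber $d\in\mathcal{A}^\perp$, hence is $(\mathcal{A},\prescript{\perp}{}{\mathcal{A}})$-Cartesian by \Cref{cartsect}(1). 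For the converses $(2)\Rightarrow(1)$ and $(3)\Rightarrow(1)$: a $p$-(co)Cartesian edge of $\chi$ is in particular an edge of $\chi(\mathcal{A}^\perp,\mathcal{A})$ resp.~$\chi(\mathcal{A},\prescript{\perp}{}{\mathcal{A}})$, which forces $d\in\mathcal{A}^\perp$ and $a\in\mathcal{A}$ (resp.~$a\in\mathcal{A}$ and $b\in\prescript{\perp}{}{\mathcal{A}}$) just from the definition of $\chi$ in \Cref{chiconstr}; then \Cref{cartsect} identifies the cofiber (resp.~fiber) of the edge with the remaining vertex and places it in $\prescript{\perp}{}{\mathcal{A}}$ (resp.~$\mathcal{A}^\perp$). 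Since the square is biCartesian, the cofiber of $d\to a$ is $b$ and the fiber of $a\to b$ is $d$, so all three membership conditions of (1) hold.

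The one point requiring a little care — and the step I expect to be the main obstacle — is the compatibility of the two descriptions of the distinguished edges: \Cref{cartsect} describes them via an arbitrary inverse of $\on{fib}:\{\mathcal{A},\mathcal{B}\}\to\mathcal{V}$, whereas \Cref{chiconstr} presents $\chi$ directly, and I must check that under the identification $\on{Fun}_{\Delta^1}(\Delta^1,\chi(\mathcal{A},\mathcal{B}))\simeq\{\mathcal{A},\mathcal{B}\}$ from \Cref{equlem} a section is $p$-(co)Cartesian precisely when the corresponding edge of $\{\mathcal{A},\mathcal{B}\}$ is the one singled out in \Cref{cartsect}. This is really just unwinding that the section of $p$ associated to an edge $a\to b$ of $\mathcal{V}$ is that very edge viewed in $\chi$, together with the fact that, in a (co)Cartesian fibration over $\Delta^1$, an edge lying over $0\to1$ is (co)Cartesian iff it witnesses the value of the associated gluing functor — exactly the content of \Cref{dkssprop2}. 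Once this bookkeeping is in place the rest is the formal chase above; no genuine computation is needed beyond observing that a biCartesian square exhibits $b$ as $\on{cofib}(d\to a)$ and $d$ as $\on{fib}(a\to b)$.
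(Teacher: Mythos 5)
Your argument is correct, but note that the paper itself does not prove this lemma at all: it is quoted from \cite{DKSS19} ("We cite the following lemma from..."), so there is no internal proof to compare against, and what you have produced is a self-contained derivation from the paper's earlier results. The skeleton is sound: admissibility gives, via \Cref{admprop}, the coCartesian semiorthogonal decomposition $(\mathcal{A}^\perp,\mathcal{A})$ and the Cartesian one $(\mathcal{A},\prescript{\perp}{}{\mathcal{A}})$; \Cref{cartsect} converts "$(\mathcal{A}^\perp,\mathcal{A})$-coCartesian" and "$(\mathcal{A},\prescript{\perp}{}{\mathcal{A}})$-Cartesian" into membership statements about the fiber of the edge; conditions (2) and (3) already contain the endpoint memberships because the notions of \Cref{cartdef} are only defined for edges with endpoints in the prescribed subcategories; and the biCartesian square identifies $d\simeq\on{fib}(a\rightarrow b)$ and $b\simeq\on{cof}(d\rightarrow a)$, so all three conditions collapse to condition (1). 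Two small points of hygiene. First, \Cref{cartsect} is stated in \emph{both} cases in terms of the fiber functor, so in the coCartesian case the object it places in $\prescript{\perp}{}{\mathcal{A}}$ is $\on{fib}(d\rightarrow a)\simeq b[-1]$, not $b$ itself; you should add the one-line remark that $\mathcal{A}^\perp$ and $\prescript{\perp}{}{\mathcal{A}}$ are stable subcategories, hence closed under shifts, so this discrepancy is harmless. Second, the compatibility issue you single out as the main obstacle is not actually there: \Cref{cartsect} is already phrased in terms of $(\mathcal{A},\mathcal{B})$-(co)Cartesian edges of $\{\mathcal{A},\mathcal{B}\}$ in the sense of \Cref{cartdef}, so all you need is that the full subcategory spanned by such edges and the orthogonal subcategories are closed under equivalences, and no appeal to \Cref{equlem} or \Cref{dkssprop2} is required. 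With these adjustments your proof is a perfectly good replacement for the external citation.
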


\begin{remark}\label{mutrem}
The restriction functors from the full subcategory of $\on{Fun}(\Delta^1\times\Delta^1,\mathcal{V})$ spanned by diagrams satisfying the equivalent conditions 1.-3. of \Cref{mutlem} to the vertices $d$ or $b$ constitute trivial fibrations. By choosing a section of one of these trivial fibrations we hence get an essentially unique equivalence $m_\mathcal{A}: \mathcal{A}^\perp\rightarrow \prescript{\perp}{}{\mathcal{A}},$ which we call the mutation around $\mathcal{A}$. We observe that precomposition with $m_\mathcal{A}$ identifies the right gluing functor $\prescript{\perp}{}{\mathcal{A}}\rightarrow \mathcal{A}$ of the semiorthogonal decomposition $(\mathcal{A},\prescript{\perp}{}{\mathcal{A}})$ with the left gluing functor $\mathcal{A}^\perp\rightarrow \mathcal{A}$ of the semiorthogonal decomposition $(\mathcal{A}^\perp,\mathcal{A})$.
\end{remark}

The next lemma associates to any adjunction of stable $\infty$-categories a biCartesian semiorthogonal decomposition.

\begin{lemma}\label{adjsodlem}
Let $p:\mathcal{M}\rightarrow \Delta^1$ be an adjunction of stable $\infty$-categories associated to a pair of functors $F:\mathcal{Y}\leftrightarrow \mathcal{X}:G$ and consider the stable $\infty$-category $\mathcal{V}\coloneqq\on{Fun}_{\Delta^1}(\Delta^1,\mathcal{M})$ of sections of $p$. Denote by $\on{p-Ran}_{\{i\}\rightarrow \Delta^1}\mathcal{M}$ and $\on{p-Lan}_{\{i\}\rightarrow \Delta^1}\mathcal{M}$ the full subcategories of $\mathcal{V}$ spanned by functors that are right and left Kan extensions relative $p$, respectively, of their restriction to $\on{Fun}_{\Delta^1}(\{i\},\mathcal{M})\subset \mathcal{M}.$ 
\begin{enumerate}
\item The following four $\infty$-categories 
\begin{align*}
 \mathcal{A}\coloneqq\on{p-Ran}_{\{1\}\rightarrow \Delta^1}\mathcal{M} \quad\quad&  \mathcal{B}\coloneqq\on{p-Ran}_{\{0\}\rightarrow \Delta^1}\mathcal{M}\\
 \mathcal{C}\coloneqq\on{p-Lan}_{\{1\}\rightarrow \Delta^1}\mathcal{M} \quad\quad&  \mathcal{D}\coloneqq\on{p-Lan}_{\{0\}\rightarrow \Delta^1}\mathcal{M}
\end{align*}
are stable subcategories of $\mathcal{V}$. 
\end{enumerate}
The restrictions functors induce equivalences
\begin{equation}\label{idabc} 
\mathcal{A},\mathcal{C}\simeq \mathcal{X}\, ,\quad \mathcal{B},\mathcal{D}\simeq \mathcal{Y}\,.
\end{equation}
\begin{enumerate}\setcounter{enumi}{1}
\item There are coCartesian semiorthogonal decompositions $(\mathcal{A},\mathcal{B}),\,(\mathcal{B},\mathcal{C})$ and Cartesian semi-orthogonal decompositions $(\mathcal{B},\mathcal{C}),\,(\mathcal{C},\mathcal{D})$ of $\mathcal{V}$. 
\item Up to mutations and the equivalences \eqref{idabc}, the left and right gluing functor of $(\mathcal{A},\mathcal{B})$ and $(\mathcal{B},\mathcal{C})$, respectively, can be identified with $G$ and the left and right gluing functor of $(\mathcal{B},\mathcal{C})$ and $(\mathcal{C},\mathcal{D})$, respectively, can be identified with $F$.
\end{enumerate}
\end{lemma}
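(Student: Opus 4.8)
The plan is to systematically identify each of the four subcategories $\mathcal{A},\mathcal{B},\mathcal{C},\mathcal{D}$ with $\mathcal{X}$ or $\mathcal{Y}$ via restriction, and then read off the semiorthogonal decomposition and gluing-functor statements from the relative-Kan-extension descriptions together with \Cref{Kanextlem} and the Cartesian/coCartesian edge criteria. First I would set up notation: a section of $p$ over $\Delta^1$ is a morphism $y\to x$ with $y\in\mathcal{Y}$, $x\in\mathcal{X}$ (no morphisms go the other way since $p$ is a fibration over $\Delta^1$), so $\mathcal{V}\simeq\{\mathcal{Y},\mathcal{X}\}$ in the notation preceding \Cref{soddef}. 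A section is a $p$-relative right Kan extension of its restriction to $\{1\}$ precisely when the edge $y\to x$ is a $p$-Cartesian edge, i.e.\ an edge of the form $G(x)\xrightarrow{\ast} x$; hence $\mathcal{A}$ consists of such edges and the restriction to the vertex $x\in\mathcal{X}$ is an equivalence $\mathcal{A}\simeq\mathcal{X}$ by \cite[4.3.2.15]{HTT}. Dually, $\mathcal{B}=\on{p-Ran}_{\{0\}}\mathcal{M}$ consists of sections that are right Kan extensions from the fiber over $0$; since $0$ is the source, the relative right Kan extension condition is vacuous on the target and forces the $\mathcal{X}$-vertex to be a (relative) terminal object over $1$, i.e.\ the zero object of $\mathcal{X}$ — so $\mathcal{B}$ consists of edges $y\to 0$ and restriction to $y$ gives $\mathcal{B}\simeq\mathcal{Y}$. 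Symmetrically $\mathcal{C}=\on{p-Lan}_{\{1\}}\mathcal{M}$ consists of edges $0\to x$ with $\mathcal{C}\simeq\mathcal{X}$, and $\mathcal{D}=\on{p-Lan}_{\{0\}}\mathcal{M}$ consists of $p$-coCartesian edges $y\xrightarrow{!}F(y)$ with $\mathcal{D}\simeq\mathcal{Y}$. This proves part (1) and the identifications \eqref{idabc}; stability of the four subcategories follows because each is the image under an equivalence of a stable $\infty$-category, or directly because relative Kan extensions are closed under finite (co)limits computed pointwise in $\mathcal{M}$.

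For part (2), I would verify the semiorthogonal decomposition conditions using \Cref{perplem}-style reasoning and \Cref{dkssprop1}. The pair $(\mathcal{A},\mathcal{B})$: I claim $\mathcal{B}=\prescript{\perp}{}{\mathcal{A}}$ inside $\mathcal{V}$. Indeed, under $\mathcal{V}\simeq\{\mathcal{Y},\mathcal{X}\}$, an object $y'\to 0$ of $\mathcal{B}$ maps to $G(x)\to x$ in $\mathcal{A}$ by a space equivalent to $\on{Map}_{\mathcal{Y}}(y',G(x))\simeq\on{Map}_{\mathcal{X}}(F(y'),x)$, and one checks the fiber-sequence object assigned to $y'\to 0$ (namely $y'$ itself, or rather $y'[?]$) pairs trivially exactly on the image of $\mathcal{A}$; alternatively, $\mathcal{A}^\perp$ is the span of $(\mathcal{A},\mathcal{B})$-Cartesian edges by \Cref{cartsect}. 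The cleanest route is: $\mathcal{A}\subset\mathcal{V}$ is right admissible — its inclusion admits the right adjoint given by $p$-relative right Kan extension along $\{1\}\to\Delta^1$ — so by \Cref{dkssprop1}(2) and \Cref{dkssprop2}(1), $(\mathcal{A}^\perp,\mathcal{A})$ is a coCartesian semiorthogonal decomposition, and one identifies $\mathcal{A}^\perp$ with $\mathcal{B}$ by computing the kernel of this right adjoint (it kills exactly the edges $y\to 0$, since a section $y\to x$ has trivial Cartesian replacement iff $x\simeq 0$). The analogous bookkeeping handles $(\mathcal{B},\mathcal{C})$ as both Cartesian and coCartesian — here one uses that $\mathcal{B}\subset\mathcal{V}$ is both left and right admissible, via relative left Kan extension along $\{0\}$ and relative right Kan extension along $\{1\}$ — and $(\mathcal{C},\mathcal{D})$ as Cartesian, using that $\mathcal{D}\subset\mathcal{V}$ is left admissible.

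For part (3), I would unwind the gluing functors through \Cref{chiconstr} and \Cref{dkssprop2}: the right gluing functor of $(\mathcal{B},\mathcal{C})$ is, by \Cref{dkssprop2}(1), the restriction to $\mathcal{C}$ of a right adjoint of the inclusion $\mathcal{B}\hookrightarrow\mathcal{V}$; concretely this right adjoint sends a section $y'\to 0$ of $\mathcal{B}$-type... no — it sends an arbitrary section to its $p$-relative right Kan extension from $\{0\}$. Tracing an object $0\to x$ of $\mathcal{C}$ through this produces the section $G(x)\to 0$, so the gluing functor is $G$ up to the equivalences \eqref{idabc}. Similarly the left gluing functor of $(\mathcal{B},\mathcal{C})$ is the restriction to $\mathcal{A}$... to $\mathcal{B}$, hmm — of a left adjoint of $\mathcal{C}\hookrightarrow\mathcal{V}$, which via relative left Kan extension from $\{1\}$ sends $G(x)\xrightarrow{\ast}x$ (an object of $\mathcal{A}$, equivalently of $\mathcal{B}$ after... ) — the point is it reconstructs the unit map and the associated functor is again $G$; and the two gluing functors for $(\mathcal{B},\mathcal{C})$ and $(\mathcal{C},\mathcal{D})$ reconstruct the coCartesian edge $y\xrightarrow{!}F(y)$, yielding $F$. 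The matching with the mutation equivalences of \Cref{mutrem} is forced because the right gluing functor of $(\mathcal{A},\prescript{\perp}{}{\mathcal{A}})$ and the left gluing functor of $(\mathcal{A}^\perp,\mathcal{A})$ agree after precomposition with $m_{\mathcal{A}}$, and similarly for $\mathcal{C}$. The main obstacle I anticipate is precisely this last coherence step: keeping straight which adjoint of which inclusion computes which gluing functor, and checking that the four a priori different functors $\mathcal{A}\to\mathcal{B}$, $\mathcal{B}\to\mathcal{C}$, etc.\ really do compose/coincide with $F$ and $G$ on the nose (up to the stated mutations) rather than with some twist or shift of them — this requires carefully matching the biCartesian square in \Cref{mutlem} with the fiber/cofiber squares implicit in the relative Kan extension definitions of $\mathcal{A},\mathcal{B},\mathcal{C},\mathcal{D}$, and is the kind of diagram chase where an off-by-a-mutation error is easy to make.
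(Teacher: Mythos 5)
Your overall identifications in part (1) — $\mathcal{A}$ as Cartesian edges $G(x)\rightarrow x$, $\mathcal{B}$ as $y\rightarrow 0$, $\mathcal{C}$ as $0\rightarrow x$, $\mathcal{D}$ as coCartesian edges $y\rightarrow F(y)$, with stability via exactness of the Kan-extension sections — are correct and agree with the paper. The trouble is in part (2), where the handedness of adjoints and orthogonals is systematically wrong. The functor $v=(y,x,\alpha)\mapsto (G(x),x,\on{id})$ ($p$-relative right Kan extension from $\{1\}$) is a \emph{left} adjoint of the inclusion $\mathcal{A}\subset\mathcal{V}$, not a right adjoint: a mapping-space computation gives $\on{Map}_{\mathcal{V}}(v,a)\simeq\on{Map}_{\mathcal{X}}(x,x')$ for $a=(G(x'),x',\on{id})$, while right admissibility of $\mathcal{A}$ fails in general (it is equivalent to the existence of further adjoints, exactly as in \Cref{adjsodlem2}). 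Consequently $(\mathcal{A}^\perp,\mathcal{A})$ is \emph{not} a semiorthogonal decomposition here, and the identification $\mathcal{A}^\perp=\mathcal{B}$ is false — the correct relations are $\mathcal{B}=\prescript{\perp}{}{\mathcal{A}}$ and $\mathcal{A}=\mathcal{B}^\perp$ (indeed $\on{Map}_{\mathcal{V}}(a,b)\simeq\on{Map}_{\mathcal{Y}}(G(x),y')$ is generally nonzero, so $\mathcal{B}\not\subset\mathcal{A}^\perp$), and your appeal to \Cref{cartsect} presupposes that $(\mathcal{A},\mathcal{B})$ is Cartesian, which again fails in general. The same error recurs for $(\mathcal{C},\mathcal{D})$: left admissibility of $\mathcal{D}$ is false in general (it amounts to $F$ having a left adjoint), and by \Cref{dkssprop2} it would anyway control coCartesianness of $(\mathcal{C},\mathcal{D})$, not the Cartesianness you need; the relevant input is right admissibility of $\mathcal{C}$. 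Finally, your justification of admissibility of $\mathcal{B}$ "via relative left Kan extension along $\{0\}$ and relative right Kan extension along $\{1\}$" does not work: those Kan extension functors land in $\mathcal{D}$ and $\mathcal{A}$ respectively, not in $\mathcal{B}$. The actual adjoints of $\mathcal{B}\subset\mathcal{V}$ are extension by zero (left) and $v\mapsto\on{fib}(y\rightarrow G(x))$ (right); the paper gets admissibility of $\mathcal{B}$ from the recollement results \cite[A.8.7, A.8.20]{HA}, and this single admissibility statement, via \Cref{admprop}, is what yields $(\mathcal{A},\mathcal{B})$ coCartesian and $(\mathcal{B},\mathcal{C})$ Cartesian. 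The coCartesianness of $(\mathcal{B},\mathcal{C})$ and Cartesianness of $(\mathcal{C},\mathcal{D})$ are then obtained in the paper by computing the classified functor of $\chi(\mathcal{B},\mathcal{C})$ and observing it admits a left adjoint; your proposal offers no correct substitute for this step.

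In part (3) there is a related concrete error: the right adjoint of $\mathcal{B}\subset\mathcal{V}$ is not the "$p$-relative right Kan extension from $\{0\}$" (that functor sends $(0,x)$ to $(0,0)$); it is the fiber construction, so tracing $(0,x)\in\mathcal{C}$ through it yields $G(x)[-1]$, not $G(x)$. The shift is real and is precisely what the "up to mutations" clause in the statement absorbs, but to see it you must do the computation the paper does — identifying Cartesian edges of $\Gamma(i)$ with pullback squares via \cite[4.3.1.10]{HTT} — which your sketch skips; as written, your tracing would land you exactly in the off-by-a-mutation trap you flag at the end. So while your plan uses the same toolkit as the paper (\Cref{dkssprop1,dkssprop2,admprop,cartsect,mutlem}), the execution of parts (2) and (3) rests on claims that are false in general and would need to be replaced by the admissibility-of-$\mathcal{B}$ (recollement) argument and the explicit gluing-functor computation.
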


\begin{proof}
For statement 1, we note that the full subcategories $\mathcal{A},\mathcal{B},\mathcal{C},\mathcal{D}$ of $\mathcal{V}$ are closed under equivalences in $\mathcal{V}$ and contain a zero object of $\mathcal{V}$. The closure of the subcategories of $\mathcal{V}$ under the formation of fibers and cofibers in $\mathcal{V}$ follows from the observation that the Kan extension functors \eqref{idabc} are exact. We obtain that the subcategories are stable subcategories.

We now show statements 2 and 3. The functor $G$ is exact. By \cite[A.8.7]{HA} we obtain that $\mathcal{B}, \mathcal{A}$ forms a recollement of $\mathcal{V}$. \cite[A.8.20]{HA} implies that $\mathcal{B}\subset \mathcal{V}$ is an admissible subcategory. This shows by \Cref{admprop} that $(\mathcal{A},\mathcal{B})$ forms a coCartesian and $(\mathcal{B},\mathcal{C})$ forms a Cartesian semiorthogonal decompositions of $\mathcal{V}$. The right gluing functor of $(\mathcal{B},\mathcal{C})$, denoted $G'$, is by \Cref{dkssprop2} given by the restriction of a right adjoint of the inclusion $i:\mathcal{B}\subset \mathcal{V}$ to $\mathcal{C}$. Consider the coCartesian fibration $\Gamma(i)\rightarrow \Delta^1$. An edge $e:\Delta^1\rightarrow \Gamma(i)$ lying over $0\rightarrow 1$ corresponds to a diagram $\Delta^1\times\Delta^1\rightarrow \mathcal{M}$ 
\begin{equation}\label{pbdiag1}
\begin{tikzcd}
y'\arrow[r]\arrow[d]&0\arrow[d]\\
y\arrow[r]&x
\end{tikzcd}
\end{equation}
such that $y,y'\in \mathcal{Y}$ and $x\in \mathcal{X}$. The edge $e$ is Cartesian if and only if the diagram \eqref{pbdiag1} is a pullback diagram. \cite[4.3.1.10]{HTT} implies that the diagram \eqref{pbdiag1} is a pullback diagram if and only if the vertex $y'$ is a choice of fiber of the edge $y\rightarrow G(x)$. Hence the right adjoint of $i$ restricted to $\mathcal{C}$ is equivalent to the functor $G':\mathcal{C}\simeq \mathcal{X}\xrightarrow{G[-1]}\mathcal{Y}\simeq \mathcal{B}$. The functor $G'$ admits a left adjoint $F':\mathcal{B}\simeq \mathcal{Y}\xrightarrow{F[1]}\mathcal{X}\simeq \mathcal{C}$. The Cartesian fibration $\chi(\mathcal{B},\mathcal{C})\rightarrow \Delta^1$ classifies $G'$. We obtain that the fibration $\chi(\mathcal{B},\mathcal{C})\rightarrow \Delta^1$ is also coCartesian, classifying the functor $F'$. It follows that the semiorthogonal decomposition $(\mathcal{B},\mathcal{C})$ is coCartesian. The $\infty$-category $\mathcal{D}$ is by \cite[4.3.1.4]{HTT} spanned by $p$-coCartesian edges. We observe that the equivalence $\on{Fun}_{\Delta^1}(\Delta^1,\chi(\mathcal{A},\mathcal{B}))\simeq \mathcal{V}$ of \Cref{equlem} maps the $\chi(\mathcal{B},\mathcal{C})$-coCartesian edges to $p$-coCartesian edges. It follows from \Cref{cartsect} that $\mathcal{D}=\prescript{\perp}{}{\mathcal{C}}$ and from \Cref{admprop} that the pair $(\mathcal{C},\mathcal{D})$ forms a Cartesian semiorthogonal decomposition of $\mathcal{V}$. Composing the left gluing functor $G'$ of $(\mathcal{B},\mathcal{C})$ with the inverse of the mutation equivalence around $\mathcal{B}$ yields by \Cref{mutrem} the right gluing functor of $(\mathcal{A},\mathcal{B})$. Similarly, the right gluing functor of $(\mathcal{C},\mathcal{D})$ is obtained from the left gluing functor $F'$ of $(\mathcal{B},\mathcal{C})$ by precomposition with the mutation equivalence around $\mathcal{C}$.
\end{proof}

\begin{remark}\label{sodrem}
Let $\mathcal{M}\rightarrow \Delta^1$ be an adjunction of stable $\infty$-categories. Abusing notation, we will denote sections $s\in \mathcal{V}=\on{Fun}_{\Delta^1}(\Delta^1,\mathcal{M})$ by their restrictions $s=\left(s|_0,s|_1\right).$ To fully specify any $s\in \mathcal{V}$ involves the additional data of a map $s|_0\rightarrow G(s|_1)$. The set of vertices of the resulting subcategories of $\mathcal{V}$ from \Cref{adjsodlem} can be depicted up to equivalence as follows.
\[\mathcal{A}_0=\{\left(G(x),x\right)\}\quad\quad \mathcal{B}_0=\{\left(y,0\right)\} \quad\quad  \mathcal{C}_0=\{\left(0,x\right)\}\quad\quad \mathcal{D}_0=\{\left(y,F(y)\right)\}\]
The vertices $(G(x),x)$ in $\mathcal{A}$ have the property that the associated edge $G(x)\rightarrow G(x)$ is an equivalence. The vertices $(y,F(y))$ in $\mathcal{D}$ have the property that the associated edge $y\rightarrow GF(y)$ is a unit map of the adjunction $F\dashv G$.
\end{remark}

\Cref{adjsodlem} implies that the datum of a biCartesian semiorthogonal decomposition is equivalent to the datum of an adjunction of stable $\infty$-categories. We can also describe adjoint triples or longer sequences of adjoint functors by sequences of biCartesian semiorthogonal decompositions.

\begin{lemma}\label{adjsodlem2}
Let $\mathcal{M}\rightarrow \Delta^1$ be an adjunction of stable $\infty$-categories associated to a pair of functors $F:\mathcal{Y}\leftrightarrow \mathcal{X}: G$ such that $F$ admits a right adjoint $E$. Consider the associated semiorthogonal decompositions $(\mathcal{A},\mathcal{B}),(\mathcal{B},\mathcal{C}),(\mathcal{C},\mathcal{D})$ of $\mathcal{V}=\on{Fun}_{\Delta^1}(\Delta^1,\mathcal{M})$ from \Cref{adjsodlem} and denote $\mathcal{E}=\prescript{\perp}{}{\mathcal{D}}$.
\begin{enumerate} 
\item The semiorthogonal decomposition $(\mathcal{C},\mathcal{D})$  is also coCartesian and $(\mathcal{D},\mathcal{E})$ forms a Cartesian semiorthogonal decomposition of $\mathcal{V}$.
\item Let $x\in \mathcal{X}$. There is a $(\mathcal{C},\mathcal{D})$-coCartesian edge $e:\left(0,x\right)\xlongrightarrow{!}\left(E(x),FE(x)\right)$ in $\mathcal{V}$, such that the restriction to the second component is a unit map of the adjunction $E\dashv F$.
\end{enumerate}
\end{lemma}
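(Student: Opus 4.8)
The plan is to deduce both statements by applying \Cref{adjsodlem} a second time, this time to a shifted version of the adjunction $E \dashv F$. First I would observe that since $F : \mathcal{Y} \to \mathcal{X}$ admits the right adjoint $E : \mathcal{X} \to \mathcal{Y}$, we have a second adjunction $\mathcal{M}' \to \Delta^1$ of stable $\infty$-categories associated to the pair $E \dashv F$ — but with the roles of the two fibers swapped relative to $F \dashv G$. Applying \Cref{adjsodlem} to $F \dashv E$ produces four stable subcategories of $\mathcal{V}' = \on{Fun}_{\Delta^1}(\Delta^1,\mathcal{M}')$ together with coCartesian decompositions $(\mathcal{A}',\mathcal{B}'),(\mathcal{B}',\mathcal{C}')$ and Cartesian decompositions $(\mathcal{B}',\mathcal{C}'),(\mathcal{C}',\mathcal{D}')$, whose gluing functors are, up to mutation, given by $E$ and by $F$ (with suitable shifts, exactly as in the proof of \Cref{adjsodlem}). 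The key point is then to identify $\mathcal{V}'$ with $\mathcal{V}$ in such a way that the relevant pieces of the two decomposition-sequences line up: concretely, I expect $\mathcal{V}' \simeq \mathcal{V}$ under an equivalence sending the decomposition $(\mathcal{C}',\mathcal{D}')$ of $\mathcal{V}'$ to $(\mathcal{C},\mathcal{D})$ of $\mathcal{V}$ and sending $\mathcal{D}' = \prescript{\perp}{}{\mathcal{C}'}$ to $\prescript{\perp}{}{\mathcal{D}}$, so that the new fourth piece $\mathcal{D}'$ is exactly the $\mathcal{E} = \prescript{\perp}{}{\mathcal{D}}$ in the statement.

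For statement 1, once this identification is in place, the fact that $(\mathcal{C},\mathcal{D})$ is \emph{coCartesian} follows because under the matching it becomes a decomposition of the form $(\mathcal{B}',\mathcal{C}')$ coming out of \Cref{adjsodlem} applied to $F \dashv E$ — and \Cref{adjsodlem}(2) asserts precisely that such a decomposition is both Cartesian and coCartesian (this is the content of the middle pair $(\mathcal{B},\mathcal{C})$ in the original lemma). Dually, $(\mathcal{D},\mathcal{E})$ becomes the pair $(\mathcal{C}',\mathcal{D}')$, which \Cref{adjsodlem}(2) tells us is Cartesian; combined with $\mathcal{E} = \prescript{\perp}{}{\mathcal{D}}$ and \Cref{admprop}, this gives a Cartesian semiorthogonal decomposition of $\mathcal{V}$. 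Alternatively — and perhaps more cleanly — I could argue statement 1 directly: $F \dashv E$ exhibits $\mathcal{C} \subset \mathcal{V}$ (which under the equivalences \eqref{idabc} is $\mathcal{X}$) as admitting, via the left gluing functor identified with $F$ in \Cref{adjsodlem}(3), a further adjoint; running the recollement argument of \cite[A.8.7, A.8.20]{HA} as in the proof of \Cref{adjsodlem} then shows $\mathcal{D} \subset \mathcal{V}$ is admissible, and \Cref{admprop} gives both the coCartesian-ness of $(\mathcal{C},\mathcal{D})$ and the Cartesian decomposition $(\mathcal{D},\prescript{\perp}{}{\mathcal{D}}) = (\mathcal{D},\mathcal{E})$.

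For statement 2, I would use \Cref{sodrem}: the objects of $\mathcal{C}$ are (up to equivalence) the sections of the form $(0,x)$ and the objects of $\mathcal{D}$ are the sections $(y, F(y))$ whose structure edge $y \to GF(y)$ is a unit of $F \dashv G$. Since $(\mathcal{C},\mathcal{D})$ is now known to be coCartesian, every object $(0,x) \in \mathcal{C}$ admits a $(\mathcal{C},\mathcal{D})$-coCartesian edge to an object of $\mathcal{D}$; by \Cref{adjsodlem}(3) the left gluing functor of this decomposition is identified (up to the equivalences \eqref{idabc} and a mutation) with $F$ in the guise coming from $F \dashv E$, namely the target object of $\mathcal{D}$ reached from $(0,x)$ is $(E(x), FE(x))$. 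To see that the second-component map of this coCartesian edge is a \emph{counit}... rather, a unit map of $E \dashv F$, I would trace through the description of coCartesian edges exactly as in the proof of \Cref{adjsodlem}: there the $(\mathcal{B},\mathcal{C})$-Cartesian edge at $x$ was shown via \cite[4.3.1.10, 4.3.1.4]{HTT} to have its structure map a fiber of a unit map; the dual computation here, applied to the biCartesian square defining the relevant section, exhibits the second component of $e$ as $x \to FE(x)$... no — as the unit $x' \to FE(x')$ wait, the direction is $E \dashv F$ with $E$ the right adjoint of $F$, so a unit of $E \dashv F$ lands in $\mathcal{X}$ going $x \to FE(x)$ only if $F$ is the left adjoint, which it is; I expect this to fall out of \Cref{mutlem} and \Cref{cartsect} applied to the decomposition $(\mathcal{C},\mathcal{D})$.

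The main obstacle I anticipate is bookkeeping: making the identification $\mathcal{V}' \simeq \mathcal{V}$ precise and compatible with \emph{all four} subcategories simultaneously, and keeping track of the shifts $[{-1}]$ and $[1]$ and the mutation equivalences that appeared in \Cref{adjsodlem}(3), so that the gluing functor one extracts is genuinely $F$ (resp.\ $E$) on the nose rather than a shift thereof. I expect the direct-admissibility argument for statement 1 to sidestep most of this, leaving only the \cite{HTT}-style pullback-square chase for statement 2 as the genuinely computational part.
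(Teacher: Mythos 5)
Your direct ("alternative") argument for statement 1 is essentially the paper's own proof: the functor $F'$ classified by the Cartesian fibration $\chi(\mathcal{C},\mathcal{D})\rightarrow\Delta^1$ is $F$ up to mutation equivalences, by hypothesis it admits a left adjoint, hence the fibration is biCartesian and $(\mathcal{C},\mathcal{D})$ is coCartesian; since $(\mathcal{C},\mathcal{D})=(\mathcal{D}^\perp,\mathcal{D})$ is then a biCartesian semiorthogonal decomposition, $\mathcal{D}$ is admissible and \Cref{admprop} gives the Cartesian decomposition $(\mathcal{D},\mathcal{E})$. (Note the hypothesis as printed, ``$F$ admits a right adjoint $E$'', is at odds with the adjunction $E\dashv F$ in part 2 and with the proof; the intended reading, consistent with \Cref{Kconstr} and \Cref{lrcor}, is that $E$ is a \emph{left} adjoint of $F$.) Your primary route --- re-applying \Cref{adjsodlem} to $E\dashv F$ and transporting along $\mathcal{V}'\simeq\mathcal{V}$ --- can be made to work (the paper uses such an identification later, in the proof of \Cref{2/4lem1}, via $\chi(\mathcal{C},\mathcal{D})\simeq\Gamma(E)$ and \Cref{equlem}), but you leave the identification unconstructed, your matching is stated inconsistently (first $(\mathcal{C}',\mathcal{D}')\mapsto(\mathcal{C},\mathcal{D})$, later, correctly, $(\mathcal{B}',\mathcal{C}')\mapsto(\mathcal{C},\mathcal{D})$ and $(\mathcal{C}',\mathcal{D}')\mapsto(\mathcal{D},\mathcal{E})$), and the equivalence $\mathcal{V}'\simeq\mathcal{V}$ must be built using only the Cartesianness of $(\mathcal{C},\mathcal{D})$ already supplied by \Cref{adjsodlem}, so as not to presuppose statement 1.

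The genuine gap is statement 2. You correctly predict that the coCartesian edge out of $(0,x)$ lands in $(E(x),FE(x))$, but the actual content of the statement --- that the second component of this edge is a unit map of $E\dashv F$ --- is precisely the step you defer (``I expect this to fall out of \Cref{mutlem} and \Cref{cartsect}''). Neither of those results does this: what is needed is the identification, under the restriction equivalences $\mathcal{C}\simeq\mathcal{X}$ and $\mathcal{D}\simeq\mathcal{Y}$, of the adjunction $E'\dashv F'$ classified by the biCartesian fibration $\chi(\mathcal{C},\mathcal{D})\rightarrow\Delta^1$ with $E\dashv F$, together with a mechanism exhibiting the unit. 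The paper does this with a short diagram: the Cartesian edge $(0,FE(x))\xrightarrow{\ast}(E(x),FE(x))$ and the coCartesian edge $(0,x)\xrightarrow{!}(E(x),FE(x))$ fit into a $2$-simplex, so by the definition of unit maps in a biCartesian fibration (\Cref{sec1.2}) the filling edge $(0,x)\rightarrow(0,FE(x))$ in $\mathcal{C}$ is a unit of $E'\dashv F'$; since the Cartesian edge has an equivalence as its second component, the second component $x\rightarrow FE(x)$ of $e$ is then a unit of $E\dashv F$. Your sketch also muddles the handedness at exactly this point: in $E\dashv F$ the functor $E$ is the left adjoint, and the unit goes $x\rightarrow FE(x)$ because $E$ is applied first; the parenthetical ``with $E$ the right adjoint of $F$ \dots only if $F$ is the left adjoint'' is backwards and should be fixed before carrying out the computation. (The shift bookkeeping you worry about is harmless here: the coreflection of $(y,F(y))\in\mathcal{D}$ onto $\mathcal{C}$ is $(0,F(y))$, so under the restriction equivalences the right gluing functor of $(\mathcal{C},\mathcal{D})$ is $F$ itself, with no shift.)
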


\begin{proof}
We denote by $F'$ the functor classified by the Cartesian fibration $\chi(\mathcal{C},\mathcal{D})\rightarrow \Delta^1$. The functor $F'$ is equivalent to the composition of the functor $F$ with mutation equivalences. By assumption, the functor $F'$ admits a left adjoint, denoted $E'$. It follows that $\chi(\mathcal{C},\mathcal{D})\rightarrow \Delta^1$ is also coCartesian. Using \Cref{admprop}, we see that $(\mathcal{D},\mathcal{E})$ forms a Cartesian semiorthogonal decomposition of $\mathcal{V}$. To obtain the description of the coCartesian edge $e$, consider the following diagram in $\mathcal{V}$.
\[
 \begin{tikzcd}
  (0,FE(x))\arrow[d, "\ast"]&\\
  (E(x),FE(x))&(0,x)\arrow[l, "!"]\arrow[ul]
 \end{tikzcd}
\]
The edge $(0,x)\rightarrow (0,FE(x))$ is a unit map of the adjunction $E'\dashv F'$, showing that the restriction to the second component $x\rightarrow FE(x)$ is a unit map of the adjunction $E\dashv F$. 
\end{proof}

\section{Spherical adjunctions}\label{sec3}

In this section we study spherical adjunctions in the setting of stable $\infty$-categories. Because of the added generality over spherical adjunctions of dg-categories, basic properties of spherical adjunction known in the latter setting need to be proven again. A treatment of spherical adjunctions in the setting of stable $\infty$-categories also appears in \cite{DKSS19}, so that we can use all properties proven there. We show further basic properties of spherical adjunctions of stable $\infty$-categories, most notably the 2/4 property of spherical adjunctions in \Cref{sec2.3}.

We begin with the definition of a spherical adjunction of stable $\infty$-categories.
 
\begin{definition}
An adjunction $p:\mathcal{M}\rightarrow \Delta^1$ of stable $\infty$-categories $\mathcal{A}$ and $\mathcal{B}$ is called spherical if the associated twist functor $T_\mathcal{A}$ and the associated cotwist functor $T_\mathcal{B}$, as defined in \Cref{twistconstr}, are equivalences.
\end{definition}

An immediate consequence of the definition of the twist functors is that they commute with the adjoints.

\begin{lemma}\label{commlem}
Let $F:\mathcal{A}\leftrightarrow\mathcal{B}:G$ be an adjunction of stable $\infty$-categories, with twist functor $T_\mathcal{A}$ and cotwist functor $T_\mathcal{B}$.  There exist equivalences of functors
$  FT_\mathcal{A}\simeq T_\mathcal{B} F$ and $ T_\mathcal{A}G\simeq G T_\mathcal{B}.$
\end{lemma}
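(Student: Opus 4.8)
The plan is to use the description of the twist functor from \Cref{twistconstr} and exhibit $FT_{\mathcal{A}}$ and $T_{\mathcal{B}}F$ (respectively $T_{\mathcal{A}}G$ and $GT_{\mathcal{B}}$) as two different restrictions of one and the same large diagram $\infty$-category built out of $p$-relative Kan extensions. Concretely, I would construct an $\infty$-category of diagrams in $\mathcal{M}$ over $\Delta^1$ whose vertices have the form
\[
\begin{tikzcd}
0 \arrow[d] & a \arrow[r, "!"] \arrow[d] \arrow[l] \arrow[ld, "\square", phantom] & b \arrow[d] \arrow[ld, "!"']\\
a'' \arrow[r] & a' \arrow[r, "\ast"] & b'
\end{tikzcd}
\]
with $a,a',a''\in\mathcal{A}$ and $b,b'\in\mathcal{B}$, so that the left-hand square is the biCartesian square defining $T_{\mathcal{A}}(a)=a''$ from diagram \eqref{squeq}, and the edges marked $!$ and $\ast$ on the right exhibit $b\simeq F(a)$, $b'\simeq FG(b)$ with $a'$ a fiber of a counit map, i.e. $b'$ together with $b\to b'$ exhibits the data computing $T_{\mathcal{B}}(b)=a''$ as well (recall $T_{\mathcal{B}}(b)$ is the fiber of a counit map, realized in the lower-left corner of the cotwist diagram). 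The point is that the object $a''$ in the lower-left corner simultaneously receives the map $a''\to a'$ from the twist construction applied to $a$ and, after identifying $a'\to b'$ as a Cartesian edge $G(b')\to b'$, is the fiber of the counit $FG(b)\to b$; pushing forward along $F$ then reads off $FT_{\mathcal{A}}(a)$ on one restriction and $T_{\mathcal{B}}F(a)\simeq T_{\mathcal{B}}(b)$ on the other.

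The key steps, in order, would be: (1) Assemble the required diagram $\infty$-category $\mathcal{Z}\subset\on{Fun}_{\Delta^1}(K,\mathcal{M})$ for a suitable finite simplicial set $K$ over $\Delta^1$, as a tower of full subcategories cut out by $p$-relative left/right Kan extension conditions exactly as in \Cref{twistconstr}, so that each stage restriction is a trivial fibration by \cite[4.3.2.15]{HTT}; in particular the composite restriction $\mathcal{Z}\to\mathcal{A}$ (remembering only $a$) is a trivial fibration. (2) Identify one restriction functor $\mathcal{Z}\to\mathcal{A}$, through $\mathcal{D}_5$ of \Cref{twistconstr} and then applying $F$ via a coCartesian edge, with (a functor equivalent to) $a\mapsto FT_{\mathcal{A}}(a)$; this uses \Cref{rem:twistdef} and the description of $p$-coCartesian edges as computing $F$. (3) Identify the \emph{other} restriction functor $\mathcal{Z}\to\mathcal{A}$ with $a\mapsto T_{\mathcal{B}}F(a)$, by checking that the sub-diagram on the $b,b',a''$ corner is precisely (up to equivalence and the identification $b\simeq F(a)$) the defining diagram of $\mathcal{D}'$ for the cotwist, i.e. that $a''\to a'\xrightarrow{\ast} b'$ presents $a''$ as $\on{fib}(FG(b)\to b)$. (4) Conclude that $FT_{\mathcal{A}}$ and $T_{\mathcal{B}}F$ are both obtained by composing a section of the same trivial fibration with a restriction functor, hence are equivalent. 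The statement $T_{\mathcal{A}}G\simeq GT_{\mathcal{B}}$ follows by the dual construction (or by passing to the opposite adjunction, under which twist and cotwist swap and $F,G$ swap).

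The main obstacle I expect is step (3): making precise that the corner of the big diagram involving the counit data genuinely computes the cotwist $T_{\mathcal{B}}$, i.e. verifying that the relevant square is biCartesian and that the edge labelled $\ast$ really is a Cartesian edge presenting $G(b')\to b'$, rather than merely some morphism into $\mathcal{B}$. This amounts to a careful bookkeeping of which faces are forced to be $p$-relative Kan extensions and checking the pasting of (co)Cartesian squares — the kind of argument where one must be attentive to the orientation of $\Delta^1$ under the various projections. Once the diagram $\mathcal{Z}$ is set up with the right Kan-extension conditions, the identifications themselves are formal, using the uniqueness-up-to-contractible-choice of unit and counit maps and of (co)Cartesian lifts, together with the pasting law for biCartesian squares in a stable $\infty$-category.
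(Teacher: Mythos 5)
Your high-level plan (build one diagram $\infty$-category by $p$-relative Kan extensions so that restriction to the vertex $a$ is a trivial fibration, then read both functors off a section) is the same as the paper's, but the diagram you specify is not well formed and does not encode the data you need. The arrow you draw from $b$ to $a'$ would be an edge of $\mathcal{M}$ from the fibre over $1$ to the fibre over $0$, and no such edge exists in an inner fibration over $\Delta^1$. With the Cartesian edge attached as $a'\xrightarrow{\ast}b'$ and $b'\simeq FG(b)$, one gets $a'\simeq G(b')\simeq GFGF(a)$ rather than $GF(a)$, so the left-hand square is no longer the square \eqref{squeq} computing $T_{\mathcal{A}}(a)$; for that you need the Cartesian edge $a'\xrightarrow{\ast}b$ with $b\simeq F(a)$ (and separately a coCartesian edge $a'\xrightarrow{!}b'$ to produce $FG(b)$ and the counit $b'\to b$ — this is exactly what the paper's $4$-simplex on $a,\,GF(a),\,F(a),\,FGF(a),\,F(a)$ supplies). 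Moreover the identification $T_{\mathcal{B}}(b)\simeq a''$ asserted in your sketch is a type error: $T_{\mathcal{B}}(b)=\on{fib}(FG(b)\to b)$ lives in $\mathcal{B}$ while $a''\in\mathcal{A}$; the lemma asserts $F(a'')\simeq T_{\mathcal{B}}(b)$, and your diagram never applies $F$ to $a''$ (there is no coCartesian edge out of $a''$), nor does it contain the counit $FG(b)\to b$ (your edge $b\to b'$ points the other way, i.e.\ it is $F(u_a)$).

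The substantive gap is that the identification $FT_{\mathcal{A}}(a)=\on{cof}(F(u_a))\simeq \on{fib}(cu_{F(a)})=T_{\mathcal{B}}F(a)$ is not Kan-extension bookkeeping: it uses the triangle identity $cu_{F(a)}\circ F(u_a)\simeq \on{id}_{F(a)}$ and the resulting splitting of $FGF(a)$ in the stable $\infty$-category $\mathcal{B}$, organized in the paper as a $3\times 3$ grid of biCartesian squares adjacent to the $4$-simplex mentioned above. Only because of this data is the restriction of the big diagram category to the vertex $a$ still a trivial fibration, and the natural equivalence is then obtained by restricting a section to an edge $T_{\mathcal{B}}F(a)\to FT_{\mathcal{A}}(a)$ of the diagram which is required to be an equivalence. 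In particular your step (4) is not a valid inference on its own: two functors obtained by composing a section of the same trivial fibration with two different restriction functors need not be equivalent (restricting \eqref{squeq} to $a$ versus $a''$ yields $\on{id}_{\mathcal{A}}$ versus $T_{\mathcal{A}}$). You must build the comparison edge into the diagram and then prove, using the triangle identity and the splitting argument, that the diagram category so constrained still maps to $\mathcal{A}$ by a trivial fibration; this is the heart of the proof and is missing from, or mischaracterized as bookkeeping in, your outline. The reduction of $T_{\mathcal{A}}G\simeq GT_{\mathcal{B}}$ to a dual construction is fine.
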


\begin{proof}
We construct only the equivalence $FT_\mathcal{A}\simeq T_\mathcal{B}F$, the second equivalence can be constructed analogously. Consider the full subcategory $\mathcal{D}\subset \on{Fun}((\Delta^1)^4\coprod_{\Delta^2}\Delta^4,\Gamma(F))$ spanned by diagrams of the following form.
\[
 \begin{tikzcd}
                                    & F(a) \arrow[rr, "\simeq"]                                             &                                                               & F(a) \arrow[rr]                                   &  & 0                            \\
                                    &                                                                       &                                                               &                                                   &  &                              \\
                                    & F(a) \arrow[uu, "\simeq"] \arrow[rr, "F(u_a)"] \arrow[rruu, "\simeq"] &                                                               & FGF(a) \arrow[uu, "cu_{F(a)}"'] \arrow[rr]        &  & FT_\mathcal{C}(a) \arrow[uu] \\
a \arrow[ru, "!"] \arrow[rr, "u_a", near end] &                                                                       & GF(a) \arrow[lu, "\ast"] \arrow[ru, "!"] \arrow[ruuu, "\ast"] &                                                   &  &                              \\
                                    & 0 \arrow[uu] \arrow[rr]                                               &                                                               & T_\mathcal{D}F(a) \arrow[uu] \arrow[rr, "\simeq"] &  & b \arrow[uu, "\simeq"]     
 \end{tikzcd}
\]
In the 3x3-square in the above diagram, all rows and columns are extended to biCartesian squares, which we do not depict. We also do not depict all edges of the $4$-simplex with the vertices $a,GF(a),F(a),FGF(a),F(a)$. The edge $u_a$ is a unit map of the adjunction $F\dashv G$ at $a$ and the edge $cu_{F(a)}$ is a counit map of the adjunction $F\dashv G$ at $F(a)$. Due to the involved Cartesian and coCartesian edges, equivalences and biCartesian squares, we obtain that the restriction functor $\on{res}:\mathcal{D}\rightarrow \mathcal{A}$ to the vertex $a$ is a trivial fibration.  Choosing a section of $\on{res}$ and composing with the restriction functor $\mathcal{D}\rightarrow \on{Fun}(\Delta^1,\mathcal{A})$ to the edge $T_\mathcal{D}F(a)\xrightarrow{\simeq} FT_\mathcal{C}(a)$ provides us with a natural equivalence between the functors $T_\mathcal{D}F$ and $FT_\mathcal{C}$. 
\end{proof}

\subsection{Spherical adjunctions and \texorpdfstring{$4$}{4}-periodic semiorthogonal decompositions}\label{sec4.2}

\begin{definition}
Let $\mathcal{V}$ be a stable $\infty$-category with a biCartesian semiorthogonal decomposition $(\mathcal{A},\mathcal{B})$ as defined in \Cref{sec1.5}. We call $(\mathcal{A},\mathcal{B})$ $4$-periodic, if $(\prescript{\perp}{}{\mathcal{B}}, \mathcal{A}^\perp)$ forms a semiorthogonal decomposition of $\mathcal{V}$. 
\end{definition}

The relationship between $4$-periodic semiorthogonal decompositions and spherical adjunctions is due to \cite{HLS16} and was extended to stable $\infty$-categories in \cite{DKSS19}. We cite the next proposition from \cite{DKSS19}, describing this relationship. 

\begin{proposition}\label{4pedprop1}
Let $\mathcal{V}$ be a stable $\infty$-category with a biCartesian semiorthogonal decomposition $(\mathcal{A},\mathcal{B})$. The semiorthogonal decomposition $(\mathcal{A},\mathcal{B})$ is $4$-periodic if and only if the adjunction \mbox{$\chi(\mathcal{A},\mathcal{B})\rightarrow \Delta^1$} is spherical.
\end{proposition}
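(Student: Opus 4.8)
I would argue as in \cite{DKSS19}. The plan is to realise $\mathcal V$ as the $\infty$-category of sections of the adjunction $\chi(\mathcal A,\mathcal B)\to\Delta^1$ — whose two adjoint functors are the left and right gluing functors $\lambda\colon\mathcal A\leftrightarrow\mathcal B\colon\rho$, so $\lambda\dashv\rho$ — and to extract both sphericalness of $\lambda\dashv\rho$ and $4$-periodicity of $(\mathcal A,\mathcal B)$ from a single chain of semiorthogonal decompositions of $\mathcal V$.

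First I would build the chain. By \Cref{equlem} the $\infty$-category of sections of $\chi(\mathcal A,\mathcal B)\to\Delta^1$ is equivalent to $\mathcal V$, and \Cref{adjsodlem} applied to $\lambda\dashv\rho$ exhibits, inside $\mathcal V$, the given biCartesian semiorthogonal decomposition $(\mathcal A,\mathcal B)$ flanked by a coCartesian semiorthogonal decomposition $(\mathcal A^\perp,\mathcal A)$ on the left and a Cartesian semiorthogonal decomposition $(\mathcal B,{}^\perp\mathcal B)$ on the right; \Cref{sodrem} gives, in section coordinates, $\mathcal A^\perp\simeq\{(\rho x,x)\}$ and ${}^\perp\mathcal B\simeq\{(y,\lambda y)\}$, and the mutation equivalences of \Cref{mutrem} identify $\mathcal A^\perp\simeq\mathcal B$, ${}^\perp\mathcal B\simeq\mathcal A$, and the two adjacent gluing functors with $\rho$ and $\lambda$ up to a shift. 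Next I would bring in \Cref{adjsodlem2} and its opposite: the coCartesian decomposition $(\mathcal A^\perp,\mathcal A)$ is in addition Cartesian precisely when the gluing functor $\rho$ admits a right adjoint, the Cartesian decomposition $(\mathcal B,{}^\perp\mathcal B)$ is in addition coCartesian precisely when $\lambda$ admits a left adjoint, and in either case the chain extends by one further semiorthogonal decomposition. Combined with \Cref{perplem,admprop}, asking for $({}^\perp\mathcal B,\mathcal A^\perp)$ to be a semiorthogonal decomposition of $\mathcal V$ should then be seen to be equivalent to asking that the chain extends in both directions and closes up into a $\mathbb Z/4$-periodic sequence of biCartesian semiorthogonal decompositions, i.e.\ that the orthogonal complement operation has period $4$ on the subcategories involved and that the remaining fiber functor $\on{fib}\colon\{{}^\perp\mathcal B,\mathcal A^\perp\}\to\mathcal V$ is an equivalence.

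It would then remain to identify the ``monodromy'' of this four-term cycle with the twist functors: composing, starting at $\mathcal A$, the four gluing functors together with the mutation equivalences of \Cref{mutrem} once around the cycle produces an endofunctor of $\mathcal A$, and I would show, by unwinding the iterated relative Kan extension defining $T_{\mathcal A}$ in \Cref{twistconstr} and reorganising its biCartesian squares through \Cref{mutlem}, that this round-trip endofunctor agrees with $T_{\mathcal A}$ up to a single shift, and symmetrically that the round trip based at $\mathcal B$ recovers $T_{\mathcal B}$ up to a shift. Since the cycle closes up into a $\mathbb Z/4$-periodic sequence of semiorthogonal decompositions exactly when these round-trip functors are equivalences, this yields the desired statement: $(\mathcal A,\mathcal B)$ is $4$-periodic if and only if $T_{\mathcal A}$ and $T_{\mathcal B}$ are equivalences, i.e.\ if and only if $\chi(\mathcal A,\mathcal B)\to\Delta^1$ is spherical. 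I expect the main obstacle to be precisely this last identification: it calls for an explicit diagram chase matching the multi-step Kan extension definition of the twist functors against the concatenation of gluing and mutation functors around the four decompositions, and in particular a careful accounting of the shift that \Cref{adjsodlem} produces each time one passes between a Cartesian and a coCartesian structure, so that the cycle of length four is reassembled correctly rather than being off by a shift. Everything else — the admissibility translations and the recognition of $4$-periodicity through the fiber functor — is formal given the results of \Cref{sec1.5}.
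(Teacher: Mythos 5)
First, a point of comparison: the paper does not prove \Cref{4pedprop1} at all — it is cited from \cite{DKSS19} (the result originates in \cite{HLS16}) — so your proposal has to stand on its own. Its formal part is fine and matches the expected strategy: since $(\mathcal{A},\mathcal{B})$ is biCartesian, \Cref{dkssprop1,dkssprop2,admprop,perplem} do give that $\mathcal{A}$ and $\mathcal{B}$ are admissible, that $(\mathcal{A}^\perp,\mathcal{A})$ and $(\mathcal{B},\prescript{\perp}{}{\mathcal{B}})$ are semiorthogonal decompositions of the indicated (co)Cartesian types, and that once $(\prescript{\perp}{}{\mathcal{B}},\mathcal{A}^\perp)$ is also a semiorthogonal decomposition all four subcategories are admissible and the whole cycle is biCartesian. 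The identification of $\mathcal{V}$ with the sections of $\chi(\mathcal{A},\mathcal{B})\rightarrow\Delta^1$ via \Cref{equlem,adjsodlem,sodrem,cartsect} is likewise unproblematic.

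The genuine gap is the step you yourself flag, and it is more serious than a deferred diagram chase: your criterion ``the cycle closes exactly when the round-trip functors are equivalences'' is circular in the direction sphericalness $\Rightarrow$ $4$-periodicity. The round trip based at $\mathcal{A}$ uses the mutation around $\mathcal{A}^\perp$ (equivalently, the gluing data of the fourth decomposition $(\prescript{\perp}{}{\mathcal{B}},\mathcal{A}^\perp)$), and by \Cref{mutrem,admprop} this exists only once $\mathcal{A}^\perp$ is known to be admissible, i.e.\ once $4$-periodicity already holds. So while the forward direction ($4$-periodic $\Rightarrow$ $T_\mathcal{A},T_\mathcal{B}$ equivalences) can plausibly be completed by identifying the twist and cotwist with composites of the two mutation equivalences — itself a nontrivial comparison with \Cref{twistconstr} that you only sketch, including the shift bookkeeping — the converse direction has no mechanism in your proposal: starting from the invertibility of $T_\mathcal{A}$ and $T_\mathcal{B}$ one must \emph{construct} the semiorthogonal decomposition $(\prescript{\perp}{}{\mathcal{B}},\mathcal{A}^\perp)$, e.g.\ by exhibiting for every object of $\mathcal{V}$ a fiber sequence with fiber in $\prescript{\perp}{}{\mathcal{B}}$ and cofiber in $\mathcal{A}^\perp$ and verifying the equivalence required in \Cref{soddef}; this is precisely where the hypothesis on the (co)twist enters and precisely the content the citation to \cite{DKSS19} supplies. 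Note that the paper's own related arguments (\Cref{Kconstr,uculem3}) obtain the analogous implication only by invoking \Cref{4pedprop2}, again imported from \cite{DKSS19}, so nothing in \Cref{sec1.5} lets you conclude it formally.
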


\begin{corollary}\label{lrcor}
 Let $E\dashv F\dashv G$ be an adjoint triple of functors between stable $\infty$-categories. Then the adjunction $E\dashv F$ is spherical if and only if the adjunction $F\dashv G$ is spherical.
\end{corollary}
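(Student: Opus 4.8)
The plan is to route everything through the correspondence between spherical adjunctions and $4$-periodic semiorthogonal decompositions (Proposition~\ref{4pedprop1}).

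First I would apply Lemma~\ref{adjsodlem} to the adjunction $F\dashv G$: writing $\mathcal{M}\to\Delta^1$ for the associated biCartesian fibration and $\mathcal{V}=\on{Fun}_{\Delta^1}(\Delta^1,\mathcal{M})$, this produces stable subcategories $\mathcal{A},\mathcal{B},\mathcal{C},\mathcal{D}\subset\mathcal{V}$ with semiorthogonal decompositions $(\mathcal{A},\mathcal{B})$, $(\mathcal{B},\mathcal{C})$, $(\mathcal{C},\mathcal{D})$, the middle one biCartesian, and with $\chi(\mathcal{B},\mathcal{C})\to\Delta^1$ identified — up to mutation and restriction equivalences and suspensions of the two adjoint functors, none of which changes the (co)twist functors — with the adjunction $F\dashv G$. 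Since $F$ admits the left adjoint $E$, Lemma~\ref{adjsodlem2} applies and shows in addition that $(\mathcal{C},\mathcal{D})$ is biCartesian, with $\chi(\mathcal{C},\mathcal{D})\to\Delta^1$ identified in the same way with $E\dashv F$, and that $\mathcal{E}:=\prescript{\perp}{}{\mathcal{D}}$ gives a further (Cartesian) semiorthogonal decomposition $(\mathcal{D},\mathcal{E})$. Using Lemma~\ref{perplem} on these decompositions I would record the identities $\mathcal{B}^\perp=\mathcal{A}$, $\mathcal{C}^\perp=\mathcal{B}$ and $\prescript{\perp}{}{\mathcal{C}}=\mathcal{D}$ (and $\prescript{\perp}{}{\mathcal{D}}=\mathcal{E}$ by definition).

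Now the computation. By Proposition~\ref{4pedprop1}, $F\dashv G$ is spherical iff $(\mathcal{B},\mathcal{C})$ is $4$-periodic, i.e.\ iff $(\prescript{\perp}{}{\mathcal{C}},\mathcal{B}^\perp)=(\mathcal{D},\mathcal{A})$ is a semiorthogonal decomposition; and $E\dashv F$ is spherical iff $(\mathcal{C},\mathcal{D})$ is $4$-periodic, i.e.\ iff $(\prescript{\perp}{}{\mathcal{D}},\mathcal{C}^\perp)=(\mathcal{E},\mathcal{B})$ is a semiorthogonal decomposition. The key observation is that each of these two conditions is equivalent to the single equality of subcategories $\mathcal{A}=\mathcal{E}$. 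Indeed, if $(\mathcal{D},\mathcal{A})$ is a semiorthogonal decomposition then $\mathcal{A}=\prescript{\perp}{}{\mathcal{D}}=\mathcal{E}$ by Lemma~\ref{perplem}, and conversely if $\mathcal{A}=\mathcal{E}$ then $(\mathcal{D},\mathcal{A})=(\mathcal{D},\mathcal{E})$ is one by Lemma~\ref{adjsodlem2}; symmetrically, if $(\mathcal{E},\mathcal{B})$ is a semiorthogonal decomposition then $\mathcal{E}=\mathcal{B}^\perp=\mathcal{A}$, and if $\mathcal{A}=\mathcal{E}$ then $(\mathcal{E},\mathcal{B})=(\mathcal{A},\mathcal{B})$ is one. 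Hence both sphericalness statements are equivalent to $\mathcal{A}=\mathcal{E}$, and in particular to each other.

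I expect the only genuine work to be bookkeeping: carefully tracking which of $\mathcal{A},\dots,\mathcal{E}$ is the left or right orthogonal of which, and checking that the passage between an adjunction and its associated gluing adjunction — which differs only by equivalences and by suspending the adjoint functors by inverse amounts — leaves sphericalness unchanged, since the twist (resp.\ cotwist) functor depends only on the adjunction monad $GF$ (resp.\ comonad $FG$) together with its unit (resp.\ counit), both of which are unaffected by such suspensions and are merely conjugated by equivalences. One should also not forget that invoking Lemma~\ref{adjsodlem2}, and hence using the left adjoint $E$, is precisely what upgrades $(\mathcal{C},\mathcal{D})$ from Cartesian to biCartesian and supplies the subcategory $\mathcal{E}=\prescript{\perp}{}{\mathcal{D}}$; without it the reduction would not go through.
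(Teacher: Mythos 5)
Your proposal is correct and follows essentially the same route as the paper: it passes via \Cref{adjsodlem} and \Cref{adjsodlem2} to the chain of semiorthogonal decompositions $(\mathcal{A},\mathcal{B}),\dots,(\mathcal{D},\mathcal{E})$ of $\on{Fun}_{\Delta^1}(\Delta^1,\mathcal{M})$ and applies \Cref{4pedprop1} to reduce the claim to the equivalence of $4$-periodicity of $(\mathcal{B},\mathcal{C})$ and of $(\mathcal{C},\mathcal{D})$. Your reduction of both conditions to the single equality $\mathcal{A}=\mathcal{E}$ (via \Cref{perplem}) just makes explicit what the paper records as an ``observation,'' and your remark that the gluing adjunctions differ from $F\dashv G$ and $E\dashv F$ only by mutations and compensating shifts, which do not affect sphericalness, is the correct bookkeeping.
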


\begin{proof}
The adjoint triple $E\dashv F \dashv G$ corresponds by \Cref{adjsodlem} and \Cref{adjsodlem2} to a sequence of semiorthogonal decompositions $(\mathcal{A},\mathcal{B}),(\mathcal{B},\mathcal{C}),(\mathcal{C},\mathcal{D}),(\mathcal{D},\mathcal{E})$ of $\on{Fun}_{\Delta^1}(\Delta^1,\Gamma(F))$. \Cref{4pedprop1} implies the following.
\begin{itemize}
\item The adjunction $E\dashv F$ is spherical if and only if the semiorthogonal decomposition $(\mathcal{C},\mathcal{D})$ is $4$-periodic. 
\item The adjunction $F\dashv G$ is spherical if and only if the semiorthogonal decomposition $(\mathcal{B},\mathcal{C})$ is $4$-periodic. 
\end{itemize}
The statement follows from the observation that $(\mathcal{C},\mathcal{D})$ is $4$-periodic if and only if $(\mathcal{B},\mathcal{C})$ is $4$-periodic.
\end{proof}

We will further need the following proposition shown in \cite{DKSS19}. 

\begin{proposition}\label{4pedprop2}
Let $F:\mathcal{A}\leftrightarrow \mathcal{B}:G$ be a spherical adjunction. Then $F$ admits a further left adjoint $E$ and $G$ admits a further right adjoint $H$ which satisfy $E\simeq T_\mathcal{A}^{-1} G$ and $H\simeq F T_\mathcal{A}^{-1}$.
\end{proposition}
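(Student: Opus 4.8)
The plan is to exploit the correspondence between the spherical adjunction $F\dashv G$ and the associated $4$-periodic semiorthogonal decomposition, together with the general machinery of \Cref{adjsodlem} and \Cref{adjsodlem2}. First I would pass to $\mathcal{V}=\on{Fun}_{\Delta^1}(\Delta^1,\mathcal{M})$ for $\mathcal{M}\to\Delta^1$ the biCartesian fibration encoding $F\dashv G$, with the four stable subcategories $\mathcal{A},\mathcal{B},\mathcal{C},\mathcal{D}$ of \Cref{adjsodlem}; recall the gluing functors recover $G$ (for $(\mathcal{A},\mathcal{B})$ and $(\mathcal{B},\mathcal{C})$, up to mutation) and $F$ (for $(\mathcal{B},\mathcal{C})$ and $(\mathcal{C},\mathcal{D})$). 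Since $F\dashv G$ is spherical, \Cref{4pedprop1} tells us that the relevant semiorthogonal decomposition is $4$-periodic, which produces a new semiorthogonal decomposition $(\prescript{\perp}{}{\mathcal{C}},\mathcal{B}^\perp)$ of $\mathcal{V}$ — equivalently, the sequence of semiorthogonal decompositions extends one step further in each direction. Concretely, $4$-periodicity of $(\mathcal{B},\mathcal{C})$ gives that $(\mathcal{C},\mathcal{D})$ is again Cartesian \emph{and coCartesian} and hence by \Cref{admprop} that $(\mathcal{D},\mathcal{E})$ with $\mathcal{E}=\prescript{\perp}{}{\mathcal{D}}$ is a Cartesian semiorthogonal decomposition; this is exactly the hypothesis of \Cref{adjsodlem2}, so $F$ acquires a left adjoint $E$. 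Dually, extending in the other direction ($\prescript{\perp}{}{\mathcal{B}}$ appearing to the left of $\mathcal{A}$) produces the further right adjoint $H$ of $G$.

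Next I would identify the functors. For $E\simeq T_\mathcal{A}^{-1}G$: the twist functor $T_\mathcal{A}$ is, by \Cref{twistconstr} and \Cref{rem:twistdef}, the cofiber of the unit $u:\on{id}_\mathcal{A}\to GF$, and by sphericalness it is invertible. I would compare the two ways of producing a functor $\mathcal{B}\to\mathcal{A}$: on one hand $G$ itself (a right adjoint to $F$), on the other hand the composite of $E$ with the relevant structure; the cofiber sequence $\on{id}_\mathcal{A}\xrightarrow{u} GF\to T_\mathcal{A}$ relates $G$ and the "shifted" datum appearing in the extended semiorthogonal decomposition, and tracing through the mutation equivalences of \Cref{mutrem} turns the defining biCartesian square for the extra gluing functor into the identification $EF \oplus$-type relation whose global content is $T_\mathcal{A}E\simeq G$, i.e.\ $E\simeq T_\mathcal{A}^{-1}G$. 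The identification $H\simeq FT_\mathcal{A}^{-1}$ follows by the dual argument, using $T_\mathcal{A}G\simeq GT_\mathcal{B}$ from \Cref{commlem} to rewrite $FT_\mathcal{A}^{-1}$ in terms of $T_\mathcal{B}$ if one prefers to phrase the cotwist side symmetrically.

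The main obstacle I expect is not the \emph{existence} of $E$ and $H$ — that is a fairly mechanical consequence of $4$-periodicity via \Cref{adjsodlem2} and \Cref{4pedprop1} — but the precise bookkeeping of mutation equivalences and shifts needed to pin down $E\simeq T_\mathcal{A}^{-1}G$ on the nose rather than merely up to an unspecified autoequivalence. Each passage between adjacent semiorthogonal decompositions in the $4$-periodic sequence introduces a mutation (as in \Cref{mutrem}) and, as seen already in the proof of \Cref{adjsodlem}, these mutations carry shifts by $[\pm 1]$ that reassemble into the twist; getting the composite of four such steps to close up to exactly $T_\mathcal{A}$ (and not $T_\mathcal{A}[k]$) requires carefully orienting the biCartesian squares. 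I would handle this by writing out one explicit large diagram in $\mathcal{V}$ — in the style of the diagram in the proof of \Cref{commlem} — simultaneously exhibiting a unit map of $E\dashv F$, the cofiber sequence defining $T_\mathcal{A}$, and the $(\mathcal{C},\mathcal{D})$- and $(\mathcal{D},\mathcal{E})$-(co)Cartesian edges, so that the restriction functor to the source object is a trivial fibration and a section yields the asserted equivalence directly.
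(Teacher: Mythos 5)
Two preliminary remarks: the paper itself gives no proof of this proposition — it is quoted verbatim from \cite{DKSS19} — so your argument has to stand on its own, and in spirit it does follow the route of the cited source (4-periodicity plus mutation bookkeeping). Your existence step is right in outline but circular as written: \Cref{adjsodlem2} takes the existence of a left adjoint $E$ of $F$ as a \emph{hypothesis} and outputs the extended sequence of semiorthogonal decompositions, so it cannot be used to conclude that $E$ exists. The repair uses only tools you already name: 4-periodicity of $(\mathcal{B},\mathcal{C})$ says that $(\prescript{\perp}{}{\mathcal{C}},\mathcal{B}^\perp)=(\mathcal{D},\mathcal{A})$ is a semiorthogonal decomposition, so by \Cref{perplem} and \Cref{dkssprop1} the subcategory $\mathcal{D}$ is left admissible and $\mathcal{A}$ is right admissible, whence by \Cref{dkssprop2} (or \Cref{admprop}) the fibrations $\chi(\mathcal{C},\mathcal{D})\rightarrow\Delta^1$ and $\chi(\mathcal{A},\mathcal{B})\rightarrow\Delta^1$ are biCartesian; since these classify $F$ and $G$ up to mutations and the restriction equivalences of \Cref{adjsodlem}, $F$ acquires a left adjoint $E$ and $G$ a right adjoint $H$.

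The genuine gap is the identification $E\simeq T_\mathcal{A}^{-1}G$ and $H\simeq FT_\mathcal{A}^{-1}$, which is the actual content of the proposition. Asserting that the mutation squares produce an ``$EF\oplus$-type relation whose global content is $T_\mathcal{A}E\simeq G$'' is not an argument: no direct-sum splitting is available for a general spherical adjunction (that is special to the local-systems examples), and you never construct the comparison transformation $ET_\mathcal{B}\rightarrow G$ (equivalently $G\rightarrow T_\mathcal{A}E$), let alone prove it is an equivalence — which is precisely where the shift and mutation bookkeeping you flag as the ``main obstacle'' has to be carried out; as proposed, that step is a hope rather than a proof. It can be closed without redoing the DKSS-style diagram chase, using results of this paper whose proofs do not rely on the present proposition: once $E$ is known to exist, condition 2 of \Cref{2/4lem1}, assembled into a natural transformation via \Cref{2/4rem}, gives a natural equivalence $\eta_3\colon ET_\mathcal{B}\xrightarrow{\ \sim\ }G$ (here $T_\mathcal{B}$ is the cotwist, invertible by sphericalness); combining with $T_\mathcal{A}G\simeq GT_\mathcal{B}$ from \Cref{commlem} yields $E\simeq GT_\mathcal{B}^{-1}\simeq T_\mathcal{A}^{-1}G$. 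Then $H\simeq FT_\mathcal{A}^{-1}$ (and in fact its existence) follows at once, since $FT_\mathcal{A}^{-1}$ is a right adjoint of $G\simeq T_\mathcal{A}E$, obtained by composing $E\dashv F$ with $T_\mathcal{A}\dashv T_\mathcal{A}^{-1}$.
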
 

In the following construction we start with an adjoint triple of functors of stable $\infty$-categories and investigate in more detail the condition of $4$-periodicity of the corresponding semiorthogonal decompositions.

\begin{construction}\label{Kconstr} 
Consider an adjunction $\mathcal{M}\rightarrow \Delta^1$ of stable $\infty$-categories, classifying the adjoint pair of functors $F:\mathcal{Y}\leftrightarrow \mathcal{X}:G$. Assume further that $F$ admits a left adjoint $E$. We denote $\mathcal{V}=\on{Fun}_{\Delta^1}(\Delta^1,\mathcal{M})$ and $\mathcal{A},\mathcal{B},\mathcal{C},\mathcal{D}$ the stable subcategories of $\mathcal{V}$ defined in \Cref{adjsodlem}. We further denote $\mathcal{E}=\prescript{\perp}{}{\mathcal{D}}$. \Cref{adjsodlem2} implies that $(\mathcal{D},\mathcal{E})$ also forms a semiorthogonal decomposition of $\mathcal{V}$. We denote the cotwist functor of the adjunction $F\dashv G$ by $T_\mathcal{X}$ and the twist functor of the adjunction $E\dashv F$ by $T_\mathcal{X}'$. We use in the following the notation introduced in \Cref{sodrem}.

Consider the full subcategory $\mathcal{K}_1$ of $\on{Fun}(\Delta^1\times\Delta^1,\mathcal{V})$ spanned by biCartesian squares 
\[ 
 \begin{tikzcd}
  \left(0,x\right)\arrow[r, "!"]\arrow[dr, phantom, "\square"]\arrow[d]& \left(E(x),FE(x)\right)\arrow[d, "\ast"]\\
  \left(0,0\right)\arrow[r]& \left(E(x),T_\mathcal{X}'(x)\right)
 \end{tikzcd}
\]
such that the top edge is $(\mathcal{C},\mathcal{D})$-coCartesian and the right edge is $(\mathcal{D},\mathcal{E})$-Cartesian. Using \Cref{mutlem}, we see that the restriction functor to the vertex $(E(x),T_\mathcal{X}'(x))$ is a trivial fibration from $\mathcal{K}_1$ to $\mathcal{E}$. All diagrams in $\mathcal{K}_1$ have by \Cref{adjsodlem2} the property that the restriction to the edge $x\rightarrow FE(x)$ is a unit map of the adjunction $E\dashv F$.

Consider the full subcategory $\mathcal{K}_2$ of $\on{Fun}(\Delta^1\times\Delta^1,\mathcal{V})$ spanned by diagrams of the form 
 \[  \begin{tikzcd}
\left(0,T_\mathcal{X}(x)\right)\arrow[r]\arrow[dr, phantom, "\square"]\arrow[d]& \left(G(x),FG(x)\right)\arrow[d]\\
\left(0,0\right)\arrow[r]& \left(G(x),x\right)
\end{tikzcd}\]
such that the restriction to $\on{Fun}(\Delta^2,\chi(F))$ is of the following form. 
\[\begin{tikzcd}
G(x)\arrow[r, "!"]\arrow[dr, "*"]& FG(x)\arrow[d]\\
& F
  \end{tikzcd}
\]
We observe that the restriction functor to the vertex $(G(x),x)$ is a trivial fibration from $\mathcal{K}_2$ to $\mathcal{A}$.

If $\mathcal{K}_1=\mathcal{K}_2$, we immediately obtain that $\mathcal{A}=\mathcal{E}$. This implies that the semiorthogonal decomposition $(\mathcal{A},\mathcal{B})$ is $4$-periodic and that the adjunction $\mathcal{M}\rightarrow \Delta^1$ is spherical. The converse is also true, as we will show \Cref{uculem3}.
\end{construction}

\subsection{Unit and counit maps of spherical adjunctions}
In this section we study the properties of the unit and counit maps of spherical adjunctions. The following lemma is due to \cite{DKSS19}.

\begin{lemma}\label{uculem1}
Let $\mathcal{V}$ be a stable $\infty$-category with a $4$-periodic semiorthogonal decomposition $(\mathcal{C},\mathcal{D})$. Consider a diagram $\Delta^2\times\Delta^2\rightarrow \mathcal{V}$ of the form 
\[ 
 \begin{tikzcd}
z\arrow[dr,phantom,"\square"] \arrow[r] \arrow[d] & b\arrow[dr,phantom,"\square"] \arrow[d, "!"] \arrow[r]     & 0 \arrow[d]  \\
c'\arrow[dr,phantom,"\square"] \arrow[d] \arrow[r] & c \arrow[dr,phantom,"\square"]\arrow[d] \arrow[r, "\ast"] & d \arrow[d] \\
0 \arrow[r]           & c'' \arrow[r]                  & z'          
\end{tikzcd}
\]
satisfying
\begin{itemize}
\item $c,c',c''\in \mathcal{C}$, $d\in \mathcal{D}$ and $b\in \mathcal{C}^\perp$,
\item the edge $b\rightarrow c$ is $(\mathcal{C}^\perp,\mathcal{C})$-coCartesian and the edge $c\rightarrow d$ is $(\mathcal{C},\mathcal{D})$-Cartesian.
\end{itemize}
Then the following statements are equivalent. 
\begin{enumerate} 
\item The edge $b\rightarrow c''$ is $(\mathcal{C}^\perp,\mathcal{C})$-Cartesian.  
\item The edge $c'\rightarrow d$ is $(\mathcal{C},\mathcal{D})$-coCartesian.
\end{enumerate}
\end{lemma}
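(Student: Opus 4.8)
The plan is to reduce both conditions to the single statement that the upper-left vertex $z$ of the diagram lies in $\prescript{\perp}{}{\mathcal{D}}$, and to extract the relevant equality of subcategories from the $4$-periodicity hypothesis. By definition, $(\mathcal{C},\mathcal{D})$ being $4$-periodic means $(\prescript{\perp}{}{\mathcal{D}},\mathcal{C}^\perp)$ is a semiorthogonal decomposition of $\mathcal{V}$, and then \Cref{perplem} gives $\prescript{\perp}{}{\mathcal{D}}=(\mathcal{C}^\perp)^\perp$.

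The first step would be to identify the two fibers appearing in the statement. Pasting the two left-hand biCartesian squares of the given $3\times 3$ diagram on top of each other, and using that the lower-left corner is $0$, produces a biCartesian square with corners $z, b, 0, c''$; this exhibits $z\simeq\on{fib}(b\to c'')$. Pasting the two top biCartesian squares side by side, and using that the upper-right corner is $0$, produces a biCartesian square with corners $z, 0, c', d$; this exhibits $z\simeq\on{fib}(c'\to d)$. So the fibers occurring in conditions 1 and 2 are both canonically the object $z$.

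The second step would be to invoke \Cref{cartsect}. For that I would first note that $(\mathcal{C},\mathcal{D})$, being a biCartesian semiorthogonal decomposition, makes $\mathcal{C}$ admissible, so $(\mathcal{C}^\perp,\mathcal{C})$ is a semiorthogonal decomposition; and it is Cartesian because $\mathcal{C}^\perp$ is right admissible, which follows from the $4$-periodicity since $((\mathcal{C}^\perp)^\perp,\mathcal{C}^\perp)=(\prescript{\perp}{}{\mathcal{D}},\mathcal{C}^\perp)$ is a semiorthogonal decomposition (using \Cref{dkssprop1} and \Cref{dkssprop2}). Likewise $(\mathcal{C},\mathcal{D})$ is coCartesian. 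Then \Cref{cartsect}(1) applied to $(\mathcal{C}^\perp,\mathcal{C})$ says that, since $b\in\mathcal{C}^\perp$ and $c''\in\mathcal{C}$, condition 1 holds iff $z=\on{fib}(b\to c'')\in(\mathcal{C}^\perp)^\perp$; and \Cref{cartsect}(2) applied to $(\mathcal{C},\mathcal{D})$ says that, since $c'\in\mathcal{C}$ and $d\in\mathcal{D}$, condition 2 holds iff $z=\on{fib}(c'\to d)\in\prescript{\perp}{}{\mathcal{D}}$. Combined with $(\mathcal{C}^\perp)^\perp=\prescript{\perp}{}{\mathcal{D}}$, this yields $1\Leftrightarrow 2$.

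The square-pasting and the manipulation of orthogonal complements are routine; the one place requiring attention is checking the Cartesian and coCartesian hypotheses of \Cref{cartsect}, which is exactly where the $4$-periodicity of $(\mathcal{C},\mathcal{D})$ (not just its being biCartesian) must be fed in. The remaining hypotheses of the lemma, namely that $b\to c$ is $(\mathcal{C}^\perp,\mathcal{C})$-coCartesian and $c\to d$ is $(\mathcal{C},\mathcal{D})$-Cartesian, simply re-express via the biCartesian squares the conditions $d\in\mathcal{D}$ and $b\in\mathcal{C}^\perp$ and are not otherwise needed; alternatively one could bypass \Cref{cartsect} and run the same fiber identifications through the mutation squares of \Cref{mutlem} directly, at the cost of a slightly longer diagram chase.
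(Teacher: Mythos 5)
Your argument is correct and takes essentially the same route as the paper: both proofs reduce condition 1 and condition 2 to the statement that the total (co)fiber of the $3\times 3$ diagram lies in $(\mathcal{C}^\perp)^\perp$ respectively $\prescript{\perp}{}{\mathcal{D}}$, and then use $4$-periodicity together with \Cref{perplem} to identify these two orthogonals. The only cosmetic differences are that the paper runs the two reductions through \Cref{mutlem} applied to the pasted squares with corners $z,b,0,c''$ and $c',d,0,z'$ (comparing them via $z'\simeq z[1]$) rather than through \Cref{cartsect} with the single object $z$, and it leaves implicit the admissibility and (co)Cartesianness checks that you spell out.
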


\begin{proof}
By \Cref{mutlem} the edge $b\rightarrow c''$ is $(\mathcal{C}^\perp,\mathcal{C})$-Cartesian if and only if $z\in \mathcal{C}^{\perp\perp}$ and the edge $c'\rightarrow d$ is $(\mathcal{C},\mathcal{D})$-coCartesian if and only if $z'\in \prescript{\perp}{}{\mathcal{D}}$. We obtain that statement 1 and statement 2 are equivalent using the equivalence $z[1]\simeq z'$ and that by $4$-periodicity $\mathcal{C}^{\perp\perp}=\prescript{\perp}{}{\mathcal{D}}$.
\end{proof}

\begin{remark}\label{ucurem}
Let $F:\mathcal{Y}\leftrightarrow:\mathcal{X}:G$ be a spherical adjunction of stable $\infty$-categories. Denote the left adjoint of $F$ by $E$. Consider the following fiber and cofiber sequence in $\mathcal{X}$. 
\[ 
 \begin{tikzcd}
  x' \arrow[d]\arrow[dr, phantom, "\square"] \arrow[r] & x \arrow[d] \\
  0 \arrow[r]           & x''          
 \end{tikzcd}
\]
\Cref{uculem1} implies that the following two conditions are equivalent.
\begin{enumerate}
\item The edge $x'\rightarrow x$ is a unit map of the adjunction $E\dashv F$.
\item The edge $x\rightarrow x''$ is a counit map of the adjunction $F\dashv G$.
\end{enumerate}
\end{remark}

The next lemma shows, that in the setting of \Cref{ucurem} the other pair of unit and counit maps is also related.

\begin{lemma}\label{uculem2}
Let $F:\mathcal{Y}\leftrightarrow\mathcal{X}:G$ be a spherical adjunction of stable $\infty$-categories. Denote the left adjoint of $F$ by $E$. Consider the following fiber and cofiber sequence in $\mathcal{Y}$.
\[ 
 \begin{tikzcd}
  y' \arrow[d]\arrow[dr, phantom, "\square"] \arrow[r] & y \arrow[d] \\
  0 \arrow[r]           & y''          
 \end{tikzcd}
\]
The following two are equivalent.
\begin{enumerate}
\item The edge $y'\rightarrow y$ is a unit map of the adjunction $F\dashv G$. 
\item The edge $y\rightarrow y''$ is counit map of the adjunction $E\dashv F$.
\end{enumerate}
\end{lemma}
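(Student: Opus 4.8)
\textbf{Proof proposal for Lemma~\ref{uculem2}.}

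The plan is to deduce this statement from \Cref{uculem1} (equivalently \Cref{ucurem}) by transporting the situation on $\mathcal{X}$ to $\mathcal{Y}$ using the adjoints, in a way parallel to \Cref{ucurem} but applied to the \emph{other} side. The key observation is that $F \dashv G$ being spherical entails, by \Cref{4pedprop2}, that $G$ also admits a further right adjoint $H \simeq F T_{\mathcal{X}}^{-1}$; and by \Cref{lrcor} together with the symmetry of the situation, the adjoint quadruple $E \dashv F \dashv G \dashv H$ has the property that $E\dashv F$, $F\dashv G$, and $G\dashv H$ are all spherical. So one may equally well apply \Cref{uculem1}/\Cref{ucurem} to the spherical adjunction $G \dashv H$ (with left adjoint $F$): for a fiber/cofiber sequence $y' \to y \to y''$ in $\mathcal{Y} = \on{source}(G)$, the edge $y' \to y$ is a unit map of $F \dashv G$ if and only if the edge $y \to y''$ is a counit map of $G \dashv H$. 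This is not yet the claim, so a second input is needed.

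The second step is to identify counit maps of $G \dashv H$ with counit maps of $E \dashv F$, up to the twist functor. This is where the explicit formulas $E \simeq T_{\mathcal{X}}^{-1} G$ and $H \simeq F T_{\mathcal{X}}^{-1}$ from \Cref{4pedprop2} (applied both to $F\dashv G$ and, in the appropriate form, to $G\dashv H$) enter. Concretely, I would set up a diagram $\infty$-category along the lines of \Cref{commlem} and \Cref{adjsodlem2}: build a diagram in (an appropriate biCartesian fibration over $\Delta^1$ encoding) the relevant adjunctions whose vertices simultaneously witness a counit map $FG(x') \to x'$ of $F\dashv G$, the cotwist square producing $T_{\mathcal{X}}$, and — after applying $T_{\mathcal{X}}^{\pm 1}$, which is an equivalence and hence preserves (co)limits, (co)Cartesian edges, and (co)unit maps up to coherent equivalence — a counit map of $E \dashv F$. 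The cleanest route is probably to observe that $T_{\mathcal{Y}}$ (the twist of $F\dashv G$, equivalently by \Cref{commlem} compatible with $T_{\mathcal{X}}$ via $FT_{\mathcal{Y}}\simeq T_{\mathcal{X}}F$ and $T_{\mathcal{Y}}G \simeq GT_{\mathcal{X}}$) intertwines the two pairs of (co)unit maps on the $\mathcal{Y}$-side, so that a fiber/cofiber sequence $y'\to y\to y''$ has $y'\to y$ a unit of $F\dashv G$ iff $y\to y''$ is a counit of $E\dashv F$, the discrepancy being absorbed into the equivalence $T_{\mathcal{Y}}$.

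Alternatively — and this may be the shorter path — one applies \Cref{ucurem} directly but with the roles of $\mathcal{X}$ and $\mathcal{Y}$ and the adjunctions permuted: since $E \dashv F$ is spherical with left adjoint (call it) $D \dashv E$ obtained from \Cref{4pedprop2}, and the "middle" object category of $E\dashv F$ is $\mathcal{Y}$, \Cref{ucurem} applied to the spherical adjunction $E \dashv F$ says: for a fiber/cofiber sequence $y' \to y \to y''$ in $\mathcal{Y}$, the edge $y'\to y$ is a unit of $D\dashv E$ iff $y \to y''$ is a counit of $E \dashv F$. Then it remains to match units of $D \dashv E$ with units of $F \dashv G$; but by \Cref{4pedprop2} the adjoints of a spherical adjunction repeat $4$-periodically up to the twist, so $D\dashv E$ and $F\dashv G$ have the same unit maps up to an equivalence (the twist), giving exactly the claimed equivalence of conditions.

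\textbf{Main obstacle.} The real work is the bookkeeping in the second step: making precise the sense in which "units of $D\dashv E$ agree with units of $F\dashv G$ up to $T$", i.e.\ constructing a single diagram $\infty$-category (in the style of \Cref{twistconstr} and \Cref{commlem}) all of whose defining conditions are expressed by (co)Cartesian edges, equivalences, and biCartesian squares — so that the relevant restriction functor is a trivial fibration — and then reading off from it that condition~(1) holds for a section iff condition~(2) does. Verifying that $4$-periodicity of the semiorthogonal decomposition is exactly what makes the two families of Cartesian/coCartesian edges coincide (as in the proof of \Cref{uculem1}) is the conceptual crux; once the diagram is correctly assembled, the conclusion is formal.
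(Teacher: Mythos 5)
Your first route is exactly the paper's proof: apply \Cref{ucurem} to the spherical adjunction $G\dashv H$ (spherical by \Cref{lrcor}), giving that $y'\to y$ is a unit map of $F\dashv G$ iff $y\to y''$ is a counit map of $G\dashv H$, and then use the \Cref{4pedprop2} identifications to conclude that counit maps of $G\dashv H$ coincide with counit maps of $E\dashv F$. The only simplification you miss is that no auxiliary diagram $\infty$-category is needed for the second step, since the twists cancel on the nose ($GH\simeq ET_{\mathcal{X}}T_{\mathcal{X}}^{-1}F\simeq EF$ compatibly with the counits); also note the formulas from \Cref{4pedprop2} should read $E\simeq T_{\mathcal{Y}}^{-1}G$ and $H\simeq FT_{\mathcal{Y}}^{-1}$, equivalently $G\simeq ET_{\mathcal{X}}$ and $H\simeq T_{\mathcal{X}}^{-1}F$ after applying \Cref{commlem}, rather than the $T_{\mathcal{X}}$-subscripted versions you wrote.
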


\begin{proof}
Denote the right adjoint of $G$ by $H$. By \Cref{ucurem}, we obtain that the edge $y'\rightarrow y$ is unit map of the adjunction $F\dashv G$ if and only if the edge $y\rightarrow y''$ is counit map of the adjunction $G\dashv H$. Denote by $T_\mathcal{X}$ the cotwist functor of the adjunction $F\dashv G$. By \Cref{4pedprop2}, we find equivalences $G\simeq ET_{\mathcal{X}}$ and $H\simeq T_\mathcal{X}^{-1}F$. This implies that any counit map of the adjunction $G\simeq ET_{\mathcal{X}}\dashv H\simeq T_\mathcal{X}^{-1}F$ is also a counit map of the adjunction $E\dashv F$ and vice versa. The equivalence of statements 1 and 2 follows.
\end{proof}

\begin{lemma}\label{uculem3}
Consider an adjunction $\mathcal{M}\rightarrow \Delta^1$ of stable $\infty$-categories, associated to the pair of adjoint functors $F:\mathcal{Y}\leftrightarrow \mathcal{X}:G$. Assume further that $F$ admits a left adjoint $E$. Let $\mathcal{V}=\on{Fun}_{\Delta^1}(\Delta^1,\mathcal{M})$. Denote by ${T}_\mathcal{X}$ the cotwist functor of the adjunction $F\dashv G$. Then the following are equivalent.
\begin{enumerate}
\item For every $x\in \mathcal{X}$ there exists an equivalence $G(x)\simeq E\circ T_\mathcal{X}(x)$ and given a fiber and cofiber sequence in $\mathcal{X}$
\[
\begin{tikzcd} x'\arrow[dr, phantom, "\square"]\arrow[r]\arrow[d]& x \arrow[d]\\ 
  0\arrow[r]& x''
\end{tikzcd}
\] 
the edge $x'\rightarrow x$ is a unit map of the adjunction $E\dashv F$ if and only if the edge $x\rightarrow x''$ is a counit map of the adjunction $F\dashv G$. 
\item The full subcategories $\mathcal{K}_1,\mathcal{K}_2\subset \on{Fun}(\Delta^1\times\Delta^1,\mathcal{V})$ introduced in \Cref{Kconstr} are identical.
\item The adjunction $F\dashv G$ is spherical.
\end{enumerate}
\end{lemma}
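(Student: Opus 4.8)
The plan is to prove the cyclic chain $(1)\Rightarrow(2)\Rightarrow(3)\Rightarrow(1)$, using throughout the notation of \Cref{Kconstr}: the semiorthogonal decompositions $(\mathcal{A},\mathcal{B}),(\mathcal{B},\mathcal{C}),(\mathcal{C},\mathcal{D}),(\mathcal{D},\mathcal{E})$ of $\mathcal{V}=\on{Fun}_{\Delta^1}(\Delta^1,\mathcal{M})$ with $\mathcal{E}=\prescript{\perp}{}{\mathcal{D}}$, the twist functor $T_\mathcal{X}'$ of $E\dashv F$ and the cotwist functor $T_\mathcal{X}$ of $F\dashv G$.

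For $(2)\Rightarrow(3)$, which is essentially recorded in \Cref{Kconstr}: restriction to the bottom-right vertex of the defining squares gives trivial fibrations $\mathcal{K}_1\to\mathcal{E}$ and $\mathcal{K}_2\to\mathcal{A}$, so $\mathcal{K}_1=\mathcal{K}_2$ forces $\mathcal{A}=\mathcal{E}$; since $(\mathcal{D},\mathcal{E})$ is a semiorthogonal decomposition by \Cref{adjsodlem2}, so is $(\mathcal{D},\mathcal{A})$, and by \Cref{perplem} this reads that $(\prescript{\perp}{}{\mathcal{C}},\mathcal{B}^\perp)$ is a semiorthogonal decomposition, i.e.\ $(\mathcal{B},\mathcal{C})$ is $4$-periodic. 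As $\chi(\mathcal{B},\mathcal{C})\to\Delta^1$ is the adjunction $F\dashv G$ up to mutations by \Cref{adjsodlem}, \Cref{4pedprop1} yields that $F\dashv G$ is spherical. For $(3)\Rightarrow(1)$: by \Cref{4pedprop2} the left adjoint $E$ satisfies $E\simeq T_\mathcal{Y}^{-1}G$ with $T_\mathcal{Y}$ the invertible twist of $F\dashv G$, and \Cref{commlem} gives $GT_\mathcal{X}\simeq T_\mathcal{Y}G$; hence $ET_\mathcal{X}\simeq T_\mathcal{Y}^{-1}GT_\mathcal{X}\simeq G$, which is the first half of $(1)$, and the unit/counit half of $(1)$ is precisely \Cref{ucurem}.

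The substance is $(1)\Rightarrow(2)$, and my plan is as follows. First I extract from $(1)$ that $T_\mathcal{X}$ and $T_\mathcal{X}'$ are mutually inverse equivalences: for $x\in\mathcal{X}$ the unit $x\to FE(x)$ of $E\dashv F$ sits in a fiber and cofiber sequence $x\to FE(x)\to T_\mathcal{X}'(x)$ whose right-hand edge is, by $(1)$, a counit of $F\dashv G$, so $FE(x)\simeq FGT_\mathcal{X}'(x)$ and, taking fibers, $x\simeq T_\mathcal{X}T_\mathcal{X}'(x)$; the symmetric argument starting from the counit $FG(x)\to x$ yields $T_\mathcal{X}'T_\mathcal{X}(x)\simeq x$ (naturality of these equivalences is supplied by the Kan-extension construction of the twist functors). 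Together with $G\simeq ET_\mathcal{X}$ this matches, for $x\in\mathcal{X}$ and $x':=T_\mathcal{X}(x)$, the vertices of the $\mathcal{K}_2$-square for $x$ with those of the $\mathcal{K}_1$-square for $x'$: top-left $(0,T_\mathcal{X}(x))=(0,x')$, top-right $(G(x),FG(x))\simeq(E(x'),FE(x'))$, bottom-right $(G(x),x)\simeq(E(x'),T_\mathcal{X}'(x'))$. The cleanest way to finish is to assemble one diagram $\infty$-category $\mathcal{K}$ on a shape carrying the $\mathcal{K}_1$-square together with the equivalences $G\simeq ET_\mathcal{X}$, $T_\mathcal{X}T_\mathcal{X}'\simeq\on{id}$ that convert it into the $\mathcal{K}_2$-square (including the $2$-simplex of $\chi(F)$ witnessing the counit), and to check that the three restriction functors $\mathcal{K}\to\mathcal{K}_1$, $\mathcal{K}\to\mathcal{K}_2$, and $\mathcal{K}\to\on{Fun}(\Delta^1\times\Delta^1,\mathcal{V})$ onto the common underlying square are trivial fibrations; then $\mathcal{K}_1$ and $\mathcal{K}_2$ have the same essential image in $\on{Fun}(\Delta^1\times\Delta^1,\mathcal{V})$ and, being full subcategories, coincide.

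I expect the main obstacle to be verifying these trivial-fibration claims, where all the fibrational bookkeeping sits: that the right edge of a $\mathcal{K}_1$-square, being $(\mathcal{D},\mathcal{E})$-Cartesian with second component the cofiber of the unit of $E\dashv F$, has by $(1)$ this second component a counit of $F\dashv G$ — the defining condition of $\mathcal{K}_2$ — and conversely that the counit condition of $\mathcal{K}_2$ together with the unit/counit relation of $(1)$ forces the top edge of a $\mathcal{K}_2$-square to be $(\mathcal{C},\mathcal{D})$-coCartesian and its right edge $(\mathcal{D},\mathcal{E})$-Cartesian. Here one uses part 2 of \Cref{adjsodlem2} to recognize the $(\mathcal{C},\mathcal{D})$-coCartesian edge out of $(0,x')$ as the one whose second component is the unit of $E\dashv F$, and \Cref{cartsect} together with \Cref{mutlem} to characterize the relevant coCartesian and Cartesian edges via fibers and mutations.
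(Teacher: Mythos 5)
Your handling of $(2)\Rightarrow(3)$ and $(3)\Rightarrow(1)$ is fine and agrees in substance with the paper, as does your pointwise derivation from condition (1) that $T_\mathcal{X}$ and $T_\mathcal{X}'$ are mutually inverse. The genuine gap sits in the inclusion $\mathcal{K}_1\subset\mathcal{K}_2$. You assert that the right edge of a $\mathcal{K}_1$-square having second component a counit map of $F\dashv G$ is ``the defining condition of $\mathcal{K}_2$''. It is not: membership in $\mathcal{K}_2$ requires the restriction of the square to $\on{Fun}(\Delta^2,\chi(F))$ to be of the prescribed form, in particular the bottom-right section $(E(x),T_\mathcal{X}'(x))$ must lie in $\mathcal{A}$, i.e.\ its structure edge must be Cartesian, equivalently the adjoint map $E(x)\rightarrow GFE(x)\rightarrow GT_\mathcal{X}'(x)$ must be an equivalence. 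Knowing that $FE(x)\rightarrow T_\mathcal{X}'(x)$ is a counit map only produces \emph{some} object $w\simeq GT_\mathcal{X}'(x)$ together with a coCartesian edge $w\rightarrow FE(x)$ whose composite with $FE(x)\rightarrow T_\mathcal{X}'(x)$ is Cartesian; it does not identify $w$ with $E(x)$ along the given coCartesian edge $E(x)\rightarrow FE(x)$ (a coCartesian edge is determined by its source, not its target), so Cartesianness of the structure edge of the bottom-right vertex does not follow. That statement is essentially condition 1 of \Cref{2/4lem1}, i.e.\ the substance of what is being proved, so it cannot be discharged by this pointwise observation; accordingly, the trivial-fibration claim for $\mathcal{K}\rightarrow\mathcal{K}_1$ amounts to asserting $\mathcal{K}_1\subset\mathcal{K}_2$ and cannot be verified by the local bookkeeping you describe. (A milder instance of the same slip appears in the other direction: an edge out of $(0,x')$ with target in $\mathcal{D}$ whose second component is a unit map of $E\dashv F$ need not be $(\mathcal{C},\mathcal{D})$-coCartesian; one must also match the first component, which is exactly where the equivalence $G\simeq ET_\mathcal{X}$ from condition (1) enters — the paper does this explicitly.)

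The paper closes this hole by a different, global argument: it verifies only $\mathcal{K}_2\subset\mathcal{K}_1$ directly (using $G(x)\simeq ET_\mathcal{X}(x)$ to build an equivalence between the top edge of a $\mathcal{K}_2$-square and the canonical $(\mathcal{C},\mathcal{D})$-coCartesian edge, then \Cref{mutlem}), and then obtains $\mathcal{K}_1\subset\mathcal{K}_2$ from uniqueness: restriction to the top-left vertex is a trivial fibration $\mathcal{K}_1\rightarrow\mathcal{C}$, so objects of $\mathcal{K}_1$ are determined up to equivalence by that vertex; since $T_\mathcal{X}$ is essentially surjective, every $(0,x')\in\mathcal{C}$ is the top-left vertex of a $\mathcal{K}_2$-square, which by the first inclusion lies in $\mathcal{K}_1$, so every object of $\mathcal{K}_1$ is equivalent to one of $\mathcal{K}_2$ and lies in $\mathcal{K}_2$ by closure under equivalences. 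Your argument needs this uniqueness step (or an equivalent one); once it is in place, the auxiliary category $\mathcal{K}$ is no longer necessary.
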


\begin{proof}
It is shown in \Cref{Kconstr} that statement 2 implies statement 3. Assume statement 3. By \Cref{4pedprop2} we find $G\simeq E\circ T_{\mathcal{X}}$. Statement 1 thus follows from \Cref{ucurem}.

We now show that statement 1 implies statement 2. We observe that statement 1 directly implies that the cotwist functor $T_\mathcal{X}$ of the adjunction $F\dashv G$ is inverse to the twist functor $T_\mathcal{X}'$ of the adjunction $E\dashv F$. Consider a diagram $\Delta^1\times\Delta^1\rightarrow \mathcal{V}$ corresponding to a vertex in $\mathcal{K}_2$, depicted as follows.
\begin{equation}\label{k2elmdiag}  
 \begin{tikzcd}
  \left(0,T_\mathcal{X}(x)\right)\arrow[r]\arrow[dr, phantom, "\square"]\arrow[d]& \left(G(x),FG(x)\right)\arrow[d]\\
  \left(0,0\right)\arrow[r]& \left(G(x),x\right)
 \end{tikzcd}
\end{equation}
The restriction of \eqref{k2elmdiag} to the edge $T_\mathcal{X}(x)\rightarrow FG(x)$ is by assumption a unit map of the adjunction $E\dashv F$ at $T_\mathcal{X}(x)$. Consider the biCartesian semiorthogonal decomposition $(\mathcal{C},\mathcal{D})$ used in \Cref{Kconstr} and the $(\mathcal{C},\mathcal{D})$-coCartesian edge $e_1:(0,T_\mathcal{X}(x))\rightarrow (ET_\mathcal{X}(x),FET_\mathcal{X}(x))$. This edge corresponds to the following diagram in $\Gamma(F)$
\[
 \begin{tikzcd}
  0 \arrow[d] \arrow[r]  & ET_\mathcal{X}(x) \arrow[d, "!"] \\
  T_\mathcal{X}(x) \arrow[r, "u"] & FET_\mathcal{X}(x)            
 \end{tikzcd}
\] 
where the edge $u$ is a unit map of the adjunction $E\dashv F$. 
The restriction $e_2:(0,T_\mathcal{X}(x))\rightarrow (G(x),FG(x))$ of diagram \eqref{k2elmdiag} corresponds to the following diagram in $\Gamma(F)$.
\[
 \begin{tikzcd}
  0 \arrow[d] \arrow[r]  & G(x) \arrow[d, "!"] \\
  T_\mathcal{X}(x) \arrow[r, "u"] & FG(x)            
 \end{tikzcd}
\]
We use the equivalence 
\[ET_\mathcal{X}(x)\simeq GT_\mathcal{X}T^{-1}_\mathcal{X}(x)\simeq G(x)\]
to extend the diagram corresponding to $e_2$ to the following diagram. 
\[
 \begin{tikzcd}
  0 \arrow[d] \arrow[r]  & G(x) \arrow[d, "!"] \arrow[r, "\simeq"] & ET_\mathcal{X}(x) \arrow[d, "!"] \\
  T_\mathcal{X}(x) \arrow[r, "u"] & FG(x) \arrow[r, "\simeq"]             & FET_\mathcal{X}(x)          
 \end{tikzcd}
\]
The composed edge $T_\mathcal{X}(x)\rightarrow FG(x)\simeq FET_\mathcal{X}(x)$ remains a unit map of the adjunction \mbox{$E\dashv F$}, so that we can produce an equivalence between $e_1$ and $e_2$. This shows that $e_2$ is also $(\mathcal{C},\mathcal{D})$-coCartesian so that using \Cref{mutlem} we obtain that the diagram \eqref{k2elmdiag} lies in $\mathcal{K}_1$. It follows $\mathcal{K}_2\subset \mathcal{K}_1$. The restriction functor $\on{Fun}(\Delta^1\times\Delta^1,\mathcal{V})\rightarrow \on{Fun}(\Delta^{\{0,0\}},\mathcal{V})$ restricts to a trivial fibration $\mathcal{K}_1\rightarrow \mathcal{C}$. This implies that two elements in $\mathcal{K}_1$ are equivalent if and only if their restrictions to $\Delta^{\{0,0\}}$ are equivalent. Using that $T_\mathcal{X}$ is an equivalence, we find for every $(0,x')\in \mathcal{C}$ an element in $\mathcal{K}_2$ whose restriction to $\Delta^{\{0,0\}}$ is given by $(0,x')$. Using that $\mathcal{K}_2$ is closed under equivalences, we obtain $\mathcal{K}_1\subset\mathcal{K}_2$. This implies $\mathcal{K}_1=\mathcal{K}_2$.
\end{proof}

\subsection{The 2/4 property}\label{sec2.3}
In this section we prove the 2/4 property of spherical adjunctions of \cite{AL17} in the setting of $\infty$-categories. A key tool in the proof is the description of the sphericalness of an adjunction is terms of the 4-periodicity of the corresponding biCartesian semiorthogonal decomposition. A further important ingredient in the proof is the relation between the sphericalness of an adjunction and the properties of the appearing unit and counit maps, see \Cref{uculem3}.

\begin{lemma}\label{2/4lem1}
Let $F:\mathcal{Y}\leftrightarrow\mathcal{X}:G$ be an adjunction of stable $\infty$-categories such that $F$ admits a left adjoint $E$. Denote the cotwist functor of $F\dashv G$ by $T_\mathcal{X}$ and the twist functor of $E\dashv F$ by $T_\mathcal{X}'$. The adjunction $F\dashv G$ is spherical if and only if the following two conditions are satisfied.
\begin{enumerate}
\item For any $x\in \mathcal{X}$, consider the edge $\alpha:\Delta^1\rightarrow \mathcal{Y}$ contained in the biCartesian square in $\mathcal{Y}$
\[
 \begin{tikzcd}
  G(x) \arrow[d, "{G(u')}"'] \arrow[r] \arrow[rd, "\square", phantom] & 0 \arrow[d] \\
  GFE(x) \arrow[r, "\alpha"']                       & GT_\mathcal{X}'(x) 
 \end{tikzcd}
\] where $u'$ is the unit map of the adjunction $E\dashv F$. The composition 
\[ E(x)\xrightarrow{u} GFE(x)\xrightarrow{\alpha} GT_\mathcal{X}'(x)\] 
of the unit map $u$ of the adjunction $F\dashv G$ and $\alpha$ is an equivalence.
\item For any $x\in \mathcal{X}$, consider the edge $\beta:\Delta^1\rightarrow \mathcal{Y}$ contained in the biCartesian square in $\mathcal{Y}$ 
\[ 
 \begin{tikzcd}
  ET_\mathcal{X}(x) \arrow[d, "\beta"'] \arrow[r] \arrow[rd, "\square", phantom] & 0 \arrow[d] \\
  EFG(x) \arrow[r, "E(cu)"']                                     & E(x)        
 \end{tikzcd}
\] 
where $cu$ is the counit map $cu$ of the adjunction $F\dashv G$. The composition 
\[ET_\mathcal{X}(x)\xrightarrow{\beta}EFG(x)\xrightarrow{cu'}G(x)\] 
of $\beta$ and the counit map $cu'$ of the adjunction $E\dashv F$ is an equivalence.
\end{enumerate}
\end{lemma}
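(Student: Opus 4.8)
The plan is to reduce the statement to the combinatorial characterization of sphericalness via $4$-periodicity, using the machinery of \Cref{Kconstr,uculem3}. By \Cref{uculem3}, the adjunction $F\dashv G$ is spherical if and only if $\mathcal{K}_1 = \mathcal{K}_2$ inside $\on{Fun}(\Delta^1\times\Delta^1,\mathcal{V})$, equivalently if and only if condition (1) of \Cref{uculem3} holds: for every $x\in\mathcal{X}$ there is an equivalence $G(x)\simeq E\circ T_\mathcal{X}(x)$, together with the stated compatibility between unit maps of $E\dashv F$ and counit maps of $F\dashv G$ along fiber-cofiber sequences. So the real content is to show that conditions (1) and (2) of the present lemma are an explicit unwinding of that abstract condition, phrased purely in terms of the endofunctors $T_\mathcal{X}$, $T_\mathcal{X}'$, the units, and the counits.

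First I would handle the "only if" direction. Assume $F\dashv G$ is spherical. Then \Cref{4pedprop2} gives the further left adjoint $E$ and the equivalence $E\simeq T_\mathcal{X}^{-1}G$, hence $G\simeq T_\mathcal{X}E$ as well; moreover, applying \Cref{uculem3}(1) to both $F\dashv G$ and (by \Cref{lrcor}) to $E\dashv F$, we get that the twist $T_\mathcal{X}'$ of $E\dashv F$ is inverse to the cotwist $T_\mathcal{X}$ of $F\dashv G$. For condition (1), I would start from the biCartesian square defining $\alpha$ as the cofiber of $G(u'):G(x)\to GFE(x)$; applying $E$ (or rather recognizing $GFE(x)$ via $E\simeq T_\mathcal{X}^{-1}G$ and the unit of $E\dashv F$) and using \Cref{uculem3}(1)/\Cref{ucurem} — which says the sequence $x'\to x\to x''$ with $x'\to x$ a unit map of $E\dashv F$ has $x\to x''$ a counit map of $F\dashv G$ — one identifies $GT_\mathcal{X}'(x)$ with the image under $G$ of a cofiber of $x$ along a unit-of-$E\dashv F$ map, which computes to $GT_\mathcal{X}^{-1}(x)\simeq E(x)$. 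One then checks the composite $E(x)\xrightarrow{u}GFE(x)\xrightarrow{\alpha}GT_\mathcal{X}'(x)$ is the resulting equivalence; this is where \Cref{commlem} ($T_\mathcal{X}G\simeq GT_\mathcal{X}$-type relations) and the triangle identities are used to match up the maps. Condition (2) is dual: it is the same argument run for the counit side of $F\dashv G$ versus the counit side of $E\dashv F$, using \Cref{uculem2} in place of \Cref{ucurem}, together with $G\simeq ET_\mathcal{X}$ and $H\simeq T_\mathcal{X}^{-1}F$ to rewrite $EFG(x)$.

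For the "if" direction I would run the implications in reverse. Conditions (1) and (2), applied with varying $x$, first force $T_\mathcal{X}$ and $T_\mathcal{X}'$ to be mutually inverse equivalences: condition (1) exhibits $GT_\mathcal{X}'(x)\simeq E(x)$ naturally, and feeding $T_\mathcal{X}(x)$ for $x$ together with condition (2) produces the two-sided inverse, so in particular $T_\mathcal{X}$ is an equivalence. Then I would verify condition (1) of \Cref{uculem3}: the equivalence $G(x)\simeq ET_\mathcal{X}(x)$ is condition (1) of the present lemma with $x$ replaced by $T_\mathcal{X}^{-1}(x)$, and the unit/counit compatibility along fiber-cofiber sequences follows by naturality of the equivalences in conditions (1) and (2), since a fiber-cofiber sequence $x'\to x\to x''$ with $x'\to x$ a unit map of $E\dashv F$ is, after applying these natural equivalences, carried to the square asserting that $x\to x''$ is a counit map of $F\dashv G$. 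By \Cref{uculem3}, $F\dashv G$ is then spherical.

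The main obstacle I expect is bookkeeping the identifications of the mediating maps: everything is "obviously" the right map up to the coherence data of the adjunctions, but writing down the commuting diagram in $\mathcal{M}$ (or in $\Gamma(F)$) that simultaneously encodes the unit of $F\dashv G$, the unit of $E\dashv F$, the biCartesian square defining $\alpha$, and the equivalence $E\simeq T_\mathcal{X}^{-1}G$ from \Cref{4pedprop2} — and checking the composite $\alpha\circ u$ is precisely the induced equivalence rather than merely an equivalence — requires the same kind of large pasted Kan-extension diagram used in the proof of \Cref{commlem}. The cleanest route is probably to phrase this via the semiorthogonal decomposition picture of \Cref{Kconstr}, interpreting conditions (1) and (2) as statements about $(\mathcal{C},\mathcal{D})$-coCartesian and $(\mathcal{D},\mathcal{E})$-Cartesian edges, so that \Cref{mutlem,uculem1} do the diagram-chasing for us and one never has to manipulate the unit/counit $2$-simplices by hand.
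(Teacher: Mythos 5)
Your overall plan --- reducing the lemma to \Cref{uculem3} rather than redoing the semiorthogonal-decomposition analysis --- is a viable route and genuinely different from the paper's proof, which shows \emph{unconditionally} (without assuming sphericalness or invertibility of any twist) that condition 1 is equivalent to the containment $\mathcal{E}\subset\mathcal{A}$ and condition 2 to $\mathcal{A}\subset\mathcal{E}$ of subcategories of the section category $\mathcal{V}$, and then concludes via $4$-periodicity. However, as written your reduction has real gaps. The step ``conditions (1) and (2) force $T_\mathcal{X}$ and $T_\mathcal{X}'$ to be mutually inverse'' does not follow from the cancellation you indicate: the two conditions give natural equivalences $E\simeq GT_\mathcal{X}'$ and $ET_\mathcal{X}\simeq G$, hence $ET_\mathcal{X}T_\mathcal{X}'\simeq E$ and $GT_\mathcal{X}'T_\mathcal{X}\simeq G$, but $E$ and $G$ cannot be cancelled (they need not be conservative). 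Fortunately this step is also unnecessary: statement 1 of \Cref{uculem3} does not presuppose that $T_\mathcal{X}$ is invertible (its proof derives this), and its first half, $G(x)\simeq ET_\mathcal{X}(x)$, is literally the equivalence supplied by condition (2), not condition (1) evaluated at $T_\mathcal{X}^{-1}(x)$.

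The more serious gap is the heart of your ``if'' direction: the claim that conditions (1) and (2) yield the unit/counit exchange in the second half of \Cref{uculem3}(1) for \emph{every} fiber and cofiber sequence is asserted to follow ``by naturality,'' which is not an argument --- applying pointwise equivalences of functors to a fiber--cofiber sequence does not produce the diagram in $\mathcal{M}$ witnessing that $x\rightarrow x''$ is a counit map. What actually makes your reduction work is adjunction transposition: since $G$ is exact, $\alpha\simeq G(c_x)$ where $c_x:FE(x)\rightarrow T_\mathcal{X}'(x)$ is the cofiber of the unit $u'_x$ of $E\dashv F$, so $\alpha\circ u$ is the transpose of $c_x$ under $F\dashv G$, and an edge $F(y)\rightarrow z$ is a counit map of $F\dashv G$ exactly when its transpose $y\rightarrow G(z)$ is an equivalence; hence condition (1) says precisely that the cofiber of a unit map of $E\dashv F$ is a counit map of $F\dashv G$. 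Dually, exactness of $E$ gives $\beta\simeq E(f_x)$ for $f_x:T_\mathcal{X}(x)\rightarrow FG(x)$ the fiber of the counit, $cu'\circ\beta$ is the transpose of $f_x$ under $E\dashv F$, and condition (2) says the fiber of a counit map of $F\dashv G$ is a unit map of $E\dashv F$. With these identifications statement 1 of \Cref{uculem3} follows (and the same identifications, combined with \Cref{ucurem}, repair the map-matching you left open in the ``only if'' direction); without them the proposal leaves the essential content of the lemma unproved, since the entire difficulty is exactly this comparison of the composites $\alpha\circ u$, $cu'\circ\beta$ with distinguished unit/counit edges.
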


\begin{proof}
Consider the adjunction $\chi(G)\rightarrow \Delta^1$. We obtain by the \Cref{adjsodlem,adjsodlem2} semiorthogonal decompositions $(\mathcal{A},\mathcal{B}),(\mathcal{B},\mathcal{C}),(\mathcal{C},\mathcal{D}),(\mathcal{D},\mathcal{E})$ of $\mathcal{V}=\on{Fun}_{\Delta^1}(\Delta^1,\chi(G))$. We show that condition 1 is equivalent to $\mathcal{E}\subset \mathcal{A}$. We then present a dual version of the argument below, showing that condition 2 is equivalent to $\mathcal{A}\subset \mathcal{E}$. Together, these two statements imply the Lemma.

Let $x\in \mathcal{X}$ and consider the vertex $(E(x),T_\mathcal{X}'(x))\in \mathcal{E}$ contained in a diagram $D:(\Delta^1)^{\times 2}\rightarrow \mathcal{V}$ of the following form.
\[  
 \begin{tikzcd}
  \left(0,x\right)\arrow[r, "!"]\arrow[dr, phantom, "\square"]\arrow[d]& \left(E(x),FE(x)\right)\arrow[d, "\ast"]\\
  \left(0,0\right)\arrow[r]& \left(E(x),T_\mathcal{X}'(x)\right)
 \end{tikzcd}
\] 
The diagram $D$ corresponds to a vertex in $\mathcal{K}_1$ as introduced in \Cref{Kconstr}. The datum of the functor $D$ is equivalent to the datum of a functor $D_1:\Delta^1\times(\Delta^1)^{\times 2}\rightarrow \chi(G)$. Consider the restriction $D'$ of $D_1$ to $\{1\}\times (\Delta^1)^{\times 2}\rightarrow \chi(G)$. Denote the left Kan extension relative $\chi(G)\rightarrow \Delta^1$ of $D'$ to $\Delta^1\times (\Delta^1)^{\times 2}$ by $D_2$. We can depict the vertices of $D_2$ as follows. 
\[ 
 \begin{tikzcd}
  {(G(x),x)} \arrow[d] \arrow[r] \arrow[rd, "\square", phantom] & {(GFE(x),FE(x))} \arrow[d] \\
  {(0,0)} \arrow[r]                                      & {(GT_\mathcal{X}'(x),T_\mathcal{X}'(x))}              
 \end{tikzcd}
\]
By the properties of the Kan extension, we find a map $D_1\rightarrow D_2$ whose restriction to $\Delta^1\times (\{1\}\times\Delta^1)$ can be depicted as follows.
\begin{equation}\label{diag6'}
 \begin{tikzcd}
  (E(x),FE(x))\arrow[r, dotted]\arrow[d]& (GFE(x),FE(x))\arrow[d]\\
  (E(x),T_\mathcal{X}'(x))\arrow[r, dotted, "\phi"]& (GT_\mathcal{X}'(x),T_\mathcal{X}'(x)) 
 \end{tikzcd}
\end{equation}
The dotted edges represent (part of) the map $D_1\rightarrow D_2$. The edge $\phi$ is an equivalence if and only if $(E(x),T_\mathcal{X}'(x))$ lies in $\mathcal{A}$. The second component of $\phi$ is by construction an equivalence and diagram \eqref{diag6'} shows that the first component of $\phi$ is an equivalence if and only if condition 1 is satisfied for this particular $x\in \mathcal{X}$. This shows that condition 1 is equivalent to $\mathcal{E}\subset \mathcal{A}.$

\Cref{equlem} provides us with an equivalence $\mathcal{V}'\coloneqq\on{Fun}_{\Delta^1}(\Delta^1,\Gamma(E))\simeq \mathcal{V}$. We denote the essential image of the stable subcategories $\mathcal{A},\mathcal{B},\mathcal{C},\mathcal{D},\mathcal{E}\subset \mathcal{V}$ under the equivalence $\mathcal{V}\simeq \mathcal{V}'$ by $\mathcal{A}',\mathcal{B}',\mathcal{C}',\mathcal{D}',\mathcal{E}'$. We note that $\mathcal{A}'\subset \mathcal{E}'$ if and only if $\mathcal{A}\subset \mathcal{E}$. We show that condition 2 is equivalent to $\mathcal{A}'\subset \mathcal{E}'$. We denote the elements of $\mathcal{V}'$ by pairs $(x,y)$ such that $x\in \mathcal{X}$ and $y\in \mathcal{Y}$, suppressing the map $E(x)\rightarrow y$ contained in the data of $(x,y)\in \mathcal{V}'$. The right gluing functor of the biCartesian decomposition $(\mathcal{B},\mathcal{C})$ can be identified with $G$, the right gluing functor of the biCartesian semiorthogonal decomposition $(\mathcal{C},\mathcal{D})$ can thus be identified with $F$. Using $\Gamma(E)\simeq \chi(F)$, we obtain a description of $\mathcal{B}',\mathcal{C}',\mathcal{D}',\mathcal{E}'$ as in \Cref{sodrem}:
\begin{itemize}
\item $\mathcal{B}'$ is spanned by elements of the form $(F(y),y)$ such that the edge $EF(y)\rightarrow y$ is a counit map of the adjunction $E\dashv F$.
\item $\mathcal{C}'$ is spanned by elements of the form $(x,0)$.
\item $\mathcal{D}'$ is spanned by elements of the form $(0,y)$.
\item $\mathcal{E}'$ is spanned by elements $(x,E(x))$, such that the edge $E(x)\rightarrow E(x)$ is an equivalence.
\end{itemize}
Applying \Cref{mutlem} we obtain that $\mathcal{A}'$ is spanned by the vertices $(T_\mathcal{X}(x),G(x))$, fitting into a diagram in $\mathcal{V}'$
\begin{equation}\label{fdiag}
 \begin{tikzcd}
  {(T_\mathcal{X}(x),G(x))} \arrow[d] \arrow[r, "!"] \arrow[rd, "\square", phantom] & {(FG(x),G(x))} \arrow[d, "\ast"] \\
  {(0,0)} \arrow[r]                                                     & {(x,0)}                       
 \end{tikzcd}
\end{equation}
where the top edge is $(\mathcal{A}',\mathcal{B}')$-coCartesian and the right edge is $(\mathcal{B}',\mathcal{C}')$-Cartesian. Consider the biCartesian square 
in $\mathcal{X}$,
\begin{equation}\label{bicartsq1}
 \begin{tikzcd}
  T_\mathcal{X}(x) \arrow[d] \arrow[r] & FG(x) \arrow[d, "cu"] \\
  0 \arrow[r]                          & x                        
 \end{tikzcd}
\end{equation} 
where $cu$ is the counit map of the adjunction $F\dashv G$. We extend \eqref{bicartsq1} via right Kan extension relative $\Gamma(E)\rightarrow \Delta^1$ to the following diagram. 
\begin{equation}\label{rkandiag} 
 \begin{tikzcd}
  {(T_\mathcal{X}(x),ET_\mathcal{X}(x))} \arrow[d] \arrow[r] & {(FG(x),EFE(x))} \arrow[d] \\
  {(0,0)} \arrow[r]                                           & {(x,E(x))}                        
 \end{tikzcd}
\end{equation}
We obtain a map from \eqref{rkandiag} to a diagram of the form \eqref{fdiag}. Restricting the diagram, we obtain the following diagram.
\[
 \begin{tikzcd}
  {(T_\mathcal{X}(x),ET_\mathcal{X}(x))} \arrow[d] \arrow[r, "\phi", dotted] & {(T_\mathcal{X}(x),G(x))} \arrow[d] \\
  {(FG(x),EFE(x))} \arrow[r, dotted]                                & {(FG(x),G(x))}                  
 \end{tikzcd}
\]
We find that $\phi$ is an equivalence if and only if $(T_\mathcal{X},G(x))\in \mathcal{E}'$. The first component of $\phi$ is by construction an equivalence and the second component is an equivalence if and only if condition 2 is satisfied for this $x\in \mathcal{X}$. This shows that $\mathcal{A}'\subset \mathcal{E}'$ if and only if condition 2 holds.
\end{proof}

\begin{lemma}\label{2/4lem2}
Let $F:\mathcal{Y}\leftrightarrow\mathcal{X}:G$ be an adjunction of stable $\infty$-categories such that $F$ admits a left adjoint $E$. Denote the twist and cotwist functors of $F\dashv G$ by $T_\mathcal{Y}$ and $T_\mathcal{X}$, respectively, and the twist and cotwist functors of $E\dashv F$ by $T_\mathcal{X}'$ and $T_\mathcal{Y}'$ respectively. Let $x\in \mathcal{X}$ and consider the following diagrams in $\mathcal{Y}$.
\begin{equation}\label{24sq}
 \begin{tikzcd}
  {y[-1]} \arrow[d] \arrow[r] \arrow[rd, "\square", phantom]  & G(x) \arrow[d, "G(u')"] \arrow[r] \arrow[rd, "\square", phantom]               & 0 \arrow[d]        \\
  E(x) \arrow[r, "u"] \arrow[d] \arrow[rd, "\square", phantom] & GFE(x) \arrow[d, "\alpha'"] \arrow[r, "\alpha"] \arrow[rd, "\square", phantom] & GT_\mathcal{X}'(x) \arrow[d] \\
  0 \arrow[r]                                         & T_\mathcal{Y}E(x) \arrow[r]                                                     & y                
 \end{tikzcd} \quad\quad 
 \begin{tikzcd}
  {y'[-1]} \arrow[d] \arrow[r] \arrow[rd, "\square", phantom]                     & ET_\mathcal{X}(x) \arrow[d, "\beta"] \arrow[r] \arrow[rd, "\square", phantom] & 0 \arrow[d]    \\
  T_\mathcal{Y}'G(x) \arrow[r, "\beta'"] \arrow[d] \arrow[rd, "\square", phantom] & EFG(x) \arrow[d, "cu'"] \arrow[r, "E(cu)"] \arrow[rd, "\square", phantom]     & E(x) \arrow[d] \\
  0 \arrow[r]                                                            & G(x) \arrow[r]                                                       & y'            
 \end{tikzcd}
\end{equation}
The edges $u$ and $cu$ are unit and counit maps of the adjunction $F\dashv G$ and the edges $u'$ and $cu'$ are unit and counit maps of the adjunction $E\dashv F$. Then:
\begin{enumerate}
\item The composition $e_1:G(x)\xrightarrow{G(u')}GFE(x)\xrightarrow{\alpha'}T_\mathcal{Y}E(x)$ is an equivalence if and only if the composition $e_2:E(x)\xrightarrow{u}GFE(x)\xrightarrow{\alpha} GT_\mathcal{X}'(x)$ is an equivalence.
\item The composition $e_3:ET_\mathcal{X}(x)\xrightarrow{\beta}EFG(x)\xrightarrow{cu'} G(x)$ is an equivalence if and only if the composition $e_4:T_\mathcal{Y}'G(x)\xrightarrow{\beta'}EFG(x)\xrightarrow{E(cu)} E(x)$ is an equivalence.
\end{enumerate}
\end{lemma}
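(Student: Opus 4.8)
The plan is to deduce both statements from the symmetry of a $3\times 3$ grid of biCartesian squares. In each of the two diagrams of \eqref{24sq}, every $2\times 2$ subsquare is biCartesian by hypothesis; the key point will be that the bottom-right corner ($y$ in the first grid, $y'$ in the second) is simultaneously the cofiber of $e_1$ and of $e_2$ (respectively of $e_3$ and of $e_4$), so that it vanishes exactly when either of the two maps is an equivalence.

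I would argue statement 1 as follows. View the first diagram of \eqref{24sq} as a functor $\sigma\colon \Delta^2\times\Delta^2\to\mathcal{Y}$, with $\sigma(i,j)$ its entry in row $i$, column $j$. By the pasting law for pushout/pullback squares \cite[Lemma~4.4.2.1]{HTT}, the outer rectangle spanned by the columns $j=0,1$,
\[
\begin{tikzcd}
y[-1] \arrow[r] \arrow[d] & G(x) \arrow[d] \\
0 \arrow[r] & T_\mathcal{Y}E(x)
\end{tikzcd}
\]
is again biCartesian. A biCartesian square induces an equivalence between the cofibers of any pair of parallel edges, and the right-hand vertical edge $G(x)\to T_\mathcal{Y}E(x)$ of this rectangle is, by the coherence encoded in $\sigma$, homotopic to the composite $e_1$. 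Since $\on{cofib}(y[-1]\to 0)\simeq y$, this gives $\on{cofib}(e_1)\simeq y$. Running the same argument with the outer rectangle spanned by the rows $i=0,1$, whose lower horizontal edge is homotopic to $e_2$, yields $\on{cofib}(e_2)\simeq y$. Hence $\on{cofib}(e_1)\simeq y\simeq\on{cofib}(e_2)$, and since a morphism of a stable $\infty$-category is an equivalence if and only if its cofiber is a zero object, $e_1$ is an equivalence $\Leftrightarrow y\simeq 0\Leftrightarrow e_2$ is an equivalence.

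Statement 2 is proven verbatim from the second diagram of \eqref{24sq}: the outer rectangle on the columns $j=0,1$ identifies $\on{cofib}(e_3)$ with $y'$, the outer rectangle on the rows $i=0,1$ identifies $\on{cofib}(e_4)$ with $y'$, and the same vanishing criterion finishes the proof.

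I do not expect a genuine obstacle here: the argument is a bookkeeping exercise with the ``$3\times 3$ lemma'' for biCartesian squares. The only point demanding care is matching each of $e_1,\dots,e_4$ to the long edge of the correct outer rectangle and verifying that, thanks to the shift $[-1]$ already present in the upper-left corner of each grid, the identification $\on{cofib}(e_i)\simeq y$ (resp.\ $y'$) carries no spurious suspension. Equivalently, one may phrase the computation in terms of the total cofiber of the $2\times 2$ array of squares; the iterated-pasting version above seems most economical.
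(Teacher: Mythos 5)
Your argument is correct and is essentially the paper's own proof: the paper likewise applies the pasting law to the two composite rectangles to conclude that $e_1$ is an equivalence iff $y[-1]\simeq 0$ and $e_2$ is an equivalence iff $y\simeq 0$ (and analogously for $e_3$, $e_4$ with $y'$), which is the same identification you phrase via cofibers. No gap; the only cosmetic difference is fiber-vanishing versus cofiber-vanishing, which are interchangeable in a stable $\infty$-category.
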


\begin{proof}
The pasting law for pullbacks implies that the edge $e_1$ is an equivalence if and only if $y[-1]\simeq 0$ and further that the edge $e_2$ is an equivalence if and only $y\simeq 0$. This shows \mbox{statement 1}. Statement 2 is shown analogously.
\end{proof}

\begin{remark}\label{2/4rem}
In the setting of \Cref{2/4lem2} there exists natural transformations 
\[ \eta_1:G\rightarrow T_{\mathcal{Y}}E\,, \quad \quad \eta_2:E\rightarrow GT_{\mathcal{X}'}\,,\]
\[ \eta_3:ET_{\mathcal{X}}\rightarrow G\,, \quad \quad \eta_4:T_{\mathcal{Y}'}G \rightarrow E\,,\]
such that evaluating $\eta_i$ on $x\in \mathcal{X}$ yields the edge $e_i$.
Using that a natural transformation is an equivalence if and only if it is pointwise an equivalence, we see that \Cref{2/4lem2} implies that $\eta_1$ is an equivalence if and only if $\eta_2$ is an equivalence and that $\eta_3$ is an equivalence if and only if $\eta_4$ is an equivalence.
\end{remark}

\begin{theorem}[The 2/4 property of spherical adjunctions]\label{2/4prop}
Let $F:\mathcal{Y}\leftrightarrow\mathcal{X}:G$ be an adjunction of stable $\infty$-categories and assume that $F$ admits a left adjoint $E$. Denote the twist and cotwist functors of $F\dashv G$ by $T_\mathcal{Y}$ and $T_\mathcal{X}$, respectively, and the twist and cotwist functors of $E\dashv F$ by $T_\mathcal{X}'$ and $T_\mathcal{Y}'$, respectively. The adjunction $F\dashv G$ is spherical if and only if any two of the following four conditions are satisfied.
\begin{enumerate}
\item The twist functor $T_\mathcal{X}'$ and cotwist functor $T_\mathcal{X}$ are inverse equivalences.
\item The twist functor $T_\mathcal{Y}$ and the cotwist functor $T_\mathcal{Y}'$ are inverse equivalences.
\item The natural transformations $\eta_1$ and $\eta_2$ introduced in \Cref{2/4rem} are equivalences.
\item The natural transformations $\eta_3$ and $\eta_4$ introduced in \Cref{2/4rem} are equivalences.
\end{enumerate}
\end{theorem}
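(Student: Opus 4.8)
The plan is to prove the "2/4" implication in both directions, using the reformulation already set up in \Cref{2/4lem1} and the bookkeeping of \Cref{2/4rem}. The key observation is that all four of the listed conditions have been isolated as independent pieces of what it means for $F\dashv G$ to be spherical: \Cref{2/4lem1} asserts that $F\dashv G$ is spherical if and only if conditions (3) and (4) both hold (there conditions (3) and (4) are phrased pointwise as "$\eta_2(x)$ is an equivalence" and "$\eta_3(x)$ is an equivalence", and \Cref{2/4rem} upgrades these to statements about natural transformations). So one direction is essentially for free: if $F\dashv G$ is spherical then by \Cref{4pedprop2} the cotwist $T_\mathcal{X}$ has inverse given by the (co)twist of $E\dashv F$, giving (1); applying \Cref{4pedprop2} once more to the spherical adjunction $E\dashv F$ (which is spherical by \Cref{lrcor}) gives (2); and \Cref{2/4lem1} together with \Cref{2/4rem} gives (3) and (4). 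Hence sphericalness implies all four conditions, so certainly any two of them.

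For the converse I would argue that \emph{any} two of the four conditions already force conditions (3) and (4), after which \Cref{2/4lem1} finishes the proof. First, \Cref{2/4rem} already tells us that (3) is a single condition ("$\eta_1$ equivalence $\iff$ $\eta_2$ equivalence", and each says the object $y$ in the left diagram of \eqref{24sq} vanishes) and similarly (4) is a single condition via the object $y'$ in the right diagram. Next I would use the pasting/pullback manipulations of \Cref{2/4lem2}: in the left diagram of \eqref{24sq}, the lower-left square exhibits $T_\mathcal{Y}E(x)$ as a cofiber, and the composite vertical $E(x)\to GFE(x)\to GT_\mathcal{X}'(x)\to y$ combined with the total rectangle relates the vanishing of $y$ to the statement that $T_\mathcal{X}\circ T_\mathcal{X}'$ and $T_\mathcal{Y}\circ T_\mathcal{Y}'$ are identities on the relevant objects. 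Concretely I expect to show: condition (1) together with condition (3) (or (4)) forces the other; condition (2) together with (3) (or (4)) forces the other; and (1) together with (2) forces both (3) and (4), using that $T_\mathcal{X}, T_\mathcal{X}', T_\mathcal{Y}, T_\mathcal{Y}'$ being mutually inverse equivalences collapses the outer squares in \eqref{24sq} so that $y\simeq 0\simeq y'$. This is a finite case check over the $\binom{4}{2}=6$ pairs, but by the $\eta_1\leftrightarrow\eta_2$ and $\eta_3\leftrightarrow\eta_4$ symmetry and the left–right (i.e. $\mathcal{X}\leftrightarrow\mathcal{Y}$, $G\leftrightarrow E$) duality it reduces to two or three genuinely distinct cases.

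In each case the mechanism is the same: build the relevant diagram from \eqref{24sq}, use the pasting law for pullbacks to identify $y$ (resp. $y'$) with an iterated cofiber, and then feed in whichever two hypotheses are assumed to conclude $y\simeq 0$ (resp. $y'\simeq 0$), i.e. that $\eta_1,\eta_2$ (resp. $\eta_3,\eta_4$) are pointwise equivalences; by \Cref{2/4rem} this promotes to equivalences of natural transformations, which is conditions (3) and (4). Then \Cref{2/4lem1} gives that $F\dashv G$ is spherical. Finally, to be fully rigorous about "$\eta_i$ is a natural transformation", I would remark that each $\eta_i$ arises, as in the constructions of \Cref{twistconstr} and the proof of \Cref{2/4lem1}, by taking a section of a trivial fibration from a diagram $\infty$-category to $\mathcal{X}$ and restricting to the relevant edge, so naturality is automatic.

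I expect the main obstacle to be purely organizational rather than conceptual: setting up the diagrams of \eqref{24sq} as honest objects of a diagram $\infty$-category via iterated relative Kan extensions (so that the pasting law can be invoked and so that the comparison maps $\phi$ are well-defined), and then carefully tracking, in the mixed cases such as "(1) and (3)", exactly which square in the grid is forced to be biCartesian by the hypothesis and which vanishing then follows. Once the diagrams are in place the pasting law does all the work, so the real content is the already-proven \Cref{2/4lem1,2/4lem2} together with \Cref{4pedprop2,lrcor}; the theorem is essentially a repackaging of these.
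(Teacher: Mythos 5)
Your first direction (spherical implies all four conditions) and the two ``pure'' cases are fine and match the paper: conditions 1 and 2 give sphericalness directly from the definition, and conditions 3 and 4 give it via \Cref{2/4lem1}. The gap is in the four mixed cases, which you propose to handle by showing that any admissible pair already forces conditions 3 and 4, ``using the pasting law to identify $y$ (resp.\ $y'$) with an iterated cofiber and then feeding in the hypotheses to conclude $y\simeq 0$.'' This mechanism does not work as stated. Take the case of conditions 1 and 3: from $\eta_2:E\simeq GT_\mathcal{X}'$ and $T_\mathcal{X}'\simeq T_\mathcal{X}^{-1}$ you obtain an \emph{abstract} equivalence of functors $ET_\mathcal{X}\simeq G$, but condition 4 asserts that the \emph{specific} natural transformations $\eta_3,\eta_4$, built from the unit/counit maps and the fiber sequences of \eqref{24sq}, are equivalences. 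Nothing in the hypotheses identifies your abstract equivalence with $\eta_3$, and no amount of pasting of the squares in \eqref{24sq} produces $y'\simeq 0$ from the mere invertibility of the (co)twist functors; the same objection applies to your claim that conditions 1 and 2 ``collapse the outer squares'' to give $y\simeq 0\simeq y'$ (there the conclusion is still reachable, but only because 1 and 2 give sphericalness by definition, not by your collapse argument). So the implications ``$(1)\wedge(3)\Rightarrow(4)$'', ``$(2)\wedge(3)\Rightarrow(4)$'', etc., while true a posteriori, are exactly as hard as the theorem itself and are not established by your sketch.

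The paper's proof of the mixed cases uses genuinely different arguments, which you would need to reproduce or replace. For conditions 1 and 3 (and dually 1 and 4) it works inside the category of sections $\mathcal{V}$: condition 3 is shown in the proof of \Cref{2/4lem1} to be equivalent to $\mathcal{E}\subset\mathcal{A}$, and condition 1 is then used to show every object $(G(x),x)\in\mathcal{A}$ is equivalent to $(ET_\mathcal{X}(x),T_\mathcal{X}'T_\mathcal{X}(x))\in\mathcal{E}$, giving $\mathcal{E}=\mathcal{A}$, i.e.\ $4$-periodicity, and sphericalness by \Cref{4pedprop1} --- no verification of condition 4 is ever attempted. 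For conditions 2 and 3 (and dually 2 and 4) it composes with the equivalence $T_\mathcal{Y}$ to form the adjunction $G\simeq T_\mathcal{Y}E\dashv FT_\mathcal{Y}'$, verifies the unit/counit criterion of \Cref{uculem3} for that adjunction by comparing two diagram categories $\mathcal{W}\subset\mathcal{W}'$ (the equivalence $T_\mathcal{Y}'$ is what makes the relevant restriction functor on $\mathcal{W}$ a trivial fibration, forcing $\mathcal{W}=\mathcal{W}'$), and then concludes via \Cref{lrcor}. You should either carry out arguments of this kind for the mixed cases or find an honest identification of your abstract equivalences with the transformations $\eta_i$; as written, the central step of your converse is missing.
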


\begin{proof}
If $F\dashv G$ is a spherical adjunction, we deduce conditions 1 and 2 by \Cref{ucurem} and \Cref{uculem2} and conditions 3 and 4 by \Cref{2/4lem1,2/4lem2}. Conditions 1 and 2 imply the sphericalness of the adjunction $F\dashv G$ by definition. Conditions 3 and 4 imply the sphericalness of the adjunction $F\dashv G$ by \Cref{2/4lem1}.

Assume conditions 1 and 3. The proof of \Cref{2/4lem1} shows that condition 3 is equivalent to $ \mathcal{E}\subset \mathcal{A}$ with the notation used there. Let $(G(x),x)\in \mathcal{A}$. As discussed in \Cref{Kconstr}, the $\infty$-category $\mathcal{E}$ is spanned by objects $(E(x'),T_\mathcal{X}'(x'))\in \mathcal{V}$ fitting into a diagram  
\[  \begin{tikzcd}
\left(0,x'\right)\arrow[r, "!"]\arrow[dr, phantom, "\square"]\arrow[d]& \left(E(x'),FE(x')\right)\arrow[d, "\ast"]\\
\left(0,0\right)\arrow[r]& \left(E(x'),T_\mathcal{X}'(x')\right)
\end{tikzcd}\]
corresponding to a vertex in $\mathcal{K}_1$. Thus for $x'=T_\mathcal{X}(x)$ we obtain that $(ET_\mathcal{X}(x),T_\mathcal{X}'T_\mathcal{X}(x))\simeq (ET_\mathcal{X}(x),x)\in \mathcal{E}\subset \mathcal{A}$. Using that the restriction functor to the second component is a trivial fibration from $\mathcal{A}$ to $\mathcal{X}$, we obtain that $(G(x),x)\simeq (ET_\mathcal{X}(x),x)\in \mathcal{E}$. Thus $\mathcal{E}=\mathcal{A}$ and the adjunction $F\dashv G$ is spherical by \Cref{4pedprop1}.

Showing that the conditions 1 and 4 imply the sphericalness of the adjunction $F\dashv G$ is analogous to showing that the conditions 1 and 3 imply the sphericalness.

Assume conditions 2 and 3. The proof of \Cref{2/4lem1} shows that condition 3 is equivalent to $ \mathcal{E}\subset \mathcal{A}$ with the notation used there. Using that $T_\mathcal{Y}$ is an equivalence, we can compose the adjunctions $E\dashv F$ and $T_\mathcal{Y}\dashv T_\mathcal{Y}^{-1}$ to obtain an adjunction $G\simeq T_\mathcal{Y}E\dashv FT_\mathcal{Y}'=:H$. We want to show that statement 1 of \Cref{uculem3} is satisfied for the adjunction $G\dashv H$ to deduce that $G\dashv H$ is spherical. \Cref{lrcor} then implies that $F\dashv G$ is also spherical. The counit of the adjunction $G\dashv H$ is equivalent to the counit of the adjunction $E\dashv F$, so that the cotwist functors of the two adjunctions are equivalent. This shows that the first part of statement 1 holds. \Cref{adjsodlem2} implies that $(\mathcal{A}^\perp,\mathcal{A})$ forms a semiorthogonal decomposition of $\mathcal{V}$. The condition $\mathcal{E}\subset \mathcal{A}$ implies $\mathcal{A}^\perp\subset \mathcal{E}^\perp=\mathcal{D}$. Consider any diagram $\Delta^2\times\Delta^2\rightarrow \mathcal{V}$ of the following form,
\begin{equation}\label{22diag}
 \begin{tikzcd}
  z \arrow[r]\arrow[dr,phantom,"\square"]\arrow[d] & a\arrow[dr,phantom,"\square"] \arrow[d, "!"] \arrow[r]     & 0 \arrow[d]  \\
  b'\arrow[dr,phantom,"\square"] \arrow[d] \arrow[r] & b\arrow[dr,phantom,"\square"] \arrow[d] \arrow[r, "\ast"] & c \arrow[d] \\
  0 \arrow[r]           & b'' \arrow[r]                  & z'          
 \end{tikzcd}
\end{equation}
satisfying
\begin{enumerate}
\item[1)] $a\in \mathcal{A}$, $b,b',b''\in \mathcal{B}$ and $c\in \mathcal{C}$,
\item[2)] the edge $a\rightarrow b$ is $(\mathcal{A},\mathcal{B})$-coCartesian,
\item[3)] the edge $b\rightarrow c$ is $(\mathcal{B},\mathcal{C})$-Cartesian,
\item[4)] the edge $a\rightarrow b''$ is $(\mathcal{A},\mathcal{B})$-Cartesian.
\end{enumerate}
Condition 4) implies by \Cref{mutlem} that $z\in \prescript{\perp}{}{\mathcal{A}} \subset \mathcal{D}$. Thus $z'\simeq z[1]\in \mathcal{D}$. By \Cref{mutlem}, we obtain that the edge $b'\rightarrow c$ is  $(\mathcal{B},\mathcal{C})$-coCartesian. We define two $\infty$-categories of diagrams in $\mathcal{V}$. Let $\mathcal{W}$ be the full subcategory of $\on{Fun}(\Delta^2\times\Delta^2,\mathcal{V})$ spanned by diagrams of the form \eqref{22diag} satisfying 1) to 4) and let $\mathcal{W}'$ be the full subcategory of $\on{Fun}(\Delta^1\times \Delta^1,\mathcal{V})$ spanned by functors of the form \eqref{22diag} satisfying $1),2),3)$ and that the edge $b'\rightarrow c$ is $(\mathcal{B},\mathcal{C})$-coCartesian.  We have shown that $\mathcal{W}\subset \mathcal{W}'$. Consider the restriction functors $\on{res}_{b'},\on{res}_{b''}:\mathcal{W}\rightarrow \mathcal{B}$ to the vertices $b'$ and $b''$. By the properties of the involved Cartesian and coCartesian edges and the biCartesian squares, we obtain that $\on{res}_{b''}$ is a trivial fibration. Choosing a section of $\on{res}_{b''}$ and composing with $\on{res}_{b'}$ constitutes a choice of cotwist functor $T'_\mathcal{Y}$, which is an equivalence by assumption. Thus $\on{res}_{b'}$ is also a trivial fibration. We observe that the restriction functor to $b'$ is also a trivial fibration from $\mathcal{W}'$ to $\mathcal{B}$. Using that $\mathcal{W}$ and $\mathcal{W}'$ are closed under equivalences in $\on{Fun}(\Delta^1\times\Delta^1,\mathcal{V})$ it follows that $\mathcal{W}=\mathcal{W}'$. We have shown that the second part of statement 1 of \Cref{uculem3} is fulfilled and deduce that the adjunction $F\dashv G$ is spherical.

Showing that the conditions 2 and 4 imply the sphericalness of the adjunction $F\dashv G$ is analogous to showing that the conditions 2 and 3 imply the sphericalness.
\end{proof}

\begin{corollary}\label{rescor}
Let $F:\mathcal{D}\leftrightarrow \mathcal{C}:G$ be an adjunction of stable $\infty$-categories. Assume that $F$ admits a left adjoint $E$. Let $\mathcal{D}'\subset \mathcal{D}$ be a stable subcategory such that $\on{Im}(E),\on{Im}(G)\subset \mathcal{D}'$. Then:
\begin{enumerate}
\item The adjunctions $E\dashv F$ and $F\dashv G$ restrict to adjunctions $E:\mathcal{C}\leftrightarrow \mathcal{D}':F'$ and \mbox{$F':\mathcal{D}'\leftrightarrow \mathcal{C}:G$,} where $F'$ is the restriction of the functor $F$.
\item The adjunction $F\dashv G$ is spherical if and only if the adjunction $F'\dashv G$ is spherical.
\end{enumerate}
\end{corollary}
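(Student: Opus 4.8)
The plan is to prove part 1 directly from the characterisation of adjoint functors via mapping spaces (equivalently, via biCartesian fibrations), and then to deduce part 2 from \Cref{2/4lem1}, by checking that the sphericalness criterion supplied by that lemma takes the same form for the adjoint triples $E\dashv F\dashv G$ and $E\dashv F'\dashv G$. The underlying idea is that sphericalness of $F\dashv G$ is governed entirely by the adjoint triple together with its units and counits, and this data is essentially unchanged by restricting to $\mathcal{D}'$.

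For part 1: since $\mathcal{D}'\subset\mathcal{D}$ is full and $\on{Im}(E),\on{Im}(G)\subset\mathcal{D}'$, we may regard $E$ and $G$ as functors $\mathcal{C}\to\mathcal{D}'$; set $F'=F|_{\mathcal{D}'}$. For $c\in\mathcal{C}$ and $d'\in\mathcal{D}'$ the natural equivalences $\on{Map}_{\mathcal{D}'}(E(c),d')\simeq\on{Map}_{\mathcal{D}}(E(c),d')\simeq\on{Map}_{\mathcal{C}}(c,F'(d'))$ and $\on{Map}_{\mathcal{C}}(F'(d'),c)\simeq\on{Map}_{\mathcal{D}}(d',G(c))\simeq\on{Map}_{\mathcal{D}'}(d',G(c))$ exhibit the adjunctions $E\dashv F'$ and $F'\dashv G$ (with $G$ corestricted to $\mathcal{D}'$). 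Equivalently, representing $F\dashv G$ and $E\dashv F$ by biCartesian fibrations $\mathcal{M}\to\Delta^1$ and $\mathcal{N}\to\Delta^1$, the full subcategory of $\mathcal{M}$, resp.\ $\mathcal{N}$, on the objects of $\mathcal{D}'$ and of $\mathcal{C}$ is again biCartesian over $\Delta^1$: the hypotheses $\on{Im}(G),\on{Im}(E)\subset\mathcal{D}'$ guarantee that the relevant Cartesian and coCartesian edges have their endpoints inside this full subcategory, where they remain (co)Cartesian. These biCartesian fibrations realise $F'\dashv G$ and $E\dashv F'$, so in particular the unit and counit maps of $E\dashv F'$ and of $F'\dashv G$ coincide with those of $E\dashv F$ and of $F\dashv G$ on objects of $\mathcal{D}'\cup\mathcal{C}$.

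For part 2: by part 1, $F$ and $F'$ both admit the left adjoint $E$, so \Cref{2/4lem1} applies to both $E\dashv F\dashv G$ and $E\dashv F'\dashv G$. Combined with \Cref{2/4rem}, it states that $F\dashv G$, resp.\ $F'\dashv G$, is spherical if and only if the natural transformations $\eta_2\colon E\to GT_\mathcal{X}'$ and $\eta_3\colon ET_\mathcal{X}\to G$ of \Cref{2/4rem} are equivalences; here $T_\mathcal{X}$ denotes the cotwist of $F\dashv G$, resp.\ $F'\dashv G$, and $T_\mathcal{X}'$ the twist of $E\dashv F$, resp.\ $E\dashv F'$. Now $F'E\simeq FE$ and $F'G\simeq FG$, and by part 1 the unit of $E\dashv F'$ and the counit of $F'\dashv G$ restrict those of $E\dashv F$ and $F\dashv G$; hence the cotwist $\on{fib}(F'G\to\on{id}_\mathcal{C})$ of $F'\dashv G$ and the twist $\on{cone}(\on{id}_\mathcal{C}\to F'E)$ of $E\dashv F'$ agree, as endofunctors of $\mathcal{C}$, with the cotwist $\on{fib}(FG\to\on{id}_\mathcal{C})$ of $F\dashv G$ and the twist $\on{cone}(\on{id}_\mathcal{C}\to FE)$ of $E\dashv F$. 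Moreover every object occurring in the two biCartesian squares of \Cref{2/4lem1} lies in $\on{Im}(E)\cup\on{Im}(G)\cup\{0\}\subset\mathcal{D}'$ (using exactness of $E$ and $G$), and all maps in those squares are built from the unit and counit maps of $E\dashv F$ and $F\dashv G$, which are shared with $E\dashv F'$ and $F'\dashv G$ by part 1. Consequently $\eta_2$ and $\eta_3$ for $E\dashv F'\dashv G$ are literally those for $E\dashv F\dashv G$, and, since $\mathcal{D}'\subset\mathcal{D}$ is full (so that a morphism between objects of $\mathcal{D}'$ is an equivalence in $\mathcal{D}'$ if and only if it is one in $\mathcal{D}$), they are equivalences for the one triple precisely when they are for the other. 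Therefore $F\dashv G$ is spherical if and only if $F'\dashv G$ is.

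The step demanding the most care is the claim, used repeatedly in part 2, that the auxiliary data of $E\dashv F'\dashv G$ (the units, counits, and the Kan-extension constructions underlying $T_\mathcal{X}$, $T_\mathcal{X}'$ and $\eta_2,\eta_3$) are the restrictions of those of $E\dashv F\dashv G$. This is most transparently organised through the full-subcategory description of the biCartesian fibrations established in part 1: the Kan extensions used in \Cref{twistconstr} and in the proof of \Cref{2/4lem1} are then computed by identical diagrams, all of whose vertices remain inside $\mathcal{D}'$ by stability of $\mathcal{D}'$. The remaining verifications are formal.
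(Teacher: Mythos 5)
Your proof is correct and takes essentially the same approach as the paper: both arguments show that the restricted triple $E\dashv F'\dashv G$ shares its units, counits, twist and cotwist data with $E\dashv F\dashv G$ (using that $\mathcal{D}'$ is a full stable subcategory containing $\on{Im}(E)$ and $\on{Im}(G)$), and then transfer sphericalness through the natural transformations of \Cref{2/4rem}. The only cosmetic difference is that you test sphericalness via $\eta_2,\eta_3$ and \Cref{2/4lem1} directly, whereas the paper uses $\eta_1,\eta_3$ together with \Cref{2/4prop}, which is itself derived from \Cref{2/4lem1}.
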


\begin{proof}
Statement 1 follows directly from the assumptions $\on{Im}(E),\on{Im}(G)\subset \mathcal{D}'$. 

We now show statement 2. The units of the adjunctions $E\dashv F$ and $E\dashv F'$ are equivalent. Similarly, the counits of the adjunctions $F\dashv G$ and $F'\dashv G$ are also equivalent. The counits of the adjunction $E\dashv F$ restrict to the counit of the adjunction $E\dashv F'$ and similarly the unit of the adjunction $F\dashv G$ restricts to the unit of the adjunction $F'\dashv G$. We thus obtain that the natural transformation $\eta_1$ of \Cref{2/4rem} associated to the adjunctions $E\dashv F\dashv G$ is equivalent to the natural transformation $\eta_1'$ associated to the adjunctions $E\dashv F'\dashv G$. Similarly, the natural transformation $\eta_3$ of \Cref{2/4rem} associated to the adjunctions $E\dashv F\dashv G$ is equivalent to the natural transformation $\eta_3'$ associated to the adjunctions $E\dashv F'\dashv G$. Thus $\eta_1$ and $\eta_3$ are equivalences if and only if  $\eta_1'$ and $\eta_3'$ are equivalences. \Cref{2/4prop} implies that the adjunction $F\dashv G$ is spherical if and only if the adjunction $F'\dashv G$ is spherical.
\end{proof}

\section{Local systems on spheres}\label{sec4} 
Let $\mathcal{D}$ be a stable $\infty$-category. We denote by $S^n$ the singular set of the topological $n$-sphere. The pullback functor $f^*$ along the map $f:S^n\rightarrow \ast$ is part of an adjunction 
\begin{equation}\label{sphadj1} 
f^*:\mathcal{D}\longleftrightarrow \on{Fun}(S^n,\mathcal{D}):f_* 
\end{equation}
between $\mathcal{D}$ and the stable $\infty$-category of local systems on the $n$-sphere with values in $\mathcal{D}$. We show in \Cref{sec3.1} that the adjunction \eqref{sphadj1} is spherical. Generalizing the spherical adjunction \eqref{sphadj1} to a relative context, we show in \Cref{sphfibtwist} that for any spherical fibration $f:X\rightarrow Y$ between Kan complexes there is a spherical adjunction \mbox{$f^*:\on{Fun}(Y,\mathcal{D})\longleftrightarrow \on{Fun}(X,\mathcal{D}):f_*.$} If we further assume  that $\mathcal{D}=\on{Sp}$ is the stable $\infty$-category of spectra, or any other good symmetric monoidal and stable $\infty$-category, we can endow the $\infty$-categories $\on{Fun}(X,\mathcal{D})$ and $\on{Fun}(Y,\mathcal{D})$ with a symmetric monoidal structure, called the pointwise monoidal structure. We provide an explicit description of the resulting twist functor $T_{\on{Fun}(Y,\mathcal{D})}:\on{Fun}(Y,\mathcal{D})\rightarrow \on{Fun}(Y,\mathcal{D})$ in terms of the symmetric monoidal product with a local system $\zeta\in \on{Fun}(Y,\mathcal{D})$ in \Cref{sec3.3}. The value of $\zeta$ at a point $y\in Y$ can be interpreted as the reduced homology the fiber of $f$ over $y$.

\subsection{Twist along a sphere}\label{sec3.1}

We fix a stable $\infty$-category $\mathcal{D}$. Given a simplicial set $Z$, we denote by $Z^\triangleright$ the join $Z\ast \Delta^0$. We recursively define $\infty$-categories by
\[ P_0\coloneqq S^0=\Delta^0 \amalg \Delta^0\,,\]
\[ P_n\coloneqq P_{n-1}^\triangleright \coprod_{P_{n-1}} P_{n-1}^\triangleright\,.\]
We define recursively a labeling of the vertices of $P_n$ by denoting the vertices which are not contained in $P_{n-1}$ by $2n+1$ and $2n+2$. Denote by $g^*:\mathcal{D}\rightarrow \on{Fun}(P_n,\mathcal{D})$ the pullback functor along the map of simplicial sets $g:P_n\rightarrow \ast$. The simplicial set $P_n$ is finite and $\mathcal{D}$ admits as a stable $\infty$-category all finite limits and colimits. By \cite[4.3.3.7]{HTT}, we hence obtain adjunctions 
\begin{equation}\label{adj1}
g^*: \mathcal{D}\longleftrightarrow \on{Fun}(P_n,\mathcal{D}): g_*=\lim\,,
\end{equation}
\begin{equation*}
g_!=\on{colim}: \text{Fun}(P_n,\mathcal{D})\longleftrightarrow \mathcal{D}: g^*\,,
\end{equation*}
where $g_!$ and $g_*$ are the limit and colimit functors, respectively. We will show in this section that the adjunction $g^*\dashv g_*$ is spherical.

The $\infty$-category $P_n$ is equivalent to the $\infty$-category of exit paths of a stratification of the $n$-sphere, as defined in \cite[A.6.2]{HA}, where the strata are nested spheres, one in each dimension $d\leq n$. For example for $n=2$, this corresponds to the $2$-sphere, stratified by a circle and two points on that circle. The functor category $\on{Fun}(P_n,\mathcal{D})$ is equivalent to the $\infty$-category of constructible sheaves with values in $\mathcal{D}$ with respect to the stratification of $S^n$ if $\mathcal{D}$ is a compactly generated $\infty$-category, see \cite[Section 8.6]{Tan19}.

Let $f:S^n\rightarrow \ast$ and let $f^*:\mathcal{D}\rightarrow \on{Fun}(S^n,\mathcal{D})$ be the corresponding pullback functor. If we can show that $\mathcal{D}$ admits $S^n$-indexed limits and colimits, we can again apply \cite[4.3.3.7]{HTT} to show that $f^*$ admits left and right adjoints. This follows from the following Lemma.

\begin{lemma}\label{cofinallemma}
There exists a final and cofinal map $P_n \rightarrow S^n$.
\end{lemma}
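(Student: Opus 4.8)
The plan is to construct the map by comparing $P_n$ with the singular complex of the topological $n$-sphere through geometric realization, and then to deduce cofinality (and, dually, finality) from the fact that this comparison map is a weak homotopy equivalence with Kan target.

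First I would analyze the geometric realization of $P_n$. Since $Z^\triangleright = Z\star \Delta^0$ and geometric realization carries this join to the topological cone $C|Z|$, and since $|\cdot|$ is a left adjoint and hence preserves the pushout defining $P_n$, an induction shows that $|P_n|$ is the unreduced suspension of $|P_{n-1}|$; together with $|P_0| = |\Delta^0 \amalg \Delta^0| = S^0$ this gives a homeomorphism $|P_n| \cong S^n_{\mathrm{top}}$. Composing the unit $P_n \to \on{Sing}|P_n|$ of the adjunction $|\cdot| \dashv \on{Sing}$ with the isomorphism $\on{Sing}|P_n| \xrightarrow{\ \sim\ } \on{Sing}(S^n_{\mathrm{top}}) = S^n$ induced by this homeomorphism yields a map $u_n\colon P_n \to S^n$. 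Because every simplicial set is cofibrant for the Kan--Quillen model structure and $|\cdot|\dashv \on{Sing}$ is a Quillen equivalence, the unit $P_n \to \on{Sing}|P_n|$ is a weak homotopy equivalence, and hence so is $u_n$.

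Next I would promote ``weak homotopy equivalence'' to ``cofinal''. By Joyal's version of Quillen's Theorem A, \cite[4.1.3.1]{HTT}, the map $u_n$ is \emph{cofinal} if and only if for every vertex $x\in S^n$ the simplicial set $P_n\times_{S^n}(S^n)_{x/}$ is weakly contractible. Since $S^n$ is a Kan complex, the left fibration $(S^n)_{x/}\to S^n$ is a model for the based path-space fibration of $S^n$ (its total space being weakly contractible), so the pullback $P_n\times_{S^n}(S^n)_{x/}$ is a model for the homotopy fibre of $u_n$ over $x$; as $u_n$ is a weak homotopy equivalence, all of its homotopy fibres are weakly contractible, and so $u_n$ is cofinal. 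Applying the same reasoning to the opposite map $u_n^{\mathrm{op}}\colon P_n^{\mathrm{op}}\to (S^n)^{\mathrm{op}}$ — note $|P_n^{\mathrm{op}}|\cong |P_n|$ and $(S^n)^{\mathrm{op}}\cong S^n$ via the coordinate-reversing self-homeomorphisms of the topological simplices, so $u_n^{\mathrm{op}}$ is again a weak homotopy equivalence with Kan target — shows that $u_n^{\mathrm{op}}$ is cofinal, i.e.\ that $u_n$ is \emph{final}.

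The main obstacle is the passage from weak homotopy equivalence to cofinality in the third step: one must identify $P_n\times_{S^n}(S^n)_{x/}$ with a homotopy fibre of $u_n$, which rests on $(S^n)_{x/}\to S^n$ being a left fibration with weakly contractible total space over the Kan complex $S^n$. (Equivalently, one may invoke directly that a map of simplicial sets with Kan target is cofinal precisely when it is a weak homotopy equivalence, which packages exactly this reduction.) Everything else is routine bookkeeping with joins, cones, pushouts, and the standard properties of $|\cdot|\dashv\on{Sing}$.
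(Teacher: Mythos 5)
Your proposal is correct and follows essentially the same route as the paper: produce a weak homotopy equivalence $P_n\rightarrow S^n$ from a homotopy equivalence $|P_n|\simeq S^n_{top}$ via the adjunction $|\cdot|\dashv\on{Sing}$, and then use that a weak homotopy equivalence with Kan complex target is cofinal (and, applied to opposites, final). The paper simply cites \cite[4.1.2.6]{HTT} for this last step rather than rederiving it from Theorem A, and leaves the identification $|P_n|\cong S^n_{top}$ implicit, which you spell out by the suspension induction.
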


\begin{proof}
Denote the topological $n$-sphere by $S^n_{top}$, so that $S^n=\on{Sing}(S^n_{top})$.
Using the Quillen equivalence between the Kan model structure on $\on{Set}_\Delta$ and the Quillen model structure on $\on{Top}$, we can obtain from a choice of weak homotopy equivalence $|P_n|\simeq S^n_{top}$ a weak homotopy equivalence $e:P_n \rightarrow S^n$. The finality and cofinality thus follow from \cite[4.1.2.6]{HTT}.
\end{proof}

We thus conclude that $f^*$ admits left and right adjoints $f_!$ and $f_*$, given by mapping a functor $S^n\rightarrow \mathcal{D}$ to its colimits, respectively, its limit. The next Lemma shows that $\on{Fun}(S^n,\mathcal{D})$ is a full subcategory of $\on{Fun}(P_n,\mathcal{D})$ and that the adjunction $g^*\dashv g_*$ restricts to the adjunction $f^*\dashv f_*$.

\begin{lemma}\label{lem:fgres}
There exists a fully faithful functor $\on{Fun}(S^n,\mathcal{D})\rightarrow \on{Fun}(P_n,\mathcal{D})$ making the following diagrams commute.
\begin{equation}\label{eq:fgcomm}
\begin{tikzcd}
{\on{Fun}(S^n,\mathcal{D})} \arrow[rr] &                                                  & {\on{Fun}(P_n,\mathcal{D})} \\
                                       & \mathcal{D} \arrow[lu, "f^*"] \arrow[ru, "g^*"'] &                            
\end{tikzcd}
\quad\quad
\begin{tikzcd}
{\on{Fun}(S^n,\mathcal{D})} \arrow[rr] \arrow[rd, "f_*"'] &             & {\on{Fun}(P_n,\mathcal{D})} \arrow[ld, "g_*"] \\
                                                          & \mathcal{D} &                                              
\end{tikzcd}
\end{equation}
\end{lemma}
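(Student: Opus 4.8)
The plan is to let the promised functor be precomposition along the final and cofinal map $e\colon P_{n}\to S^{n}$ produced in \Cref{cofinallemma}, that is $e^{*}\colon \on{Fun}(S^{n},\mathcal{D})\to \on{Fun}(P_{n},\mathcal{D})$, and then to verify the two required properties separately: that the two triangles commute, and that $e^{*}$ is fully faithful.

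The two triangles are formal. For the first, $f\circ e=g$ as maps to the point and precomposition is contravariantly functorial, so $e^{*}\circ f^{*}\simeq g^{*}$. For the second, I would invoke that $e$ is \emph{final}: for every $F\in \on{Fun}(S^{n},\mathcal{D})$ the canonical comparison $\lim_{P_{n}}(F\circ e)\to \lim_{S^{n}}F$ is an equivalence, and this is natural in $F$, so $g_{*}\circ e^{*}\simeq f_{*}$. (Cofinality of $e$ likewise yields $g_{!}\circ e^{*}\simeq f_{!}$, which is not needed here but is convenient downstream.)

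The substance of the lemma, and the step I expect to need the most care, is the full faithfulness of $e^{*}$. The clean reason is that, being a weak homotopy equivalence onto the Kan complex $S^{n}$ (this is precisely how $e$ is built in \Cref{cofinallemma}), the map $e$ exhibits $S^{n}$ as the localization of $P_{n}$ at the class of all its morphisms --- equivalently, as the $\infty$-groupoid completion of $P_{n}$ --- and restriction along a localization is fully faithful, cf.~\cite{HTT}. Its essential image is then the full subcategory of $\on{Fun}(P_{n},\mathcal{D})$ of functors that invert every morphism, i.e.~the locally constant objects. If one prefers to avoid the universal property of localizations, the same conclusion follows by computing $\on{Map}_{\on{Fun}(P_{n},\mathcal{D})}(e^{*}F,e^{*}G)$ as an end over the twisted arrow $\infty$-category of $P_{n}$ and using finality of $e$ to identify it with the corresponding end over $S^{n}$, namely $\on{Map}_{\on{Fun}(S^{n},\mathcal{D})}(F,G)$. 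In any case, the crux is that \Cref{cofinallemma} is exactly the input needed to realize $\on{Fun}(S^{n},\mathcal{D})$ as a full subcategory of $\on{Fun}(P_{n},\mathcal{D})$ compatibly with pullback and with taking limits, and the remainder of the argument is bookkeeping.
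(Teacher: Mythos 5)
Your proposal is correct, and for the substantive point --- the full faithfulness --- it takes a genuinely different route from the paper. The paper argues by induction on $n$: it uses the pushout decompositions $S^{n+1}\simeq \Delta^0\amalg_{S^n}\Delta^0$ and $P_{n+1}\simeq P_n^\triangleright\amalg_{P_n}P_n^\triangleright$, the fact that $\on{Fun}(\mhyphen,\mathcal{D})$ carries these to pullback squares in $\on{Cat}_\infty$, and the contractibility of $P_n^\triangleright$ (so that $b^*\colon\mathcal{D}\to\on{Fun}(P_n^\triangleright,\mathcal{D})$ is an equivalence onto the constant functors) to conclude that the induced functor on the pulled-back corners is again an equivalence onto its image; it then deduces the commutativity of the right-hand triangle, exactly as you do, from finality of the weak equivalence $e\colon P_n\to S^n$ of \Cref{cofinallemma}. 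You instead invoke the universal property of $\infty$-categorical localization: since $e$ is a weak homotopy equivalence into a Kan complex, it exhibits $S^n$ as the groupoid completion of $P_n$, so $e^*$ is fully faithful, with essential image the locally constant functors. Your argument is shorter and more general --- it works for any weak homotopy equivalence into a Kan complex and identifies the essential image for free --- at the price of importing the localization machinery, whereas the paper's induction stays within the elementary toolkit it uses throughout (Kan extensions, (co)limit decompositions, cofinality) and constructs the comparison map hand in hand with the recursive definition of $P_n$; both proofs treat the two triangles the same way ($f\circ e=g$ for the left one, finality of $e$ for the right one).

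Two small corrections that do not affect the argument: the canonical comparison map runs $\lim_{S^n}F\to\lim_{P_n}(F\circ e)$ (restriction of limit cones), not in the direction you wrote; and the alternative end-computation you sketch is not as immediate as claimed, since finality of $e$ does not formally yield finality of the induced map on twisted arrow $\infty$-categories --- but the localization argument you lead with does not need it.
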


\begin{proof}
Suppose that $n=0$. Then $S^0=P_0=\Delta^0 \amalg \Delta^0$ and the assertion is clear. We proceed by induction on $n$. 

Suppose that there is a map $P_n\rightarrow S^n$ whose pullback functor $\on{Fun}(S^n,\mathcal{D})\rightarrow \on{Fun}(P_n,\mathcal{D})$ is fully faithful and makes the diagrams \eqref{eq:fgcomm} commute. Let $a:P_n\rightarrow P_n^\triangleright$ be the inclusion and $b:P_n^\triangleright\rightarrow \ast$. The pullback functors assemble into a commutative diagram 
\[
\begin{tikzcd}
\mathcal{D} \arrow[r, "b^*"] \arrow[rd, "g^*"] \arrow[d, "f^*"] & {\on{Fun}(P_n^\triangleright,\mathcal{D})} \arrow[d, "a^*"] \\
{\on{Fun}(S^n,\mathcal{D})} \arrow[r]                           & {\on{Fun}(P_n,\mathcal{D})}                                
\end{tikzcd}
\]
where the upper triangle commutes because $b\circ a=g$ and the lower triangle commutes by the induction assumption. The pushout diagrams of $S^{n+1}\simeq \Delta^0 \amalg_{S^n}\Delta^0$ and $P^{n+1}\simeq P_n^\triangleright\amalg_{P_n}P_n^\triangleright$ are mapped by $\on{Fun}(\mhyphen,\mathcal{D})$ to pullback diagrams in $\on{Cat}_\infty$, which assemble into the following commutative diagram.
\[
\begin{tikzcd}[row sep=small]
{\on{Fun}(S^{n+1},\mathcal{D})} \arrow[rr, hook, "e^*"] \arrow[dd] \arrow[rd] \arrow[rddd, "\lrcorner", phantom] &                                                 & {\on{Fun}(P_{n+1},\mathcal{D})} \arrow[dd] \arrow[rd] \arrow[rddd, "\lrcorner", phantom] &                                                              \\
                                                                                                          & \mathcal{D} \arrow[rr, "b^*", near start] \arrow[dd, "f^*"', near start] &                                                                                          & {\on{Fun}(P_n^\triangleright,\mathcal{D})} \arrow[dd, "a^*"] \\
\mathcal{D} \arrow[rd, "f^*", near end] \arrow[rr, "b^*", near end]                                                           &                                                 & {\on{Fun}(P_n^\triangleright,\mathcal{D})} \arrow[rd, "a^*"]                             &                                                              \\
                                                                                                          & {\on{Fun}(S^n,\mathcal{D})} \arrow[rr, hook]    &                                                                                          & {\on{Fun}(P_n,\mathcal{D})}                                 
\end{tikzcd}
\]
Since $P_{n}^\triangleright$ is contractible, the functor $b^*$ is an equivalence onto its image given by (up to equivalence) constant functors $P_{n}^\triangleright\rightarrow  \mathcal{D}$. It follows that the resulting functor $e^*:\on{Fun}(S^{n+1},\mathcal{D})\rightarrow \on{Fun}(P_{n+1},\mathcal{D})$ is also an equivalence onto its image. The functor $e^*$ is given by the pullback functor along a weak equivalence $e:P_{n+1}\rightarrow S_{n+1}$, which is as in \Cref{cofinallemma} final. Since $e\circ f=g$, it follows that $g^*=e^*\circ f^*$, meaning that the left diagram of \eqref{eq:fgcomm} commutes. The commutativity of the right diagram of \eqref{eq:fgcomm} follows from $e$ being final.
\end{proof}

If we show that the adjunction $g^*\dashv g_*$ is spherical, it follows by \Cref{rescor}, that the adjunction $f^*\dashv f_*$ is also spherical. The advantage of treating the adjunction $g^*\dashv g_*$ over treating the adjunction $f^*\dashv f_*$ is, that the former is more accessible to direct computations because $P_n$ is a finite poset.

To show that the adjunction $g^*\dashv g_*$ is spherical, we use the conditions 3 and 4 of  the 2/4 property of spherical adjunctions. We begin in \Cref{sphlem1} by describing the limits and colimits of constant local systems and the arising unit and counit maps. The idea of the proof is to use the decomposition results for limits and colimits, see \cite[4.2.3.10]{HTT}.

\begin{lemma}\label{sphlem1}
Let $n\geq 0$ and consider the functor $g^*:\mathcal{D}\rightarrow \on{Fun}(P_n,\mathcal{D})$ with left and right adjoints $g_!$ and $g_*$.
\begin{enumerate}
\item There exists an equivalence of functors $g_!g^*\simeq id_{\mathcal{D}}\oplus id_{\mathcal{D}}[n]$ in $\on{Fun}(\mathcal{D},\mathcal{D})$ such that composition with the counit map $g_!g^*\rightarrow id_\mathcal{D}$ of the adjunction $g_!\dashv g^*$ yields the projection from the direct sum.
\item There exists an equivalence of functors $g_*g^*\simeq id_{\mathcal{D}}\oplus id_{\mathcal{D}}[-n]$ in $\on{Fun}(\mathcal{D},\mathcal{D})$ such that composition with the unit map $id_\mathcal{D}\rightarrow g_*g^*$ of the adjunction $g^*\dashv g_*$ yields the inclusion into the direct sum.
\end{enumerate}
\end{lemma}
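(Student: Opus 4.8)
The plan is to argue by induction on $n$, using the recursive presentation $P_n = P_{n-1}^\triangleright \coprod_{P_{n-1}} P_{n-1}^\triangleright$ — a homotopy pushout of simplicial sets, since $P_{n-1}\hookrightarrow P_{n-1}^\triangleright$ is a monomorphism — together with the compatibility of colimits and limits with colimits of the indexing diagram \cite[4.2.3.10]{HTT}. Statement 2 is obtained from statement 1 by the dual argument, with limits, pullbacks and units in place of colimits, pushouts and counits, so I only discuss statement 1. In the base case $n=0$ one has $P_0=\Delta^0\amalg\Delta^0$, hence $g_!g^*(d)\simeq d\oplus d$ with counit the fold map; the fold map of a biproduct in a stable $\infty$-category is a split epimorphism whose fibre is again $d$ (placed in degree $0$, as required for $n=0$), so a choice of splitting identifies $g_!g^*$ with $\on{id}_\mathcal{D}\oplus\on{id}_\mathcal{D}[0]$ in such a way that the counit becomes the projection.

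For the inductive step write $Q\coloneqq P_{n-1}^\triangleright$, with structure map $h\colon Q\to\ast$, so that $P_n=Q\coprod_{P_{n-1}}Q$. Applying \cite[4.2.3.10]{HTT} to the constant functor $g^*(d)$ presents $g_!g^*(d)$ as the pushout of
\[ h_!h^*(d)\longleftarrow \on{colim}_{P_{n-1}}g^*(d)\longrightarrow h_!h^*(d) \]
along the maps induced by the two inclusions $P_{n-1}\hookrightarrow Q$. Since $Q=P_{n-1}^\triangleright$ has a terminal object (the cone point), $h_!h^*(d)\simeq d$ with counit $h_!h^*\to\on{id}$ an equivalence, and by naturality of the counit in the map $P_{n-1}\hookrightarrow Q$ over $\ast$ each leg of the displayed span is thereby identified with the counit $\on{colim}_{P_{n-1}}g^*(d)\to d$, which by the inductive hypothesis is the projection $d\oplus d[n-1]\to d$. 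Decomposing this span into the direct sum of $d\xleftarrow{\on{id}}d\xrightarrow{\on{id}}d$ on the common summand $d$ and $0\leftarrow d[n-1]\to 0$ on the complementary summand $d[n-1]$, and using $0\coprod_{d[n-1]}0\simeq d[n-1][1]=d[n]$, yields $g_!g^*(d)\simeq d\oplus d[n]$. Finally, the counit $g_!g^*(d)\to d$ is the map induced on the pushout by the counits of the three pieces; carried through the same decomposition it is the identity on the $d$-summand and zero on the $d[n]$-summand, hence the projection.

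I expect the step requiring most care to be the bookkeeping of the unit and counit transformations through \cite[4.2.3.10]{HTT}: one must check both that the two legs of the pushout span really are the counits of the colimit/constant adjunction attached to $P_{n-1}$ — so that the inductive hypothesis recognises them as projections — and that the counit attached to $P_n$ is assembled from the counits of the three pieces, so that the final identification with the projection is legitimate. The remaining object-level content, the computation of $g_!g^*(d)$ and dually of $g_*g^*(d)$, is a short pushout, resp.\ pullback, calculation.
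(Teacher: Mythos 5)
Your overall strategy is the same as the paper's: induction on $n$, the decomposition $P_n\simeq P_{n-1}^\triangleright\amalg_{P_{n-1}}P_{n-1}^\triangleright$, the decomposition of colimits \cite[4.2.3.10]{HTT} to present $g_!g^*(d)$ as a pushout of the span $d\leftarrow d\oplus d[n-1]\rightarrow d$ whose legs are identified (via contractibility of the cones and compatibility of colimit cones) with the counit for $P_{n-1}$, and the computation of that pushout as $d\oplus d[n]$. All of that matches the paper's proof and is fine. The genuine gap is in your very last sentence of the inductive step: you assert that the counit, carried through your direct-sum decomposition of the span, is ``the identity on the $d$-summand and zero on the $d[n]$-summand.'' The second half of this does not follow. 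What you actually know is that the counit of $P_n$ is induced on pushouts by a map of spans whose restriction to the summand $S_2=(0\leftarrow d[n-1]\rightarrow 0)$ has all three \emph{components} equal to zero maps; but a transformation from $S_2$ to the constant span at $d$ is not determined by its components. Indeed, the space of such transformations is $\Omega_0\on{Map}_{\mathcal{D}}(d[n-1],d)\simeq\on{Map}_{\mathcal{D}}(d[n],d)$ (the two end components contribute contractible choices, leaving a loop space), and the induced map on pushouts $d[n]\rightarrow d$ is exactly the class of the transformation under this identification -- so every map $d[n]\rightarrow d$ arises from a ``componentwise zero'' transformation, and nothing forces the second component of the counit to vanish under your identification.

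What your argument does legitimately yield is that the composite of the counit with the inclusion of the first summand is an equivalence (your $S_1$-summand analysis is correct, since $S_1$ is a left Kan extension from the initial vertex of the span shape, so transformations out of it are detected on the middle component). That weaker conclusion suffices, because the lemma only asserts the existence of \emph{some} equivalence $g_!g^*\simeq id_\mathcal{D}\oplus id_\mathcal{D}[n]$ under which the counit is the projection: if the counit is $(e,c')$ with $e$ an equivalence, compose with the upper-triangular automorphism $\begin{pmatrix} e^{-1} & -e^{-1}c' \\ 0 & id \end{pmatrix}$ of $id_\mathcal{D}\oplus id_\mathcal{D}[n]$ -- exactly the correction you already made in your base case $n=0$, and exactly what the paper does with its matrix $A$ at the end of the induction. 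Note that this corrected form of the statement is also what you must feed back in as the induction hypothesis, which works since the next step only uses that the counit for $P_{n-1}$ is the projection under \emph{some} equivalence. A secondary, more cosmetic point: the lemma claims an equivalence in $\on{Fun}(\mathcal{D},\mathcal{D})$, while your construction is phrased pointwise at $d$; to get naturality in $d$ you should run the decomposition in the functor category (colimits there are computed pointwise), as the paper does via relative Kan extensions.
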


\begin{proof}
We prove only statement 1, statement 2 can be shown analogously. Assume $n=0$. It is immediate that $g_!g^*\simeq id_\mathcal{D}\oplus id_{\mathcal{D}}$. The counit map is given by $id_{\mathcal{D}}\oplus id_{\mathcal{D}}\xrightarrow{(id,id)}id_{\mathcal{D}}$ which is equivalent to the projection, using the equivalence $id_{\mathcal{D}}\oplus id_{\mathcal{D}} \xrightarrow{A} id_{\mathcal{D}}\oplus id_{\mathcal{D}}$ where $A=\begin{pmatrix}id & -id \\ 0 & id \end{pmatrix}$ with inverse  $A^{-1}=\begin{pmatrix}id & id \\ 0 & id \end{pmatrix}$. 

We introduce some notation: For $m>0$ we denote $[m]=\{1,\dots,m\}$. Given $X\subset [2n+2]$, we denote the full subcategory of $P_n$ generated by the elements of $X$ by $\langle X\rangle$. We label the following inclusion functors $i_{n-1}:P_{n-1}\rightarrow P_n$, $j_{n-1}:\langle [2n+1]\rangle\rightarrow P_n$ and $k_{n-1}:\langle [2n]\cup \{2n+2\}\rangle\rightarrow P_n$. We denote by $i^*_{n-1},j^*_{n-1},k^*_{n-1}$ the respective pullback functors. We denote by $\on{colim}$ the colimit functors, irrespective of their domain.

For $n>0$, we continue by induction. We show that there exists an equivalence of functors $g_!g^*\simeq id_{\mathcal{D}}\oplus id_{\mathcal{D}}[n]$, such that the following diagram commutes.
\[
\begin{tikzcd}
g_!g^* \arrow[r, "\simeq"] \arrow[rd, "cu_n"'] & {id_\mathcal{D}\oplus id_\mathcal{D}[n]} \arrow[d, "{(id,0)}"] \\
                                              & id_\mathcal{D}                                                
\end{tikzcd}
\]
 
Assume that the above statement has been shown for $n-1$. Via Kan extension we produce the following diagram in $\on{Fun}(\mathcal{D},\mathcal{D})$.
\begin{equation}\label{fdc}
\begin{tikzcd}
{\on{colim}\circ\, i_{n-1}^*\circ g^*} \arrow[r] \arrow[d] & {\on{colim}\circ\, k^*_{n-1}\circ g^*} \arrow[d] \\
{\on{colim}\circ\, j_{n-1}^* \circ g^*} \arrow[r, "b"]          & g_! g^*                             
\end{tikzcd}
\end{equation}
The top horizontal and left vertical maps can be identified with the counit of the adjunction $\mathcal{D}\leftrightarrow \on{Fun}(P_{n-1},\mathcal{D})$.  The induction assumption implies that the diagram \eqref{fdc} is equivalent to the following diagram in $\on{Fun}(\mathcal{D},\mathcal{D})$.
\begin{equation}\label{sq4}
\begin{tikzcd}
{id_{\mathcal{D}}\oplus id_{\mathcal{D}}[n-1]} \arrow[r, "{(id\text{,}0)}"] \arrow[d, "{(id\text{,}0)}"'] & id_{\mathcal{D}} \arrow[d]  \\
id_{\mathcal{D}} \arrow[r]                                                                                                           & g_!g^*
\end{tikzcd}
\end{equation} 
Evaluating the diagram \eqref{fdc} at a vertex $d\in \mathcal{D}$ yields a diagram equivalent to the pushout diagram obtained from the decomposition of the colimit of the functor $g^*(d)\in \on{Fun}(P_n,\mathcal{D})$ along the decomposition
\[ P_{n-1}\subset \langle [2n+1]\rangle,\langle [2n]\cup \{2n+2\}\rangle\subset P_n\,.\]
This implies that the diagrams \eqref{fdc} and \eqref{sq4} are biCartesian squares. We can thus find an equivalence of diagrams between the diagram \eqref{sq4} and the following diagram.
\[\begin{tikzcd}
id_{\mathcal{D}}\oplus id_{\mathcal{D}}[n-1]\arrow[dr, phantom, "\square"]\arrow[r, "(id\text{,}0)"]\arrow[d, "(id\text{,}0)"]& id_{\mathcal{D}}\arrow[d, "{(id,0)}"]\\
id_{\mathcal{D}}\arrow[r, "{(id,0)}"]& id_{\mathcal{D}}\oplus id_{\mathcal{D}}[n]
\end{tikzcd}\] 

Using that $\langle [2n+1]\rangle$ is contractible, it follows that the composite map $\on{colim}\circ\, j^{n-1}\circ g^*\xrightarrow{b} g_!g^*\xrightarrow{cu_n} id_\mathcal{D}$ is an equivalence. There thus exists a commutative diagram of the form 
\[
\begin{tikzcd}
g_!g^* \arrow[r, "\simeq"] \arrow[rd, "cu_n"'] & {id_\mathcal{D}\oplus id_\mathcal{D}[n]} \arrow[d, "{(c,c')}"] \arrow[r, "A"] & {id_\mathcal{D}\oplus id_\mathcal{D}[n]} \arrow[d, "{(id,0)}"] \\
                                              & id_\mathcal{D} \arrow[r, "id"]                                                 & id_\mathcal{D}                                                
\end{tikzcd}
\]
with $c,c'$ some undetermined maps and $A= \begin{pmatrix}c^{-1} & c' \\ 0 & id \end{pmatrix}$ an equivalence, completing the induction.
\end{proof}

\begin{remark}\label{sphrem1}
\Cref{sphlem1} implies that the twist functor $T_\mathcal{D}$ of the adjunction $g^*\dashv g_*$ satisfies $T_\mathcal{D}\simeq [-n]$. The twist functor of the adjunction $f^*\dashv f_*$ is equivalent to the twist functor of the adjunction $g^*\dashv g_*$ and thus also equivalent to $[-n]$.
\end{remark}

\begin{lemma}\label{sphlem2}
Every functor in $\on{Fun}(P_n,\mathcal{D})$ is a pullback of functors whose value is zero on $2n+1\in (P_n)_0$ or $2n+2\in (P_n)_0$.
\end{lemma}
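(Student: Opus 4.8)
The plan is to exhibit every functor $\mathcal{F}\in\on{Fun}(P_n,\mathcal{D})$ as the pullback of two functors which vanish at the top two vertices $2n+1$ and $2n+2$. Recall the construction $P_n = P_{n-1}^\triangleright\coprod_{P_{n-1}}P_{n-1}^\triangleright$, so that the two cone points are precisely $2n+1$ and $2n+2$, and $P_{n-1}\subset P_n$ is the full subcategory on $\{1,\dots,2n\}$. The key structural fact I would use is that the cone point of a simplicial join $Z^\triangleright = Z\ast\Delta^0$ is a terminal object; in $P_n$ the vertices $2n+1,2n+2$ are each terminal in the respective copy of $P_{n-1}^\triangleright$, and every morphism in $P_n$ whose source lies in $P_{n-1}$ either stays in $P_{n-1}$ or lands at one of the two cone points.

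First I would form, for a given $\mathcal{F}$, the right Kan extension along the inclusion of the full subcategory $\langle\{1,\dots,2n+1\}\rangle \hookrightarrow P_n$ of the restriction of $\mathcal{F}$ — no, more directly: let $\mathcal{F}_1$ be the functor obtained from $\mathcal{F}$ by replacing its value at $2n+2$ by a zero object (and all maps into/out of it by the zero maps), i.e. the right Kan extension of $\mathcal{F}|_{\langle\{1,\dots,2n+1\}\rangle}$ along the inclusion, using that this inclusion is a sieve-like subcategory so the Kan extension just inserts $0$ at $2n+2$. Symmetrically define $\mathcal{F}_2$ with a zero at $2n+1$. There are canonical maps $\mathcal{F}\to\mathcal{F}_1$ and $\mathcal{F}\to\mathcal{F}_2$, both equivalences away from the relevant cone point. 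Let $\mathcal{F}_0$ be obtained by putting zeros at both $2n+1$ and $2n+2$; then I claim the square
\begin{equation*}
\begin{tikzcd}
\mathcal{F} \arrow[r]\arrow[d] & \mathcal{F}_1 \arrow[d]\\
\mathcal{F}_2 \arrow[r] & \mathcal{F}_0
\end{tikzcd}
\end{equation*}
is a pullback in $\on{Fun}(P_n,\mathcal{D})$. Since limits in functor categories are computed pointwise, it suffices to check this at each vertex $v\in(P_n)_0$: at $v\in\{1,\dots,2n\}$ all four values agree and the square is trivially a pullback; at $v=2n+1$ the square reads $\mathcal{F}(2n+1)\to\mathcal{F}(2n+1)$, $0\to 0$, which is a pullback; at $v=2n+2$ symmetrically. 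Hence the square is a pullback, and $\mathcal{F}_1$ vanishes at $2n+2$ while $\mathcal{F}_2$ vanishes at $2n+1$, as required.

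The main obstacle, and the only point needing genuine care, is justifying that "inserting a zero object at a terminal vertex" is literally a right Kan extension along the corresponding full-subcategory inclusion, and that it is functorial in $\mathcal{F}$ so that the comparison maps $\mathcal{F}\to\mathcal{F}_i$ and $\mathcal{F}_i\to\mathcal{F}_0$ genuinely exist as morphisms of functors. For this I would invoke \cite[4.3.2]{HTT}: the inclusion $\langle\{1,\dots,2n+1\}\rangle\hookrightarrow P_n$ omits only the object $2n+2$, whose undercategory (slice $\langle\{1,\dots,2n+1\}\rangle_{2n+2/}$ taken in $P_n$) is empty because $2n+2$ is a cone point and there are no morphisms out of it except its identity; therefore the pointwise formula for the right Kan extension gives the limit over the empty diagram, namely a terminal (zero) object. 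This makes the construction canonical up to contractible choice, the comparison maps automatic, and the pointwise pullback check above rigorous. Everything else is the elementary verification that a square of functors which is a pullback at every vertex is a pullback.
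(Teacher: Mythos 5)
Your proof is correct and takes essentially the same route as the paper: the paper likewise realizes the functors with a zero inserted at $2n+1$, at $2n+2$, or at both as right Kan extensions of restrictions of $X$, and exhibits $X$ as the pullback $X_{2n+1}\times_{X_{2n+1,2n+2}}X_{2n+2}$ in $\on{Fun}(P_n,\mathcal{D})$. Your extra verifications (the emptiness of the relevant comma category at the omitted maximal vertex, and the pointwise pullback check) merely spell out details the paper leaves implicit.
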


\begin{proof}
Let $X\in\on{Fun}(P_n,\mathcal{D})$. Denote by $X_{n+1},X_{n+2},X_{n+1,n+2}\in \on{Fun}(P_n,\mathcal{D})$ the functors which are identical to $X$ except for their value at $2n+1$, $2n+2$ and $\{2n+1,2n+2\}$, respectively, where their value is $0$. The functors $X_{2n+1},X_{2n+2},X_{2n+1,2n+2}$ can be described as right Kan extensions of restrictions of $X$. The functor $X$ is equivalent to a pullback $X_{2n+1}\times_{X_{2n+1,2n+2}} X_{2n+2}$ in $\on{Fun}(P_n,\mathcal{D})$. 
\end{proof}

\begin{lemma}\label{sphlem3} 
Let $F\in \on{Fun}(P_n,\mathcal{D})$ and consider the counit map $cu:g^*g_*(F)\rightarrow F$ of the adjunction $g^*\dashv g_*$. There exists an equivalence $g_!(F)\oplus g_*(F)\simeq g_!g^*g_*(F)$ such that the composite with $g_!(cu):g_!g^*g_*(F)\rightarrow g_!(F)$ yields the projection from the direct sum.
\end{lemma}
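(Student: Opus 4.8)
The plan is to deduce \Cref{sphlem3} from \Cref{sphlem1}, applied to the object $g_*(F)\in\mathcal{D}$, together with a Poincar\'e-duality type identity for the $n$-sphere: a natural equivalence of functors $g_!\simeq g_*\circ[n]\colon\on{Fun}(P_n,\mathcal{D})\to\mathcal{D}$, i.e.\ a natural equivalence $\on{colim}_{P_n}H\simeq(\lim_{P_n}H)[n]$. I will first carry out the formal deduction granting this identity, and then indicate how to prove the identity.

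First I would record two consequences of the second part of \Cref{sphlem1}. Applied to $g_*(F)$, it yields an equivalence $g_*g^*g_*(F)\simeq g_*(F)\oplus g_*(F)[-n]$ under which the unit of $g^*\dashv g_*$ at $g_*(F)$ is the inclusion of the first summand. By the triangle identity, the counit $cu$ of $g^*\dashv g_*$ gives, after applying $g_*$, a map $g_*(cu_F)\colon g_*g^*g_*(F)\to g_*(F)$ which is a left inverse to this inclusion, hence (after a change of basis of the direct sum) the projection onto the first summand. Consequently $\on{fib}(g_*(cu_F))\simeq g_*(F)[-n]$, and --- since the cofiber sequence of a split epimorphism out of a direct sum is itself split --- the map $g_*(F)\to g_*(\on{cofib}(cu_F))$ obtained by applying $g_*$ to $F\to\on{cofib}(cu_F)$ is nullhomotopic.

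Now I finish the deduction. Applying the exact functor $g_!=\on{colim}_{P_n}$ to the cofiber sequence $g^*g_*(F)\xrightarrow{cu_F}F\to\on{cofib}(cu_F)$ gives a cofiber sequence $g_!g^*g_*(F)\xrightarrow{g_!(cu_F)}g_!(F)\to g_!(\on{cofib}(cu_F))$. By the duality identity the last map is identified with the $n$-fold suspension of $g_*(F)\to g_*(\on{cofib}(cu_F))$, which is null by the previous paragraph; hence this cofiber sequence splits, so $g_!g^*g_*(F)\simeq g_!(F)\oplus g_!(\on{fib}(cu_F))$ in a way under which $g_!(cu_F)$ is the projection onto $g_!(F)$ (here I use $\on{cofib}(cu_F)[-1]\simeq\on{fib}(cu_F)$). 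Finally, $g_!(\on{fib}(cu_F))\simeq g_*(\on{fib}(cu_F))[n]\simeq(\on{fib}(g_*(cu_F)))[n]\simeq(g_*(F)[-n])[n]\simeq g_*(F)$, using the duality identity, exactness of $g_*$, and the first paragraph. This gives $g_!g^*g_*(F)\simeq g_!(F)\oplus g_*(F)$ with $g_!(cu_F)$ the projection onto $g_!(F)$, as required.

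The hard part is the identity $g_!\simeq g_*\circ[n]$, which I would prove by induction on $n$, in the spirit of the proof of \Cref{sphlem1}; \Cref{sphlem2} lets one reduce, by exactness of both sides, to functors supported on one of the two sub-cones. The base case $n=0$ is immediate since $P_0=S^0$ and both functors compute a binary (co)product. For the inductive step, the decomposition $P_n\simeq P_{n-1}^{\triangleright}\amalg_{P_{n-1}}P_{n-1}^{\triangleright}$ presents $\on{colim}_{P_n}H$ as the pushout $H(2n+1)\amalg_{\on{colim}_{P_{n-1}}H}H(2n+2)$ via \cite[4.2.3.10]{HTT} (using that each cone point is a terminal object of its copy of $P_{n-1}^{\triangleright}$, so that $\on{colim}$ over the cone is evaluation there), and dually presents $\lim_{P_n}H$ as a pullback of the limits over the two cones. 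The subtlety --- and where the inductive hypothesis enters --- is that $\lim_{P_{n-1}^{\triangleright}}$ does \emph{not} reduce to evaluation at the cone point: writing the restriction of $H$ to a cone as an extension of a right Kan extension from $P_{n-1}$ by the functor supported at the cone point, one computes a fiber sequence expressing $\lim_{P_{n-1}^{\triangleright}}H$ in terms of $\lim_{P_{n-1}}H$ and an $[-n]$-shift of the cone-point value. Matching the $n$-fold suspension of the pushout presentation of $\on{colim}_{P_n}H$ with the pullback presentation of $\lim_{P_n}H$, using $\on{colim}_{P_{n-1}}\simeq(\lim_{P_{n-1}})[n-1]$, closes the induction.
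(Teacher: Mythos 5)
Your formal reduction is correct, and it is a genuinely different route from the paper's: granting a \emph{natural} equivalence $g_!\simeq [n]\circ g_*$ of functors $\on{Fun}(P_n,\mathcal{D})\to\mathcal{D}$, your combination of \Cref{sphlem1} applied to $g_*(F)$, the triangle identity, and the observation that a (co)fiber sequence whose first map is a split epimorphism has nullhomotopic connecting map does yield exactly the statement of \Cref{sphlem3}; I checked each of these steps and they are sound. The paper instead never invokes any global duality: it reduces via \Cref{sphlem2} to functors vanishing at one pole and runs an induction directly on the counit using the decomposition of colimits.

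The gap is the duality identity itself, which carries essentially all the weight. Within the paper's logical order, $g_!\simeq[n]\circ g_*$ only becomes available \emph{after} \Cref{sphlem3}: it follows from the sphericalness of $g^*\dashv g_*$ via \Cref{4pedprop2} and \Cref{sphrem1}, and that sphericalness is what \Cref{sphlem3} feeds into. You correctly recognize that you must prove it independently, but the sketch does not do so, in two respects. First, naturality: the step identifying $g_!(F)\to g_!(\on{cofib}(cu_F))$ with the $n$-fold suspension of $g_*(F)\to g_*(\on{cofib}(cu_F))$ needs an equivalence of \emph{functors} (or at least of these arrows), not object-by-object identifications; your induction produces pointwise matchings of presentations but never constructs a transformation $g_![-n]\Rightarrow g_*$ (e.g.\ adjoint to a ``fundamental class'' $\on{id}\Rightarrow g^*g_*[n]$) whose invertibility could then be checked pointwise, using \Cref{sphlem2} and exactness as you suggest. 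Second, even pointwise, the decisive sentence ``matching the $n$-fold suspension of the pushout presentation with the pullback presentation \dots closes the induction'' hides a genuine compatibility: one must verify that, under the inductive equivalence $\on{colim}_{P_{n-1}}\simeq(\lim_{P_{n-1}})[n-1]$, the colimit-cone map $\on{colim}_{P_{n-1}}H\to H(2n+1)$ corresponds to the $[n-1]$-shift of the boundary map $\lim_{P_{n-1}}H\to H(2n+1)[1-n]$ of your fiber sequence for $\lim_{P_{n-1}^{\triangleright}}H$ (and likewise at the other pole, coherently). This is exactly the kind of bookkeeping of connecting maps that \Cref{sphrem2} and the biCartesianness argument in the paper's induction are there to control; indeed your duality lemma together with \Cref{sphlem1} is essentially equivalent in strength to \Cref{sphlem1}, \Cref{sphlem3} and \Cref{sphrem2} combined. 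So as written the difficulty has been relocated into the unproven duality rather than removed; if you supply a complete, natural proof of $g_!\simeq g_*\circ[n]$ at this point, the remainder of your argument is a clean alternative to the paper's proof.
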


\begin{proof}
By \Cref{sphlem2} it suffices to consider the case where $F$ is zero on $2n+1$ or $2n+2$. The statement for a general functor then follows using that $g_*$, $g^*$ and $g_!$ are exact. We restrict in the following to the case where $F(2n+2)=0$. The case $F(2n+1)=0$ is completely analogous.

For $n=0$, we consider a functor $F\in\on{Fun}(P_0,\mathcal{D})$ with $F(2)=0$. We find equivalences $g_!(F)\simeq g_*(F)\simeq F(1)$. This implies $g_!g^*g_*(F)\simeq g_*(F)\oplus g_*(F)\simeq F(1)\oplus F(1)$. We decompose the colimit of the diagram $P_0\rightarrow \on{Fun}(\Delta^1,\mathcal{D})$ corresponding to the natural transformation of functors $g^*g_*(F)\xrightarrow{cu} F$ along the decomposition $P_0=\ast\amalg \ast$ to obtain the following commutative diagram in $\mathcal{D}$.
\[
\begin{tikzcd}[column sep=small]
F(1) \arrow[rd, "{(id,0)}"] \arrow[rrrrrr, "id", dashed, bend left=15] &                                                    & F(1) \arrow[ld, "{(0,id)}"'] \arrow[rrrrrr, dashed, bend left=15] &  & ~~~ &  & F(1) \arrow[rd, "id"] &      & 0 \arrow[ld] \\
                                                                       & F(1)\oplus F(1) \arrow[rrrrrr, "{(id,0)}", dashed] &                                                                  &  & ~~~ &  &                       & F(1) &             
\end{tikzcd}
\]
This shows that the map $F(1)\oplus F(1)\simeq g_!g^*g_*(F)\xrightarrow{g_!(cu)} g_*(F) \simeq F(1)$ is equivalent to the projection onto the first direct summand. 

For $n\geq 1$ we proceed by induction. Fix $n$ and assume the lemma has been shown for all functors in $\on{Fun}(P_{n-1},\mathcal{D})$. Consider $F\in \on{Fun}(P_n,\mathcal{D})$ with the property, that $F(2n+2)=0$. We use the notation introduced in the proof of \Cref{sphlem1} in the following. We apply the decomposition of colimits with the decomposition
\[ P_{n-1}\subset \langle[2n+1]\rangle,\langle[2n]\cup\{2n+2\}\rangle \subset P_n\]
to the diagram $P_n\rightarrow \on{Fun}(\Delta^1,\mathcal{D})$ corresponding to the natural transformation $g^*g_*(F)\xrightarrow{cu} F$. The resulting diagram in $\mathcal{D}$ is up to equivalence of the following form.
\begin{equation}\label{diag4}
\begin{tikzcd}[column sep=small, row sep=small]
                               & {g_*(F)[n-1]\oplus g_*(F)} \arrow[rrrrr, "e_{n-1}", dotted] \arrow[ld, "{(0,id)}"'] \arrow[rd, "{(0,id)}"] \arrow[dd, "\square", phantom] &                               &  &  &                    & \on{colim}F|_{P_{n-1}} \arrow[dd, "\square", phantom] \arrow[ld] \arrow[rd] &              \\
g_*(F) \arrow[rd, "{(0,id)}"'] &                                                                                                                                           & g_*(F) \arrow[ld, "{(0,id)}"] &  &  & F(2n+1) \arrow[rd] &                                                                                       & 0 \arrow[ld] \\
                               & {g_*(F)[n]\oplus g_*(F)} \arrow[rrrrr, "{e_n=(h_0,h_1)}", dotted]                                                                         &                               &  &  &                    & \on{colim} F                                                                          &             
\end{tikzcd}
\end{equation} 
We have not depicted all edges. If $n=1$, the edge $e_{n-1}$ is given by $g_*(F)\oplus g_*(F)\xrightarrow{(a,a)}F(1)\simeq \on{colim}F|_{P_0}$, where $a:g_*(F)\rightarrow F(1)$ is the map contained in the limit cone of $F$. It follows that $h_1$ is zero. If $n\geq 2$, the edge $e_{n-1}$ factors as 
\[g_*(F)[n-1]\oplus g_*(F) \rightarrow \lim F|_{P_{n-1}}[n-1]\oplus \lim F|_{P_{n-1}}\xrightarrow{(h_0',h_1')} \on{colim} F|_{P_{n-1}}\,,\] 
where by the induction $h_0'$ is an equivalence and $h_1'$ is zero. It follows that $e_{n-1}$ restricted to $g_*(F)$ is zero and thus that $h_1$ is also zero. To complete the induction step, we need to show that for any $n\geq 1$ the map $h_0$ is an equivalence. For that it suffices to show that the following square contained in the diagram \eqref{diag4} is biCartesian. 
\begin{equation}\label{sq1}
\begin{tikzcd}
{g_*(F)[n-1]\oplus g_*(F)} \arrow[d, "{(0,id)}"] \arrow[r, "e_{n-1}"] & \on{colim} F|_{P_{n-1}} \arrow[d] \\
g_*(F) \arrow[r]                                                       & F(2n+1)                                
\end{tikzcd}
\end{equation}

The decomposition of colimits organizes the colimit cones of the restrictions of the functors $g^*g_*(F)$ and $F$ into a diagram 
\[D:Z\coloneqq P_{n-1}^\triangleright\times \Delta^1\times \Delta^1 \coprod_{P_{n-1}^\triangleright\times \Delta^1\times\{1\}} \langle [2n+1]\rangle^\triangleright\times \Delta^1\times\{1\}\rightarrow \mathcal{D}\,,\] 
i.e.~$D$ restricts to the colimit cones of the functors $g^*g_*(F)|_{P_{n-1}},F|_{P_{n-1}},g^*g_*(F)|_{\langle [2n+1]\rangle}\text{ and }F|_{\langle [2n+1]\rangle}$ on 
\begin{align}\label{comps} 
\begin{split}
& P_{n-1}^\triangleright\times \{0\}\times \{0\},~P^\triangleright_{n-1}\times \{1\}\times\{0\},~\langle [2n+1]\rangle^\triangleright\times \{0\}\times\{1\}\\
\text{and }& \langle [2n+1]\rangle^\triangleright\times \{1\}\times\{1\}\,,
\end{split}
\end{align}
respectively. Furthermore, the restriction of $D$ to the 'tips of the cones', i.e.~the simplicial subset $\Delta^1\times\Delta^1\hookrightarrow Z$ mapping to the joined $0$-simplicies, yields the diagram \eqref{diag4}. By the involved universal properties, the diagram $D$ is determined up to equivalence by its restriction to 
\begin{equation}\label{eq29} 
P_{n-1}\times \Delta^1\times \Delta^1 \coprod_{P_{n-1}\times \Delta^1\times\{1\}}\langle [2n+1]\rangle\times \Delta^1\times\{1\}
\end{equation}
and the restriction to each component in \eqref{comps}. Note that \eqref{eq29} is a left Kan extension of its restriction to 
\begin{equation}\label{eq30}
P_{n-1}\times \left(\Delta^{\{(0,0),(0,1)\}}\amalg_{\Delta^{\{(0,0)\}}}\Delta^{\{(0,0),(1,0)\}}\right)
\end{equation} and that each restriction to a component in \eqref{comps} is a left Kan extension of its restriction to the complement of the tip of the colimit cone. Using that left Kan extensions commute with each other, it follows that $D$ is a left Kan extension of its restriction to \eqref{eq29} and that \eqref{sq1} is a pushout square and thus biCartesian.
\end{proof}

\begin{remark}\label{dualrem} 
There exists an equivalence $P_n\simeq P^{op}_n$ mapping $i$ to $2n+3-i$. Replacing $\mathcal{D}$ with $\mathcal{D}^{op}$ in the proof of \Cref{sphlem2} thus implies the following: Let $F\in \on{Fun}(P_n,\mathcal{D})$ and consider the edge $g_*(F)\xrightarrow{g_*(u)}g_*g^*g_!(F)$ obtained from applying $g_*$ to the unit map $u:F\rightarrow g^*g_!(F)$ of the adjunction $g_!\dashv g^*$. There exists an equivalence $g_*g^*g_!(F)\simeq g_!(F)\oplus g_*(F)$ such that precomposition with $g_*(u)$ yields up to equivalence the inclusion of $g_*(F)$ into the direct sum.
\end{remark}

\begin{remark}\label{sphrem2}
We note that in the construction of the equivalences in the \Cref{sphlem1,sphlem2} and \Cref{dualrem} the identical decompositions of $P_n$ were used, so that we obtain a compatibility statement between the equivalences as follows. The composition of the equivalences 
\[ g_!(F)\oplus g_!(F)[n]\simeq g_*g^*g_!(F)\simeq g_!(F)\oplus g_*(F)\] 
from the \Cref{sphlem1,sphlem2} restrict to equivalences $g_!(F)\simeq g_!(F)$ and $g_!(F)[n]\simeq g_*(F)$ on the first and second factors, respectively.
\end{remark}

\begin{proposition}\label{sphprop}
Let $\mathcal{D}$ be a stable $\infty$-category and $n\geq 0$. Consider the pullback functor $g^*:\mathcal{D}\rightarrow \on{Fun}(P_n,\mathcal{D})$ with right adjoint $g_*$. The adjunction $g^*\dashv g_*$ is spherical.
\end{proposition}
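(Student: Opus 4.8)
The plan is to verify the sphericalness of $g^*\dashv g_*$ by checking conditions 3 and 4 of the 2/4 property, \Cref{2/4prop}. For this I first need a left adjoint of $g^*$: since $P_n$ is a finite simplicial set and $\mathcal{D}$ is stable, \cite[4.3.3.7]{HTT} provides the adjoint triple $g_!\dashv g^*\dashv g_*$ with $g_!=\on{colim}$ and $g_*=\lim$. Applying the theorem to the adjunction $F=g^*$, $G=g_*$, $E=g_!$, I must show that the two natural transformations $\eta_3:ET_{\mathcal{X}}\to G$, i.e.\ $g_!T_{\mathcal{D}}\to g_*$, and $\eta_4:T_{\mathcal{Y}'}G\to E$ are equivalences. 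By \Cref{2/4rem} it suffices to check $\eta_3$, since $\eta_4$ is an equivalence if and only if $\eta_3$ is. Here $\mathcal{Y}=\mathcal{D}$ and $\mathcal{X}=\on{Fun}(P_n,\mathcal{D})$, and $T_{\mathcal{D}}=T_\mathcal{X}$ is the cotwist functor of $g^*\dashv g_*$; by \Cref{sphrem1} we already know $T_\mathcal{D}\simeq[-n]$, but the finer point is the identification of the natural transformation itself.

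The key computation is to identify $\eta_3$ explicitly using the lemmas just proved. Unwinding \Cref{2/4lem2}, part 2, the edge $e_3$ at $F\in\on{Fun}(P_n,\mathcal{D})$ is the composition
\[ ET_\mathcal{X}(F)\xrightarrow{\beta} EFG(F)=g_!g^*g_*(F)\xrightarrow{cu'} G(F)=g_*(F)\,, \]
where $cu'$ is the counit of $g_!\dashv g^*$, i.e.\ $cu'=g_!(cu)$ applied after identifying $EF=g_!g^*$, and $\beta$ is the fiber-comparison edge coming from the biCartesian square with bottom row $EFG(F)\xrightarrow{E(cu)}E(F)$. \Cref{sphlem3} gives an equivalence $g_!(F)\oplus g_*(F)\simeq g_!g^*g_*(F)$ under which $g_!(cu)$ becomes the projection onto $g_!(F)$; hence the fiber of $g_!(cu):g_!g^*g_*(F)\to g_!(F)$ is $g_*(F)$, and $\beta$ is (up to equivalence) the inclusion of that summand, so that the composite $e_3$ is an equivalence $ET_\mathcal{X}(F)\simeq g_*(F)$. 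To make this a genuine argument rather than a dimension count, I will assemble the squares of \Cref{2/4lem2} together with the direct-sum decompositions of \Cref{sphlem1,sphlem3} into one diagram $\infty$-category of Kan-extension type, exactly as in the proofs of \Cref{commlem,2/4lem1}, with a restriction functor to $F$ that is a trivial fibration; a section then yields the natural equivalence $\eta_3$. The compatibility recorded in \Cref{sphrem2} is what guarantees the decompositions used for $T_\mathcal{X}$ (via \Cref{sphlem1}) and for $g_!(cu)$ (via \Cref{sphlem3}) are the same ones, so the two summand identifications line up and $\beta$ lands in the correct factor.

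I expect the main obstacle to be bookkeeping rather than conceptual: carefully matching the edge $\beta$ from \Cref{2/4lem2} with the summand inclusion from \Cref{sphlem3}, and checking that $T_\mathcal{X}$ as produced by \Cref{twistconstr} agrees, naturally in $F$, with the shift $[-n]$ arising from the second summand in \Cref{sphlem1} — this is where \Cref{sphrem2} must be invoked with care, and where I must be sure the decompositions of $P_n$ chosen in the three lemmas are literally the same so that the resulting equivalences are compatible. Once $\eta_3$ (equivalently $\eta_4$) is an equivalence and the cotwist $T_\mathcal{X}\simeq[-n]$ is an equivalence — so that condition 1 of \Cref{2/4prop} also holds, $T_\mathcal{X}'=T_\mathcal{X}^{-1}\simeq[n]$ following from \Cref{4pedprop2} once sphericalness is known, but more directly the composite above already exhibits $ET_\mathcal{X}\simeq g_*$ which forces $T_\mathcal{X}$ invertible — we have two of the four conditions and conclude by \Cref{2/4prop} that $g^*\dashv g_*$ is spherical. (Alternatively, once $\eta_3,\eta_4$ are equivalences, condition 4 alone together with the already-established condition 1 suffices.) By \Cref{lem:fgres} and \Cref{rescor}, sphericalness of $g^*\dashv g_*$ transfers to the adjunction $f^*\dashv f_*$ for $f:S^n\to\ast$, completing the argument promised in this section.
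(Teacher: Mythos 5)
Your core computation is fine and matches half of the paper's argument: identifying $\beta$ as the fiber of $g_!(cu):g_!g^*g_*(F)\to g_!(F)$ via \Cref{sphlem3}, identifying the counit of $g_!\dashv g^*$ at $g_*(F)$ via \Cref{sphlem1}, and using the compatibility of \Cref{sphrem2} to conclude that $\eta_3$ (equivalently $\eta_4$, by \Cref{2/4rem}) is an equivalence is precisely the paper's verification of condition 2 of \Cref{2/4lem1}. But \Cref{2/4prop} needs \emph{two} of its four conditions, and your second condition is not established. You claim condition 1 of \Cref{2/4prop} (that the cotwist $T_\mathcal{X}$ of $g^*\dashv g_*$ and the twist $T_\mathcal{X}'$ of $g_!\dashv g^*$ are inverse equivalences), but your justifications fail: \Cref{sphrem1} computes the \emph{twist} $T_\mathcal{D}\simeq[-n]$ on $\mathcal{D}$, i.e.\ $T_\mathcal{Y}$ in the notation of \Cref{2/4prop}, and says nothing about the cotwist $T_\mathcal{X}$, which is an endofunctor of $\on{Fun}(P_n,\mathcal{D})$ — your identification ``$T_\mathcal{D}=T_\mathcal{X}$ is the cotwist'' conflates the two sides of the adjunction; the appeal to \Cref{4pedprop2} is circular since that proposition assumes sphericalness; and $g_!T_\mathcal{X}\simeq g_*$ does not force $T_\mathcal{X}$ to be invertible ($g_!$ is far from conservative), let alone inverse to $T_\mathcal{X}'$. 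There is also a local slip: the second leg of $e_3$ is the counit of $g_!\dashv g^*$ at $g_*(F)$, which is \emph{not} the map $g_!(cu)$ (the latter is the bottom row of the defining square for $\beta$); the argument only works because, after \Cref{sphlem1} and \Cref{sphrem2}, these two different maps are identified with the two projections out of $g_!(F)\oplus g_*(F)$.

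Two repairs are available. The paper's own route is to also verify condition 1 of \Cref{2/4lem1} (equivalently condition 3 of \Cref{2/4prop}, the transformations $\eta_1,\eta_2$), which it does with \Cref{dualrem}, the dual of \Cref{sphlem3} obtained from $P_n\simeq P_n^{op}$; conditions 1 and 2 of \Cref{2/4lem1} together are exactly equivalent to sphericalness. Alternatively, staying closer to your plan, you could pair your condition 4 with condition 2 of \Cref{2/4prop}: by \Cref{sphlem1} the twist of $g^*\dashv g_*$ is $T_\mathcal{Y}\simeq[-n]$ and the cotwist of $g_!\dashv g^*$ is $T_\mathcal{Y}'\simeq\on{fib}(g_!g^*\to\on{id})\simeq[n]$, and these are inverse shift equivalences; \Cref{2/4prop} then applies to conditions 2 and 4. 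As written, however, the proof has a genuine gap, since only one usable condition of the 2/4 theorem has been verified.
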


\begin{proof}
We apply \Cref{2/4lem1} to show the sphericalness. We denote the cotwist functor of $g^*\dashv g_*$ by $T_\mathcal{X}$ and the twist functor of $g_!\dashv g^*$ by $T_\mathcal{X}'$. Let $F\in \on{Fun}(P_n,\mathcal{D})$ and let $u':F\rightarrow g^*g_!(F)$ be the unit map of the adjunction $g_!\dashv g^*$. \Cref{dualrem} shows that in the biCartesian square in $\mathcal{D}$
\[
\begin{tikzcd}
g_*(F) \arrow[d, "{g_*(u')}"'] \arrow[r] \arrow[rd, "\square", phantom] & 0 \arrow[d] \\
g_*g^*g_!(F) \arrow[r, "\alpha"']                       & g_*T_\mathcal{X}'(F) 
\end{tikzcd}
\]
the edge $g_*(u')$ is equivalent to the edge $g_*(F)\xrightarrow{(0,id)} g_!(F)\oplus g_*(F)$. We obtain that $\alpha$ is equivalent to the edge $g_!(F)\oplus g_*(F)\xrightarrow{(id,0)} g_!(F)$. The unit map $u:g_!(F)\rightarrow g_*g^*g_!(F)$ of the adjunction $g^*\dashv g_*$ is by \Cref{sphlem1} equivalent to the edge $g_!(F)\xrightarrow{(id,0)} g_!(F)\oplus g_*(F)$. The equivalences $g_*g^*g_!(F)\simeq g_!(F)\oplus g^*(F)$ in the description of the unit map and $\alpha$ are compatible as discussed in \Cref{sphrem2}. We thus obtain that the composition $\alpha\circ u:g_!(F)\rightarrow g_*T_\mathcal{X}'(F)$ is an equivalence. Thus condition 1. of \Cref{2/4lem1} is satisfied.

Let $F\in \on{Fun}(P_n,\mathcal{D})$ and let $cu':g^*g_*(F)\rightarrow F$ be the counit map of the adjunction $g^*\dashv g_*$. Consider the following biCartesian square in $\mathcal{D}$.
\[ 
\begin{tikzcd}
g_!T_\mathcal{X}(F) \arrow[d, "\beta"'] \arrow[r] \arrow[rd, "\square", phantom] & 0 \arrow[d] \\
g_!g^*g_*(F) \arrow[r, "g_!(cu')"']                                     & g_!(F)        
\end{tikzcd}
\] 
By \Cref{sphlem2} we find that $g_!(cu')$ is equivalent to the edge $g_!(F)\oplus g_*(F)\xrightarrow{(id,0)} g_!(F)$. We thus obtain that $\beta$ is equivalent to the edge $g_*(F)\xrightarrow{(0,id)}g_!(F)\oplus g_*(F)$. By \Cref{sphlem1} we find that the counit map $cu:g_!g^*g_*(F)\rightarrow g_*(F)$ is equivalent to the edge $g_!(F)\oplus g_*(F)\xrightarrow{(0,id)}g_*(F)$. Using \Cref{sphrem2}, we find that the composition $cu\circ\beta:g_!T_\mathcal{X}(F)\rightarrow g_*(F)$ is an equivalence and that condition 2. of \Cref{2/4lem1} is satisfied.
\end{proof}

\begin{corollary}
Let $\mathcal{D}$ be a stable $\infty$-category and $n\geq 0$. Consider the pullback functor\linebreak  $f^*:\mathcal{D}\rightarrow \on{Fun}(S^n,\mathcal{D})$ with right adjoint $f_*$. The adjunction $f^*\dashv f_*$ is spherical.
\end{corollary}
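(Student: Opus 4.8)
The plan is to bootstrap from \Cref{sphprop}, which already gives that $g^*\dashv g_*$ is spherical, using the restriction criterion \Cref{rescor} together with the left--right interchange \Cref{lrcor}. Since \Cref{rescor} shrinks the \emph{source} of the left adjoint of an adjunction, whereas here we want to shrink the \emph{target} $\on{Fun}(P_n,\mathcal{D})$ of $g^*$ down to $\on{Fun}(S^n,\mathcal{D})$, the correct adjunction to feed into \Cref{rescor} is $g_!\dashv g^*$, not $g^*\dashv g_*$; one passes between the two using \Cref{lrcor} and the adjoint triple $g_!\dashv g^*\dashv g_*$.

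First I would record that $g_!\dashv g^*$ is spherical: this is immediate from \Cref{sphprop} and \Cref{lrcor}. Next I would verify the hypotheses of \Cref{rescor} for the adjunction $g_!\dashv g^*$ with the subcategory $\on{Fun}(S^n,\mathcal{D})\subset\on{Fun}(P_n,\mathcal{D})$, where $\on{Fun}(S^n,\mathcal{D})$ is identified with the essential image of the fully faithful exact functor $e^*$ of \Cref{lem:fgres} and is in particular a stable subcategory. The condition $\on{Im}(g^*)\subset\on{Fun}(S^n,\mathcal{D})$ is immediate from the factorization $g^*=e^*\circ f^*$. For the left adjoint of $g_!$: by \Cref{4pedprop2} applied to the spherical adjunction $g^*\dashv g_*$, the further left adjoint of $g^*$ — which is $g_!$ — satisfies $g_!\simeq T_\mathcal{D}^{-1}\circ g_*$, and by \Cref{sphrem1} the twist functor $T_\mathcal{D}$ is the shift $[-n]$; hence $g_!\simeq[n]\circ g_*$, which therefore admits a left adjoint $E\simeq g^*\circ[-n]$, and $\on{Im}(E)=\on{Im}(g^*)\subset\on{Fun}(S^n,\mathcal{D})$ since $[-n]$ is an autoequivalence of $\mathcal{D}$.

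With the hypotheses in hand, \Cref{rescor} applies: it restricts $g_!\dashv g^*$ to an adjunction $F'\dashv G'$ in which $G'$ is the corestriction of $g^*$ to $\on{Fun}(S^n,\mathcal{D})$, equal to $f^*$ by \Cref{lem:fgres}, and $F'$ is a left adjoint of $f^*$, hence equivalent to $f_!$ by uniqueness of adjoints. Part (2) of \Cref{rescor} then yields that $f_!\dashv f^*$ is spherical. Finally, applying \Cref{lrcor} once more to the adjoint triple $f_!\dashv f^*\dashv f_*$ — which exists by \Cref{cofinallemma} and \cite[4.3.3.7]{HTT} — gives that $f^*\dashv f_*$ is spherical, as claimed.

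The only genuine subtlety, and the step I expect to need care, is the bookkeeping in \Cref{rescor}: checking that $g_!$ (and not merely $g^*$) admits a left adjoint and that this left adjoint has essential image contained in $\on{Fun}(S^n,\mathcal{D})$; this is precisely where the identification $g_!\simeq[n]\circ g_*$, assembled from \Cref{4pedprop2} and \Cref{sphrem1}, gets used. Everything else is a formal concatenation of cited results. An entirely symmetric alternative would instead restrict the adjunction $g_*\dashv H$, where $H\simeq g^*\circ[n]$ is the further right adjoint of $g_*$ provided by \Cref{4pedprop2}; the image conditions come out identically, and one again concludes via \Cref{lrcor}.
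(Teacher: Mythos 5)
Your proposal is correct and is essentially the paper's argument in mirror image: the paper applies \Cref{rescor} to the triple $g^*\dashv g_*\dashv g^{**}$, checking $\on{Im}(g^*),\on{Im}(g^{**})\subset \on{Fun}(S^n,\mathcal{D})$ via $g^{**}\simeq g^*\circ[n]$ from \Cref{4pedprop2} and \Cref{sphrem1}, whereas you restrict $g_!\dashv g^*$ using its further left adjoint $g^*\circ[-n]$ and then pass back and forth with \Cref{lrcor}. Indeed, the ``symmetric alternative'' you mention at the end is precisely the paper's proof, so no substantive difference remains.
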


\begin{proof}
The adjunction $f^*\dashv f_*$ arises by \Cref{lem:fgres} as the restriction of the spherical adjunction $g^*\dashv g_*$. We apply \Cref{rescor} (with $F=f_*$) to deduce the sphericalness. For that, we need to show that $\on{Im}(g^*),\on{Im}(g^{**})\subset \on{Fun}(S^n,\mathcal{D})$, with $g^{**}$ the right adjoint of $g_*$. The inclusion $\on{Im}(g^*)\subset \on{Fun}(S^n,\mathcal{D})$ follows from the commutativity of the left diagram in \eqref{eq:fgcomm}. By \Cref{4pedprop2,sphrem1}, we have $g^{**}\simeq g^*\circ [n]$, so that $\on{Im}(g^{**})=\on{Im}(g^*)$, concluding the proof.
\end{proof}

\begin{remark}
Let $\mathcal{D}=\on{Sp}$ be the stable $\infty$-category of spectra. Let $E\in \on{Sp}$. The homotopy groups of the spectra $f_!f^*(E)$ and $f_*f^*(E)$ describe the homology and cohomology groups of the $n$-sphere with values in the spectrum $E$, respectively, i.e. 
\[ \pi_i(f_!f^*(E))\simeq H_i(X,E)\text{ and }\pi_{-i}(f_*f^*(E))\simeq H^i(X,E)\,.\] 
We thus consider $f_*,f_!:\on{Fun}(S^n,\on{Sp})\rightarrow \on{Sp}$ as the homology and cohomology functors for local systems of spectra on $S^n$. Using \Cref{sphrem1} and the 2/4 property, we find an equivalence  $[n]\circ f_* \simeq f_!$. The sphericalness of the adjunction $f^*\dashv f_*$ hence implies Poincaré duality for local systems on the $n$-sphere with values in spectra.
\end{remark}

We end this section with a conjecture for a possible generalization of the spherical adjunction $g^*\dashv g_*$. Consider a good stratification $A$ of the $n$-sphere. Denote by $\on{Sing}^A(S^n)$ the $\infty$-category of exit paths, see \cite[A.6.2]{HA}. In \cite[Section 8.6]{Tan19}, building on \cite[Appendix A]{HA}, it is shown that if $\mathcal{D}$ is a compactly generated $\infty$-category, then the $\infty$-category of functors $\on{Fun}(\on{Sing}^A(S^n),\mathcal{D})$ embeds fully faithfully into the $\infty$-category $\on{Shv}(S^n,\mathcal{D})$ of sheaves on the $n$-sphere with values in $\mathcal{D}$. The essential image of the embedding is given by constructible sheaves with respect to the stratification $A$. We have shown in \Cref{sphprop} that for a specific stratification of the $n$-sphere the pullback-limit adjunction $\mathcal{D}\leftrightarrow \on{Fun}(\on{Sing}^A(S^n),\mathcal{D})$ is spherical. We conjecture that for any good stratification the pullback-limit adjunction is spherical and arises as the restriction of a spherical adjunction involving sheaves on the $n$-sphere.

\begin{conjecture}
Let $\mathcal{D}$ be a compactly generated $\infty$-category and $n\geq 0$. Then the pullback-pushforward adjunction $\mathcal{D}\leftrightarrow \on{Shv}(S^n,\mathcal{D})$ is spherical.
\end{conjecture}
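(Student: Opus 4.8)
The plan is to deduce the conjecture from \Cref{sphprop} by means of the corollaries \Cref{lrcor,rescor} together with Poincar\'e--Verdier duality on $S^n$. Write $a\colon S^n\to\ast$ for the projection, so that the adjunction in question is $a^*\colon\mathcal{D}\leftrightarrow\on{Shv}(S^n,\mathcal{D})\colon a_*$ with $a_*=\Gamma(S^n;-)$. Since $\mathcal{D}$ is compactly generated, $\on{Shv}(S^n,\mathcal{D})$ is presentable and stable; since $S^n$ is a compact manifold, in particular of finite cohomological dimension, $a_*$ preserves all colimits and hence admits a right adjoint $a^!$, yielding an adjoint triple $a^*\dashv a_*\dashv a^!$. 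As $S^n$ is a closed oriented $n$-manifold, Poincar\'e--Verdier duality (cf.\ \cite{HA}) gives $a^!\simeq a^*[n]$; for $n=0$ this is the obvious identification $a^!\simeq a^*$.

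First I would apply \Cref{lrcor} to the triple $a^*\dashv a_*\dashv a^!$, reducing the sphericalness of $a^*\dashv a_*$ to that of $a_*\dashv a^!$. By the exit-path description of constructible sheaves \cite[A.6.2]{HA}, \cite[Section 8.6]{Tan19}, for the stratification $A$ of $S^n$ by nested spheres there is a full stable subcategory $\mathcal{D}'\subset\on{Shv}(S^n,\mathcal{D})$, the $A$-constructible sheaves, with an equivalence $\mathcal{D}'\simeq\on{Fun}(\on{Sing}^A(S^n),\mathcal{D})\simeq\on{Fun}(P_n,\mathcal{D})$ under which the constant-sheaf functor corresponds to $g^*$, hence (uniqueness of adjoints) $a_*|_{\mathcal{D}'}$ corresponds to $g_*=\lim_{P_n}(-)$. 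The images of $a^*$ and of $a^!\simeq a^*[n]$ consist of shifts of constant sheaves and so lie in $\mathcal{D}'$, so the hypotheses of \Cref{rescor} are met by the adjunction $a_*\dashv a^!$ with left adjoint $a^*$; therefore $a_*\dashv a^!$ is spherical if and only if its restriction to $\mathcal{D}'$ is, i.e.\ if and only if $g_*\dashv H$ is spherical, where $H$ is the restriction of $a^!$. This restricted adjunction sits in the adjoint triple $g^*\dashv g_*\dashv H$ on $\mathcal{D}'$ (the right adjoint $H$ being furnished by \Cref{rescor}, in accordance with \Cref{4pedprop2}), so a second application of \Cref{lrcor} reduces its sphericalness to that of $g^*\dashv g_*$, which is \Cref{sphprop}. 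Reading the chain of equivalences backwards shows that $a^*\dashv a_*$ is spherical.

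The formal reduction above is short; the substance of the conjecture, and the reason for recording it as such, lies in the sheaf-theoretic inputs, which I expect to be the main obstacle to make precise at the stated level of generality of an arbitrary compactly generated stable $\infty$-category $\mathcal{D}$. Concretely one must justify: (a) that $\on{Shv}(S^n,\mathcal{D})$ carries a six-functor formalism with $a_!\simeq a_*$ and Poincar\'e--Verdier duality $a^!\simeq a^*[n]$ --- this should follow from the case $\mathcal{D}=\on{Sp}$ via $\on{Shv}(S^n,\mathcal{D})\simeq\on{Shv}(S^n,\on{Sp})\otimes_{\on{Sp}}\mathcal{D}$, but the compatibility of $a^!$ and $a_!$ with this base change must be pinned down; (b) that $\Gamma(S^n;-)$ preserves all colimits, so that $a^!$ exists (standard, using $\dim S^n<\infty$ and compactness); and (c) that the equivalence $\on{Fun}(\on{Sing}^A(S^n),\mathcal{D})\simeq\mathcal{D}'$ of \cite{Tan19} is compatible with the constant-sheaf functor, so that the restricted adjunction really is $g^*\dashv g_*$. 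I expect (a) and (c) to demand the most care. Finally, I note that running the same argument with an arbitrary good stratification $A'$ in place of the nested-sphere one would conversely show the pullback--limit adjunction $\mathcal{D}\leftrightarrow\on{Fun}(\on{Sing}^{A'}(S^n),\mathcal{D})$ to be spherical, establishing the first half of the informally stated conjecture as well.
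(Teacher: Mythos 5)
The statement you are addressing is recorded in the paper only as a conjecture; no proof of it appears there, so there is no argument of the paper to compare yours against, and the only question is whether your proposal actually closes the conjecture. It does not, and you say so yourself. The formal skeleton is sound and is exactly the paper's own toolkit: pass to the adjoint triple $a^*\dashv a_*\dashv a^!$, transfer sphericalness across the triple with \Cref{lrcor}, restrict along the constructible subcategory with \Cref{rescor}, and quote \Cref{sphprop}; this parallels how the paper deduces the $\on{Fun}(S^n,\mathcal{D})$ case from the $\on{Fun}(P_n,\mathcal{D})$ case. But the entire mathematical content of the conjecture sits in the inputs you label (a)--(c) and leave open: the existence of $a^!$ (i.e.\ colimit-preservation of $\Gamma(S^n;-)$ on $\mathcal{D}$-valued sheaves), the identification $a^!\simeq a^*[n]$, and the compatibility of the exit-path equivalence of \cite{Tan19} with the constant-sheaf and pushforward functors. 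None of these is proved in the paper or follows from the results you cite, and for an arbitrary compactly generated (implicitly stable) $\mathcal{D}$ they are not routine: the base-change claim $\on{Shv}(S^n,\mathcal{D})\simeq\on{Shv}(S^n,\on{Sp})\otimes_{\on{Sp}}\mathcal{D}$, the hypercompleteness/finite-dimensionality argument for colimit-preservation of $\Gamma$, and a Verdier-duality statement for $\mathcal{D}$-valued sheaves all require proofs or precise references. Note also that without $a^!\simeq a^*[n]$ you cannot even verify the hypothesis $\on{Im}(a^!)\subset\mathcal{D}'$ needed to invoke \Cref{rescor}, so the reduction does not get off the ground until (a) is settled.

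A further caution: the duality input you assume is close to the conclusion itself. As the paper's remark following the local-systems corollary points out, sphericalness of the pullback--pushforward adjunction together with $T_\mathcal{D}\simeq[-n]$ \emph{yields} $a^!\simeq a^*[n]$ via \Cref{4pedprop2}; so taking Poincar\'e--Verdier duality for $\mathcal{D}$-valued sheaves as an input is legitimate only if you establish it by an independent argument (e.g.\ from the six-functor formalism for $\on{Sp}$-valued sheaves plus a carefully justified base change), otherwise the argument risks circularity. In summary, what you have is a reasonable reduction strategy together with an accurate list of the genuine obstructions --- a proof outline, not a proof --- and the conjecture remains open after your argument.
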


\subsection{Twist along a spherical fibration}\label{sphfibtwist} 

Let $\mathcal{D}$ be a stable $\infty$-category, $X$ and $Y$ Kan complexes and $f:X\rightarrow Y$ be a Kan fibration such that for all $y\in Y$ the fiber satisfies 
\[f^{-1}(y)\coloneqq \{y\}\times_{Y}X\simeq S^n\,.\]
We refer to such an $f$ as a spherical fibration. The pullback functor $f^*:\text{Fun}(Y,\mathcal{D})\longrightarrow \text{Fun}(X,\mathcal{D})$ admits left and right adjoints $f_!,f_*$, given by left and right Kan extension, as follows from the next Lemma and \cite[4.3.3.7]{HTT}.

\begin{lemma}\label{extlem}
Let $f:X\rightarrow Y$ be a spherical fibration, $F\in\on{Fun}(Y,\mathcal{D})$ and $y\in \mathcal{Y}$. Then the left Kan extension $f_!(F)$ and the right Kan extension $f_*(F)$ exist and satisfy 
\[f_!(F)(y)\simeq \underset{{f^{-1}(y)}}{\on{colim}}F\simeq \underset{S^n}{\on{colim}}F\]
and 
\[ f_*(F)(y)\simeq \lim_{f^{-1}(y)}F\simeq \lim_{S^n}F\,.\]
\end{lemma}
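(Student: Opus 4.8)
The plan is to apply the standard pointwise formula for Kan extensions, \cite[4.3.3.2]{HTT}, which computes the value of a left Kan extension at a point $y\in Y$ as a colimit over a comma-type slice $\infty$-category, and then to identify that slice with the fiber $f^{-1}(y)$ up to a cofinal map. Concretely, write $q:Y_{/y}^{\Delta^0}\to Y$ for the slice and recall that $f_!(F)(y)\simeq \on{colim}\big(X\times_Y Y_{/y}\to X\xrightarrow{F\circ\text{(something)}}\mathcal{D}\big)$; the combinatorial input is that the natural map $f^{-1}(y)=\{y\}\times_Y X\hookrightarrow X\times_Y Y_{/y}$ is right cofinal (for the colimit) and left cofinal (for the limit). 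First I would invoke that for a Kan fibration $f$ the strict fiber $\{y\}\times_Y X$ is already a Kan complex modelling the homotopy fiber, and that the inclusion of the strict fiber into the homotopy-theoretic comma object $X\times_Y Y^{\Delta^1}\times_Y\{y\}$ is a trivial cofibration / categorical equivalence of simplicial sets. Then I would cite \cite[4.1.1.8]{HTT} or \cite[4.1.2.6]{HTT} to conclude that a categorical (indeed weak homotopy) equivalence between Kan complexes is both cofinal and final, so it may be used to rewrite the indexing diagram of the colimit and limit without changing the value.

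The second half is then immediate: by hypothesis $f^{-1}(y)\simeq S^n$ as Kan complexes, and an equivalence of $\infty$-categories is cofinal and final, so $\on{colim}_{f^{-1}(y)}F\simeq \on{colim}_{S^n}(F|_{f^{-1}(y)})$ under any chosen equivalence, where on the right $F$ is restricted along $S^n\simeq f^{-1}(y)\hookrightarrow X$; dually for limits. The existence of $f_!(F)(y)$ and $f_*(F)(y)$ for every $y$ — hence of the Kan extensions $f_!(F)$ and $f_*(F)$ as functors on all of $Y$ — follows because $\mathcal{D}$ is stable and therefore admits all finite limits and colimits, and $S^n\simeq \on{Sing}(S^n_{top})$ is (equivalent to, via \Cref{cofinallemma}) the finite $\infty$-category $P_n$, so the relevant diagrams are finite; this is the same reasoning used just before \Cref{lem:fgres}. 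Once pointwise existence is known, \cite[4.3.2.13]{HTT} (or \cite[4.3.3.7]{HTT}, already invoked in the statement) upgrades this to the existence of the Kan extension functors and their adjunction property.

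The main obstacle is purely bookkeeping: matching the strict pullback $\{y\}\times_Y X$, which is the naive fiber, with the comma $\infty$-category $X\times_Y Y_{/y}$ that appears in Lurie's pointwise Kan extension formula, and checking the cofinality of the comparison map. Because $f$ is assumed to be a Kan fibration, this comparison is a categorical equivalence by standard model-categorical facts about the Joyal and Kan model structures — no subtle argument is needed, just care in citing \cite[2.4.7.5]{HTT} or the relevant statement about fibers of inner/Kan fibrations. I would keep this step short, state it as the one nontrivial point, and otherwise let the citations to \cite{HTT} carry the weight.
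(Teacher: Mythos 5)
Your argument is correct and is essentially the paper's proof spelled out: the paper's proof consists of the single remark that, since $f$ is a Kan fibration, the statement follows from the pointwise Kan-extension formula of \cite[4.3.3.10]{HTT}, which is exactly your comparison of the comma object $X\times_Y Y_{/y}$ with the strict fiber $f^{-1}(y)$ via cofinality, combined with the existence of $S^n$-indexed (co)limits in $\mathcal{D}$ coming from \Cref{cofinallemma} and stability. Only your citation numbers need adjusting: the pointwise formula is \cite[4.3.3.10]{HTT} rather than 4.3.3.2, and the cofinality/finality of a weak homotopy equivalence with Kan-complex target is \cite[4.1.2.6]{HTT}.
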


\begin{proof}
Using that $f$ is a Kan fibration, the statement follows from \cite[4.3.3.10]{HTT}.
\end{proof}

We now proof the sphericalness of the adjunction $f^*\dashv f_*$. The proof is essentially a relative version of the proof of \Cref{sphprop}.

\begin{proposition}\label{relsphprop}
Let $f:X\rightarrow Y$ be a spherical fibration and $\mathcal{D}$ a stable $\infty$-category. The adjunction $f^*:\on{Fun}(Y,\mathcal{D})\leftrightarrow \on{Fun}(X,\mathcal{D}):f_*$ is spherical.
\end{proposition}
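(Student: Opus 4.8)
The strategy is to mimic the proof of \Cref{sphprop}, but working fiberwise, and to reduce everything to the absolute case already established there. The key technical tool is that by \Cref{extlem} the adjoints $f_!,f_*$ are computed pointwise as colimits and limits over the fibers $f^{-1}(y)\simeq S^n$. I will apply \Cref{2/4lem1} to the adjunction $f^*\dashv f_*$: for this I first need that $f^*$ admits a left adjoint $f_!$ (already recorded above), and then I must verify the two ``equivalence of compositions'' conditions of \Cref{2/4lem1}, where $T_\mathcal{X}$ is the cotwist of $f^*\dashv f_*$ and $T_\mathcal{X}'$ is the twist of $f_!\dashv f^*$, both endofunctors of $\on{Fun}(X,\mathcal{D})$.

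\medskip

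The first main step is to establish relative analogues of \Cref{sphlem1}, \Cref{sphlem3} and \Cref{dualrem}. Concretely, for $F\in\on{Fun}(Y,\mathcal{D})$ I claim there are equivalences $f_!f^*(F)\simeq F\oplus F[n]$ and $f_*f^*(F)\simeq F\oplus F[-n]$ in $\on{Fun}(Y,\mathcal{D})$, compatible with the counit and unit respectively, and likewise the decomposition $g_!g^*g_*\simeq g_!\oplus g_*$ of \Cref{sphlem3} and its dual from \Cref{dualrem} have fiberwise versions for $f$. The point is that all these statements were proved in the absolute case by decomposing (co)limits over $P_n$ along a fixed cell decomposition, and by \Cref{extlem} the functors $f_!,f_*,f^*$ are computed at each $y\in Y$ by the corresponding absolute functors over the fiber $f^{-1}(y)\simeq S^n$ (or over the cofinal model $P_n$, using \Cref{cofinallemma} and \Cref{lem:fgres}). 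Since an equivalence of functors into $\on{Fun}(Y,\mathcal{D})$ can be checked pointwise, and the absolute equivalences were natural, they assemble into the desired relative equivalences; in particular the cotwist $T_\mathcal{X}$ of $f^*\dashv f_*$ satisfies $T_{\on{Fun}(Y,\mathcal{D})}\simeq[-n]$ (the analogue of \Cref{sphrem1}), and the compatibility of the various decompositions stated in \Cref{sphrem2} also persists fiberwise.

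\medskip

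The second main step is then formally identical to the proof of \Cref{sphprop}: using the relative dual decomposition one identifies, for $F\in\on{Fun}(X,\mathcal{D})$, the map $f_*(u')$ appearing in condition 1 of \Cref{2/4lem1} with the summand inclusion $f_*(F)\xrightarrow{(0,\mathrm{id})} f_!(F)\oplus f_*(F)$, hence $\alpha$ with the projection $(\mathrm{id},0)$, while the unit $u\colon f_!(F)\to f_*f^*f_!(F)$ is identified via the relative \Cref{sphlem1} with the inclusion $(\mathrm{id},0)$; the compatibility of these two identifications forces the composite $\alpha\circ u$ to be an equivalence. The argument for condition 2, using the relative \Cref{sphlem3}, is entirely analogous. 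Applying \Cref{2/4lem1} then yields the sphericalness of $f^*\dashv f_*$.

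\medskip

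The step I expect to be the main obstacle is making precise the ``fiberwise assembly'' in the first step: \Cref{extlem} gives the values of $f_!,f_*$ at objects $y\in Y$, but to promote the pointwise absolute equivalences of \Cref{sphlem1}, \Cref{sphlem3} and \Cref{dualrem} to an equivalence of functors on all of $\on{Fun}(Y,\mathcal{D})$ — and to do so \emph{coherently}, so that the compatibility of \Cref{sphrem2} is preserved — one must either package the absolute construction as a natural transformation over a suitable diagram category and invoke the pointwise criterion for equivalences of functors, or (cleaner) reprove the decomposition results directly in the relative setting by running the same Kan-extension-along-a-cell-filtration argument, now relative to $f$, using that left Kan extension along $\{y\}\hookrightarrow Y$ commutes with the relevant colimits. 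I would take the latter route, remarking that each step of the proofs of \Cref{sphlem1} and \Cref{sphlem3} goes through verbatim after replacing ``$\on{colim}$ over $P_n$'' by ``$f_!$'' and ``$\lim$ over $P_n$'' by ``$f_*$'', since the decomposition of (co)limits used there is local over $Y$ and the relevant universal properties are stable under the pointwise evaluation functors $\on{Fun}(Y,\mathcal{D})\to\mathcal{D}$.
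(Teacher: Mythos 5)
Your overall strategy (run \Cref{2/4lem1} for $f^*\dashv f_*$ and reduce to the absolute case of \Cref{sphprop} fiberwise) is the paper's strategy, but your first main step contains a genuine error: the claimed global equivalences $f_!f^*(F)\simeq F\oplus F[n]$ and $f_*f^*(F)\simeq F\oplus F[-n]$ in $\on{Fun}(Y,\mathcal{D})$, and the resulting claim that the twist of $f^*\dashv f_*$ is $[-n]$, are false for a general spherical fibration. The pointwise equivalences of \Cref{sphlem1} do \emph{not} assemble over $Y$, because the identifications of each fiber with $S^n$ (and the hemisphere decomposition of $P_n$ used in the proofs of \Cref{sphlem1,sphlem3}) cannot be chosen coherently over $Y$ unless the fibration is trivial; the monodromy of $f$ twists them. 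Concretely, for the connected double cover $f:S^1\rightarrow S^1$ and $\mathcal{D}=\mathcal{D}(k)$ with $\on{char}(k)\neq 2$, one has $f_!f^*(1_Y)\simeq 1_Y\oplus \zeta$ with $\zeta$ the sign (M\"obius) local system, not $1_Y\oplus 1_Y$; and indeed the paper's own \Cref{mndsphprop} shows that in general the twist functor is $\mhyphen\otimes\zeta$ for a local system $\zeta$ that is only fiberwise, not globally, a shifted unit. So the route you propose at the end --- reproving the decomposition results ``verbatim'' in the relative setting --- would be proving a false statement, and your second step, which identifies $f_*(u')$ and $u$ with global summand inclusions, inherits this gap.

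The fix, which is exactly what the paper does, is to never ask for a global splitting: one only identifies \emph{restrictions at points of $Y$}. Since $f$ is a Kan fibration, $f_!$ and $f_*$ are computed fiberwise (\Cref{extlem}, \cite[4.3.3.10]{HTT}), and by \Cref{Kanextlem} together with the locality of Kan extensions, the restriction of the unit of $f_!\dashv f^*$ to each fiber $f^{-1}(y)\simeq S^n$ is the unit of the absolute adjunction over that fiber. Hence the restrictions at each $y\in Y$ of the globally defined maps $f_*(u')$, $\alpha$ and $u$ appearing in condition 1 of \Cref{2/4lem1} are identified, via the absolute \Cref{sphlem1}, \Cref{sphlem3} and \Cref{dualrem} (with the compatibility of \Cref{sphrem2}), with the inclusion $(0,\mathrm{id})$, the projection $(\mathrm{id},0)$ and the inclusion $(\mathrm{id},0)$ respectively --- but only after restricting to $y$. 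One then concludes that the single natural transformation $\alpha\circ u:f_!(F)\rightarrow f_*T_{\mathcal{X}}'(F)$ is an equivalence because it is a pointwise equivalence; condition 2 is handled in the same way. With your argument rewritten so that all identifications are made after restriction to points of $Y$, it becomes correct and coincides with the paper's proof.
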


\begin{proof}
We apply \Cref{2/4lem1} to show the sphericalness. Denote the twist functor of $f_!\dashv f^*$ by $T_\mathcal{X}'$. Let $F\in \on{Fun}(X,\mathcal{D})$ and let $u':F\rightarrow f^*f_!(F)$ be the unit map of the adjunction $f_!\dashv f^*$. The unit map $u'$ has the property that the restriction $u'|_{f^{-1}(y)}:F|_{f^{-1}(y)}\rightarrow f^*f_!(F)|_{f^{-1}(y)}$ to the fiber $f^{-1}(y)\simeq S^n$ of any $y\in Y$ is equivalent to the unit map of the adjunction $h_!:\on{Fun}(f^{-1}(y),\mathcal{D})\leftrightarrow\mathcal{D}:h^*$ where $h:f^{-1}(y)\rightarrow \ast$. By \Cref{sphlem2}, we find that in the biCartesian square in $\on{Fun}(Y,\mathcal{D})$
\[
 \begin{tikzcd}
  f_*(F) \arrow[d, "{f_*(u')}"'] \arrow[r] \arrow[rd, "\square", phantom] & 0 \arrow[d] \\
  f_*f^*f_!(F) \arrow[r, "\alpha"']                       & f_*T_\mathcal{X}'(F) 
 \end{tikzcd}
\]
the restriction of the edge $f_*(u')$ to any $y\in Y$ is equivalent to the edge $f_*(F|_{f^{-1}(y)})\xrightarrow{(0,id)} f_!(F|_{f^{-1}(y)})\oplus f_*(F|_{f^{-1}(y)})$. We thus obtain that the restriction of $\alpha$ to any $y\in Y$ is equivalent to the edge $f_!(F|_{f^{-1}(y)})\oplus f_*(F|_{f^{-1}(y)})\xrightarrow{(id,0)} f_!(F|_{f^{-1}(y)})$. The unit map $u:f_!(F)\rightarrow f_*f^*f_!(F)$ of $f^*\dashv f_!$ is by \Cref{Kanextlem} a right Kan extension. By \cite[4.3.3.10]{HTT} and  \Cref{sphlem1} we obtain that the restriction of $u$ to any $y\in \mathcal{Y}$ is equivalent to the edge $f_!(F|_{f^{-1}(y)})\xrightarrow{(id,0)} f_!(F|_{f^{-1}(y)})\oplus f_*(F|_{f^{-1}(y)})$.  We thus obtain that the composition $\alpha\circ u:f_!(F)\rightarrow f_*T_\mathcal{X}'(F)$ restricts on every $y\in Y$ to an equivalence and is hence a natural equivalence. We have shown that condition 1 of \Cref{2/4lem1} is satisfied.

Condition 2 of \Cref{2/4lem1} is shown analogously and can also be compared to the second part of the proof of \Cref{sphprop}.
\end{proof}

\subsection{Local systems with values in a symmetric monoidal \texorpdfstring{$\infty$}{infinity}-category}\label{sec3.3}

Let $\mathcal{C}$ be a symmetric monoidal and stable $\infty$-category such that the monoidal product preserves colimits in both variables. We describe in this section the twist functor of the spherical adjunction of \Cref{relsphprop} in terms of the monoidal product with a local system $\zeta$. 

\begin{lemma}
 Let $Z$ be a simplicial set and $q:\mathcal{C}^\otimes\rightarrow \on{Fin}_\ast$ a symmetric monoidal $\infty$-category. Then $\on{Fun}(Z,\mathcal{C})$ can be endowed with the structure of a symmetric monoidal $\infty$-category, with total space $ \on{Fun}(Z,\mathcal{C})^\otimes \coloneqq \on{Fun}(Z,\mathcal{C}^\otimes)\times_{\on{Fun}(Z,\on{Fin}_\ast)}\on{Fin}_\ast$. An edge in $\on{Fun}(Z,\mathcal{C})^\otimes$ is coCartesian if and only if its restriction to each vertex of $Z$ yields a $q$-coCartesian edge.
\end{lemma}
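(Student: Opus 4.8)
The plan is to construct the symmetric monoidal structure on $\on{Fun}(Z,\mathcal{C})$ directly from the given $\infty$-operad $q:\mathcal{C}^\otimes\to \on{Fin}_\ast$ by the stated fibre product, and then verify the two defining conditions: that the composite $\on{Fun}(Z,\mathcal{C})^\otimes\to\on{Fin}_\ast$ is a coCartesian fibration, and that its fibre over $\langle 1\rangle$ is $\on{Fun}(Z,\mathcal{C})$ (so that the associated tensor product is the pointwise one). First I would note that $\on{Fun}(Z,-)$ preserves inner fibrations and, more importantly, preserves coCartesian fibrations: if $p:\mathcal{E}\to\mathcal{B}$ is a coCartesian fibration then $\on{Fun}(Z,p):\on{Fun}(Z,\mathcal{E})\to\on{Fun}(Z,\mathcal{B})$ is again a coCartesian fibration, with an edge being $\on{Fun}(Z,p)$-coCartesian precisely when it is pointwise $p$-coCartesian; this is essentially \cite[3.1.2.1]{HTT} together with the characterization of coCartesian edges in functor categories. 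Applying this to $q:\mathcal{C}^\otimes\to\on{Fin}_\ast$ gives that $\on{Fun}(Z,\mathcal{C}^\otimes)\to\on{Fun}(Z,\on{Fin}_\ast)$ is a coCartesian fibration.

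Next I would pull this back along the constant-diagram functor $\on{Fin}_\ast\to\on{Fun}(Z,\on{Fin}_\ast)$. Base change preserves coCartesian fibrations, so $\on{Fun}(Z,\mathcal{C})^\otimes=\on{Fun}(Z,\mathcal{C}^\otimes)\times_{\on{Fun}(Z,\on{Fin}_\ast)}\on{Fin}_\ast\to\on{Fin}_\ast$ is a coCartesian fibration, and an edge in the pullback is coCartesian iff its image in $\on{Fun}(Z,\mathcal{C}^\otimes)$ is, i.e.\ iff it is pointwise $q$-coCartesian — which is exactly the claimed characterization. To see this is a symmetric monoidal $\infty$-category in the sense of \cite[2.0.0.7]{HA}, I need the Segal condition: for each $\langle n\rangle$ the inert morphisms $\langle n\rangle\to\langle 1\rangle$ induce an equivalence from the fibre over $\langle n\rangle$ to the $n$-fold product of the fibre over $\langle 1\rangle$. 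The fibre of $\on{Fun}(Z,\mathcal{C})^\otimes$ over $\langle n\rangle$ is $\on{Fun}(Z,\mathcal{C}^\otimes_{\langle n\rangle})\simeq\on{Fun}(Z,\mathcal{C})^{\times n}$ since $\on{Fun}(Z,-)$ preserves finite products and $q$ satisfies the Segal condition; one checks the comparison map is induced by postcomposition with the Segal maps of $q$ and is therefore an equivalence pointwise, hence an equivalence. In particular the fibre over $\langle 1\rangle$ is $\on{Fun}(Z,\mathcal{C})$, as required.

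The main obstacle — really the only non-formal point — is the precise identification of coCartesian edges in $\on{Fun}(Z,\mathcal{E})$ for a coCartesian fibration $p:\mathcal{E}\to\mathcal{B}$, and the verification that this pointwise criterion survives the pullback defining $\on{Fun}(Z,\mathcal{C})^\otimes$. For the first point I would invoke that for $p$ a coCartesian fibration the induced functor $\on{Fun}(\Delta^1,\mathcal{E})\to\on{Fun}(\Delta^1,\mathcal{B})\times_{\on{Fun}(\{0\},\mathcal{B})}\on{Fun}(\{0\},\mathcal{E})$ detects coCartesian edges, and then that a natural transformation in $\on{Fun}(Z,\mathcal{E})$ is coCartesian over $\on{Fun}(Z,\mathcal{B})$ iff each component is, using that $Z$ is built from its simplices and coCartesian edges are stable under the relevant limits; concretely this is the content of \cite[3.1.2.1]{HTT} applied with $K=Z$ (after observing that $\on{Fun}(Z,-)$ for $Z$ a simplicial set is handled the same way as for $Z$ an $\infty$-category). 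For the pullback step, coCartesian edges in a fibre product $\mathcal{E}\times_{\mathcal{B}}\mathcal{B}'\to\mathcal{B}'$ are exactly the edges whose projection to $\mathcal{E}$ is coCartesian, so combining the two gives that coCartesian edges of $\on{Fun}(Z,\mathcal{C})^\otimes\to\on{Fin}_\ast$ are the pointwise $q$-coCartesian ones, completing the proof.
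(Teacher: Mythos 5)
Your proposal is correct and follows essentially the same route as the paper: invoke \cite[3.1.2.1]{HTT} to see that $\on{Fun}(Z,\mathcal{C}^\otimes)\rightarrow\on{Fun}(Z,\on{Fin}_\ast)$ is a coCartesian fibration with pointwise coCartesian edges, pull back along the constant-diagram functor $\on{Fin}_\ast\rightarrow\on{Fun}(Z,\on{Fin}_\ast)$ (which preserves coCartesian fibrations and detects coCartesian edges), and conclude that the resulting fibration over $\on{Fin}_\ast$ is symmetric monoidal. The only difference is that you spell out the Segal-condition verification, which the paper leaves implicit in the phrase ``using that $\mathcal{C}$ is symmetric monoidal''.
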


\begin{proof}
The map $\on{Fun}(Z,\mathcal{C}^\otimes)\rightarrow \on{Fun}(Z,\on{Fin}_\ast)$ is by \cite[3.1.2.1]{HTT} a coCartesian fibration whose coCartesian edges are given by edges whose restriction to each vertex of $Z$ yields a coCartesian edge in $\mathcal{C}^\otimes$. Consider the pullback $\on{Fun}(Z,\mathcal{C})^\otimes=\on{Fun}(Z,\mathcal{C}^\otimes)\times_{\on{Fun}(Z,\on{Fin}_\ast)}\on{Fin}_\ast$. The induced functor $p:\on{Fun}(Z,\mathcal{C})^\otimes\rightarrow \on{Fin}_\ast$ is also a coCartesian fibration. We note that the coCartesian edges of the fibration $p$ are also given by edges whose restriction to each vertex in $Z$ yields a coCartesian edge in $\mathcal{C}^\otimes$. Using that $\mathcal{C}$ is symmetric monoidal, it follows that the coCartesian fibration $p:\on{Fun}(Z,\mathcal{C})^\otimes \rightarrow \on{Fin}_\ast$ is also symmetric monoidal.
\end{proof}

\begin{lemma}
\label{mndsphlem}
Let $\mathcal{C}$ be a stable symmetric monoidal $\infty$-category and let $f:X\rightarrow Y$ be a map of simplicial sets. The pullback functor $f^*:\on{Fun}(Y,\mathcal{C})\rightarrow \on{Fun}(X,\mathcal{C})$ can be extended to a symmetric monoidal functor 
\[ (f^*)^\otimes:\on{Fun}(Y,\mathcal{C})^\otimes\longrightarrow \on{Fun}(X,\mathcal{C})^\otimes\,,\]
as defined in \cite[2.1.3.7]{HA}. The functor $(f^*)^\otimes$ admits a right adjoint  
\[ (f_*)^\otimes:\on{Fun}(X,\mathcal{C})^\otimes \longrightarrow \on{Fun}(Y,\mathcal{C})^\otimes\,,\]
whose restriction to $\on{Fun}(X,\mathcal{C})^\otimes_{<n>}\simeq \on{Fun}(X,\mathcal{C})^{\times n}$ is given by applying $f_*$ to each component. 
\end{lemma}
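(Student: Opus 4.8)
The plan is to obtain $(f^*)^\otimes$ by transporting the pointwise symmetric monoidal structure along precomposition with $f$, and then to produce $(f_*)^\otimes$ from the operadic adjoint functor theorem, identifying it fibrewise at the end. Precomposition with $f$ induces functors $\on{Fun}(Y,\mathcal{C}^\otimes)\to \on{Fun}(X,\mathcal{C}^\otimes)$ and $\on{Fun}(Y,\on{Fin}_\ast)\to \on{Fun}(X,\on{Fin}_\ast)$ which are compatible both with postcomposition by $q$ and with the maps from $\on{Fin}_\ast$ classifying constant diagrams (the restriction along $f$ of a constant diagram on $Y$ is a constant diagram on $X$). Passing to the fibre products appearing in the definition of $\on{Fun}(Y,\mathcal{C})^\otimes$ and $\on{Fun}(X,\mathcal{C})^\otimes$ therefore yields a functor $(f^*)^\otimes:\on{Fun}(Y,\mathcal{C})^\otimes\to \on{Fun}(X,\mathcal{C})^\otimes$ over $\on{Fin}_\ast$. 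Its fibre over $\langle n\rangle$ is $(f^*)^{\times n}$ under the identification $\on{Fun}(Z,\mathcal{C})^\otimes_{\langle n\rangle}\simeq \on{Fun}(Z,\mathcal{C})^{\times n}$, so in particular it recovers $f^*$ over $\langle 1\rangle$. To see it is symmetric monoidal in the sense of \cite[2.1.3.7]{HA}, I would invoke the characterization of coCartesian edges from the previous lemma: the value of $(f^*)^\otimes(F)$ at a vertex $x$ of $X$ is the value of $F$ at the vertex $f(x)$ of $Y$, so a coCartesian edge of $\on{Fun}(Y,\mathcal{C})^\otimes$ — i.e.\ one that is $q$-coCartesian at every vertex of $Y$ — is carried to an edge of $\on{Fun}(X,\mathcal{C})^\otimes$ that is $q$-coCartesian at every vertex of $X$, hence coCartesian. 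Being a functor over $\on{Fin}_\ast$ that preserves all coCartesian edges, it is in particular a map of $\infty$-operads, and therefore a symmetric monoidal functor.

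For the right adjoint, I would observe that the underlying functor $f^*$ admits a right adjoint $f_*$ given by right Kan extension along $f$ (which exists under the running hypotheses on $\mathcal{C}$). Since $(f^*)^\otimes$ is a symmetric monoidal functor whose underlying functor is a left adjoint, the operadic adjoint functor theorem \cite[7.3.2.6,~7.3.2.7]{HA} equips $f_*$ with a lax symmetric monoidal structure, producing a map of $\infty$-operads $(f_*)^\otimes:\on{Fun}(X,\mathcal{C})^\otimes\to \on{Fun}(Y,\mathcal{C})^\otimes$ over $\on{Fin}_\ast$ that is right adjoint to $(f^*)^\otimes$ relative to $\on{Fin}_\ast$, and hence in particular right adjoint to $(f^*)^\otimes$ in $\on{Cat}_\infty$. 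Restricting this relative adjunction to the fibre over $\langle n\rangle$ exhibits $(f_*)^\otimes|_{\langle n\rangle}$ as a right adjoint of $(f^*)^\otimes|_{\langle n\rangle}\simeq (f^*)^{\times n}$, so by uniqueness of adjoints $(f_*)^\otimes|_{\langle n\rangle}\simeq (f_*)^{\times n}$, which is the asserted pointwise description.

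I do not expect a serious obstacle here; the content is essentially bookkeeping around the operadic adjoint functor theorem. The two points that need care are that the lax monoidal functor it produces is genuinely — not merely relatively over $\on{Fin}_\ast$ — a right adjoint of $(f^*)^\otimes$, and that its value on each fibre is forced to be $(f_*)^{\times n}$ by compatibility with the $n$ inert maps $\langle n\rangle\to\langle 1\rangle$ in $\on{Fin}_\ast$ together with the description of the fibres of $\on{Fun}(Z,\mathcal{C})^\otimes$; both are formal consequences of \cite[7.3.2.7]{HA} and the preceding lemma.
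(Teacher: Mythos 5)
Your proposal is correct and follows essentially the same route as the paper: construct $(f^*)^\otimes$ by restricting the precomposition functor $\on{Fun}(Y,\mathcal{C}^\otimes)\to\on{Fun}(X,\mathcal{C}^\otimes)$ to the fibre products defining the pointwise monoidal structures, verify preservation of coCartesian edges via their vertexwise characterization, and obtain $(f_*)^\otimes$ together with its fibrewise description from the theory of relative adjunctions \cite[7.3.2.7]{HA}. The extra details you supply (the inert-map/uniqueness-of-adjoints identification of the fibres) are consistent with, and slightly more explicit than, the paper's argument.
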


\begin{proof}
Consider the pullback functor $\alpha:\on{Fun}(Y,\mathcal{C}^\otimes)\longrightarrow \on{Fun}(X,\mathcal{C}^\otimes)$ along $f$. The restriction of $\alpha$ to $\on{Fun}(Y,\mathcal{C})^\otimes$ factors through the inclusion $\on{Fun}(X,\mathcal{C})^\otimes\subset \on{Fun}(X,\mathcal{C}^\otimes)$. The restriction of $\alpha$ to $\on{Fun}(Y,\mathcal{C})^\otimes_{\langle 1\rangle}\simeq \on{Fun}(Y,\mathcal{C})$ is equivalent to $f^*$. To show that the resulting functor $(f^*)^\otimes=\alpha|_{\on{Fun}(Y,\mathcal{C})^\otimes}:\on{Fun}(Y,\mathcal{C})^\otimes\rightarrow \on{Fun}(X,\mathcal{C})^\otimes$ is symmetric monoidal, we need to show that it preserves coCartesian edges. An edge $y\rightarrow y'$ in $\on{Fun}(Y,\mathcal{C})^\otimes$ is coCartesian with respect to the pointwise monoidal structure if and only if all its restrictions to vertices in $Y$ are coCartesian edges in $\mathcal{C}^\otimes$. Using the analogous characterization of the coCartesian edges in $\on{Fun}(X,\mathcal{C})^\otimes$, it is apparent that $f^*$ preserves coCartesian edges.

 The description of the right adjoint of $(f_*)^\otimes$ follows from the theory of relative adjunctions, see \cite[7.3.2.7]{HA}.
\end{proof}

\begin{construction}\label{mndsphcon}
Let $\mathcal{C}$ be a stable $\infty$-category and let $f:X\rightarrow Y$ be a Kan fibration. Consider the pullback functor $f^*:\on{Fun}(Y,\mathcal{C})\rightarrow \on{Fun}(X,\mathcal{C})$. Let $y \in Y$. We denote by $h^*:\mathcal{C}\rightarrow \on{Fun}(f^{-1}(y),\mathcal{C})$ the pullback along the map $f^{-1}(y)\rightarrow \Delta^0$. There is a natural transformation $\eta$ between the functors $f^*,h^*:\Delta^1\rightarrow \on{Set}_\Delta$ corresponding to the following commutative diagram in $\on{Set}_\Delta$, where the vertical edges are given by the evaluation functors.
\[
\begin{tikzcd}
{\on{Fun}(Y,\mathcal{C})} \arrow[d, "ev_y"] \arrow[r, "f^*"]& {\on{Fun}(X,\mathcal{C})}  \arrow[d, "ev_{f^{-1}(y)}"] \\
\mathcal{C}                             \arrow[r, "h^*"]& {\on{Fun}(f^{-1}(y),\mathcal{C})}                    
\end{tikzcd}
\] 
One checks that the diagram commutes in the $1$-category $\on{Set}_\Delta$, using the explicit description of the pullback and evaluation functors as maps between simplicial sets. The natural transformation $\eta$ induces a functor $\alpha:\Gamma(f^*)\rightarrow \Gamma(h^*)$ between the Grothendieck constructions. By \Cref{Kanextlem} an edge $E\rightarrow F$ lying over $0\rightarrow 1$ in $\Gamma(f^*)$ is Cartesian if and only if it is a right Kan extension. \cite[4.3.3.10]{HTT} shows that being a right Kan extension is a local property, namely this is the case if and only if for all $y\in Y$, the restricted map $f^*(E)|_{f^{-1}(y)}\rightarrow F|_{f^{-1}(y)}$ is a right Kan extension. We obtain that an edge in $\Gamma(f^*)$ lying over $0\rightarrow 1$ is Cartesian if and only if for all $y\in Y$ its restrictions in $\Gamma(h^*)$ is Cartesian.
\end{construction}

\begin{notation}
Let $\mathcal{C},Y$ be simplicial sets. We denote $\mathcal{C}^Y\coloneqq\on{Fun}(Y,\mathcal{C}).$
\end{notation}

\begin{proposition}\label{mndsphprop}
Let $\mathcal{C}$ be a symmetric monoidal and stable $\infty$-category and let $f:X\rightarrow Y$ be a spherical fibration. Denote by $T_{\mathcal{C}^Y}$ the twist functor of the adjunction $f^*:\mathcal{C}^Y\leftrightarrow \mathcal{C}^X:f_*$. Let $\zeta=\on{cof}(1_Y\xrightarrow{u} f_*f^*(1_Y))\in \mathcal{C}^Y$ where $u$ is a unit map of the adjunction $f^*\dashv f_*$. There exists an equivalence of endofunctors $T_{\mathcal{C}^Y}\simeq  \mhyphen \otimes \zeta$ of $\mathcal{C}^Y$.
\end{proposition}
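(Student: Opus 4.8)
The core idea is that the twist functor $T_{\mathcal{C}^Y} = \on{cof}(\on{id} \xrightarrow{u} f_*f^*)$ should be computed "pointwise'' in a suitable sense, and that $f^*$ is \emph{symmetric monoidal} by \Cref{mndsphlem}, so that $f^*f^*(1_Y) \simeq 1_Y$ -- wait, more precisely, the right adjoint $f_*$ is only lax monoidal, so $f_*f^*(1_Y)$ is an algebra object in $\mathcal{C}^Y$, and the point is that for \emph{any} $F \in \mathcal{C}^Y$ there is a projection-formula-type equivalence $f_*f^*(F) \simeq F \otimes f_*f^*(1_Y)$ compatible with the unit maps. The plan is to establish this projection formula and then pass to cofibers.

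\textbf{Step 1 (Projection formula).} First I would prove that for every $F \in \mathcal{C}^Y$ there is a natural equivalence $f_*f^*(F) \simeq F \otimes f_*f^*(1_Y)$ under which the unit map $u_F : F \to f_*f^*(F)$ corresponds to $F \otimes u_{1_Y} : F \simeq F \otimes 1_Y \to F \otimes f_*f^*(1_Y)$. Since the monoidal product on $\mathcal{C}^Y$ is computed pointwise and $f_*$ is computed by fiberwise limits over $f^{-1}(y) \simeq S^n$ (via \Cref{extlem}), it suffices to check this after evaluating at each $y \in Y$, where it becomes the assertion that for $h : S^n \to \ast$ one has $h_* h^*(c) \simeq c \otimes h_* h^*(1_{\mathcal{C}})$ naturally in $c \in \mathcal{C}$, compatibly with the units. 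This fiberwise statement follows because $h^*$ is symmetric monoidal, $h_*$ is lax monoidal, and $h_*h^*(c) \simeq \lim_{S^n} c$ is a \emph{finite} limit (as $S^n$ is equivalent to the finite $\infty$-category $P_n$ by \Cref{cofinallemma}); a finite limit of the constant diagram at $c$ is computed as $c \otimes (\text{finite limit of constant diagram at } 1_{\mathcal{C}})$ because $\mathcal{C}$ is stable and $-\otimes-$ preserves finite colimits hence, being exact, also finite limits in each variable. Concretely, one can also extract this directly from \Cref{sphlem1}(2): $h_*h^* \simeq \on{id} \oplus \on{id}[-n]$ with the unit the summand inclusion, so $h_*h^*(c) \simeq c \oplus c[-n] \simeq c \otimes (1_{\mathcal{C}} \oplus 1_{\mathcal{C}}[-n])$, and $1_{\mathcal{C}} \oplus 1_{\mathcal{C}}[-n] \simeq h_*h^*(1_{\mathcal{C}})$ compatibly. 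To upgrade this pointwise statement to a natural equivalence of functors on $\mathcal{C}^Y$, I would construct the comparison map $F \otimes f_*f^*(1_Y) \to f_*f^*(F)$ globally -- it is adjoint to $f^*(F \otimes f_*f^*(1_Y)) \simeq f^*F \otimes f^*f_*f^*(1_Y) \xrightarrow{\on{id} \otimes cu} f^*F$ using that $f^*$ is symmetric monoidal -- and then invoke that equivalences of functors are detected pointwise, together with \Cref{mndsphcon}, which records exactly that the relevant unit/counit maps restrict fiberwise to those of the adjunction $h^* \dashv h_*$.

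\textbf{Step 2 (Pass to cofibers).} Given the natural equivalence from Step 1 intertwining $u_F$ with $F \otimes u_{1_Y}$, and using that $\mathcal{C}^Y$ is stable with $-\otimes-$ exact in each variable, taking pointwise cofibers yields
\[
T_{\mathcal{C}^Y}(F) = \on{cof}(u_F) \simeq \on{cof}(F \otimes u_{1_Y}) \simeq F \otimes \on{cof}(u_{1_Y}) = F \otimes \zeta\,,
\]
naturally in $F$. This gives the equivalence of endofunctors $T_{\mathcal{C}^Y} \simeq - \otimes \zeta$ and completes the proof.

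\textbf{Main obstacle.} The delicate part is Step 1: making the projection-formula equivalence genuinely \emph{natural} in $F$ and genuinely \emph{compatible} with the unit transformations, rather than just a pointwise equivalence of objects. The cleanest route is to exhibit a canonical natural transformation $- \otimes f_*f^*(1_Y) \Rightarrow f_*f^*(-)$ (via the adjoint description above, which exists because $f^*$ is symmetric monoidal) and then check it is an equivalence pointwise; the pointwise check itself is essentially \Cref{sphlem1}(2) applied fiberwise, and the fiberwise control of the unit maps is precisely what \Cref{mndsphcon} and \cite[4.3.3.10]{HTT} provide. Everything else is formal manipulation in a stable $\infty$-category.
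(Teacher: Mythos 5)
Your proposal is correct, but it takes a genuinely different route from the paper. You prove a projection formula: the canonical map $F\otimes f_*f^*(1_Y)\rightarrow f_*f^*(F)$, defined as the adjunct of $f^*(F\otimes f_*f^*(1_Y))\simeq f^*F\otimes f^*f_*(1_X)\xrightarrow{\on{id}\otimes cu}f^*F$ using that $f^*$ is symmetric monoidal (\Cref{mndsphlem}), is a natural equivalence; its compatibility with the units is the triangle identity, and the pointwise equivalence check reduces via \Cref{extlem}, \Cref{mndsphcon} and \cite[4.3.3.10]{HTT} to the fiberwise statement that $c\otimes\lim_{S^n}1_\mathcal{C}\rightarrow\lim_{S^n}c$ is an equivalence, which holds because $S^n$-limits are finite limits (cofinality of $P_n\rightarrow S^n$, \Cref{cofinallemma}) and $\otimes$ is exact in each variable under the standing assumption of the section — or, as you note, directly from \Cref{sphlem1}. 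Passing to cofibers, using \Cref{rem:twistdef} to identify $T_{\mathcal{C}^Y}$ with $\on{cof}(u)$ in the functor category, then gives $T_{\mathcal{C}^Y}\simeq\mhyphen\otimes\zeta$. The paper instead never formulates the projection formula: it builds the natural equivalence $\mhyphen\otimes\zeta\rightarrow T_{\mathcal{C}^Y}$ by hand, through a tower of diagram $\infty$-categories $\mathcal{D}_1,\dots,\mathcal{D}_7$ inside the Grothendieck construction $\Gamma((f^*)^\otimes)$ related by trivial fibrations (relative Kan extensions), and verifies the relevant Cartesian/biCartesian conditions fiberwise. Your approach buys a shorter, more conceptual argument in which naturality and unit-compatibility come for free from adjunction formalism, at the cost of having to justify carefully that the globally defined comparison map restricts over each $y\in Y$ to the fiberwise exchange map (the same fiberwise input the paper uses); the paper's approach stays entirely within its Kan-extension toolkit and produces the coherence data explicitly without invoking any monoidal-adjunction formalism beyond monoidality of $f^*$.
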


The idea of the proof is to construct a natural transformation $\mhyphen\otimes \zeta \rightarrow T_{\mathcal{C}^Y}$ via Kan extension which is pointwise an equivalence.

\begin{proof}[Proof of \Cref{mndsphprop}]
Consider the the $\infty$-category $\mathcal{D}_1$ spanned by (commutative) diagrams in $\Gamma(f^*)$ as on the left in \eqref{2diag},
\begin{equation}\label{2diag}
\begin{tikzcd}
E\arrow[rr, "\simeq"]&& E\\
E \arrow[u, "\simeq"]\arrow[rr, "\simeq"] \arrow[rd, "\simeq"] &                                                & E \arrow[ld, "\simeq"] \arrow[dd, "!"] \arrow[u, "\simeq"]\\
                                            & E \arrow[d, "!"] \arrow[rd, "!"]               &                                        \\
                                            & f^*(E) \arrow[r, "\simeq"] \arrow[d, "\simeq"] & f^*(E) \arrow[ld, "\simeq"]            \\
                                            & f^*(E)                                         &                                       
\end{tikzcd}\qquad \begin{tikzcd}
0\arrow[rr]&&\zeta\\
1_Y \arrow[u]\arrow[urr, "\square", phantom]\arrow[rr] \arrow[rd, "!"] &                                    & f_*(1_X) \arrow[ld, "\ast"] \arrow[dd, "!"]\arrow[u] \\
                               & 1_X \arrow[d, "\simeq"] \arrow[rd] &                                             \\
                               & 1_X \arrow[r] \arrow[d, "\simeq"]  & f^*f_*(1_X) \arrow[ld]                      \\
                               & 1_X                                &                                            
\end{tikzcd}
\end{equation}
where $E\in \mathcal{C}^Y$ denotes any vertex. The restriction functor to the initial vertex (the lower left vertex in the upper square) is a trivial fibration from $\mathcal{D}_1$ to $\mathcal{C}^Y$. This follows from the fact that a vertex in $\mathcal{D}_1$ is the repeated Kan extension of its restriction to the initial vertex. Consider the $\infty$-category $\mathcal{D}_2$ spanned by diagrams in $\Gamma(f^*)$ as on the right in \eqref{2diag}. The restriction functor to the initial vertex induces a trivial fibration from $\mathcal{D}_2$ to the full subcategory $\langle 1_Y\rangle$ of $\mathcal{C}^Y$ spanned by the unit objects of the monoidal structure. We obtain a trivial fibration from the product $\infty$-category $\mathcal{D}_1\times\mathcal{D}_2$ to $\mathcal{C}^Y\times \langle 1_Y\rangle$. We note that $\mathcal{C}^Y\times \langle 1_Y\rangle\subset \mathcal{C}^Y\times \mathcal{C}^Y\subset \Gamma((f^*)^\otimes)_{\langle 2\rangle}.$ The product $\infty$-category $\mathcal{D}_1\times\mathcal{D}_2$ however does not include into any $\infty$-category of diagrams in $\Gamma((f^*)^\otimes)_{\langle 2\rangle}$. This is because of the vertices in the product diagram being mapped to elements in $\mathcal{C}^Y\times\mathcal{C}^X$. Given an element of $\mathcal{D}_1\times\mathcal{D}_2$, it can however be restricted to a diagram in $\Gamma((f^*)^\otimes)_{\langle 2\rangle}$ as follows. Consider the $\infty$-category $\mathcal{D}_3$ of diagrams in $\Gamma((f^*)^{\otimes})_{\langle 2\rangle}$ of the following form. 
\begin{equation}\label{diag3}
 \begin{tikzcd}
{(E,0)} \arrow[r]                       & {(E,\zeta)}                        \\
{(E,1_Y)} \arrow[u] \arrow[r] \arrow[d] & {(E,f_*(1_Y))} \arrow[u] \arrow[d] \\
{(f^*(E),1_X)} \arrow[r] \arrow[d]      & {(f^*(E),f^*f_*(1_X))} \arrow[ld]  \\
{(f^*(E),1_X)}                          &                                   
\end{tikzcd}
\end{equation}
The components of the edges are given by the corresponding edges in the diagrams \eqref{2diag}. The $\infty$-category $\mathcal{D}_3$ is a full subcategory of the functor category $\on{Fun}(Z,\Gamma((f^*)^{\otimes})_{\langle 2\rangle})$ with \[Z=\left(\Delta^1\times\Delta^1\right)\coprod_{\Delta^1} \left((\Delta^1\times\Delta^1)\amalg_{\Delta^1}\Delta^2\right)\,.\] The restriction functor maps $\mathcal{D}_1\times\mathcal{D}_2$ to $\mathcal{D}_3$. Consider the full subcategory $\mathcal{D}_4$ of the $\infty$-category $\on{Fun}(\Delta^1\times Z,\Gamma((f^*)^{\otimes}))$ spanned by diagrams whose restriction to $\Delta^{\{0\}}\times Z$ lies in $\mathcal{D}_3$ and that are a left Kan extension relative $\Gamma((f^*)^{\otimes})\rightarrow \on{Fin}_\ast$ of their restriction to $\Delta^{\{0\}}\times Z$. For the relative left Kan extension we used the map $\Delta^1\times Z\rightarrow \on{Fin}_\ast, (0,z)\mapsto \langle 2\rangle, (1,z)\mapsto \langle 1\rangle$. We note that the restriction functor $\mathcal{D}_4\rightarrow \mathcal{D}_3$ is a trivial fibration. The restrictions of the vertices of $\mathcal{D}_4$ to $\on{Fun}(\Delta^{\{1\}}\times Z,\Gamma((f^*)^{\otimes}))$  have the following form.
\begin{equation}\label{diag25}
 \begin{tikzcd}
E\otimes 0\arrow[r]& E\otimes \zeta\\
E\otimes 1_Y \arrow[d, "!"] \arrow[u]\arrow[ur, phantom, "\square"]\arrow[r]           & E\otimes f_*(1_X) \arrow[u]\arrow[d, "!"]     \\
f^*(E)\otimes 1_X \arrow[r] \arrow[d, "\simeq"] & f^*(E)\otimes f^*f_*(1_X) \arrow[ld] \\
f^*(E)\otimes 1_X                               &                                     
\end{tikzcd}
\end{equation}
The functor $f^*$ is monoidal by \Cref{mndsphlem}. We use this to deduce that the so labeled edges in \eqref{diag25} are coCartesian. We use that the monoidal product preserves colimits in the second entry to deduce that the upper square is biCartesian. 
\Cref{mndsphcon} shows that the composite edge \[\alpha:E\otimes f_*(1_X)\xrightarrow{!} f^*(E)\otimes f^*f_*(1_X) \rightarrow f^*(E)\otimes 1_X\] is a Cartesian edge in $\Gamma(f^*)$ if and only if all restrictions to points are Cartesian, which can be directly checked. Consider the $\infty$-category $\mathcal{D}_5$ spanned by diagrams of the following form.
\begin{equation}\label{findiag3}\begin{tikzcd}
              & E\otimes 0 \arrow[dr, phantom, "\square"]\arrow[r] \arrow[ld, "\simeq"']                                         & E\otimes \zeta                                                                   \\
0 \arrow[dddr, phantom, "\square", xshift=-3ex]\arrow[ddd] & E\otimes 1_Y \arrow[d, "!"] \arrow[r] \arrow[ddd, bend right=50] \arrow[l] \arrow[u] & E\otimes f_*(1_X) \arrow[d, "!"'] \arrow[lddd, "\simeq", dotted, bend left=50] \arrow[u] \\
              & f^*(E)\otimes 1_X \arrow[r] \arrow[d, "\simeq"]                                   & f^*(E)\otimes f^*f_*(1_X) \arrow[ld]                                             \\
              & f^*(E)\otimes 1_X                                                                 &                                                                                  \\
T_{\mathcal{C}^Y}(E)        & f_*f^*(E) \arrow[u, "\ast"'] \arrow[l]                                            &                                                                                 
\end{tikzcd}
\end{equation}
The label of the left biCartesian square refers to the square containing the bent arrow. The dotted edge is part of the diagram $\mathcal{D}_5$, the dotting is only for better readability. Consider also the $\infty$-category of diagrams $\mathcal{D}_6$ of the form \eqref{findiag3}, with an added edge $E\otimes \zeta \xrightarrow{\simeq}  T_{\mathcal{C}^Y}(E)$ completing a cube containing the two biCartesian squares in the diagram and thus exhibiting an equivalence between the two biCartesian squares. We observe that the restriction functor induces is a trivial fibration from $\mathcal{D}_6$ to $\mathcal{D}_5$ using that the two squares are pushout. We further observe that the functor from $\mathcal{D}_4'\coloneqq\mathcal{D}_4\times_{\on{Fun}(\Delta^{\{1\}}\times Z,\Gamma((f^*)^{\otimes})}\mathcal{D}_6$ to $\mathcal{D}_4$ contained in the defining pullback diagram is a trivial fibration. We can compose with the trivial fibration $\mathcal{D}_4\rightarrow \mathcal{D}_3$ to obtain a trivial fibration $\mathcal{D}_4'\rightarrow \mathcal{D}_3$. We obtain a trivial fibration from $\mathcal{D}_7\coloneqq\mathcal{D}_4'\times_{\mathcal{D}_3}\mathcal{D}_1\times \mathcal{D}_2$ to $\mathcal{D}_1\times\mathcal{D}_2$. We note that the projection functor $\mathcal{C}^Y\times \langle 1_Y\rangle\rightarrow \mathcal{C}^Y$ is also a trivial fibration. In total we obtain a trivial fibration 
\begin{equation}\label{trivfib7} 
\mathcal{D}_7\rightarrow \mathcal{D}_1\times\mathcal{D}_2\rightarrow \mathcal{C}^Y\times \langle 1_Y\rangle\rightarrow \mathcal{C}^Y\,.
\end{equation}
The functors $T_{\mathcal{C}^Y}$ and $\mhyphen\otimes \zeta$ are obtained by choosing a section of \eqref{trivfib7} and composing with the restriction functor to the vertex $E\otimes \zeta$ and $T_{\mathcal{C}^Y}(E)$ in diagram \eqref{findiag3}, respectively. The composition of a section of \eqref{trivfib7} with the restriction functor to the edge $E\otimes\zeta\xrightarrow{\simeq} T_{\mathcal{C}^Y}(E)$ thus describes the desired natural equivalence.
\end{proof}

\section{Spherical monadic adjunctions}\label{sec5}
We recall the notions monad and monadic adjunction in \Cref{sec1.4}. In this section we investigate spherical monadic adjunctions. The following theorem characterizes the sphericalness of a monadic adjunction in terms of the properties of the monad.

\begin{theorem}\label{sphmndthm}
Let $\mathcal{D}$ be a stable $\infty$-category and let $M:\mathcal{D}\rightarrow \mathcal{D}$ be a monad with unit $u:\on{id}_{\mathcal{D}}\rightarrow M$. Consider the endofunctor $T_{\mathcal{D}}=\on{cof}(\on{id}_{\mathcal{D}}\xrightarrow{u}M)\in \on{Fun}(\mathcal{D},\mathcal{D})$. The following conditions are equivalent.
\begin{enumerate}
\item The endofunctor $T_{\mathcal{D}}$ is an equivalence and the unit $u$ satisfies $T_\mathcal{D}u\simeq uT_\mathcal{D}$.
\item The monadic adjunction $F:\mathcal{D}\leftrightarrow \on{LMod}_M(\mathcal{D}):G$ is spherical.
\end{enumerate}
\end{theorem}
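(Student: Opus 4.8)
The plan is to compare the monadic functor $G\colon \mathcal{C}\to\mathcal{D}$, where $\mathcal{C}=\on{LMod}_M(\mathcal{D})$, with the twisted functor $G'\coloneqq G\circ T_{\mathcal{C}}$, where $T_{\mathcal{C}}$ denotes the cotwist of $F\dashv G$. Writing $M=GF$ and letting $u$ also denote the adjunction unit, the twist of $F\dashv G$ is $T_{\mathcal{D}}=\on{cof}(u)$ by \Cref{rem:twistdef}, so $F\dashv G$ is spherical exactly when both $T_{\mathcal{D}}$ and $T_{\mathcal{C}}$ are equivalences, and the content of the theorem is the criterion for $T_{\mathcal{C}}$ to be an equivalence. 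By \Cref{commlem} there are equivalences $G'\simeq T_{\mathcal{D}}\circ G$ and $T_{\mathcal{C}}F\simeq FT_{\mathcal{D}}$; since $G\circ T_{\mathcal{C}}=G'$ on the nose, $T_{\mathcal{C}}$ is a functor over $\mathcal{D}$ from $(\mathcal{C},G')$ to $(\mathcal{C},G)$. This sets up an application of the equivalence criterion \cite[4.7.3.16]{HA} (the same one used in \Cref{algmndlem}), which detects equivalences between $\infty$-categories that are monadic over a common base.

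For $(1)\Rightarrow(2)$ I would argue as follows. If $T_{\mathcal{D}}$ is an equivalence then $G'\simeq T_{\mathcal{D}}G$ is again conservative, admits the left adjoint $F'\coloneqq FT_{\mathcal{D}}^{-1}$, and preserves colimits of $G'$-split (equivalently $G$-split) simplicial objects; hence $G'$ is monadic by Lurie's Barr--Beck theorem, with adjunction monad $M'=T_{\mathcal{D}}MT_{\mathcal{D}}^{-1}$ whose unit is the conjugate $T_{\mathcal{D}}uT_{\mathcal{D}}^{-1}$ of $u$. From $T_{\mathcal{C}}F\simeq FT_{\mathcal{D}}$ one obtains $T_{\mathcal{C}}F'\simeq F$, so $T_{\mathcal{C}}$ carries free modules to free modules and induces a morphism of monads $M'\to M$; on underlying endofunctors this is the image under $G$ of the canonical comparison $F\to T_{\mathcal{C}}F'$, which, under the identification $T_{\mathcal{C}}F'\simeq F$, is the transformation adjoint to the unit $T_{\mathcal{D}}uT_{\mathcal{D}}^{-1}$ of $F'\dashv G'$. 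The extra hypothesis $T_{\mathcal{D}}u\simeq uT_{\mathcal{D}}$ identifies this adjoint with $\on{id}_F$ (using conservativity of $G$ to compare before and after applying $G$), so the monad morphism $M'\to M$ is an equivalence, and \cite[4.7.3.16]{HA} then gives that $T_{\mathcal{C}}$ is an equivalence; together with the equivalence $T_{\mathcal{D}}$ this is precisely sphericalness of $F\dashv G$.

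For $(2)\Rightarrow(1)$: sphericalness gives that $T_{\mathcal{D}}=\on{cof}(u)$ and $T_{\mathcal{C}}$ are both equivalences, so $T_{\mathcal{C}}$ is an equivalence over $\mathcal{D}$ between $(\mathcal{C},G')$ and $(\mathcal{C},G)$. Any such equivalence identifies left adjoints and hence identifies the two adjunction monads together with their units: comparing $(M,u)$ with $(M',\,T_{\mathcal{D}}uT_{\mathcal{D}}^{-1})=(T_{\mathcal{D}}MT_{\mathcal{D}}^{-1},\,T_{\mathcal{D}}uT_{\mathcal{D}}^{-1})$ forces $u\simeq T_{\mathcal{D}}uT_{\mathcal{D}}^{-1}$, which unwinds to the assertion $T_{\mathcal{D}}u\simeq uT_{\mathcal{D}}$. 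The main work — and the step I expect to require care — is the middle of $(1)\Rightarrow(2)$: making precise what $T_{\mathcal{D}}u\simeq uT_{\mathcal{D}}$ means as an equivalence of natural transformations under the canonical identification $T_{\mathcal{D}}M\simeq MT_{\mathcal{D}}$ coming from \Cref{commlem}, and checking that this is exactly the compatibility with free-module structures which upgrades the (automatic) equivalence $T_{\mathcal{C}}F'\simeq F$ on underlying objects to an equivalence of the induced monads, so that \cite[4.7.3.16]{HA} can be invoked. Verifying the remaining hypotheses of that corollary — conservativity and the existence and preservation of the relevant split colimits — in the stable setting should be routine.
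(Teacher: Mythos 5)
Your proposal is correct and takes essentially the same route as the paper: the paper also uses \Cref{commlem} to regard $T_{\mathcal{C}}$ as a functor over $\mathcal{D}$ from $(\on{LMod}_M(\mathcal{D}),T_{\mathcal{D}}G)$ to $(\on{LMod}_M(\mathcal{D}),G)$, checks monadicity of $T_{\mathcal{D}}G$ (via \cite[4.7.3.22]{HA}), shows that $T_{\mathcal{D}}u\simeq uT_{\mathcal{D}}$ forces the comparison map $F\rightarrow T_{\mathcal{C}}FT_{\mathcal{D}}^{-1}$ to be an equivalence so that \cite[4.7.3.16]{HA} applies, and proves the converse exactly as you do, by comparing the units of $FT_{\mathcal{D}}^{-1}\dashv T_{\mathcal{D}}G$ and $T_{\mathcal{C}}^{-1}F\dashv GT_{\mathcal{C}}$. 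The delicate step you flag (identifying the unit of $FT_{\mathcal{D}}^{-1}\dashv T_{\mathcal{D}}G$ with $T_{\mathcal{D}}uT_{\mathcal{D}}^{-1}$ and seeing that the hypothesis makes it a unit map of $F\dashv G$, whence its adjoint is an equivalence) is precisely what the paper settles with an explicit relative Kan extension over $\Delta^2$.
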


\begin{proof}
Before we begin, we note that the endofunctor $T_{\mathcal{D}}$ is equivalent to the twist functor of the adjunction $F\dashv G$. 

We start by showing that condition 1 implies condition 2. We denote the cotwist functor of the adjunction $F\dashv G$ by $T_\mathcal{C}$. \Cref{commlem} shows that the following diagram commutes.
 \[
  \begin{tikzcd}
\on{LMod}_M(\mathcal{D}) \arrow[rd, "T_\mathcal{D}G"'] \arrow[rr, "T_{\mathcal{C}}"] &             & \on{LMod}_M(\mathcal{D}) \arrow[ld, "G"] \\
                                                                                & \mathcal{D} &                                    
\end{tikzcd}
 \]
We observe that the left adjoint of $T_\mathcal{D}G$ is given by $FT_\mathcal{D}^{-1}$. We apply \cite[4.7.3.5,\ 4.7.3.16]{HA} and deduce that the following two conditions imply that $T_\mathcal{C}$ is an equivalence.
\begin{enumerate}
\item[1)] The functor $T_\mathcal{D}G$ is monadic.
\item[2)] For every $d\in \mathcal{D}$, the unit map $d\rightarrow T_\mathcal{D}GFT^{-1}_\mathcal{D}(d)\simeq GT_\mathcal{C}FT^{-1}_\mathcal{D}$ of the adjunction \mbox{$FT^{-1}_\mathcal{D}\dashv T_\mathcal{D}G$} induces via the adjunction $F\dashv G$ an equivalence $F(d)\rightarrow T_\mathcal{C}FT_\mathcal{D}^{-1}(d)$.
\end{enumerate}
Consider the following commutative diagram.
\[
 \begin{tikzcd}
                                                                 & \mathcal{D} \arrow[rd, "T_\mathcal{D}^{-1}"] &             \\
\on{LMod}_M(\mathcal{D}) \arrow[rr, "G"] \arrow[ru, "T_\mathcal{D}G"] &                                              & \mathcal{D}
\end{tikzcd}
\] The functor $T_\mathcal{D}^{-1}$ is an equivalence and hence conservative. We apply \cite[4.7.3.22]{HA} to deduce that $T_\mathcal{D}G$ is monadic. This shows 1).\\
For 2), consider the functor of $1$-categories $\alpha:[2]\rightarrow \on{Set}_\Delta$ corresponding to the composable functors $ \mathcal{D}\xrightarrow{T_\mathcal{D}^{-1}}\mathcal{D}\xrightarrow{F}\on{LMod}_M(\mathcal{D})$. We obtain a biCartesian fibration $p:\Gamma(\alpha)\rightarrow \Delta^2$. Let $d\in \mathcal{D}\simeq p^{-1}([0])$. Via Kan extension, we can produce the following diagram in $\Gamma(\alpha)$.
\[
 \begin{tikzcd}
  d \arrow[r, "!"] \arrow[d, "\simeq"']                                        & T_\mathcal{D}^{-1}(d) \arrow[d, "u'"] \arrow[r, "!"] & FT_\mathcal{D}^{-1}(d) \\
  T_\mathcal{D}T^{-1}_\mathcal{D}(d) \arrow[d, "T_\mathcal{D}(u')"'] \arrow[ru, "\ast"] & GFT_\mathcal{D}^{-1}(d) \arrow[ru, "\ast"]                  &                        \\
  T_\mathcal{D}GFT_\mathcal{D}^{-1}(d) \arrow[ru, "\ast"]                              &                                                             &                       
 \end{tikzcd}
\]
The map $u':T_\mathcal{D}^{-1}(d)\rightarrow GFT_\mathcal{D}^{-1}(d)$ is a unit map of the adjunction $F\dashv G$ and the map $v:d\xrightarrow{\simeq} T_\mathcal{D}T_\mathcal{D}^{-1}(d)\xrightarrow{T_\mathcal{D}(u')} T_\mathcal{D}GFT_\mathcal{D}^{-1}(d)$ is a unit map of the adjunction $FT_\mathcal{D}^{-1}\dashv T_\mathcal{D}G$. The map $v$ induces via the adjunction $F\dashv G$ an equivalence $F(d)\rightarrow T_\mathcal{D}FT_\mathcal{D}^{-1}(d)$ if $v$ is unit map of the adjunction $F\dashv G$, which follows from $uT_{\mathcal{D}}\simeq T_{\mathcal{D}}u$. We deduce that 2) is satisfied and the cotwist functor $T_\mathcal{C}$ is an equivalence. The monadic adjunction is thus spherical and we have shown condition 2.

We next show that condition 2 implies condition 1. If $F\dashv G$ is spherical, then by definition, it holds that $T_{\mathcal{D}}$ is an equivalence. Consider the unit $v:id_\mathcal{D}\rightarrow T_\mathcal{D}GFT^{-1}_\mathcal{D}\simeq GF$ of the adjunction $FT^{-1}_\mathcal{D}\dashv T_\mathcal{D}G.$ We observe that $v\simeq T_\mathcal{D}u'T^{-1}_\mathcal{D}$, where $u'$ is the unit of the adjunction $F\dashv G$. The cotwist functor $T_\mathcal{C}$ of $F\dashv G$ is by assumption an equivalence. Using \Cref{commlem}, we obtain equivalences $GT_\mathcal{C}\simeq T_\mathcal{D}G$ and $FT_\mathcal{D}\simeq T_\mathcal{C}F$ and thus $F T_\mathcal{D}^{-1}\simeq T_\mathcal{C}^{-1} F$. This shows that the unit $v'$ of the adjunction $T^{-1}_\mathcal{C}F\dashv GT_\mathcal{C}$ is equivalent to $v$. We observe that $v'$ is equivalent to $u'$. It thus follows that $u'$ is equivalent to $v$ and condition 1 is fulfilled.
\end{proof}

\begin{remark}
The proof of \Cref{thm1} shows that it suffices to check the commutativity $uT_\mathcal{D}\simeq T_{\mathcal{D}}u$ pointwise.
\end{remark}

\Cref{sphmndthm} can be seen as extending a part of the discussion in \cite[Section 3.2]{Seg18}, which focuses on Kleisli adjunctions. \Cref{sphmndthm} further allows us to extend the main result of \cite{Seg18} from the setting of pretriangulated dg-categories to stable $\infty$-categories.

\begin{corollary}\label{cor:sph}
Let $\mathcal{D}$ be a stable $\infty$-category and let $T:\mathcal{D}\rightarrow \mathcal{D}$ be an autoequivalence. Then $T$ arises as the twist functor of a spherical adjunction. This adjunction can further be chosen to be monadic or a stable Kleisli adjunction. 
\end{corollary}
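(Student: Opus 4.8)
The plan is to realize the given autoequivalence $T$ as the twist functor $T_{\mathcal{D}} = \on{cof}(\on{id}_{\mathcal{D}} \xrightarrow{u} M)$ of a cleverly chosen monad $M$, and then invoke \Cref{sphmndthm}. The natural candidate is the \emph{square-zero extension monad} built from $T$: informally, we want $M \simeq \on{id}_{\mathcal{D}} \oplus T[-1]$ as an endofunctor, with multiplication that kills the product of the two "$T[-1]$ parts" (the square-zero condition), and with unit $u$ given by the inclusion of the $\on{id}_{\mathcal{D}}$ summand. To make this precise in the $\infty$-categorical setting, I would use \Cref{endlem}: since $\on{Fun}(\mathcal{D},\mathcal{D})$ is the endomorphism object of $\mathcal{D}$ and acts on $\mathcal{D}$, it suffices to produce $M$ as an associative algebra object in $\on{Fun}(\mathcal{D},\mathcal{D})$ equipped with the composition monoidal structure. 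The endofunctor $T[-1]$ is an autoequivalence, hence in particular an exact endofunctor, so $\on{id}_{\mathcal{D}} \oplus T[-1]$ is exact and $\on{LMod}_M(\mathcal{D})$ will be stable by the lemma following \Cref{Kleisli}.

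First I would construct the algebra structure on $M = \on{id}_{\mathcal{D}} \oplus T[-1]$. The cleanest route is to observe that $T[-1]$ carries the trivial (square-zero) nonunital-algebra structure — every object of a stable $\infty$-category is canonically a nonunital square-zero algebra via the zero multiplication — and then pass to the unital algebra by adjoining a unit, which corresponds precisely to forming $\on{id}_{\mathcal{D}} \oplus T[-1]$ with multiplication $m$ that is the identity on the $\on{id}_{\mathcal{D}}$-factors and zero on the $T[-1]$-factor. This is the image of the initial-object/trivial-algebra construction in $\on{Alg}(\on{Fun}(\mathcal{D},\mathcal{D}))$. With $u: \on{id}_{\mathcal{D}} \to M$ the structural unit (the summand inclusion), we get $T_{\mathcal{D}} = \on{cof}(u) \simeq T[-1][1] \simeq T$, so the twist functor of the monadic adjunction $F: \mathcal{D} \leftrightarrow \on{LMod}_M(\mathcal{D}) : G$ is exactly $T$.

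Next I would verify condition (1) of \Cref{sphmndthm}. That $T_{\mathcal{D}} \simeq T$ is an equivalence is immediate. The remaining point is $T_{\mathcal{D}} u \simeq u T_{\mathcal{D}}$, i.e.\ $T u \simeq u T$ as maps $T \to MT$; by the remark after \Cref{sphmndthm} it suffices to check this pointwise. Here the $\oplus$-decomposition of $M$ makes things transparent: $u$ is a summand inclusion, and conjugating a summand inclusion $X \hookrightarrow X \oplus Y$ by the autoequivalence $T$ (which is additive, hence commutes with finite direct sums) again gives the summand inclusion $TX \hookrightarrow TX \oplus TY$; so $Tu$ and $uT$ are identified. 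This gives the monadic spherical adjunction with twist functor $T$. Finally, since the monadic adjunction is spherical, its restriction to the stable Kleisli $\infty$-category $\overline{\on{LMod}^{\on{free}}_M(\mathcal{D})}$ is spherical as well (a restriction of a spherical adjunction along a stable subcategory containing the images of all the relevant adjoints is spherical — this is the content of \Cref{rescor}, applied using \Cref{4pedprop2} to know $F$ admits a further left adjoint and that $\on{Im}(E), \on{Im}(G)$ land in the stable closure of the free modules), and it has the same twist functor since the twist functor is computed on $\mathcal{D}$ from the adjunction monad, which is unchanged by \Cref{stbKleisli}.

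The main obstacle I anticipate is the first step: rigorously producing the associative algebra structure on $\on{id}_{\mathcal{D}} \oplus T[-1]$ in $\on{Fun}(\mathcal{D},\mathcal{D})^\otimes$. One must be careful that the "trivial square-zero algebra" construction is available — this requires either citing the unitalization functor $\on{Alg}^{\on{nu}} \to \on{Alg}$ (left adjoint to forgetting the unit) and feeding it the zero nonunital algebra, or directly constructing $M$ via a split-square-zero extension as in \cite[Section 5.1]{Seg18}/the literature on square-zero extensions, and in either case one needs the coherence of the $\oplus$-decomposition to hold at the level of algebra objects, not merely underlying objects. Everything downstream — the computation of $\on{cof}(u)$, the pointwise check of $Tu \simeq uT$, and the passage to the Kleisli adjunction — is then routine given the machinery already set up in the paper.
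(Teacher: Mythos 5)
Your overall strategy coincides with the paper's: build the square-zero extension monad on $\on{id}_{\mathcal{D}}$ plus a copy of the autoequivalence (the paper cites \cite[Section 7.3.4]{HA} for the algebra structure, which is the same unitalization-of-the-trivial-nonunital-algebra device you describe), verify condition 1 of \Cref{sphmndthm} with the commutation $T_{\mathcal{D}}u\simeq uT_{\mathcal{D}}$ checked pointwise via additivity of $T$, and then pass to the stable Kleisli adjunction, whose adjunction monad is still $M$ so the twist is unchanged. However, there is a concrete error in your twist computation: in this paper's conventions $T_{\mathcal{D}}=\on{cof}(\on{id}_{\mathcal{D}}\xrightarrow{u}M)$, and the cofiber of the split inclusion $X\xrightarrow{(id,0)}X\oplus Y$ is $Y$ with \emph{no} shift (a shift $[1]$ would appear for the cofiber of the split projection, not the split inclusion). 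So with your choice $M=\on{id}_{\mathcal{D}}\oplus T[-1]$ the twist functor is $T[-1]$, not $T[-1][1]\simeq T$ as you claim; you have likely imported a shift from Segal's conventions. The repair is immediate: take $M=\on{id}_{\mathcal{D}}\oplus T$, which is exactly the monad the paper uses (equivalently, feed $T[1]$ into your construction). With that correction the argument that $u$ commutes with $T_{\mathcal{D}}$ pointwise goes through verbatim.

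One smaller point on the Kleisli step: \Cref{rescor} restricts the category that is the common \emph{target} of the outer two functors of an adjoint triple, so to restrict $\on{LMod}_M(\mathcal{D})$ to $\overline{\on{LMod}^{\on{free}}_M(\mathcal{D})}$ you should apply it to the adjunction $G\dashv H$ (with $H\simeq FT_{\mathcal{D}}^{-1}$ supplied by \Cref{4pedprop2}), where the hypothesis to verify is $\on{Im}(F),\on{Im}(H)\subset\overline{\on{LMod}^{\on{free}}_M(\mathcal{D})}$ — which holds since $\on{Im}(H)=\on{Im}(F)$ — and then transfer sphericalness back to $F\dashv G'$ via \Cref{lrcor}. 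Your version cites containments of $\on{Im}(E)$ and $\on{Im}(G)$, which land in $\mathcal{D}$ and are not the relevant ones. The paper is equally terse at this point (it simply asserts that the restriction of a spherical adjunction to the stable Kleisli adjunction is spherical), so this is a matter of stating the right hypotheses rather than a missing idea; with the two corrections above your proof matches the paper's.
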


\begin{proof}
Consider the endofunctor $M=id_\mathcal{D}\oplus T$. By \cite[Section 7.3.4]{HA}, we can equip $M$ with the structure of a monad, called the square-zero extension monad. The unit map of $M$ is given by the inclusion $u:id_\mathcal{D}\xrightarrow{(id,0)} id_\mathcal{D}\oplus T=M$. The twist functor $T_\mathcal{D}$ of the monadic adjunction $F:\mathcal{D}\leftrightarrow \on{LMod}_M(\mathcal{D}):G$ is thus equivalent to $T$. The unit $u$ clearly commutes with $T_\mathcal{D}$. The adjunction $F\dashv G$ is spherical by \Cref{sphmndthm}. The stable Kleisli adjunction of $M$ is a restriction of the spherical monadic adjunction and thus also spherical with twist functor equivalent to $T$.
\end{proof}

\begin{remark}
Many examples of spherical adjunctions are monadic or comonadic, e.g.~the spherical adjunction described in \Cref{relsphprop} is comonadic. As we argue in the following, for monadicity it suffices in good cases that one of the involved functor is conservative. Let $\mathcal{C}$ and $\mathcal{D}$ be stable $\infty$-categories that admit sufficient limits of cosimplicial objects and colimits of simplicial objects. Consider a spherical adjunction $F:\mathcal{C}\leftrightarrow \mathcal{D}: G$. The functors $F$ and $G$ admit by \Cref{4pedprop2} further left and right adjoints, which implies that $F$ and $G$ preserve all limits and colimits. The $\infty$-categorical Barr-Beck theorem thus implies that the adjunction $F\dashv G$ is monadic if and only if the functor $G$ is conservative and comonadic if and only if the functor $F$ is conservative.

Denote the right adjoint of $G$ by $H$. Under the above assumptions, if the adjunction $F\dashv G$ is monadic, then the adjunction $G\dashv H$ is comonadic and vice versa. Thus, if sufficient limits and colimits exist, spherical  monadic  adjunctions determine spherical comonadic adjunctions and vice versa.
\end{remark}

Consider an adjunction $F:\mathcal{D}\leftrightarrow \mathcal{C}:G$ of stable $\infty$-categories. Let $M\simeq GF$ be the adjunction monad. As shown in \Cref{stbKleisli}, there exists a fully faithful functor $F':\overline{\on{LMod}_M^{\on{free}}(\mathcal{D})}\rightarrow \mathcal{C}$ from the stable Kleisli $\infty$-category of $M$ to $\mathcal{C}$. The essential image of $F'$ is identical to the stable closure of the essential image of $F$ in $\mathcal{C}$. Combining \Cref{sphmndthm} and \Cref{rescor}, we obtain the following characterization of the sphericalness of the adjunction $F\dashv G$.

\begin{proposition}\label{sphmndprop}
Let $F:\mathcal{D} \leftrightarrow \mathcal{C}:G$ be an adjunction of stable $\infty$-categories. The adjunction $F\dashv G$ is spherical if and only if the following four conditions are satisfied.
\begin{enumerate}
\item The twist functor $T_\mathcal{D}$ of the adjunction $F\dashv G$ is an autoequivalence.
\item Let $u:id_\mathcal{D}\rightarrow GF$ be the unit of $F\dashv G$. There exists an equivalence $u T_\mathcal{D}\simeq T_\mathcal{D} u$.
\item The functor $G$ admits a right adjoint $H$.
\item The essential image $\on{Im}(H)$ is contained in the stable closure $\overline{\on{Im}(F)}$ of $\on{Im}(F)\subset \mathcal{C}$.
\end{enumerate}
\end{proposition}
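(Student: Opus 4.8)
The plan is to deduce the statement from \Cref{sphmndthm} by passing through the stable Kleisli adjunction of the monad $M=GF$, which \Cref{stbKleisli} identifies with a corestriction of $F\dashv G$, and by using \Cref{rescor} together with \Cref{lrcor} to transfer sphericalness between adjunctions sharing an adjoint. Since $M=GF$ is a composite of exact functors it is exact, so $\on{LMod}_M(\mathcal{D})$ is stable, the monadic adjunction $F_0:\mathcal{D}\leftrightarrow\on{LMod}_M(\mathcal{D}):G_0$ of $M$ falls under \Cref{sphmndthm}, and $T_{\mathcal{D}}=\on{cof}(\on{id}_\mathcal{D}\xrightarrow{u}M)$ is the twist functor of $F\dashv G$. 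First, suppose $F\dashv G$ is spherical. Then condition~1 holds by definition; for condition~2 I would observe that the argument proving ``(2)$\Rightarrow$(1)'' in \Cref{sphmndthm} uses only that the twist and cotwist functors of the adjunction are equivalences together with \Cref{commlem}, so it applies verbatim to $F\dashv G$ and gives $uT_{\mathcal{D}}\simeq T_{\mathcal{D}}u$; condition~3 is part of \Cref{4pedprop2}, which moreover provides a right adjoint $H\simeq FT_{\mathcal{D}}^{-1}$ of $G$, whence $\on{Im}(H)=\on{Im}(F)\subseteq\overline{\on{Im}(F)}$ since $T_{\mathcal{D}}^{-1}$ is an autoequivalence of $\mathcal{D}$, giving condition~4.

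Conversely, suppose conditions~1--4 hold. By conditions~1 and~2 and \Cref{sphmndthm} the monadic adjunction $F_0\dashv G_0$ is spherical, so by \Cref{4pedprop2} the functor $G_0$ has a right adjoint $H_0\simeq F_0T_{\mathcal{D}}^{-1}$ with $\on{Im}(H_0)=\on{Im}(F_0)$, and by \Cref{lrcor} the adjunction $G_0\dashv H_0$ is spherical. Both $\on{Im}(F_0)$ and $\on{Im}(H_0)$ lie in the stable Kleisli $\infty$-category $\overline{\on{LMod}^{\on{free}}_M(\mathcal{D})}$, so \Cref{rescor}, applied to $G_0\dashv H_0$ with this subcategory, shows that the restriction of $G_0$ to $\overline{\on{LMod}^{\on{free}}_M(\mathcal{D})}$ together with its right adjoint forms a spherical adjunction; since its left adjoint is the free-module functor, \Cref{lrcor} then shows that the stable Kleisli adjunction $\tilde F\dashv\tilde G$ is spherical. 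Now \Cref{stbKleisli} gives an equivalence $F'\colon\overline{\on{LMod}^{\on{free}}_M(\mathcal{D})}\xrightarrow{\,\sim\,}\overline{\on{Im}(F)}=:\mathcal{C}'\subseteq\mathcal{C}$ with $F=F'\circ\tilde F$; transporting $\tilde F\dashv\tilde G$ along $F'$ produces a spherical adjunction $\bar F\colon\mathcal{D}\leftrightarrow\mathcal{C}':\bar G$, and a short mapping-space computation, using that $\mathcal{C}'$ is a full subcategory of $\mathcal{C}$, identifies $\bar F$ with the corestriction of $F$ and $\bar G$ with $G|_{\mathcal{C}'}$.

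It then remains to climb back from $\mathcal{C}'$ to $\mathcal{C}$. By condition~3 the functor $G$ has a right adjoint $H$, and $\on{Im}(F)\subseteq\mathcal{C}'$ by construction while $\on{Im}(H)\subseteq\mathcal{C}'$ by condition~4, so I would apply \Cref{rescor} to $G\dashv H$ with the subcategory $\mathcal{C}'\subseteq\mathcal{C}$: it gives that $G\dashv H$ is spherical if and only if $\bar G$ together with the corestriction $\bar H$ of $H$ is, and the restricted adjunctions assemble into an adjoint triple $\bar F\dashv\bar G\dashv\bar H$. Sphericalness of $\bar F\dashv\bar G$ and \Cref{lrcor} then make $\bar G\dashv\bar H$ spherical, hence $G\dashv H$ spherical, and one last application of \Cref{lrcor} to $F\dashv G\dashv H$ shows that $F\dashv G$ is spherical. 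The main obstacle is this converse direction, and in particular the bookkeeping that links the four adjunctions $F_0\dashv G_0$, $\tilde F\dashv\tilde G$, $\bar F\dashv\bar G$ and $F\dashv G$: one must check that the stable Kleisli adjunction transported along \Cref{stbKleisli} is genuinely the corestriction of $F\dashv G$ to $\overline{\on{Im}(F)}$ and not merely abstractly equivalent to it, and one must recognize that condition~4 is precisely what licenses the application of \Cref{rescor} to $G\dashv H$, since without it $\on{Im}(H)$ could fail to lie in $\overline{\on{Im}(F)}$ and the restriction step would break down.
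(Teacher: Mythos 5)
Your proof is correct and takes essentially the same route as the paper: \Cref{sphmndthm} applied to the monadic adjunction of $M=GF$, the identification via \Cref{stbKleisli} of the stable Kleisli adjunction with the corestriction of $F\dashv G$ to $\overline{\on{Im}(F)}$, and \Cref{rescor} (licensed precisely by conditions 3 and 4) together with \Cref{lrcor} to pass back to $F\dashv G$, while the forward direction (conditions 1--4 from sphericalness, with $H\simeq FT_\mathcal{D}^{-1}$ giving $\on{Im}(H)=\on{Im}(F)$) matches the paper as well. The only difference is presentational: you make explicit the \Cref{lrcor}/\Cref{rescor} bookkeeping and the identification of the transported adjunction with the corestricted one, which the paper compresses into its final sentences by transporting the restricted cotwist along the equivalence $F'$.
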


\begin{proof}
Denote by $M$ the adjunction monad of the adjunction $F\dashv G$ with monadic adjunction $F'':\mathcal{D}\leftrightarrow \on{LMod}_M(\mathcal{D}):G''$. The equivalence $F':\overline{\on{LMod}_M^{\on{free}}(\mathcal{D})}\simeq \overline{\on{Im}(F)}$ constructed in \Cref{stbKleisli} identifies the functors $F$ and $F''$, i.e.~$F'\circ F''\simeq F$. Denote by  $T_{\on{LMod}_M(\mathcal{D})}$ and $T_{\mathcal{C}}$ the cotwist functors of the adjunctions $F\dashv G$ and $F''\dashv G''$, respectively. The equivalence $F'$ also identifies the restrictions of the cotwists, i.e.~$F'\circ T_{\on{LMod}_{M}(\mathcal{D})}|_{\overline{\on{LMod}_M^{\on{free}}(\mathcal{D})}} \circ F'^{-1}\simeq T_{\mathcal{C}}|_{\overline{\on{Im}(F)}}$.

Assume that the adjunction $F\dashv G$ is spherical. Condition 1 is immediate. Condition 2 follows from \Cref{sphmndthm} and the observation that the unit maps of the monadic adjunction of the adjunction monad $GF$ and the unit maps of the adjunction $F\dashv G$ are equivalent. \Cref{2/4prop} shows that the functor $G$ admits a right adjoint and there exists an equivalence $H\simeq FT^{-1}_\mathcal{D}$. In particular, we find $\on{Im}(H)=\on{Im}(F)$. This shows conditions 3 and 4. 

Assume conditions 1 to 4 are satisfied. Then the monadic adjunction $F''\dashv G''$ is spherical by \Cref{sphmndthm} and thus the restriction of $T_{\on{LMod}_M(\mathcal{D})}$ to $\overline{\on{LMod}_M^{\on{free}}(\mathcal{D})}$ is an equivalence. It follows that the restriction of $T_\mathcal{C}$ to $\overline{\on{Im}(F)}$ is an equivalence. Using condition 4, we can apply \Cref{rescor} to deduce that the adjunction $F\dashv G$ is spherical.  
\end{proof}

Consider a symmetric monoidal and stable $\infty$-category $\mathcal{C}$. Given an associative algebra object $A\in \mathcal{C}$, we describe in \Cref{endlem} a monad $A\otimes \mhyphen:\mathcal{C}\rightarrow \mathcal{C}$. The characterization of the sphericalness of the monadic adjunction of $A\otimes \mhyphen$ simplifies by the symmetry of the monoidal structure. Namely, the monadic adjunction is spherical if and only if the twist functor is an autoequivalence.

\begin{proposition}\label{symmndprop}
Let $\mathcal{C}^\otimes\rightarrow \on{Assoc}^\otimes$ be an $\on{Assoc}^\otimes$-monoidal stable $\infty$-category, such that the composite with $\on{Assoc}^\otimes\rightarrow \on{Fin}_\ast$ exhibits $\mathcal{C}^\otimes$ as a symmetric monoidal $\infty$-category. Assume that the monoidal product $\mhyphen \otimes \mhyphen:\mathcal{C}\times\mathcal{C}\rightarrow \mathcal{C}$ is exact in both entries. Let $A\in \mathcal{C}^\otimes$ be an associative algebra object and consider the free-forget adjunction $F:\mathcal{C}\leftrightarrow \on{LMod}_{A}(\mathcal{C}):G$. The adjunction $F\dashv G$ is spherical if and only if the twist functor $T_\mathcal{C}$ is an autoequivalence.
\end{proposition}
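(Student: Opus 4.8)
The plan is to recognise the free--forget adjunction as the monadic adjunction of the monad $M\coloneqq A\otimes\mhyphen$ and then invoke \Cref{sphmndthm}, the point being that in the symmetric monoidal setting the commutativity clause in condition~1 of \Cref{sphmndthm} holds automatically. First I would record the standard identifications. By \Cref{endlem} the endofunctor $A\otimes\mhyphen\colon\mathcal{C}\to\mathcal{C}$ is a monad; since $\mhyphen\otimes\mhyphen$ is exact in each variable this monad is exact, so $\on{LMod}_{A\otimes\mhyphen}(\mathcal{C})$ is stable, and by \Cref{algmndlem} there is an equivalence $\on{LMod}_A(\mathcal{C})\simeq\on{LMod}_{A\otimes\mhyphen}(\mathcal{C})$ carrying the forgetful functor $G$ to the monadic functor of $M$. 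Thus $F\dashv G$ is, up to equivalence, the monadic adjunction of $M$, whose unit $u$ is the algebra unit $1_\mathcal{C}\xrightarrow{u_A}A$ tensored with $\mhyphen$. Writing $\bar A\coloneqq\on{cof}(1_\mathcal{C}\xrightarrow{u_A}A)$ and using exactness of $\mhyphen\otimes X$ in $X$, the twist functor $T_\mathcal{C}=\on{cof}(\on{id}_\mathcal{C}\xrightarrow{u}A\otimes\mhyphen)$ is identified, pointwise and hence naturally, with $\bar A\otimes\mhyphen$.

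The heart of the argument is to produce an equivalence $T_\mathcal{C}u\simeq uT_\mathcal{C}$ of natural transformations $T_\mathcal{C}\Rightarrow MT_\mathcal{C}\simeq T_\mathcal{C}M$. Under the identification $T_\mathcal{C}\simeq\bar A\otimes\mhyphen$, and modulo the associativity constraints of $\otimes$, the whiskered transformation $uT_\mathcal{C}$ is $\mhyphen$ tensored on the right with the morphism $u_A\otimes\on{id}_{\bar A}\colon\bar A\to A\otimes\bar A$ of $\mathcal{C}$, whereas $T_\mathcal{C}u$ is $\mhyphen$ tensored on the right with $\on{id}_{\bar A}\otimes u_A\colon\bar A\to\bar A\otimes A$. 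The symmetry of the monoidal structure on $\mathcal{C}$ supplies a natural equivalence of functors $\mhyphen\otimes\bar A\simeq\bar A\otimes\mhyphen$; evaluating it on the morphism $u_A\colon1_\mathcal{C}\to A$ yields a commuting square identifying $u_A\otimes\on{id}_{\bar A}$ with $\on{id}_{\bar A}\otimes u_A$ over the braiding $A\otimes\bar A\xrightarrow{\ \simeq\ }\bar A\otimes A$. Tensoring this square with $\mhyphen$ on the right gives the sought equivalence $T_\mathcal{C}u\simeq uT_\mathcal{C}$, which by the remark following \Cref{sphmndthm} need only be checked pointwise. Granting this, \Cref{sphmndthm} yields the claim: if $F\dashv G$ is spherical then $T_\mathcal{C}$ is an equivalence by definition, and conversely if $T_\mathcal{C}$ is an autoequivalence then condition~1 of \Cref{sphmndthm} holds (its commutativity clause being the braiding equivalence just produced), so the monadic adjunction is spherical.

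I expect the main obstacle to be performing the comparison $T_\mathcal{C}u\simeq uT_\mathcal{C}$ coherently rather than merely on homotopy categories: one must exhibit the equivalence inside $\on{Fun}(\Delta^1,\on{Fun}(\mathcal{C},\mathcal{C}))$, which requires unwinding how the symmetric monoidal structure of $\mathcal{C}$---in particular the naturality of the braiding and its transport along the monoidal functor $i\colon\mathcal{C}^\otimes\to\on{Fun}(\mathcal{C},\mathcal{C})^\otimes$ of \Cref{endlem}---furnishes the functors $\bar A\otimes\mhyphen$, $A\otimes\bar A\otimes\mhyphen$, $\bar A\otimes A\otimes\mhyphen$ and the relevant transformations with all the required coherences, rather than appealing to symmetry informally. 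The remaining ingredients---the identification of the monad, of $\on{LMod}_A(\mathcal{C})$, and of $T_\mathcal{C}$ with $\bar A\otimes\mhyphen$---are routine applications of the results already in place.
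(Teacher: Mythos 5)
Your proposal is correct and follows essentially the same route as the paper's proof: identify $F\dashv G$ as the monadic adjunction of $M=A\otimes\mhyphen$ via \Cref{algmndlem}, use exactness to identify $T_\mathcal{C}$ with tensoring by $\on{cof}(u_A)$, invoke the symmetry of the monoidal structure to obtain $uT_\mathcal{C}\simeq T_\mathcal{C}u$, and conclude by \Cref{sphmndthm}. The paper carries out the braiding step just as informally as you do, so your extra remarks on coherence only make the argument more careful, not different.
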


\begin{proof}
By \Cref{algmndlem}, the adjunction $F\dashv G$ is monadic with adjunction monad given by $M=i(A)=A\otimes\mhyphen$. Let $u$ be the unit of $A$ with cofiber $T$ in $\mathcal{C}$. The unit $u'$ of the monad $M$ is given by $\mhyphen \otimes u:id_\mathcal{D}\rightarrow M$. By the exactness of the monoidal product, the twist $T_{\mathcal{C}}$ is equivalent to $\mhyphen\otimes\on{cof}(u)$. Using that $\mathcal{C}^\otimes$ is symmetric monoidal, we obtain 
\[u'T_{\mathcal{C}}\simeq \mhyphen \otimes u \otimes \on{cof}(u) \simeq \mhyphen \otimes \on{cof}(u)\otimes u\simeq T_{\mathcal{C}}u'\,.\] 
The sphericalness is thus equivalent to $T_{\mathcal{C}}$ being an equivalence by \Cref{sphmndthm}.
\end{proof}

\subsection{Recovering a spherical adjunction from a section of the twist functor is not possible}\label{sec4.1}

Consider a spherical monadic adjunction $F:\mathcal{D}\leftrightarrow \mathcal{C}:G$ with adjunction monad $GF$. The section $s$ of the twist functor $T_\mathcal{D}$ is the natural transformation contained in the following fiber and cofiber sequence in the stable $\infty$-category $\on{Fun}(\mathcal{D},\mathcal{D})$.
\[
\begin{tikzcd}
{T_{\mathcal{D}}[-1]} \arrow[d, "s"] \arrow[r] \arrow[rd, "\square", phantom] & 0 \arrow[d] \\
id_{\mathcal{D}} \arrow[r, "u"]                                 & GF         
\end{tikzcd}
\]
We show that the adjunction $F\dashv G$ and the adjunction monad $GF$ cannot be recovered from the section $s$.

\begin{example}\label{ex1}
Let $k$ be a field. Consider the morphism of $k$-algebras $\nu:k[x]\rightarrow k[y],~x\mapsto y^2$. Geometrically, this corresponds to the map $\mathbb{A}^1\rightarrow \mathbb{A}^1,~x\mapsto x^2,$ i.e.~a branched covering. Consider the adjunction of stable $\infty$-categories 
\begin{equation}\label{mndadj1}
\mhyphen \otimes_{k[x]} k[y] =\nu_!:\mathcal{D}(k[x])\longleftrightarrow \mathcal{D}(k[y]):\nu^*=\on{RHom}_{k[y]}(k[y],\mhyphen)
\end{equation}
where $\mathcal{D}(k[x])$ denotes the unbounded derived $\infty$-category of $k[x]$. \Cref{algmndlem} implies that the adjunction \eqref{mndadj1} is monadic with the adjunction monad given by $M=k[y]\otimes_{k[x]}\mhyphen\,$. The $k[x]$-module $k[y]$ is isomorphic to the free module $k[x]\oplus k[x]$, by mapping monomials of even degree to the first component and monomials of odd degree to the second component. The underlying endofunctor of $M$ is thus equivalent to $id_{\mathcal{D}(k[x])}\oplus id_{\mathcal{D}(k[x])}$. The unit of the monad $M$ is given by the inclusion $id_{\mathcal{D}(k[x])}\xrightarrow{(id,0)} id_{\mathcal{D}(k[x])}\oplus id_{\mathcal{D}(k[x])}\simeq M$. It follows that the twist functor is equivalent to the identity functor. \Cref{symmndprop} implies that the adjunction $\nu_!\dashv \nu^*$ is spherical. The multiplication map of $M$ at $k[x]$ is the map $m:M^2(k[x])\simeq k[x]^{\oplus 4}\rightarrow k[x]^{\oplus 2}\simeq M(k[x])$, described by multiplication with the following matrix. 
\[
 \begin{pmatrix}
   id_{k[x]} & 0 & 0 & id_{k[x]} \\
   0 & id_{k[x]} & id_{k[x]} & 0
 \end{pmatrix}
\]
\end{example}

\begin{example}\label{ex2}
Let $k$ be a field. Consider the morphism of $k$-algebras $\mu:k[x]\rightarrow k[x,\epsilon]\coloneqq k[x,\epsilon]/(\epsilon^2),~ x\mapsto x$. Geometrically, this corresponds to the map $\mathbb{A}^1\times \on{Spec}(k[\epsilon])\rightarrow \mathbb{A}^1,~(x_1,x_2\epsilon) \mapsto x_1$.  Consider the adjunction of stable $\infty$-categories  
\begin{equation}\label{mndadj2}
\mhyphen\otimes_{k[x]} k[x,\epsilon]=\eta_!:\mathcal{D}(k[x])\longleftrightarrow \mathcal{D}(k[x,\epsilon]):\eta^*=\on{RHom}_{k[x,\epsilon]}(k[x,\epsilon],\mhyphen)\,.
\end{equation}
Again, \Cref{algmndlem} implies that the adjunction \eqref{mndadj2} is monadic with the adjunction monad given by $N=k[x,\epsilon]\otimes_{k[x]} \mhyphen\,$.
The $k[x]$-module $k[x,\epsilon]$ is equivalent to the direct sum $k[x]\oplus k[x]$ of free $k[x]$-modules. The underlying endofunctor of $N$ is thus equivalent to $id_{\mathcal{D}(k[x])}\oplus id_{\mathcal{D}(k[x])}$. The unit of the monad $N$ is given by the inclusion $id_{\mathcal{D}(k[x])}\xrightarrow{(id,0)} id_{\mathcal{D}(k[x])}\oplus id_{\mathcal{D}(k[x])}\simeq M$. It follows that the twist functor is equivalent to the identity functor. \Cref{symmndprop} implies that the adjunction $\eta_!\dashv \eta^*$ is spherical. 
The multiplication map of $M$ at $k[x]$ is the map $m:M^2(k[x])\simeq k[x]^{\oplus 4}\rightarrow k[x]^{\oplus 2}\simeq M(k[x])$, described by multiplication with the following matrix. 
\[
 \begin{pmatrix}
   id_{k[x]} & 0 & 0 & 0 \\
   0 & id_{k[x]} & id_{k[x]} & 0
 \end{pmatrix}
\]
\end{example}

\begin{remark}
The endofunctors underlying the monads $M,N:\mathcal{D}(k[x])\rightarrow \mathcal{D}(k[x])$ from the \Cref{ex1,ex2} are equivalent. The unit maps of $M$ and $N$ and the sections of the respective twist functors are also equivalent. However, $M$ is not equivalent as a monad to $N$ because $k[y]$ is not equivalent as a $k[x]$-algebra to $k[x,\epsilon]$.
\end{remark}

The following simplification of the \Cref{ex1,ex2} was suggested to us by Ed Segal.

\begin{example}
 Let $k$ be a field. Let $k\oplus k$ be the product $k$-algebra and $k[\epsilon]\coloneqq k[\epsilon]/(\epsilon^2)$ be the square-zero $k$-algebra. Then the monadic adjunctions of the monads $M=\mhyphen \otimes_{k}(k\oplus k):\mathcal{D}(k)\rightarrow \mathcal{D}(k)$ and $N=\mhyphen \otimes_k k[\epsilon]:\mathcal{D}(k)\rightarrow \mathcal{D}(k)$ are spherical with twist functor equivalent to the identity. There exists an equivalence of underlying endofunctors $M\simeq N$. Furthermore, the units $id_{\mathcal{D}(k)}\rightarrow M,N$ and thus the sections of the twist functors are equivalent. However $M$ and $N$ are not equivalent as monads.  
\end{example}

\bibliography{biblio} 
\bibliographystyle{alpha}

\end{document}